\newcommand*\separateur{\begin{center} \includegraphics[scale=0.2]{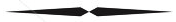}\end{center} }
\def\Fin#1{\leavevmode\unskip\nobreak\quad\hspace*{\fill}{#1}}
\DeclareFontFamily{OT1}{pzc}{}
\DeclareFontShape{OT1}{pzc}{m}{it}{<-> s * [1.1] pzcmi7t}{}
\DeclareMathAlphabet{\mathpzc}{OT1}{pzc}{m}{it}  
\renewcommand\thepart{\textbf{Part \Roman{part}.}}
\newtheoremstyle{defn}{}{}{}{}{\bfseries}{ : }{0cm}{}
\newtheoremstyle{theoreme}{6pt}{6pt}{\itshape}{}{\bfseries}{.}{3pt}{}
\theoremstyle{defn}
\theoremstyle{theoreme} 
\newtheorem{thm}{Theorem}[section]
\newtheorem{lem}[thm]{Lemma}
\newtheorem{hyp}[thm]{Assumption}
\newtheorem{prop}[thm]{Proposition}
\newtheorem{cor}[thm]{Corollary}
\theoremstyle{remark}
\newtheorem{rem}[thm]{Remark} 
\newtheorem{construction}[thm]{Construction}
\numberwithin{equation}{section}
\newcommand{\R}{\mathbb{R}}
\newcommand{\C}{\mathbb{C}}
\newcommand{\Z}{\mathbb{Z}}
\newcommand{\N}{\mathbb{N}}
\newcommand{\g}{\mathfrak{g}}
\newcommand{\ka}{\mathfrak{k}}
\newcommand{\pe}{\mathfrak{p}}
\newcommand{\norme}[1]{\left\Vert #1\right\Vert}
\newcommand{\ind}{\text{Ind}}
\newcommand{\ad}{\text{Ad}}
\title{On the analogy between real reductive groups and Cartan motion groups. II: Contraction of irreducible tempered representations}
\author{Alexandre Afgoustidis\\ \small \emph{CEREMADE, Université Paris-Dauphine}}
\date{}
\begin{document}
\selectlanguage{english}

\maketitle

\begin{abstract}

\noindent Attached to any reductive Lie group $G$ is a ``Cartan motion group'' $G_0$ $-$ a Lie group with the same dimension as $G$, but a simpler group structure. A natural one-to-one correspondence between the irreducible tempered representations of $G$ and the unitary irreducible representations of $G_0$, whose existence had been suggested by Mackey in the 1970s, has recently been described by the author.  In the present article, we use the existence of a family of groups interpolating between $G$ and $G_0$ to realize the bijection as a deformation: for every irreducible tempered representation $\pi$ of G, we build, in an appropriate Fr\'echet space, a family of subspaces and evolution operators that contract $\pi$ onto the corresponding representation of $G_0$.

 \end{abstract}

\tableofcontents

\section{Introduction}

Suppose $G$ is a reductive Lie group. Fix a maximal compact subgroup $K$ in $G$ and write $\g = \ka \oplus \pe$ for the corresponding Cartan decomposition of the Lie algebra of $G$.  The \emph{Cartan motion group} attached to $G$ (and $K$) is the semidirect product $G_0 = K \ltimes \pe$ associated with the adjoint action of $K$ on $\pe$.

The Lie groups $G$ and $G_0$ have the same dimension, but very different algebraic structures; their representation theories are, accordingly, quite different. The motion group $G_0$ is a split extension of a compact group by an abelian group: as a result, the classification of its unitary representations is among the simplest applications of Mackey's theory of induced representations \cite{MackeyPNAS49}. The representation theory of $G$ is more difficult: in general, much about the unitary dual $\widehat{G}$ remains mysterious, and the complete description of the tempered dual $\widetilde{G}$ achieved by Knapp and Zuckerman \cite{KnappZuckerman, KnappZuckerman2} is a very deep result which draws on much of Harish-Chandra's work.

Nevertheless, Mackey remarked in the early 1970s that his own description of the unitary dual $\widehat{G_0}$ featured parameters formally similar to those that appeared in Harish-Chandra's then-ongoing study of $\widetilde{G}$; he went on to wonder whether the analogy could give rise to a natural one-to-one correspondence between large subsets of $\widehat{G_0}$~and~$\widetilde{G}$.

His conjecture seems to have been met with skepticism. But later on, the Baum-Connes-Kasparov conjecture in operator algebra theory renewed interest in Mackey's observations (see \cite{ConnesLivreENG}, Ch. II, or \cite{BaumConnesHigson}, \S4); Nigel Higson eventually strengthened Mackey's rather tentative suggestion into a precise guess that  \emph{exactly the  same} parameters can be used to describe $\widehat{G_0}$ and $\widetilde{G}$ \cite{Higson2008}. That guess has recently turned out to be accurate: we constructed in \cite{AAMackey} a {natural bijection} between $\widehat{G_0}$ and $\widetilde{G}$, using Vogan's notion of lowest $K$-types (see \S \ref{subsec:correspondance}). \\

\noindent The existence of a common parametrization for the reduced duals of such different groups would perhaps seem incomprehensible, were it not for the existence of a continuous family $(G_t)_{t \in [0,1]}$ of Lie groups that interpolates between $G_0$ and $G=G_1$. The group $G$ can be viewed as the isometry group of the riemannian symmetric space $G/K$, while $G_0$ acts by affine isometries on the flat tangent space to $G/K$ at the identity coset. The  (constant) curvature of $G/K$ is negative and can be set to $(-1)$ with  suitable normalizations; a change of scale in that metric (akin to zooming-in on a neighborhood of the origin in $G/K$) yields, for each $t>0$, a symmetric space with curvature $-t^2$ whose isometry group $G_t$ is isomorphic with $G$. The motion group $G_0$ appears as a limiting case of the construction as the curvature parameter $t$ goes to zero (see \S \ref{subsec:defo}).

The relationship between $G$ and $G_0$ is thus of a kind often encountered in physics, when the structure constants determining the symmetries of a physical system are changed and brought to a limiting value (for instance when the speed of light becomes infinite): the motion group $G_0$ is a \emph{contraction} of $G$, in the sense of  \.{I}n\"{o}n\"{u}  and Wigner (\cite{InonuWignerContractions}; see also \cite{Segal}, \S 6). Given a Lie group $\Gamma$ and a contraction $\Gamma_0$, it is a classical problem in physics to try to relate the unitary representations of $\Gamma$ and $\Gamma_0$ (to be thought of as carrier Hilbert spaces for the states of quantum systems with different but related symmetries) through a limiting process, studying the behavior of representation spaces and operators as the physical parameter of interest becomes singular.  Mackey's conjecture had in fact been motivated by the fact that an infinite space with negative but very small curvature is as plausible a model for physical space as a flat one $-$ whence the idea that the representations of the corresponding symmetry groups ought to ``resemble'' one another.

In most cases of physical interest, the unitary representations of $\Gamma$ and $\Gamma_0$ do not correspond exactly under the contraction $-$ far from it. The most famous example is obtained in the transition from the Poincaré group to the Galilei group \cite{InonuWignerContractions}: the ``physically meaningful'' unitary irreducible representations do not  in any reasonable sense ``contract'' to unitary representations of the Galilei group. 

From this point of view, the existence of the Mackey-Higson bijection appears to involve a very unusual rigidity phenomenon in the deformation from $G$ to $G_0$. The fact that the representations paired by the bijection can look either very similar (as in the case of minimal principal series representations, where the rigidity has long been known \cite{DooleyRice, HelgasonTangentSpace}) or quite different (as in the case where $G$ admits discrete series representations, whose counterparts in $\widehat{G_0}$ are then much more trivial finite-dimensional representations), and the importance for several aspects of the representation theory of $G$ of the large-scale geometry of $G/K$ (while $G_0$ is a first-order approximation of $G$ near $K$), add to the mystery.

The mere existence of the deformation $(G_t)_{t \in [0,1]}$ does not, then, make the Mackey-Higson bijection any less unexpected. Yet, one can wonder whether the persistence phenomenon now established at the level of parameters is reflected in the behavior of representation spaces as the contraction is performed, and it is natural to ask whether the ``coincidence of parametrizations'' discovered by Mackey can be established by a deformation procedure.

That is the problem to be addressed here: when $G$ is a linear connected reductive group, we will indicate how every tempered irreducible representation $\pi$ of $G$ can be ``contracted'' $-$ at the level of geometrical realizations $-$ onto the representation $\pi_0$ of $G_0$ that corresponds to it in the Mackey-Higson bijection.
\vspace{-0.2cm}
\separateur
\vspace{-0.2cm}
Of course if one is to associate with $\pi$ a family $(\pi_t)_{t>0}$, where $\pi_t$ is a representation of $G_t$ for each $t>0$, and to try to prove that ``$\pi_t$ goes to $\pi_0$ as $t$ goes to zero'', one needs to give a meaning to the convergence. There does not seem to be a universally acknowledged method for doing so; the various notions of ``contraction of representations'' that have been used in mathematical physics are somewhat disparate in nature, and often formulated at the infinitesimal (Lie algebra) level; see \cite{InonuWignerGalilee, Hermann, Mukunda, MickelssonNiederle} for classical examples.

Our strategy  can be described, in somewhat imprecise terms, as follows (we postpone a precise statement of our results to  \S \ref{subsec:programme}: we will then have introduced enough notation to give a more formal description). 

\vspace{-0.1cm}Suppose $\pi$ is an irreducible tempered representation of $G$. Drawing from the existing constructions of geometrical realizations for $\pi$, we will attach to $\pi$ a Fréchet space $\mathbf{E}$, together with a family $(\pi_t)_{t>0}$ of representations in which each $\pi_t$, $t>0$, acts on a subspace $\mathbf{V}_t$ of $\mathbf{E}$.

This will generate a natural notion of \emph{evolution} in $\mathbf{E}$. Since each $G_t$, $t>0$, is isomorphic with $G$, the vectors in subspaces $\mathbf{V}_t$ that correspond to different values of $t$ will lie in distinct spaces that carry irreducible representations of $G$. These $G$-representations will be equivalent, or at least can be turned into equivalent representations if an appropriate (and classical) renormalization of their continuous parameters is performed (see \S \ref{subsec:programme}). Schur's lemma will determine  a way of following a vector in the subspace $\mathbf{V}_1$, and in fact any vector of $\mathbf{E}$, through the contraction. 

We will then be able to watch vectors and operators deform according to that natural evolution. What we will prove is that as $t$ goes to zero, the initial carrier space and operators for $\pi$ do ``converge'', in a precise sense, to a subspace of $\mathbf{E}$ and operators yielding a linear representation of $G_0$ on that subspace; furthermore, the equivalence class of the $G_0$-representation obtained in this way corresponds to $\pi$ in the Mackey-Higson bijection.

 We should mention that except in the case of minimal principal series, the settings we will use (for instance the description of discrete series representations using Dirac induction, or the realization of real-infinitesimal-character representations in Dolbeault cohomology spaces) force upon us a choice of Fréchet space whose topology is quite different from the Hilbert space topology inherited from the unitary structure of the representations. Thus, our geometrical analysis of Mackey's analogy is made possible only by the deep work of many mathematicians (among whom Helgason, Parthasarathy, Atiyah, Schmid, Vogan, Zuckerman, Wong), ranging from the early 1970s to the late 1990s, on the geometric realization of irreducible tempered $(\g, K)$-modules.

The most suggestive and important examples are technically much simpler to deal with than the general case: before embarking on the contraction of an arbitrary tempered representation, we will take the time  to inspect the contraction of a discrete series representation to its (unique) lowest $K$-type (see \S \ref{subsec:serie_discrete}), as well as the contraction of minimal principal series representations (see \S \ref{sec:serie_principale}) to generic representations of $G_0$ $-$ for the latter case, our results are closely related to earlier treatments in the physical literature and in \cite{DooleyRice}. Our general procedure, relying on successive reductions of the general case to that of real-infinitesimal character representations of connected semisimple groups, then to that of irreducible constituents of real-infinitesimal-character principal series representations of quasi-split groups, is more technical (especially in \S \ref{sec:reduction}); but most of the important ideas and many useful simple lemmas are already present in the two extreme examples. Section \ref{sec:real_inf_char} contains our description of the contraction process for real-infinitesimal-character representations. Section \ref{sec:contraction_generale} reduces the general case to that one.

 The analytical methods to be used here are hardly the final word on the relationship between Mackey's analogy and deformation theory. In the time elapsed since part of these results (corresponding roughly to \S \ref{sec:discreteprincipale} and \S \ref{sec:contraction_generale}) began circulating in preprint form in 2015, Bernstein, Higson and Subag have suggested that the methods of algebraic geometry may provide a framework for studying (algebraic) deformations of groups and families of representations in a broader context. Their recent work \cite{BHSPub, BHS2Pub, Subag, SubagH, BBS}, as well as Shilin Yu's recent attempts to obtain an algebraic-geometric realization of Mackey's correspondence using deformations of algebraic $\mathcal{D}$-modules \cite{ShilinSL2, Shilin}, make it reasonable to hope that insight will be gained in this way on Mackey's analogy. To mention only the simplest related issue, it will soon be obvious that the analytic methods and evolution operators in Fréchet space of the present article, in which no passage to subquotients is needed at any stage, cannot be well-suited to studying the relationship between deformation theory and the natural correspondence between the admissible duals introduced in the last section of \cite{AAMackey}: it seems reasonable to turn to more algebraic methods for that study. 
 
 As one referee pointed out, it would certainly bring further light on our subject to determine whether there exists a deformation procedure that can be constructed independently of the extensive knowledge of the tempered dual, geometrical realizations, and  lowest $K$-types, on which this paper entirely rests. The algebraic methods mentioned above may prove useful in that direction.

\paragraph{Acknowledgments.} \small Most of the material in \S \ref{sec:prelim}, \ref{sec:discreteprincipale} and \ref{sec:contraction_generale} appeared in preliminary form in my Ph.D. thesis \cite{AAthese}. I am profoundly indebted to Daniel Bennequin's guidance and support. I am happy to thank Michel Duflo, Nigel Higson and Michèle Vergne for their help and advice at various stages of this investigation, as well as François Rouvière and David Vogan, whose reading of my thesis led to major corrections and improvements.  The ideas in \S\ref{sec:reduction} took form only after Nicolas Prudhon pointed out \cite{Wong99} to me, and I may not have completed the present paper without the frequent conversations I had with~Jeremy~Daniel.

With the exception of \S \ref{sec:real_inf_char}, the results below were conceived around 2015; it was a happy time at Université Paris-7 and the Institut de Mathématiques de Jussieu $-$ Paris Rive Gauche. Section \ref{sec:real_inf_char}, which is crucial to this paper, is much more recent: I am grateful in many ways to my colleagues at Université Paris-Dauphine and CEREMADE. 

I thank the referees for their careful comments, which helped improve this paper. 
\normalsize

\section{Preliminaries} \label{sec:prelim}

\subsection{The Mackey-Higson bijection} \label{subsec:correspondance}

\noindent In \cite{AAMackey}, \S 2.2, we called \emph{Mackey parameter} any pair $(\chi, \mu)$ where $\chi$ is an element of the vector space dual $\pe^\star$ of $\pe$, and $\mu$ is an irreducible representation of the stabilizer $K_\chi$ of $\chi$ in $K$ (for the coadjoint action of $K$ on $\pe^\star$). 

Attached to any Mackey parameter $(\chi, \mu)$ is a unitary irreducible representation of $G_0$: we form the centralizer $L_0^\chi = K_\chi \ltimes \pe$ of $\chi$ in $G_0$ and consider the induced representation
\begin{equation} \mathbf{M}_0(\chi, \mu)=\ind_{L^0_\chi}^{G_0}(\sigma) = \ind_{K_\chi \ltimes \pe}^{G_0}\left(\mu\otimes e^{i\chi}\right).\end{equation}
It will be useful to record a construction of $\mathbf{M}_0(\chi, \mu)$ and three elementary remarks.

\begin{construction} Fix a carrier space $V$ for $\mu$ and a $\mu(K)$-invariant inner product on $V$. Equip the Hilbert space
\begin{equation} \label{espaceG0} \mathbf{H}= \left\{ f \in \mathbf{L}^2(K, V), \ \forall m \in K_{\chi}, \forall k \in K, \ f(km)=\mu(m^{- 1})f(k) \right\}\end{equation}
with the $G_{0}$-action in which 
\begin{equation} \label{actionG0} g_{0} = (k,v) \text{ acts through } \pi_0(k,v): f \mapsto \left[ u \mapsto e^{i \langle Ad^\star(u)\chi, v\rangle} f(k^{- 1} u)\right].\end{equation}
Then the representation $(\mathbf{H}, \pi_0)$ of $G_0$ is equivalent with $\mathbf{M}_0(\chi,\mu)$.\end{construction}

\begin{rem} Suppose $\chi$ is zero. Then $K_\chi$ equals $K$, $\mu$ is an irreducible representation of $K$, and the representation $\mathbf{M}_0(\chi, \mu)$ of $G_0$ is the trivial extension of $\mu$ where $\pe$ acts by the identity on $V$. Thus $\mathbf{M}_0(\chi,\mu)$ is finite-dimensional in that case.\end{rem}

\begin{rem} \label{remarque_fourier} For arbitrary $\chi$, suppose $\mu$ is the trivial representation of $K_\chi$. Then another geometric realization for $\mathbf{M}_0(\chi, \mu)$ may be obtained by considering the space $\mathscr{E}$ of tempered distributions on $\pe$ whose Fourier transform (a tempered distribution on $\pe^\star$) has support contained in the orbit $\ad^\star(K) \cdot \chi$; since that orbit is compact,  $\mathscr{E}$ is a space of smooth functions on $\pe$. The functions in $\mathscr{E}$ are ``combinations'' of Euclidean plane waves on $\pe$ whose wavevectors lie in the orbit $\ad^\star(K) \cdot \chi$. The realization \eqref{espaceG0}-\eqref{actionG0} of $G_0$ can be viewed as transferred from the quasi-regular action of $G_0$ on $\mathscr{E}$ by taking Fourier transforms.\end{rem}

\begin{rem} \label{remarque_regulier} Fix a maximal abelian subspace $\mathfrak{a}$ of $\pe$, and view the vector space dual $\mathfrak{a}^\star$ as a subspace of $\pe^\star$. Then every Mackey parameter $(\chi, \mu)$ is conjugate under $K$ to one in which $\chi$ lies in $\mathfrak{a}^\star$. If $\chi$ lies in $\mathfrak{a}^\star$ and is a regular element of $\mathfrak{a}^\star$ (\cite{KnappPrincetonBook}, \S V.3), then $K_\chi$ is equal to the group $M = Z_K(\mathfrak{a})$. 
\end{rem}

 We can also attach to any Mackey parameter $(\chi, \mu)$ a tempered irreducible representation of $G$. Form the centralizer $L_\chi$ of $\chi$ in $G$ (for the coadjoint action). There exists a parabolic subgroup $P_\chi = L_\chi N_\chi$ of $G$ with Levi factor $L_\chi$. Write $P_\chi = M_\chi A_\chi N_\chi$ for the Langlands decomposition of $P_\chi$: the  group $M_\chi$ admits $K_\chi$ as a maximal compact subgroup, is usually disconnected, but is linear reductive with abelian Cartan subgroups. The representation $\mu$ of $K_\chi$ determines, through Vogan's theory of lowest $K$-types \cite{Vogan81}, a unique irreducible tempered representation of $M_\chi$: among the representations of $M_\chi$ with real infinitesimal character, there is exactly one that admits $\mu$ as a lowest $K_\chi$-type (and then $\mu$ is its only lowest $K_\chi$-type):  see \cite{VoganBranching}, Theorem 1.3, or \cite{AAMackey}, \S 3.1. We write $\mathbf{V}_{M_\chi}(\mu)$ for it and define a representation of $G$ as 
\begin{equation} \label{inductionG} \mathbf{M}(\chi, \mu) = \ind_{M_\chi A_\chi N_\chi}(\mathbf{V}_{M_\chi}(\mu) \otimes e^{i\chi} \otimes \mathbf{1})\end{equation}
(for a construction, see \S \ref{sec:serie_principale}).

It is a classical result of Mackey \cite{MackeyPNAS49} that every unitary irreducible representation of $G_0$ is equivalent with $\mathbf{M}_0(\chi, \mu)$ for some Mackey parameter $(\chi, \mu)$; the corresponding tempered representation $ \mathbf{M}(\chi, \mu)$ is always irreducible, and its class in $\widetilde{G}$ depends only on the class of $\mathbf{M}_0(\chi, \mu)$ in $\widehat{G_0}$.  The above thus determines a map  $\mathcal{M}: \widehat{G_0} \to \widetilde{G}$.

\begin{thm}[\cite{AAMackey}, Theorem 3.2] \label{correspondance} The map $\mathcal{M}: \widehat{G_0} \to \widetilde{G}$ is a bijection between the unitary dual of $G_0$ and the tempered dual of $G$. \end{thm}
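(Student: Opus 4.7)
The plan is to prove the bijection in three stages, with the Knapp-Zuckerman classification of $\widetilde{G}$ and Vogan's theorem parametrizing real-infinitesimal-character tempered representations by their lowest $K$-types as the two main inputs.

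First I would check that $\mathcal{M}$ is well-defined on equivalence classes and that each $\mathbf{M}(\chi,\mu)$ is tempered and irreducible. Mackey's theory says that two parameters $(\chi,\mu)$ and $(\chi',\mu')$ give equivalent representations of $G_0$ iff they are $K$-conjugate; any such $K$-conjugation carries $P_\chi$, the Langlands decomposition $M_\chi A_\chi N_\chi$, and the inducing datum in \eqref{inductionG} to their $(\chi',\mu')$ counterparts, and the standard intertwiner gives $\mathbf{M}(\chi,\mu)\simeq\mathbf{M}(\chi',\mu')$. Temperedness is immediate from parabolic induction of tempered data along a cuspidal parabolic with unitary continuous character; irreducibility follows because $P_\chi$ is by construction the parabolic on which $\chi$ is strictly positive, so that together with the real infinitesimal character of $\mathbf{V}_{M_\chi}(\mu)$ the Knapp-Zuckerman reducibility criteria are trivially satisfied.

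For injectivity, suppose $\mathbf{M}(\chi_1,\mu_1)\simeq\mathbf{M}(\chi_2,\mu_2)$ with the $\chi_i$ normalized to lie in a fixed closed Weyl chamber of $\mathfrak{a}^\star$. The infinitesimal character and Langlands data of the induced representation determine $\chi_1=\chi_2$, hence $L_\chi$ and $M_\chi$ agree; then $\mathbf{V}_{M_\chi}(\mu_1)\simeq\mathbf{V}_{M_\chi}(\mu_2)$ as $M_\chi$-representations, and Vogan's uniqueness theorem yields $\mu_1=\mu_2$.

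For surjectivity, take any $\pi\in\widetilde{G}$. Knapp-Zuckerman realizes $\pi$ as an irreducible summand of $\ind_{MAN}^G(\delta\otimes e^{i\nu}\otimes\mathbf{1})$ for some cuspidal parabolic $P=MAN$, limit of discrete series $\delta$ of $M$, and $\nu\in\mathfrak{a}^\star$. Setting $\chi=\nu\in\pe^\star$, one has $L_\chi\supset MA$, and induction by stages together with the fact that $e^{i\chi}$ is a central character of $L_\chi$ yields
\begin{equation*} \ind_{MAN}^G(\delta\otimes e^{i\nu}\otimes\mathbf{1})\;\simeq\;\ind_{P_\chi}^G\bigl(\ind_{P\cap L_\chi}^{L_\chi}(\delta)\otimes e^{i\chi}\otimes\mathbf{1}\bigr).\end{equation*}
The inner $L_\chi$-representation is tempered with real infinitesimal character, and each of its irreducible constituents is of the form $\mathbf{V}_{M_\chi}(\mu)$ by Vogan's theorem; choosing the constituent corresponding to a lowest $K$-type of $\pi$ exhibits $\pi\simeq\mathbf{M}(\chi,\mu)$. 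The main obstacle is precisely this final surjectivity step: one must show that the Langlands parabolic $MAN$ supplied by Knapp-Zuckerman can always be reorganized into the intrinsic parabolic $P_\chi$ when $\nu$ fails to be $\mathfrak{a}$-regular, and that the constituent of $\ind_{P\cap L_\chi}^{L_\chi}(\delta)$ singled out by a lowest $K$-type of $\pi$ reinduces back to $\pi$ itself rather than to some other summand of the original basic representation. This delicate branching compatibility between lowest $K$-types at the levels of $G$ and $M_\chi$ is where Vogan's theory \cite{VoganBranching} becomes indispensable.
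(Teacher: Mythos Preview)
The paper does not prove this theorem: it is stated with a citation to \cite{AAMackey}, Theorem 3.2, and used as input for everything that follows. There is therefore no in-paper proof to compare against.

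Your outline is a reasonable sketch of how the argument in \cite{AAMackey} proceeds, and you have correctly identified the two structural inputs (Knapp--Zuckerman and Vogan's lowest $K$-type theory) as well as the genuine difficulty, namely the surjectivity step when $\nu$ is singular and the compatibility between lowest $K$-types at the levels of $G$ and $M_\chi$. Your injectivity argument is slightly glib: saying that ``the infinitesimal character and Langlands data determine $\chi_1=\chi_2$'' hides the fact that an irreducible tempered representation can appear in basic representations attached to non-conjugate cuspidal data, so one must argue more carefully (via the Langlands disjointness theorem and the specific choice of $P_\chi$ as the centralizer parabolic) that the continuous parameter is recovered up to $K$-conjugacy. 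But these are refinements rather than gaps; as a plan for reconstructing the proof in \cite{AAMackey}, your proposal is on the right track.
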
 

\subsection{Deformation to the motion group} \label{subsec:defo}

\subsubsection{A family of Lie groups} \label{subsec:famille}

\noindent For every nonzero real number $t$, we can use the global diffeomorphism
\begin{align*} \varphi_t: K \times \pe & \rightarrow G \\ (k,v) & \mapsto \exp_G(tv) k \end{align*}
to endow $K \times \pe$ with a group structure which makes it isomorphic with $G$; the idea dates back to \cite{DooleyRice}. We write $G_t$ for that group, so that the underlying set of $G_t$ is $K \times \pe$, and the product law reads
\[ (k_1, v_1) \cdot_t (k_2, v_2)  = \varphi^{- 1}_{t} \left(\varphi_{t}(k_{1},v_{1}) \cdot \varphi_{t}(k_{2},v_{2})\right).\]

The Cartan motion group $G_0$ also has underlying set $K \times \pe$, and its product law reads $(k_1, v_1) \cdot_0 (k_2, v_2) = (k_{1}k_{2},v_{1}+\text{Ad}(k_{1})v_{2})$. The product law for $G_t$ goes to the product law for $G_0$ as $t$ goes to zero:

\begin{lem} \label{rouviere} {Suppose $(k_{1}, v_{1})$ and $(k_{2}, v_{2})$ are two elements of $K \times \pe$; then 
 $(k_1, v_1) \cdot_t (k_2, v_2)$ goes to $(k_1, v_1) \cdot_0 (k_2, v_2)$ when $t$ goes to zero. The convergence is uniform on compact subsets of  $K \times \pe$.}\end{lem}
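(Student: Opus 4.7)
The plan is to unwind the product law of $G_t$ explicitly via the Cartan (polar) decomposition of $G$, and then read off its $t\to 0$ asymptotics with the Baker--Campbell--Hausdorff formula.

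First I would rewrite the defining identity $\varphi_t((k_1,v_1)\cdot_t(k_2,v_2)) = \varphi_t(k_1,v_1)\cdot\varphi_t(k_2,v_2)$. Since $k_1 \exp_G(tv_2) k_1^{-1} = \exp_G(t\,\ad(k_1)v_2)$,
\[ \varphi_t(k_1,v_1)\cdot\varphi_t(k_2,v_2) = \exp_G(tv_1)\exp_G(t\,\ad(k_1)v_2)\,k_1k_2. \]
Using the global polar decomposition of $G$, I write the product of the two exponentials as $\exp_G(P(t))\,K(t)$ with $P(t) \in \pe$ and $K(t) \in K$; then the coordinates $(k_3(t),v_3(t)) := (k_1,v_1)\cdot_t(k_2,v_2)$ read $v_3(t)=P(t)/t$ and $k_3(t)=K(t)\,k_1k_2$. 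The lemma thus reduces to the two limits $P(t)/t \to v_1+\ad(k_1)v_2$ and $K(t)\to 1_K$, locally uniformly.

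To obtain these, I would apply BCH to the product on the left: for small $t$,
\[ \exp_G(tv_1)\exp_G(t\,\ad(k_1)v_2) = \exp_G\!\Big( t(v_1+\ad(k_1)v_2) + \tfrac{t^2}{2}[v_1,\ad(k_1)v_2] + O(t^3)\Big). \]
Denote the argument by $Z(t)$ and split it along $\g=\ka\oplus\pe$. Because $[\pe,\pe]\subseteq\ka$ while $[\ka,\pe]\subseteq\pe$, the terms of the BCH series fall by degree into alternating summands, so $Z_\pe(t)=t(v_1+\ad(k_1)v_2)+O(t^3)$ and $Z_\ka(t)=O(t^2)$. Writing $K(t)=\exp_G(\kappa(t))$ for $\kappa(t)\in\ka$ small (valid near $t=0$ by smoothness of the polar decomposition), applying BCH once more to $\exp_G(P(t))\exp_G(\kappa(t))=\exp_G(Z(t))$, and using the bootstrap $P(t)=O(t)$ and $\kappa(t)=O(t^2)\Rightarrow[P(t),\kappa(t)]=O(t^3)$, one matches $\pe$- and $\ka$-components to read off $P(t) = Z_\pe(t) + O(t^3) = t(v_1+\ad(k_1)v_2)+O(t^3)$ and $\kappa(t) = Z_\ka(t) + O(t^3) = O(t^2)$. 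Dividing by $t$ and exponentiating yields the required limits.

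Uniformity on compact subsets comes almost for free: on any compact subset of $K\times\pe\times K\times\pe$ the vectors $tv_1$ and $t\,\ad(k_1)v_2$ lie in a fixed neighborhood of $0\in\g$ for $t$ in a small interval around zero, so the BCH remainders are controlled uniformly, and the polar decomposition is a diffeomorphism on a fixed neighborhood of $1\in G$. The main (and only) delicate point is the second BCH matching: one must confirm that $\kappa(t)$ is genuinely $O(t^2)$ and not merely $O(t)$, for otherwise $k_3(t)$ would fail to converge to $k_1k_2$. This is precisely where the symmetric-space bracket relation $[\pe,\pe]\subseteq\ka$ plays its essential role, forcing the $\ka$-part of the BCH series to vanish at order $t$.
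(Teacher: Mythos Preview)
Your argument is correct, though the ``bootstrap'' sentence deserves one more word of unpacking: a priori you only know $\kappa(t)=O(t)$; projecting $\exp_G(Z(t))=\exp_G(P(t))\exp_G(\kappa(t))$ onto $\ka$ and noting that $[P,\kappa]\in\pe$ shows $Z_\ka(t)=\kappa(t)+O(t^3)$ already with this crude estimate, whence $\kappa(t)=O(t^2)$; \emph{then} $[P(t),\kappa(t)]=O(t^3)$ and the $\pe$-projection gives $P(t)=Z_\pe(t)+O(t^3)$. Read this way the circularity vanishes.

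The paper's proof takes a different and somewhat slicker route. Rather than tracking $\ka$- and $\pe$-components through two BCH expansions, it first handles $k(t)$ without any BCH at all, using only the diffeomorphism $\pe\to G/K$ to see that $tv(t)\to 0$ and hence $k(t)\to k_1k_2$. For $v(t)$ it applies the Cartan involution $\theta$ to the identity $e^{tv(t)}k(t)=e^{tv_1}e^{t\,\mathrm{Ad}(k_1)v_2}k_1k_2$, inverts, and multiplies the two to obtain
\[
e^{2tv(t)}=e^{tv_1}\,e^{2t\,\mathrm{Ad}(k_1)v_2}\,e^{tv_1},
\]
a product of three exponentials of elements of $\pe$. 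One BCH application then gives $2tv(t)$ directly, with no need to separate components. Your approach is more mechanical and self-contained (one does not need to think of the involution trick), while the paper's avoids the second BCH matching and the bootstrap step entirely; both exploit $[\pe,\pe]\subset\ka$ at the key moment.
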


This is quite well-known, but I have not been able to locate a precise justification in the literature. Given that  the very existence of the present article depends on Lemma \ref{rouviere}, it is perhaps reasonable to include a proof: this one was communicated to me by François Rouvière. 

 Write $(k_{1}, v_{1}) \cdot_{t} (k_{2}, v_{2})$ as $(k(t), v(t))$, where $k(t)$ is in $K$ and $v(t)$ in $\pe$; what we have to prove is that when $t$ goes to zero, $k(t)$ goes to $k_{1}k_{2}$ and $v(t)$ goes to $v_{1}+\text{Ad}(k_{1})v_{2}$. We start from the fact that
 \begin{equation} \label{compo} e^{tv(t)}k(t) = \left(e^{tv_{1}} k_{1}\right) \left(e^{tv_{2}}k_{2}\right) = e^{tv_{1}} e^{t \text{Ad}(k_{1}) v_{2}} k_{1} k_{2}. \end{equation}
$\bullet$ Write \vspace{-0.5cm}
\begin{align} \label{cartan} u :\pe&  \to G/K  \\ X & \mapsto \exp_{G}(X)K \nonumber\end{align} for the global diffeomorphism from the Cartan decomposition, and $(g,X) \mapsto g \cdot x = u^{-1}\left(g\exp_G(X)K\right)$ for the corresponding action of $G$ on $\pe$; then we deduce from \eqref{compo} that $u[tv(t)]= e^{tv_{1}} \cdot u\left[t\text{Ad}(k_{1})v_{2}\right]$. When $t$ goes to zero, the right-hand side of the last equality goes to the origin $1_{G}K$ in $G/K$, so that $tv(t)$ must go to the origin of $\pe$; taking up \eqref{compo} we deduce that $k(t)$ goes to $k_{1}k_{2}$.

\noindent $\bullet$ Let us turn to the convergence of $v(t)$. Apply the Cartan involution $\theta$ with fixed-point-set $K$ to both sides of \eqref{compo}; we get $e^{- tv(t)} k(t) = e^{- v_{1}} e^{- t \text{Ad}(k_{1}) v_{2}}k_{1}k_{2}$; taking inverses and multiplying by \eqref{compo} yields 
\begin{equation} \label{compo2} e^{2tv(t)} = e^{tv_{1}}e^{2t \text{Ad}(k_{1})v_{2}}e^{tv_{1}}. \end{equation}
From the Campbell- Hausdorff formula, we know that for small enough $t$, the right-hand side can be written as $e^{2t\left( v_{1}+\text{Ad}(k_{1})v_{2} + r(t)\right)}$, where $r(t)$ is an element of $\mathfrak{g}$ (the sum of a convergent Lie series) and  $r(t) = O(t)$  in the Landau ``big $O$'' notation. If $t$ is close enough to zero, then both $2tv(t)$ and $2t\left( v_{1}+\text{Ad}(k_{1})v_{2} + r(t)\right)$ lie in a neighborhood of the origin $\g$ over which $\exp_{G}$ is injective, so that \eqref{compo2} yields $v(t) =  v_{1}+\text{Ad}(k_{1})v_{2} + r(t)$. We deduce that for small enough $t$, $r(t)$ lies in $\pe$, and of course since $r(t)=O(t)$ we see that $v(t)$ goes to $v_{1} + \text{Ad}(k_{1})v_{2}$ as $t$ goes to zero. The last statement (the fact that the convergence is uniform on compact subsets) is now clear from the existence of explicit formulae for $r(t)$ and $k(t)(k_1k_2)^{-1}$. \qed 


\subsubsection{A family of actions and metrics on $\pe$} \label{subsec:cadregeom}

\noindent In this section, we describe a family of actions and a family of metrics on $\pe$. We shall use them in \S \ref{sec:discreteprincipale} to describe the contraction process for \emph{discrete series} and \emph{spherical principal series} representations: these two cases will introduce us to the main ideas and lemmas to be used for the general case in sections \ref{sec:real_inf_char} and \ref{sec:contraction_generale}.
\begin{center}
\begin{figure}
\begin{center}

\includegraphics[width=0.57\textwidth]{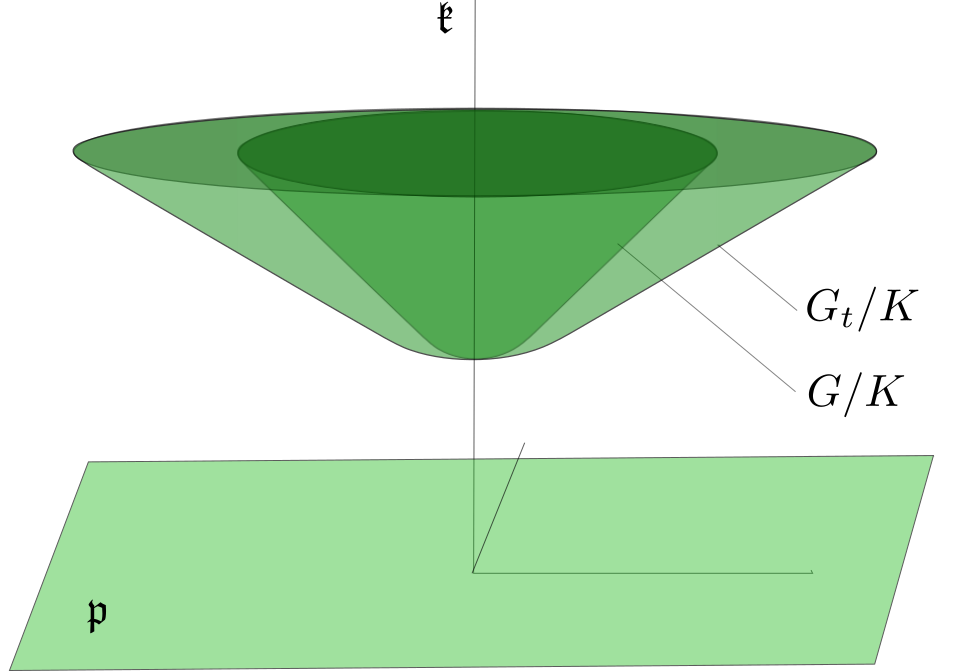} 
\caption{This is a picture of (co)-adjoint orbits for $G$ and $G_t$ when $G$ is $SL(2,\R)$. The horizontal plane is the space $\pe$ of trace-zero symmetric matrices, the vertical axis is the line $\ka$ of antisymmetric matrices; the drawn manifolds are the $G$-adjoint orbit and the $G_{t}$-coadjoint orbit of the point on the vertical axis. It is tempting, but incorrect, to interpret the diffeomorphism $u_t^{-1}$ from \eqref{cartanGt} as the vertical projection; however, the vertical projection can be recovered as $u_t^{-1} \circ \tau$, where $\tau: x \mapsto \frac{\sinh(\norme{x}_B)}{\norme{x}_B} \text{Rotation}_{\frac{\pi}{2}}(x)$. The map $\tau$ accounts for the fact that the geodesics in $(\pe, \eta_t)$ and $(\pe, \eta_0)$ do not spread at the same speed.  }
\end{center}
\end{figure}
\end{center}
 Suppose $t$ is a real number (we do not prohibit $t=0$ here). Then $G_t$  acts on the symmetric space $G_t/K$; because of the Cartan decomposition,  the map
\begin{equation} \label{cartanGt} u_t:  \pe \overset{\exp_{G_{t}}}{\longrightarrow} G_{t} \overset{\text{quotient}}{\twoheadrightarrow} G_{t}/K\end{equation}
is a global diffeomorphism. This yields a transitive action of $G_t$ on $\pe$: if $\gamma$ is an element of $G_t$, we write the action of $\gamma$ on $\pe$ as $x \mapsto \gamma \cdot_t x \overset{\text{def}}{=} u_t^{-1}\left[ \text{mult}_{G_t}\!\left(\gamma, \exp_{G_t}(x)\right) \ K \right]$, where $\text{mult}_{G_t} : G_t \times G_t \to G_t$ is the group law for $G_t$.

The action of $G_t$ on $\pe$ induces a $G_t$-invariant riemannian metric on $\pe$: we choose a fixed $K$-invariant inner product $B$ on $\pe$ and write $\eta_t$ for the (uniquely determined) $G_t$-invariant metric on $\pe$ which coincides with $B$ at the origin of $\pe$. The metric $\eta_t$ has constant scalar curvature, equal to $-t^2$: thus $(\pe, \eta_t)$ is negatively curved for every nonzero $t$, while $(\pe, \eta_0)$ is Euclidean.

In the proof of Lemma \ref{rouviere}, we also used the Cartan decomposition of $G$ to define an action $(g, x) \mapsto g \cdot x$ of $G$ on $\pe$. Since each $G_t$, $t \neq 0$, is obtained from $G$ by a simple rescaling in the Cartan decomposition, there must be a simple relationship between actions and metrics we just defined on $\pe$. The precise relationship can be phrased using the  isomorphism $\varphi_t: G_t \rightarrow G$ from \S \ref{subsec:famille}, and the dilation \begin{align*} z_{t} \ : \pe& \rightarrow \pe \\ \ x & \mapsto  {x}/{t}.\end{align*}
\begin{lem}   \label{conjactions} \begin{enumerate}[(i)]
\item For every $x$ in $\pe$ and $g$ in $G$, we have $\varphi_t^{-1}(g) \cdot_t z_t(x) =z_t(g \cdot x)$.
\item   The metrics $\eta_t$ and $\eta_1$ are related through $z_{t}^\star \eta_{t} = t^{- 2} \cdot \eta_{1}$.
\end{enumerate}\end{lem}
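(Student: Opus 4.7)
The plan is to extract (i) from a single compatibility diagram relating $u$, $u_t$, $z_t$ and the isomorphism $\varphi_t$, and then to obtain (ii) from (i) through a standard $G$-invariance argument.

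For (i), I would first compute the Lie-algebra differential $d\varphi_t:\g_t\to\g$. Differentiating $\varphi_t(1,sV)=\exp_G(tsV)$ and $\varphi_t(\exp_K(sX),0)=\exp_G(sX)$ at $s=0$ shows that $d\varphi_t$ is the identity on $\ka$ and multiplication by $t$ on $\pe$, whence $\varphi_t(\exp_{G_t}(V))=\exp_G(tV)$ for every $V\in\pe$. Since $\varphi_t$ also fixes $K$ setwise, it descends to a diffeomorphism $\bar\varphi_t:G_t/K\to G/K$ that intertwines the $G_t$-action on the source with the $G$-action on the target through $\varphi_t$. The definitions of $u$ and $u_t$ then give the diagram identity
\[
\bar\varphi_t\bigl(u_t(z_t(x))\bigr)=\exp_G(x)K=u(x)\qquad(x\in\pe),
\]
that is, $u_t^{-1}=z_t\circ u^{-1}\circ\bar\varphi_t$. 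Part (i) then falls out of a short chase: applying the intertwining property of $\bar\varphi_t$ to $g\cdot u(x)$ and feeding the result into $u_t^{-1}$ yields $\varphi_t^{-1}(g)\cdot_t z_t(x)=z_t(g\cdot x)$.

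For (ii), (i) amounts to the identity of diffeomorphisms $\varphi_t^{-1}(g)\cdot_t(\cdot)=z_t\circ(g\cdot(\cdot))\circ z_t^{-1}$ for each $g\in G$. Combined with the $G_t$-invariance of $\eta_t$, this immediately shows that $z_t^\star\eta_t$ is invariant under every diffeomorphism $x\mapsto g\cdot x$ of $\pe$, that is, $G$-invariant. Since $G$ acts transitively on $\pe$ (the action being the one transferred from $G/K$ via $u$), any such metric is determined by its value at the origin, and the pointwise identity $(dz_t)_0=\tfrac1t\,\mathrm{Id}$ gives
\[
(z_t^\star\eta_t)_0=t^{-2}B=(t^{-2}\eta_1)_0,
\]
so the two $G$-invariant metrics coincide everywhere.

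I do not foresee any serious obstacle: the argument is essentially formal once the key relation $\varphi_t(\exp_{G_t}(V))=\exp_G(tV)$ has been pinned down. The only point calling for care is the bookkeeping of identifications of tangent spaces with $\pe$ and of the various exponentials; in particular, remembering that the $\pe$-summand of $\g_t$ is rescaled by $t$ under $d\varphi_t$ is exactly what produces the factor $t^{-2}$ in (ii).
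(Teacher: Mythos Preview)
Your proposal is correct and follows essentially the same approach as the paper. The paper verifies (i) by applying $u_t$ to both sides and reducing to the identity $\exp_{G_t}(z_t x)=\varphi_t^{-1}(\exp_G(x))$, which is exactly your relation $\varphi_t(\exp_{G_t}(V))=\exp_G(tV)$; and for (ii) the paper argues, as you do, that $z_t^\star\eta_t$ is $G$-invariant by (i) and then compares with $t^{-2}\eta_1$ at the origin using $(dz_t)_0=t^{-1}\mathrm{Id}$.
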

\begin{proof} Fix $(g,x)$ in $G \times \pe$ and observe the image of both sides of (i) by the diffeomorphism $u_t$: on~the~left-hand~side, \[ u_{t}\left( \varphi_{t}^{- 1}(g) \cdot_{t} z_{t}(x) \right)  = \varphi_{t}^{- 1}(g) \exp_{G_{t}}(z_{t}x) K
= \varphi_{t}^{- 1}(g \exp_{G}(x))K ; \]
on the right-hand side, we see from the definitions of the actions that
\[ u_{t}\left( z_{t} \left(g \cdot x\right) \right)  =  \exp_{G_{t}}\left[z_{t}(g \cdot x) \right] K = \varphi_{t}^{- 1}( \exp_{G}(g \cdot x))K  = \varphi_{t}^{- 1} \left( g \exp_{G}(x) \right)K. \]
This proves (i). For (ii), we know from (i) that $z_{t}^\star \eta_{t}$ is a $G_1$-invariant metric on $\pe$. But so is $\eta_1$: since the derivative of $z_{t}$ at zero is multiplication by $t^{- 1}$, we know that $z_{t}^\star \eta_{t}$ and $t^{- 2} \cdot \eta_{1}$ coincide at zero, so they must be equal. 
\end{proof}
We now give a precise statement for the idea that as $t$ goes to zero, the $G_t$-action on $\pe$ ``goes to'' the natural action of $G_0$ on $\pe$ by rigid motions. Each $G_t$, $t \in \R$, has underlying set $K \times \pe$, so the action is encoded by a map
\begin{align*} \mathcal{A}_{t} \ : (K \times \pe) \times \pe & \rightarrow \pe \\ \ \left( (k,v), x\right) & \mapsto (k,v) \cdot_t x. \end{align*}
\begin{lem} \label{convactions} For the topology of uniform convergence on compact subsets,  $\mathcal{A}_t$ goes to $\mathcal{A}_0$ when $t$ goes to zero.  \end{lem}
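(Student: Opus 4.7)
The plan is to reduce Lemma \ref{convactions} to Lemma \ref{rouviere} by first establishing a $t$-independent description of the diffeomorphism $u_t^{-1} : G_t/K \to \pe$. Concretely, I would show that for every $t \in \R$ (including $t = 0$), one has $u_t^{-1}((k,v)K) = v$, so that the action $\mathcal{A}_t$ becomes a fixed projection applied to left multiplication in $G_t$.

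Two ingredients are needed. First, for $x \in \pe$ the one-parameter subgroup $s \mapsto (1_K, sx)$ of $G_t$ has velocity $x$ at the origin: applying $\varphi_t$ transports it into the one-parameter subgroup $s \mapsto \exp_G(tsx)$ of $G$, whose velocity at the origin of $\g$ is $x$. Hence $\exp_{G_t}(x) = (1_K, x)$ in the coordinates $K \times \pe$, for every real $t$. Second, for any $m \in K$ one has $(k,v) \cdot_t (m, 0) = (km, v)$: applying $\varphi_t$ to the left-hand side yields $\exp_G(tv)(km)$, which equals $\varphi_t(km, v)$ by uniqueness of the Cartan decomposition of an element of $\exp_G(\pe) \cdot K$. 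Consequently the right $K$-action on $G_t$ preserves the $\pe$-coordinate, $u_t(x)$ is the coset $\{(m, x) : m \in K\}$, and $u_t^{-1}$ coincides with the projection onto $\pe$ independently of $t$.

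Writing $p_\pe : K \times \pe \to \pe$ for that projection, we obtain
\[
\mathcal{A}_t((k,v), x) = p_\pe\bigl((k, v) \cdot_t (1_K, x)\bigr)
\]
for every $t \in \R$. By Lemma \ref{rouviere}, $(k,v) \cdot_t (1_K, x)$ goes to $(k,v) \cdot_0 (1_K, x) = (k, v + \text{Ad}(k)x)$ as $t \to 0$, uniformly on compact subsets of $(K \times \pe) \times (K \times \pe)$. Since $p_\pe$ is a continuous linear map it preserves uniform convergence on compact sets, and we conclude that $\mathcal{A}_t((k,v), x) \to v + \text{Ad}(k)x = \mathcal{A}_0((k,v),x)$ uniformly on compact subsets of $(K \times \pe) \times \pe$, which is the assertion.

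The only nontrivial input is the $t$-independence of $u_t^{-1}$, which relies entirely on uniqueness of the Cartan decomposition; I do not expect a serious obstacle, since everything else is either a formal manipulation or a direct application of Lemma \ref{rouviere} followed by a continuous projection.
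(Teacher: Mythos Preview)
Your argument is correct and takes a genuinely different route from the paper's. The paper first invokes Lemma~\ref{conjactions}(i) to write $(k,v)\cdot_t x = \frac{1}{t}\bigl[\varphi_1(k,tv)\cdot(tx)\bigr]$, then applies the Cartan diffeomorphism $u$ and essentially reruns the Campbell--Hausdorff estimate from the proof of Lemma~\ref{rouviere} to extract the limit $v+\text{Ad}(k)x$. Your approach is more economical: the $t$-uniform identity $\mathcal{A}_t\bigl((k,v),x\bigr)=p_\pe\bigl((k,v)\cdot_t(1_K,x)\bigr)$ lets you apply Lemma~\ref{rouviere} as a black box, bypassing both Lemma~\ref{conjactions} and the repeated asymptotic computation. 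The paper's route has the minor advantage of exercising the dilation machinery $z_t$ that recurs throughout, but yours is the cleaner proof of this particular lemma.

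Two cosmetic points. First, the phrase ``whose velocity at the origin of $\g$ is $x$'' should read $tx$: the curve $s\mapsto\exp_G(tsx)$ has tangent vector $tx$ at $s=0$. This does not affect your conclusion, since you have already correctly identified $\varphi_t(1_K,sx)=\exp_G(tsx)$, and hence $\exp_{G_t}(x)=\varphi_t^{-1}\bigl(\exp_G(tx)\bigr)=(1_K,x)$. Second, calling $p_\pe$ a ``continuous linear map'' is a slight abuse, as $K\times\pe$ is not a vector space; but $p_\pe$ is the second-coordinate projection, hence $1$-Lipschitz for the product metric, and that is all you need to pass uniform convergence through it.
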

\begin{proof} For every $\left((k,v), x \right)$ in $(K \times \pe) \times \pe$ and $t$ in $\R$, we first write
 \begin{align*} (k,v) \cdot_{t} x &  = \left(\varphi_{t}^{- 1} \left[ \varphi_1(k, tv) \right]\right) \cdot_{t} x  =  \frac{1}{t} \left[ \varphi_1(k, tv) \right] \cdot (tx)  \ \ \ \text{by lemma \ref{conjactions}(i).} \end{align*}
What we need to prove is that this goes to $v + \text{Ad}(k)x$ when $t$ goes to zero. Apply the diffeomorphism $u$: we get
\small \begin{align*} u\left( \frac{1}{t} \varphi_1(k, tv) \cdot \left(tx\right) \right) & = \exp_{G}\left(\frac{1}{t} u^{- 1} \left[ \varphi_1(k, tv) \exp_{G}(tx) K \right] \right) K \\
  & = \exp_{G}\left(\frac{1}{t} u^{- 1} \left[ \exp_{G}(tv) k \exp_{G}(tx) K \right] \right) K \\
    & = \exp_{G}\left(\frac{1}{t} u^{- 1} \left[ \exp_{G}(tv) \exp_{G}(t\text{Ad}(k)[x]) K \right] \right) K.\end{align*} \normalsize
In the proof of Lemma \ref{rouviere}, we saw that $e^{tv}e^{t\text{Ad}(k)[x])}$ reads $e^{t \beta(t)}\kappa(t)$, where $\kappa(t)$ lies in  $K$ and $\beta(t)$ in $\pe$, with $\beta(t) = v + \text{Ad}(k)[x] + r(t)$, $r(t) \in \pe$, $r(t) = O(t)$. We deduce that $\exp_{G}(tv) \exp_{G}(t\text{Ad}(k)[x]) K = \exp_{G}(t\beta(t))K = u\left[t\beta(t)\right]$ and that 
   \[ u\left( \frac{1}{t} \varphi_1(k, tv) \cdot \left(tx\right) \right)  = \exp_{G}\left(\frac{1}{t} \left[ t\beta(t)\right] \right)K = u\left(v + \text{Ad}(k)[x] + r(t) \right), \]
  so $ \frac{1}{t} \varphi_1(k, tv) \cdot \left(tx\right)$ does go to $v+\text{Ad}(k)x$ as $t$ goes to zero. The fact that the convergence is uniform on compact subsets is immediate from Lemma \ref{rouviere}.\end{proof}

\subsection{Fréchet contractions and a program for \S 3-5}
\label{subsec:programme}

\noindent Suppose $\pi$ is an irreducible representation of $G$ and $\pi_0 = \mathcal{M}^{-1}(\pi)$ is the corresponding representation of $G_0$ in the Mackey-Higson bijection of  \S \ref{subsec:correspondance}. We wish to give a meaning to the phrase: ``associated to the deformation $(G_t)_{t \in \R}$ is a continuous family $(\pi_t)_{t \in \R}$ of representations, so that $\pi_1=\pi$ and $\pi_t$ goes to $\pi_0$ as $t$ goes to zero''. Of course a difficulty is that as $t$ varies, the representations $\pi_t$ usually will act on different spaces, and we need to choose an appropriate notion of ``deformation of representations''.\\

\noindent For every irreducible tempered representation $\pi$ of $G$, we shall describe
 \begin{itemize}
\item[$\bullet$] a Fréchet space $\mathbf{E}$, \Fin{(A)}
\item[$\bullet$] a family $(\mathbf{V}_t)_{t>0}$ of linear subspaces of $\mathbf{E}$, \Fin{(B)}
\item[$\bullet$] for each $t>0$, an irreducible representation $\pi_t: G_t \rightarrow \text{End}(\mathbf{V}_t)$ acting on $\mathbf{V}_t$, \Fin{(B')}
\item[$\bullet$]  a family $(\mathbf{C}_t)_{t>0}$ of endomorphisms of $\mathbf{E}$, to be called the \emph{contraction operators}, \Fin{(C)}
\end{itemize}
with the folllowing properties.\\
\begin{itemize}
\item[$\bullet$] \emph{For every $f$ in $\mathbf{E}$ and every $t>0$, write $f_t$ for $\mathbf{C}_t f$.}

\begin{itemize}
\item \emph{If $f$ lies in $\mathbf{V}_1$, then for every $t>0$, $f_t$ lies in $\mathbf{V}_t$,} \Fin{(Vect1)}
\item \emph{For every $f$ in $\mathbf{E}$, there is a limit (in $\mathbf{E})$ to $f_t$ as $t$ goes to zero.}\Fin{(Vect2)}\\
\end{itemize}
\item[$\bullet$]  \emph{Set $\mathbf{V}_0 = \left\{ \underset{t \to 0}{\text{\emph{lim}}}(f_t), \ f \in \mathbf{V}_1 \right\}$.}\\
\vspace{-0.1cm}
\begin{itemize}
\item \emph{Fix $(k,v)$ in $K \times \pe$. Fix $F$ in $\mathbf{V}_0$ and $f \in \mathbf{V}_1$ such that $F= \underset{t \rightarrow 0}{\text{lim}}(f_t)$. } \Fin{(Op1)}\\
\emph{ Then there is a limit to $\pi_t(k,v) \cdot  f_t$ as $t$ goes to zero; the limit depends only on $F$. }
\item \emph{Write $\pi_0(k,v)$ for the  map from $\mathbf{V}_0$ to itself obtained from (Op1). \Fin{\emph{(Op2)}}\\
Then $\pi_0(k,v)$ is linear for every $(k,v)$ in $K \times \pe$; the map $\pi_0: K \times \pe \rightarrow \text{\emph{End}}(\mathbf{V}_0)$ defines a morphism from $G_0$ to $\text{\emph{End}}(\mathbf{V}_0)$. The $G_0$-representation $(\mathbf{V}_0, \pi_0)$ is unitary irreducible. Its equivalence class is that which corresponds to $\pi$ in the Mackey-Higson bijection.} \\
\end{itemize}
\end{itemize}

\noindent For each $t>0$, our contraction operator $\mathbf{C}_t$ will be uniquely determined (up to a multiplicative constant) by a natural equivariance condition, together with Schur's lemma. To state the equivariance condition, we simplify the notation by identifying $G$ with $G_1$ through the isomorphism $\varphi_1: G_1 \to G$ of \S \ref{subsec:defo}, and $\pi$ with the representation $\pi_1 \circ \varphi_1^{-1}$ of $G$ acting on $\mathbf{V}_1$. Our requirement is most easily phrased in a special case:
\begin{equation} \tag{$\text{Nat}_{\text{RIC}}$} \text{\emph{ If $\pi$ has real infinitesimal character, we require that $\mathbf{C}_t$ intertwine $\pi$ and $\pi_t \circ \varphi_t^{-1}$.}} \end{equation}

\noindent If $\pi$ does not have real infinitesimal character, we require that $\mathbf{C}_t$ intertwine $\pi_t \circ \varphi_t^{-1}$, not quite with $\pi$, but with a representation obtained from $\pi$ by renormalizing the imaginary part of the infinitesimal character. We bring in the renormalization map $\mathcal{R}^{1/t}_{{G}}: \widetilde{G} \to \widetilde{G}$ of \cite{AAMackey}, \S 4:  starting from an irreducible tempered representation of $\pi$ of $G$, we use the  Knapp-Zuckerman classification to write $\pi$ as $\text{Ind}_{M_{P}A_{P}N_{P}}^G\left( \sigma \otimes e^{i\nu} \right)$ where $M_{P}A_{P}N_{P}$ is a cuspidal parabolic subgroup of $G$, $\sigma$ is a discrete series or nondegenerate limit of discrete series of $G$, and $\nu$ is an element of $\mathfrak{a}_P^\star$; then we define $\mathcal{R}^{1/t}_{{G}}(\pi)$ as the equivalence class of the irreducible representation $\text{Ind}_{M_{P}A_{P}N_{P}}^G\left( \sigma \otimes e^{i\frac{\nu}{t}} \right)$.
\begin{equation} \tag{$\text{Nat}_{\text{gen}}$} \text{ \emph{For arbitrary $\pi$,  we require that $\mathbf{C}_t$ intertwine $\mathcal{R}^{1/t}_{{G}}(\pi) $ and $\pi_t \circ \varphi_t^{-1}$.}} \end{equation}
Of course we shall explain why the renormalization in ($\text{Nat}_{\text{gen}}$) is natural: see Proposition \ref{renorm2}, Remark \ref{rem_identification} and Remark \ref{spreading}.

 In \S 3-5 below, we will implement this program for every irreducible tempered representation of $G$. 

When we will have succeeded in carrying it out for a representation $\pi$, we shall say that the quadruple $\left( \mathbf{E}, (\mathbf{V}_t)_{t>0}, (\pi_t)_{t >0}, (\mathbf{C}_t)_{t>0} \right) $ describes a \emph{Fréchet contraction of $\pi$ onto $\pi_0$. }

\addtocontents{toc}{\setcounter{tocdepth}{2}}
\section{Examples of Fréchet contractions: the discrete series and the spherical principal series}
\label{sec:discreteprincipale}

\noindent We here illustrate the program outlined in \S \ref{subsec:programme} by implementing it for two particular classes of irreducible tempered representations of $G$. 

There are at least three good reasons for beginning with these extreme and opposite examples before turning to more general results in sections \ref{sec:real_inf_char} and \ref{sec:contraction_generale}, even though the later results will imply those of this section: 

\begin{itemize}
\item[$\bullet$] All of the most important ideas in \S \ref{sec:real_inf_char} and \S \ref{sec:contraction_generale} will appear in these two examples, but they will be easier to discern without the technicalities to be met later.
\item[$\bullet$] In our two examples, the representations admit geometric realizations as spaces of functions (or sections of a finite-rank vector bundle) on $\pe$: the elementary results of \S \ref{subsec:cadregeom} will make the proofs particularly simple.
\item[$\bullet$] Most of the technical lemmas to be proven along the way will be used again in \S \ref{sec:real_inf_char} and \S \ref{sec:contraction_generale}.
\end{itemize} 
\subsection{Contraction of discrete series representations} \label{subsec:serie_discrete}

\noindent Throughout \S \ref{subsec:serie_discrete}, we assume that $G$ is a linear connected semisimple Lie group with finite center and  that $G$ and $K$ have equal ranks, so that $G$ has a nonempty discrete series. We fix a discrete series representation $\pi$ of $G$ and recall that the representation of $G_0$ that corresponds to $\pi$ in the Mackey-Higson bijection is the (unique) lowest $K$-type of $\pi$. We thus aim at describing a Fréchet contraction of $\pi$ onto its lowest $K$-type $\mu$.

\subsubsection{Square-integrable solutions of the Dirac equation}\label{subsec:dirac}

\noindent We here recall  results of Parthasarathy, Atiyah and Schmid \cite{AtiyahSchmid, Parthasarathy} which provide a geometrical realization~for~$\pi$. 

Fix a maximal torus $T$ in $K$. We will write $\Lambda \subset i\mathfrak{t}^\star$ for the set of linear functionals that arise as the derivative of a unitary character of $T$, $\Delta_{c}$ for the set of roots of $(\ka_{\mathbf{C}}, \mathfrak{t}_{\mathbf{C}})$, and $\Delta$ for the set of roots of $(\g_{\mathbf{C}}, \mathfrak{t}_{\mathbf{C}})$; of course $\Delta_{c} \subset \Delta$. If $\Delta^+$ is a system of positive roots for $\Delta$, we can consider the associated half-sum $\rho$ of positive roots, write $\rho_{c}$ for the half- sum of positive compact roots, and $\rho_{n}$ for the half-sum of positive noncompact roots, so that $\rho_{c}+\rho_{n} = \rho$. We will denote the $\Delta_c^+$-highest weight of $\mu$ by $\vec{\mu}$.

We now call in the  spin double cover $\text{Spin}(\pe)$ of the special orthogonal group $SO(\pe)$ associated with the inner product $B$ chosen in \S \ref{subsec:cadregeom}, and its spin representation $S$. Recall that the condition that $G$ and $K$ have equal ranks guarantees that $\dim(G/K)$ is an even integer, say $2q$;  the $\text{Spin}(\pe)$-module $S$ has dimension $2^{q}$ and splits into two irreducible $2^{q- 1}$-dimensional $\text{Spin}(\pe)$-submodules $S^+$ and $S^-$, with $\rho_{n}$ a weight of $S^+$ (for the action induced by the natural map from $\mathfrak{k}$ to $\text{Spin}(\pe)$). 

Suppose $V_{\mu^\flat}$ is the carrier space for an irreducible $\ka_{\C}$-module with highest weight $\mu^\flat~=~\vec{\mu}~-~\rho_{n}$. 
Then we can consider the tensor product $V_{\mu^\flat} \otimes S^{\pm}$, and although neither $V_{\mu^\flat}$ nor $S^{\pm}$ need be a $K$-module if $G$ is not simply connected, the action of $\ka$ on $V_{\mu^\flat} \otimes S^{\pm}$ does lift to $K$ (the half-integral $\rho_{n}$-shifts in the weights do compensate). So we can consider the equivariant bundle $\mathcal{E} = G \otimes_{K} \left( V_{\mu^{\flat}} \otimes S \right)$ over $G/K$, and its sub-bundles $\mathcal{E}^{\pm} = G \otimes_{K}~\left( V_{\mu^\flat}~\otimes~S^{\pm}~\right)$.

Now, the $G$-invariant metric that $G/K$ inherits from the Killing form of $\g$, together with the built-in spin structure of $\mathcal{E}$, make it possible to define a first-order differential operator $D$ acting on smooth sections of $\mathcal{E}$, the Dirac operator. We will need a few immediate consequences of its definition in the next subsection, so let us briefly spell it out, referring to \cite{Parthasarathy} for details.

We need the Clifford multiplication map $\mathbf{c}: \pe_{\C} \to \text{End}(S)$. Whenever $X$ is in $\pe$, the endomorphism $\mathbf{c}(X)$ sends $S^\pm$ to $S^{\mp}$; in addition, the element $X$ of $\g$ defines a left-invariant vector field on $G/K$, which yields a differential operator $X^\mathcal{E}$ acting (componentwise in the natural trivialization associated to the action of $G$ on $G/K$) on sections of $\mathcal{E}$. If $(X_{i})_{i = 1.. 2q}$ is an orthonormal basis of $\pe$, then the Dirac operator reads \begin{equation} \label{dirac} D s = \sum \limits_{i=1}^{2q} \mathbf{c}(X_{i}) X^\mathcal{E}_{i} s \end{equation}
for every section $s$ of $\mathcal{E}$. It splits as $D = D^+ + D^-$, with $D^\pm$ sending sections of $\mathcal{E}^{\pm}$ to sections of $\mathcal{E}^{\mp}$.

Write $\mathbf{H}$ for the space of smooth, square integrable sections of $\mathcal{E}$ which are annihilated by $D$. Since $D$ is an essentially self-adjoint elliptic operator, $\mathbf{H}$ is a closed subspace of the Hilbert space of square-integrable sections of $\mathcal{E}$. Because $D$ is $G$-invariant, $\mathbf{H}$ is invariant under the natural action of $G$ on sections of $\mathcal{E}$. 

\begin{thm}[\textbf{Parthasarathy, Atiyah \& Schmid}]
The Hilbert space $\mathbf{H}$ carries an irreducible unitary representation of $G$, whose equivalence class is that of $\pi = \mathbf{V}_{G}(\mu)$.
\end{thm}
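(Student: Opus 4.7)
The plan is to reduce the statement to Parthasarathy's formula for $D^2$ and then to a bookkeeping via the Plancherel decomposition of $L^2(\mathcal E)$. A direct expansion of \eqref{dirac} using the Clifford relations $\mathbf{c}(X_i)\mathbf{c}(X_j) + \mathbf{c}(X_j)\mathbf{c}(X_i) = -2 B(X_i,X_j)\,\mathrm{id}$, together with the fact that the brackets $[X_i,X_j] \in \ka$ between noncompact basis vectors act on $V_{\mu^\flat} \otimes S$ through the $K$-structure of the fibre, yields an identity of the form
\[
D^2 \,=\, -\,\Omega_G \,+\, c(\mu),
\]
where $\Omega_G$ is the Casimir of $\g$ acting on smooth sections of $\mathcal{E}$ through the right regular representation of $G$, and $c(\mu) = \|\mu^\flat + \rho_c\|^2 - \|\rho\|^2$. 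The half-integral $\rho_n$-shift built into $\mu^\flat = \vec{\mu} - \rho_n$ is precisely what makes the constant come out in this clean form. Consequently $\mathbf H = \ker D$ is contained in a single $\Omega_G$-eigenspace inside $L^2(\mathcal E)$.

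Next I would decompose $L^2(\mathcal E)$ along Harish-Chandra's Plancherel measure on $\widetilde G$. On principal series induced from a proper cuspidal parabolic, $\Omega_G$ acts through a spectrum that varies continuously with a real parameter, so the $L^2$-finiteness of $\mathbf H$ excludes any such contribution; on a discrete series with Harish-Chandra parameter $\Lambda$, $\Omega_G$ acts by the scalar $\|\Lambda\|^2 - \|\rho\|^2$. Hence the discrete series summands surviving in $\mathbf H$ must satisfy $\|\Lambda\|^2 = \|\vec{\mu} + \rho_c - \rho_n\|^2$. Moreover, any $K$-type occurring in $\mathbf H$ must occur in the fibre $V_{\mu^\flat} \otimes S$, so the lowest $K$-type of any such discrete series is~$\mu$. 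By the Vogan/Harish-Chandra parametrization recalled in \cite{AAMackey}, \S 3.1, this uniquely picks out $\pi = \mathbf V_G(\mu)$, and $\mathbf H$ is therefore a (possibly zero) multiple of $\pi$.

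The remaining task is to show that $\mathbf H$ is nonzero and realizes $\pi$ exactly once. Multiplicity one follows from the observation that $V_\mu$ occurs with multiplicity one in $V_{\mu^\flat} \otimes S^+$ (its highest weight $\vec \mu = \mu^\flat + \rho_n$ is obtained in only one way from the highest weight $\rho_n$ of $S^+$), combined with Blattner's formula $[\pi:\mu]=1$ and Frobenius reciprocity. The genuine obstacle is non-vanishing. Here I would invoke the $L^2$-index argument of \cite{AtiyahSchmid}: the index of $D^+$ is computed to be a virtual discrete series character whose value at the identity equals the (nonzero) formal degree of $\pi$, forcing $\ker D^+ \neq 0$. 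Alternatively, \cite{Parthasarathy} constructs square-integrable solutions of $Ds=0$ directly from matrix coefficients of $\pi$. Either route gives $\mathbf H \simeq \pi$ as a unitary $G$-representation. This non-vanishing step is where the analytic depth of the theorem lies; the rest is algebraic bookkeeping around Parthasarathy's formula.
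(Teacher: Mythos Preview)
The paper does not prove this theorem: it is quoted as a background result from \cite{Parthasarathy} and \cite{AtiyahSchmid}, with no argument supplied beyond the references. Your proposal is therefore not to be compared against anything in the paper, but it is a reasonable high-level outline of the classical Parthasarathy--Atiyah--Schmid argument itself, and the ingredient $D^2=-\Omega+\sigma$ you start from is precisely the formula the paper later invokes in the proof of Lemma~\ref{extended}.

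One step in your sketch is looser than the rest. From ``any $K$-type occurring in $\mathbf H$ must occur in the fibre $V_{\mu^\flat}\otimes S$'' you infer ``the lowest $K$-type of any such discrete series is $\mu$''. Frobenius reciprocity only tells you that \emph{some} $K$-type of a contributing discrete series lies in the fibre, not that its lowest one does; and the norm condition $\|\Lambda\|^2=\|\vec\mu+\rho_c-\rho_n\|^2$ alone does not single out one $\Lambda$. What actually pins down the representation is the equality case of Parthasarathy's (Dirac) inequality: on any unitary summand the positivity of $D^-D^+$ gives an inequality between the Casimir eigenvalue and a $K$-type norm, and on $\ker D$ this becomes an equality, which forces the contributing $K$-type to be the extremal one of highest weight $\mu^\flat+\rho_n=\vec\mu$ in $V_{\mu^\flat}\otimes S^+$. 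That is also the mechanism behind Proposition~\ref{rq_atiyah}, which the paper quotes immediately afterwards. With that step inserted, the rest of your plan (ruling out the continuous spectrum via the Plancherel decomposition, multiplicity one from the single occurrence of $\mu$ in the fibre together with Blattner, and non-vanishing via the $L^2$-index) matches the structure of the original proofs.
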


\noindent We must point out that Atiyah and Schmid's proof shows that the solutions to the Dirac equation that are gathered in $\mathbf{H}$ do not explore the whole fibers, but are actually sections of a sub-bundle of $\mathcal{E}$ whose fiber, a $K$-module, is irreducible and of class $\mu$. Let $W$ denote the isotypical $K$-submodule of $V_{\mu^{\flat}} \otimes S^+$ for the highest weight $\vec{\mu} = \mu^{\flat} + \rho_{n}$; the $K$-module $W$ is irreducible. Let $\mathcal{W}$ denote the equivariant bundle on $G/K$ associated to $W$.

\begin{prop}[\textbf{Atiyah \& Schmid}] \label{rq_atiyah} {If a section of $\mathcal{E}$ is a square-integrable solution of the Dirac equation, then it is in fact a section of $\mathcal{W}$.}\end{prop}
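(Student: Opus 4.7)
The plan is to exploit Parthasarathy's formula for the square of the Dirac operator: once sections of $\mathcal{E}$ are identified with $K$-equivariant functions $G \to V_{\mu^\flat} \otimes S$, one has
\[
 D^2 \;=\; -\,R(\Omega_{\g}) \;+\; \|\mu^\flat + \rho\|^2 - \|\rho\|^2,
\]
where $R(\Omega_{\g})$ is the Casimir of $\g$ acting through the right regular representation (see \cite{Parthasarathy}). The whole point is that the scalar on the right singles out one distinguished $K$-type among the constituents of the fiber $V_{\mu^\flat} \otimes S$.

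First I would decompose the finite-dimensional $K$-module $V_{\mu^\flat} \otimes S$ into its $K$-isotypic pieces $\bigoplus_\tau V_\tau^{\oplus m_\tau}$, noting that the piece indexed by the highest weight $\vec{\mu}$ contains $W$ as its $S^+$-component. This yields a splitting $\mathcal{E} = \bigoplus_\tau \mathcal{E}_\tau$ of $G$-equivariant sub-bundles and a corresponding $G$-invariant decomposition of $L^2(\mathcal{E})$. The operator $D$ itself mixes these summands through Clifford multiplication, but $R(\Omega_{\g})$ clearly preserves the decomposition (it acts componentwise with respect to the fiber splitting), and therefore so does $D^2$ by the formula just recalled.

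Next, given $s \in \mathbf{H}$, I would write $s = \sum_\tau s_\tau$; since $D^2 s = 0$ and $D^2$ respects the isotypic decomposition, each $L^2$-section $s_\tau$ of $\mathcal{E}_\tau$ is an eigenvector of $R(\Omega_{\g})$ with eigenvalue $c = \|\mu^\flat + \rho\|^2 - \|\rho\|^2$. Harish-Chandra's Plancherel decomposition applied to $L^2(\mathcal{E}_\tau)$, combined with Frobenius reciprocity, then forces the existence of an irreducible unitary representation of $G$ that occurs discretely in $L^2(G)$, admits $V_\tau$ as a $K$-type, and has Casimir eigenvalue exactly $c$: in short, a discrete series containing $V_\tau$ with the prescribed infinitesimal character.

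The crux is the final step, which is Parthasarathy's Dirac inequality: among the $K$-types $\tau$ appearing in $V_{\mu^\flat} \otimes S$, the requirement that some discrete series with Casimir eigenvalue $c$ contain $V_\tau$ as a $K$-type can be met only when $\tau = \vec{\mu}$, in which case the discrete series is $\pi$ itself. This rests on the weight-combinatorial estimate $\|\tau + \rho_c\| \ge \|\vec{\mu} + \rho_c\|$ for the weights $\tau$ occurring in $V_{\mu^\flat} \otimes S$, with equality only for $\tau = \vec{\mu}$, together with Harish-Chandra's parametrization of discrete-series infinitesimal characters. Granting this, I conclude that $s_\tau = 0$ for $\tau \neq \vec{\mu}$, so $s$ takes values in $W$ and is therefore a smooth section of $\mathcal{W}$, as claimed.
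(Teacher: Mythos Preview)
The paper does not supply its own proof of this proposition: it is stated as a result of Atiyah and Schmid, followed only by a quote from their paper interpreting the argument as ``curvature estimates, in algebraic disguise''. Your outline is essentially the Parthasarathy--Atiyah--Schmid argument itself, so in spirit you are reconstructing the cited source rather than comparing against anything proved in the present paper.

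That said, there is a sign slip in your final step. The weight-combinatorial fact you need is the \emph{opposite} inequality: for every $K$-type $\tau$ occurring in $V_{\mu^\flat} \otimes S$ one has $\|\tau + \rho_c\| \le \|\vec{\mu} + \rho_c\|$, with equality only for $\tau = \vec{\mu}$ (this is Parthasarathy's combinatorial lemma; $\vec{\mu} = \mu^\flat + \rho_n$ is the highest weight of the tensor product, and the other extremal weights are obtained by subtracting sums of positive roots). The Dirac inequality, on the other hand, says that a discrete series whose Harish-Chandra parameter has length $\|\vec{\mu} + \rho_c\|$ can contain $\tau$ only if $\|\tau + \rho_c\| \ge \|\vec{\mu} + \rho_c\|$. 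Combining the two forces equality, hence $\tau = \vec{\mu}$. With the inequality written your way (both $\ge$), the two bounds point in the same direction and nothing is pinned down. The fix is purely a matter of reversing one sign; the architecture of your argument is sound.
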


\noindent Atiyah and Schmid comment on this result as follows:
\begin{center}
\begin{minipage}{0.95\textwidth}
\emph{We should remark that the arguments leading up to $[$the fact that the cokernel of the Dirac operator is zero$]$ are really curvature estimates, in algebraic disguise. The curvature properties of the bundles and of the manifold G/K force all square-integrable, harmonic spinors to take values in a certain sub-bundle of $[\mathcal{E}]$, namely the one that corresponds to the K-submodule of highest weight $[\mu^\flat + \rho_{n}]$ in $[V_{\mu^\flat} \otimes S^+]$.}
\end{minipage}
\end{center}


\subsubsection{Contraction to the lowest $K$-type}\label{subsec:contraction_discrete}

\noindent We are now in a position to observe how, for every real number $t \neq 0$, the above construction yields a $G_t$-invariant Dirac equation on $\pe$, and study the behaviour of solutions when the parameter $t$ goes to zero.

Fix a real number $t$ and bring in from \S \ref{subsec:cadregeom} the diffeomorphism $u_{t}: \pe \to G_{t}/K$, the action of $G_t$ and the metric $\eta_{t}$ on $\pe$.  Using the action of $K$ on $V_{\mu} \otimes S$, we use the results of the previous subsection to build a $G_t$-equivariant spinor bundle $\mathcal{E}_{t}$ over $G_t/K$. The vector bundle $u_{t}^\star \mathcal{E}_{t}$ over $\pe$ can be trivialized using the $G_t$-action: this yields a vector bundle isomorphism, say  $T_{t}$, between $u_{t}^\star \mathcal{E}_{t}$ and the trivial bundle $\pe \times (V_{\mu^\flat} \otimes S)$.

Using the metric $ \eta_{t}$ to build a $G_t$-equivariant Dirac operator acting on sections of $u_{t}^\star \mathcal{E}_t$, and trivializing using $T_{t}$, we obtain a first-order differential operator $D'_t$ acting on $\mathbf{C}^\infty(\pe, V_{\mu^\flat} \otimes S)$. Proposition \ref{rq_atiyah} makes it tempting to build from $D'_{t}$ a differential operator acting on
\begin{equation} \tag{A} \mathbf{E}= \mathbf{C}^\infty(\pe, W).\end{equation} We equip $\mathbf{E}$ with the usual Fréchet topology (see \cite{Treves}, Chapter 10: we shall spell out semi-norms defining the topology of $\mathbf{E}$ in the proof of Lemma \ref{lipschitz_seriediscrete}). We then define  the differential operator
\[ \Delta_t := \text{Proj}_{W} \circ\left[ (D'_t)^2 \big|_{\mathbf{C}^\infty(\pe, W)}\right] \]
where $\text{Proj}_{W}$ is the (orthogonal) isotypical projection $V_{\mu^\flat} \otimes S \rightarrow W$. 

Applying the above construction for each $t$ (including $t=0$), we obain a second-order $G_{0}$-invariant differential operator $\Delta_{0}$ on the flat space $(\pe, \eta_{0})$, as well as $G_{t}$-invariant differential operators $\Delta_{t}, t \neq0$, for the negatively-curved spaces $(\pe, \eta_{t})$. These operators do fit together:
\begin{lem} \label{continuite_delta} The family $(\Delta_{t})_{t \in [0,1]}$ of differential operators on $\mathbf{E}$ is weakly continuous: if $f$ lies in $\mathbf{E}$, then $(\Delta_{t}f )_{t \in [0,1]}$ is a continuous path in $\mathbf{E}$.\end{lem}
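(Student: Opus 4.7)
My plan is to realize $\Delta_t$ explicitly in global linear coordinates on $\pe$ as a second-order differential operator
\[
\Delta_t = \sum_{|\alpha| \leq 2} a_\alpha^{(t)}(x) \, \partial^\alpha,
\]
whose coefficients $a_\alpha^{(t)} : \pe \to \text{End}(W)$ are jointly continuous in $(t,x) \in [0,1] \times \pe$, together with all their $x$-derivatives. Once this is established, weak continuity is routine: for every $f \in \mathbf{E}$, every compact $K \subset \pe$, and every integer $n \geq 0$, the seminorm $p_{K,n}(g) = \sup_{x \in K,\, |\beta| \leq n}\|\partial^\beta g(x)\|$ satisfies
\[
p_{K,n}\bigl(\Delta_t f - \Delta_{t_0} f\bigr) \leq C \sum_{|\alpha| \leq 2,\, |\gamma|\leq n} \Bigl(\sup_{x \in K}\bigl\|\partial^\gamma a_\alpha^{(t)}(x) - \partial^\gamma a_\alpha^{(t_0)}(x)\bigr\|\Bigr)\cdot p_{K,n+2}(f),
\]
and the right-hand side tends to zero as $t \to t_0$ by the assumed continuity of the coefficients.

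The bulk of the work is thus the joint continuity of the $a_\alpha^{(t)}$ and their derivatives. Unwinding the construction, $D'_t$ is obtained by trivializing, via $T_t$ and $u_t$, the $G_t$-invariant Dirac operator \eqref{dirac} on $G_t/K$; its coefficients are polynomial expressions in (i) the metric $\eta_t$ and its inverse at each point of $\pe$, (ii) the Christoffel symbols of $\eta_t$ (and hence the induced spin connection), and (iii) the Clifford multiplication $\mathbf{c}$, which is independent of $t$. Squaring and projecting onto $W$ yields $\Delta_t$, whose coefficients are polynomial combinations of the data in (i)--(iii) and their first derivatives. For $t_0 > 0$, joint continuity is immediate: the isomorphism $\varphi_t: G_t \to G$ and the rescaling $z_t$ of Lemma \ref{conjactions}(ii) show that $\eta_t = t^{-2}(z_t)_\star \eta_1$, so the coefficients depend smoothly on $t$ on $(0,1]$ and the behavior near $t_0 > 0$ is unproblematic.

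The delicate point, and the main obstacle, is continuity at $t_0 = 0$, where the geometry degenerates from negatively curved to flat. Here the inputs are Lemma \ref{convactions} and the Campbell--Hausdorff expansion used in the proof of Lemma \ref{rouviere}: the latter provides an explicit formula $(k,v)\cdot_t x = v + \ad(k)x + t\,r(t,k,v,x)$ with a remainder $r$ that is smooth in $(t,k,v,x)$ on $\R \times K \times \pe \times \pe$. Differentiating this formula in $x$ yields joint continuity of the $G_t$-action and all its $x$-derivatives, hence of the metric $\eta_t$, its inverse, and its Christoffel symbols. One must then verify that the trivialization $T_t$ of $u_t^\star \mathcal{E}_t$ $-$ defined through the $G_t$-action on $V_{\mu^\flat}\otimes S$ via the $K$-part of the polar decomposition inside $G_t$ $-$ depends continuously on $t$ including at $t=0$; this amounts to checking smoothness in $t$ of the $K$-component of the map $(t,x,y) \mapsto \exp_{G_t}(-x)\exp_{G_t}(y)$ near $(0,0,0)$, which is again a Campbell--Hausdorff calculation. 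Assembling these ingredients gives continuity of the $a_\alpha^{(t)}$ in the $\mathbf{C}^\infty$-topology of their $x$-dependence, and the lemma follows.
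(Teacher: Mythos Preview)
Your proposal is correct and takes essentially the same approach as the paper: both arguments reduce the weak continuity of $(\Delta_t)$ to the joint continuity in $(t,x)$ (with all $x$-derivatives) of the coefficients obtained after trivializing the Dirac operator in global coordinates on $\pe$. The paper is terser $-$ it writes $D'_t|_{\mathbf{C}^\infty(\pe,W)} = \sum_i A_t^i \partial_i + K_t$ and asserts that the definitions of $u_t$ and $T_t$ make the transferred left-invariant vector fields $(T_t\circ u_t)^\star X_i^{\mathcal{E}_t}$ into continuous endomorphisms of $\mathcal{C}^\infty(\pe\times\R)$, from which the joint smoothness of $A_t^i$, $K_t$ (and hence of the coefficients of $\Delta_t$) follows $-$ whereas you make the mechanism explicit by invoking the scaling relation for $t>0$ and the Campbell--Hausdorff remainder at $t=0$; but the substance is the same.
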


\begin{proof} Fix an orthonormal basis $(X_i)_{i=1..2q}$ of $\pe$ as in \eqref{dirac} and use the $2q$ corresponding Cartesian coordinates on $\pe$; given \eqref{dirac}, the operator $D'_{t}\big|_{\mathbf{C}^\infty(\pe, W)}$ reads
\[ D'_{t}\big|_{\mathbf{C}^\infty(\pe, W)} = \sum \limits_{i=1}^{2q} A_{t}^{i} \partial_{i}+ K_{t} \]  
where $A_{t}^{i}$, $i = 1... 2q$ and $K_{t}$ are continuous maps from $\pe$ to the space $\mathcal{L}(W, V_{\mu^{\flat}} \otimes S)$ of linear maps $W \to V_{\mu^{\flat}} \otimes S$. Given the definitions of $u_{t}$ and $T_{t}$, each of the vector fields $(T_{t} \circ u_{t})^\star X^{\mathcal{E}_{t}}_{i}$ defines a continuous endomorphism of $\mathcal{C}^\infty(\pe \times \R)$ when this space is equipped with its usual Fr\'echet topology; given \eqref{dirac} we deduce that multiplication by $(x, t) \mapsto A^{i}_{t}(x)$ and $(x, t) \mapsto K_{t}(x)$ define continuous endomorphisms of $\mathcal{C}^\infty(\pe \times \R)$; Lemma \ref{continuite_delta} follows. \end{proof}

\noindent We know from the previous subsection that the $\mathbf{L}^2$ kernel of $\Delta_{t}$, $t >0$, carries ``the'' discrete series representation of $G_{t}$ with lowest $K$-type $\mu$. For every $t>0$, we set 
\begin{equation} \tag{B} \mathbf{V}_t = \left\{ f \in \mathbf{C}^\infty(\pe, W) \ \ | \ \ \Delta_{t} f = 0 \quad \text{and}\quad  f \in \mathbf{L}^2(\eta_{t}, W) \right\}. \end{equation}
and call in the action of $G_t$ on $\mathbf{E}= \mathbf{C}^\infty(\pe, W)$ induced by the action $\cdot_t$ on $\pe$ (of \S \ref{subsec:cadregeom}) and the action $\mu$ of $K$ on $W$: if $(k,v)$ is an element of $K \times \pe$ $-$ interpreted as an element $\gamma$ of $G_t$ $-$ and $f$ is an element of $\mathbf{E}$, we set
\begin{equation} \label{operat_disc} \pi_t(k,v) f : x \mapsto \mu(k) \cdot f(  \gamma^{-1} \cdot_t x) \end{equation}
where the inverse for $\gamma$ is taken in $G_t$. From the $G_t$-invariance of $\Delta_t$  (and the fact that $D'_{t}$ and its square have the same $\mathbf{L}^2$ kernel) we know that 
 \begin{align} &\text{The subspace $\mathbf{V}_t$ of $\mathbf{E}$ is $\pi_t$-stable; the representation $(\mathbf{V}_t, \pi_t)$ of $G_t$ is irreducible.}\nonumber \\ &\text{It is a discrete series representation of $G_t$ with lowest $K$-type $\mu$.} \tag{B'} \end{align}
Recall from Lemma \ref{conjactions} that the dilation 
\[ z_{t} \ : \ x \mapsto  \frac{x}{t}\]
intertwines the actions of $G$ and $G_{t}$ on $\pe$. We can use $z_{t}$ to transform functions on $\pe$, setting
\begin{equation} \tag{C} \mathbf{C}_{t} f := x \mapsto f(t \cdot x) \end{equation}
for every $f$ in $\mathbf{E}$.
As a consequence of Lemma \ref{conjactions}(ii) and the definition of the Dirac operator in \eqref{dirac}, we get
\[ \mathbf{C}_{t}^{- 1} \Delta_{t} \mathbf{C}_{t} = t^{ 4} \cdot \Delta_{1}. \]
Together with the fact that $\mathbf{C}_{t} f$ is square-integrable with respect to $\eta_{t}$ as soon as $f$ is square- integrable with respect to $\eta_{1}$, this means that 
\begin{equation} \tag{Vect1} \text{If $f$ lies in $\mathbf{V}_{1}$, then for each $t>0$, the map $\mathbf{C}_{t} f$ lies in $\mathbf{V}_{t}$.}\end{equation}  
We remark that our contraction operators $(\mathbf{C}_t)_{t>0}$ satisfy the naturality property ($\text{Nat}_{\text{RIC}}$) of \S \ref{subsec:programme}, and are (almost) uniquely determined by that constraint:
\begin{lem} \label{naturalite_SD} Fix $t>0$.  \begin{enumerate}[(i)]
\item The contraction operator $\mathbf{C}_{t}$ intertwines the representations $(\mathbf{V}_1, \pi_1 \circ \varphi_1)$ and $(\mathbf{V}_t, \pi_t \circ \varphi_t)$ of $G$.
\item Our contraction operator $\mathbf{C}_{t}$ is the only map from $\mathbf{V}_1$ to $\mathbf{V}_t$ that satisfies (i) and preserves the value of functions at zero, in that  $(\mathbf{C}_{t} f)(0) = f(0)$ for every $f$ in $\mathbf{E}$. \end{enumerate}  \end{lem}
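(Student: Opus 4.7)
For (i), I verify the equality $\mathbf{C}_t \circ (\pi_1 \circ \varphi_1^{-1})(g) = (\pi_t \circ \varphi_t^{-1})(g) \circ \mathbf{C}_t$ pointwise. Writing $g \in G$ in Cartan form $g = \exp_G(Y) k$, the images $\varphi_1^{-1}(g) = (k, Y)$ and $\varphi_t^{-1}(g) = (k, Y/t)$ share the same $K$-component, so the cocycle factors $\mu(k)$ appearing in formula \eqref{operat_disc} coincide on both sides. What remains to verify is the identity
\[
t \cdot \bigl(\varphi_t^{-1}(g)^{-1} \cdot_t x\bigr) \;=\; \varphi_1^{-1}(g)^{-1} \cdot_1 (tx) \;=\; g^{-1} \cdot (tx)
\]
for each $x \in \pe$, and this is precisely Lemma \ref{conjactions}(i) applied to $g^{-1}$ at the point $tx$.

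For (ii), I invoke Schur's lemma. The operator $\mathbf{C}_t : \mathbf{V}_1 \to \mathbf{V}_t$ is a bijection: pullback along $z_t^{-1}: x \mapsto tx$ is clearly a linear automorphism of $\mathbf{C}^\infty(\pe, W)$, and Lemma \ref{conjactions}(ii) together with the already-established scaling $\mathbf{C}_t^{-1} \Delta_t \mathbf{C}_t = t^4 \Delta_1$ show that it sends $\mathbf{V}_1$ onto $\mathbf{V}_t$. Given any other $G$-intertwiner $T: \mathbf{V}_1 \to \mathbf{V}_t$ satisfying (i), the composition $\mathbf{C}_t^{-1} \circ T$ is a $G$-endomorphism of the irreducible representation $(\mathbf{V}_1, \pi_1 \circ \varphi_1^{-1})$ (property (B')), hence equals some scalar $c$ by Schur. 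The condition $(Tf)(0) = f(0)$ then reads $c \, f(0) = f(0)$, which forces $c=1$ as soon as some $f \in \mathbf{V}_1$ has $f(0) \neq 0$.

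To establish this last nonvanishing, I observe that $\varphi_1^{-1}(k) = (k, 0)$ and $(k, 0) \cdot_1 0 = 0$ (since the origin of $\pe$ corresponds to the basepoint of $G/K$ and is fixed by $K$), so by \eqref{operat_disc} we have $(\pi_1(k, 0) f)(0) = \mu(k) f(0)$; thus the evaluation map $\mathrm{ev}_0 : \mathbf{V}_1 \to W$ is $K$-equivariant. Since $W$ is a copy of $\mu$, the lowest $K$-type of the discrete-series representation carried by $\mathbf{V}_1$ (appearing there with multiplicity one), the $K$-equivariant map $\mathrm{ev}_0$ cannot vanish identically. The only genuinely delicate point of the argument is the matching of cocycle factors in (i); once one observes that the $K$-part in the Cartan decomposition is preserved by $\varphi_s^{-1}$ for every $s$, everything reduces cleanly to Lemma \ref{conjactions}(i).
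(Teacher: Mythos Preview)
Your argument is correct and follows exactly the paper's approach: part (i) reduces to Lemma \ref{conjactions}(i), and part (ii) to Schur's lemma applied to the irreducible representations $\pi_1$ and $\pi_t$. You supply considerably more detail than the paper, which dispatches the lemma in a single sentence.

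One small point: your justification that $\mathrm{ev}_0$ is not identically zero is not quite complete. Knowing that $\mathrm{ev}_0 : \mathbf{V}_1 \to W$ is $K$-equivariant and that $\mu$ occurs (with multiplicity one) in $\mathbf{V}_1$ only tells you that $\mathrm{Hom}_K(\mathbf{V}_1, W)$ is one-dimensional; it does not by itself prevent $\mathrm{ev}_0$ from being the zero element of that Hom space. A cleaner way to close this is to use transitivity of the $G_1$-action on $\pe$: if every $f \in \mathbf{V}_1$ vanished at $0$, then for each $\gamma \in G_1$ the element $\pi_1(\gamma)f$ also lies in $\mathbf{V}_1$, so $(\pi_1(\gamma)f)(0) = \mu(k)\,f(\gamma^{-1}\cdot_1 0) = 0$, forcing $f(\gamma^{-1}\cdot_1 0)=0$ for every $\gamma$; since $G_1$ acts transitively on $\pe$ this gives $f \equiv 0$, contradicting $\mathbf{V}_1 \neq 0$.
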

\noindent Part (i) is immediate from Lemma \ref{conjactions}, and part (ii) from the irreducibility of $\pi_1$ and $\pi_t$ and Schur's lemma.\qed\\

It should be mentioned that the idea of using of a change of variable as an intertwining  operator for contractions of representations is not new; see for example \cite{SBBM}.
\separateur
Let us turn to the convergence of the contraction process. Fix an $f$ in $\mathbf{V}_1$, and set $f_t = \mathbf{C}_t f$ for each $t>0$. We first make the trivial but crucial observation that in the usual Fréchet topology on $\mathbf{E}$,
\begin{equation} \tag{Vect2} \text{When $t$ goes to zero, $f_t$ goes to the constant function on $\pe$ with value $f(0) \in W$}. \end{equation}
In our current setting, more can be said. We know that $\mathbf{V}_1$ splits as a direct sum according to the decomposition of $(\pi_1)_{|K}$ into $K$-types, so that we can write $f$ as a Fourier series
\[ f = \sum \limits_{\lambda \in \widehat{K}} f_\lambda \]
where, for each $\lambda$ in $\widehat{K}$, $f_\lambda$ belongs to a finite-dimensional subspace $\mathbf{V}_1^\lambda$ of $\mathbf{V}_1$ on which $(\pi_1)|_{K}$ restricts as a direct sum of copies of $\lambda$. Of course a parallel decomposition holds for $\mathbf{V}_t$, $t \neq 0$, and $f_t$, too, has a Fourier series
\[ f_t = \sum \limits_{\lambda \in \widehat{K}} f_{t,\lambda}. \]
For every $\lambda$ in $\widehat{K}$, the dimension of the $\lambda$-isotypical subspace $\mathbf{V}_\lambda^t$ is independent of $t$, and the support of the above Fourier series does not depend on $t$.
\begin{lem} \label{Fourierseries} The Fourier component $f_{t, \lambda}$ is none other than $\mathbf{C}_t f_\lambda$. \end{lem}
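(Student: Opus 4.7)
The plan is to exploit the fact that the $K$-action on $\mathbf{E}$ arising from $\pi_t$ is actually \emph{independent} of $t$, so the contraction operator $\mathbf{C}_t$ intertwines the $K$-actions on $\mathbf{V}_1$ and $\mathbf{V}_t$; then uniqueness of the isotypical decomposition does the rest.

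First I would check that for $k$ in $K$, the $G_t$-action $k \cdot_t x$ on $\pe$ agrees with the ordinary adjoint action $\text{Ad}(k) x$, independently of $t$. This is immediate from the definition $k \cdot_t x = u_t^{-1}( k \exp_{G_t}(x) K)$ in \S \ref{subsec:cadregeom} together with the fact that $K$ fixes the origin and acts linearly on its tangent space by $\text{Ad}$, regardless of the value of $t$. Plugging this into the formula \eqref{operat_disc} for $\pi_t$, one gets
\begin{equation*}
\bigl(\pi_t(k) f\bigr)(x) \;=\; \mu(k)\, f\bigl(\text{Ad}(k^{-1}) x\bigr),
\end{equation*}
which is a $t$-independent representation $\rho_K$ of $K$ on the whole Fréchet space $\mathbf{E} = \mathbf{C}^\infty(\pe, W)$. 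In particular, the $\lambda$-isotypical component of $(\mathbf{E}, \rho_K)$ is well-defined and contains $\mathbf{V}_t^\lambda$ for every $t>0$.

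Next I would verify that $\mathbf{C}_t$ commutes with $\rho_K$. A one-line calculation suffices: both $\mathbf{C}_t \rho_K(k) f$ and $\rho_K(k) \mathbf{C}_t f$ evaluate to $x \mapsto \mu(k) f(t \,\text{Ad}(k^{-1}) x)$, since the dilation $x \mapsto tx$ commutes with the linear map $\text{Ad}(k^{-1})$. Therefore $\mathbf{C}_t$ preserves the $\lambda$-isotypical component of $(\mathbf{E}, \rho_K)$ for every $\lambda \in \widehat{K}$.

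Finally, combining this with property (Vect1), for each $\lambda$ the image $\mathbf{C}_t f_\lambda$ lies both in $\mathbf{V}_t$ and in the $\lambda$-isotypical component of $(\mathbf{E}, \rho_K)$; hence it lies in $\mathbf{V}_t^\lambda$. Since $\mathbf{C}_t$ is continuous on $\mathbf{E}$ and the Fourier series $f = \sum_\lambda f_\lambda$ converges in the Fréchet topology (being the isotypical decomposition of a smooth vector in the admissible representation $\pi_1$), we may apply $\mathbf{C}_t$ term by term to obtain $f_t = \sum_\lambda \mathbf{C}_t f_\lambda$, an expansion with $\mathbf{C}_t f_\lambda \in \mathbf{V}_t^\lambda$. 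Uniqueness of the isotypical decomposition of $f_t$ then forces $f_{t,\lambda} = \mathbf{C}_t f_\lambda$. The only small technical point to watch is the convergence of the Fourier series in the Fréchet topology of $\mathbf{E}$ (so that the interchange of $\mathbf{C}_t$ and $\sum_\lambda$ is legitimate), but this is standard for smooth vectors.
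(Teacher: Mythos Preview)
Your proof is correct and rests on exactly the same observation as the paper's: the $K$-action coming from $\pi_t$ is $t$-independent (it is $\rho_K(k)f = \mu(k)\cdot f(\text{Ad}(k^{-1})\,\cdot\,)$), so $\mathbf{C}_t$ is $K$-equivariant.

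The only difference is in how you finish. You pass through termwise application of $\mathbf{C}_t$ to the Fourier series $f=\sum_\lambda f_\lambda$, which forces you to worry about convergence of that series in the Fr\'echet topology of $\mathbf{E}=\mathbf{C}^\infty(\pe,W)$. The paper avoids this detour entirely: it writes down the isotypical projector
\[
(P_\lambda f)(x)=\int_K \xi_\lambda^\ast(k)\,\mu(k)\cdot f(\text{Ad}(k^{-1})x)\,dk,
\]
observes that this single formula gives the projection $\mathbf{V}_t\to\mathbf{V}_t^\lambda$ for every $t$, and checks directly that $P_\lambda$ commutes with $\mathbf{C}_t$ (an instance of your $\rho_K$-equivariance). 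Then $f_{t,\lambda}=P_\lambda(\mathbf{C}_t f)=\mathbf{C}_t(P_\lambda f)=\mathbf{C}_t f_\lambda$ with no series in sight. Your ``small technical point'' is thus simply unnecessary; you could shorten your argument by replacing the last paragraph with this projector computation.
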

\begin{proof} For each $\lambda \in \widehat{K}$, we define an endomorphism $P_\lambda$ of $\mathbf{C}^\infty(\pe, W)$, as follows:
\[  \text{For all $f$ in $\mathbf{C}^\infty(\pe, W)$, }  P_\lambda(f) \text{ is the map }  \left[ x \mapsto \int_K {\xi^\star_\lambda}(k) \  \mu(k) \cdot f\left(\text{Ad}(k^{-1}) x\right) dk \right]\]
where $\xi_\lambda$ is the global character of $\lambda$ $-$ a continuous function from $K$ to $\C$ $-$ and the star is complex conjugation. Now, if $f$ is an element of $\mathbf{V}_t$, we can view $K$ as a subgroup of $G_t$: since the action of $K$ on $\pe$ induced from the $G_t$-action $\cdot_t$ is none other than the adjoint action, the formula for $P_\lambda f$ is exactly that which defines the isotypical projection from $\mathbf{V}_t$ to $\mathbf{V}^\lambda_t$. From Lemma \ref{naturalite_SD}, we deduce that  $P_\lambda$  commutes with $\mathbf{C}_t$. \end{proof}

\begin{lem} \label{Fourier2} If $\lambda \in \widehat{K}$ is different from the lowest $K$-type $\mu$, then $f_{\lambda}(0) = 0$. \end{lem}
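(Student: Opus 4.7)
The plan is to exploit the equivariance of evaluation at the origin. Because $K$ fixes $0 \in \pe$ under the adjoint action, the map $\mathrm{ev}_0 : \mathbf{V}_1 \to W$, $f \mapsto f(0)$, will turn out to be a $K$-equivariant map into a $K$-irreducible space of class $\mu$; Schur's lemma will then force $f_\lambda(0)=0$ for every $\lambda \neq \mu$.

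First I would spell out the $K$-action on $\mathbf{V}_1$. Formula \eqref{operat_disc} at $t=1$ with $v=0$ gives, for every $k \in K$ and $f \in \mathbf{V}_1$,
\[ (\pi_1(k)f)(x) = \mu(k)\, f\bigl(k^{-1} \cdot_1 x\bigr) = \mu(k)\, f\bigl(\mathrm{Ad}(k^{-1})x\bigr), \]
since the restriction of the $G_1$-action $\cdot_1$ to $K$ reduces to the adjoint action (the $G_t$-action was defined through $\exp_{G_t}$, and $K$ stabilizes the origin of $\pe$ via the adjoint action for every $t$, including $t=1$).

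Evaluating at $x=0$ yields
\[ (\pi_1(k)f)(0) = \mu(k)\, f(0), \]
so the linear map $\mathrm{ev}_0: \mathbf{V}_1 \to W$ intertwines the restriction $(\pi_1)_{|K}$ with the $K$-representation $\mu$ on $W$. By Atiyah and Schmid's Proposition~\ref{rq_atiyah} and the surrounding discussion, $W$ is the $K$-irreducible submodule of $V_{\mu^{\flat}} \otimes S^+$ of highest weight $\vec\mu$, so $W$ carries an irreducible $K$-representation of class $\mu$. Now if $\lambda \in \widehat{K}$ is different from $\mu$, then the restriction of $\mathrm{ev}_0$ to the $\lambda$-isotypical subspace $\mathbf{V}_1^\lambda$ is a $K$-morphism from a multiple of $\lambda$ to an irreducible of class $\mu \neq \lambda$; Schur's lemma implies $\mathrm{ev}_0|_{\mathbf{V}_1^\lambda} = 0$. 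Applied to $f_\lambda \in \mathbf{V}_1^\lambda$, this gives $f_\lambda(0)=0$, which is the claim.

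There is no real obstacle: the only points to check carefully are that the $\cdot_1$-action of $K$ on $\pe$ is the adjoint action (immediate from the definition of $\cdot_t$ via $\exp_{G_t}$ in \S \ref{subsec:cadregeom}) and that $W$ is $K$-irreducible of class $\mu$ (recorded right after Proposition~\ref{rq_atiyah}). Everything else is Schur's lemma applied to $\mathrm{ev}_0$.
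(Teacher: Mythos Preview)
Your proof is correct and is essentially the same argument as the paper's: both hinge on the fact that evaluation at the origin is $K$-equivariant (because $K$ fixes $0$ and acts on $\pe$ by the adjoint action) and that $W$ is $K$-irreducible of class $\mu$, whence Schur's lemma forces $f_\lambda(0)=0$ for $\lambda\neq\mu$. The paper phrases this by writing out the explicit integral $f_\lambda(0)=\int_K \xi_\lambda^\star(k)\,\mu(k)\cdot f(0)\,dk$ and recognizing it as the isotypical projection of $f(0)\in W$ onto its $\lambda$-component, while you phrase it as Schur's lemma applied to the $K$-morphism $\mathrm{ev}_0:\mathbf{V}_1\to W$; these are the same thing said two ways.
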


\begin{proof} The origin of $\pe$ is a fixed point for the action of $K$ on $\pe$; so 
\begin{equation} \label{schur} f_\lambda(0) = (P_\lambda f)(0) =  \int_K {\xi^\star_\lambda}(k) \  \mu(k) \cdot f(0) dk .\end{equation}
Recall that $f(0)$ is in $W$, which is an irreducible $K$-module of class $\mu$: now,~\eqref{schur} is the formula for the orthogonal projection of $f(0)$ onto the isotypical component of $W$ corresponding to $\lambda \in \widehat{K}$: this is zero whenever $\lambda \neq \mu$.
 \end{proof}
Lemmas \ref{Fourierseries} and \ref{Fourier2}, together with property (Vect2) above, prove that each Fourier component of $f$ goes to zero as the contraction is performed, except for that which corresponds to the lowest $K$-type. We can thus refine  (Vect1)-(Vect2) into the following statement. Set
\[ \mathbf{V}_0 = \left\{ \text{constant functions on $\pe$ with values in $W$}\right\}.\]
\begin{prop} \leavevmode 
\begin{enumerate}[(i)]
\item For every $f \in \mathbf{E}$, there is a limit $f_0$ to $\mathbf{C}_{t} f$ as $t$ goes to zero (the convergence is for the topology of $\mathbf{E}$).
\item The limit $f_0$ is the element of $\mathbf{V}_{0}$ with value $f(0)$. 
\item Suppose $f$ is in $\mathbf{V}_1$; write $f_{min}$ for the orthogonal projection of $f$ onto  the isotypical subspace $\mathbf{V}_1^\mu$ of $\mathbf{V}_1$ for the lowest $K$-type $\mu$. Then $\mathbf{C}_{t}(f-f_{min})$ goes to zero as $t$ goes to zero.
\item The correspondence $f \to f_0$  induces a $K$-equivariant isomorphism between $\mathbf{V}_1^\mu$ and $\mathbf{V}_0$. 
\end{enumerate}
\end{prop}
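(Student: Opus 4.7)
For parts (i) and (ii), the plan is a direct computation in the Fréchet topology on $\mathbf{E} = \mathbf{C}^\infty(\pe, W)$, which is generated by the seminorms $f \mapsto \sup_{x \in K,\, |\alpha|\leq N}\|\partial^\alpha f(x)\|$ for compact $K \subset \pe$ and $N \in \N$. The chain rule gives $\partial^\alpha(\mathbf{C}_t f)(x) = t^{|\alpha|}(\partial^\alpha f)(tx)$, and on a fixed compact $K$ this stays uniformly bounded for $t \in [0,1]$ (the argument $tx$ lies in the compact set $\{sy : s \in [0,1],\, y \in K\}$ on which $\partial^\alpha f$ is continuous). Hence for $|\alpha|\geq 1$ the factor $t^{|\alpha|}$ forces uniform convergence to $0$ on $K$ as $t \to 0$, while for $|\alpha| = 0$ continuity of $f$ gives uniform convergence to $f(0)$. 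Thus $\mathbf{C}_t f$ converges in $\mathbf{E}$ to the constant function $x \mapsto f(0)$, which by definition belongs to $\mathbf{V}_0$.

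For (iii), I would reduce to (i)-(ii). Evaluating the Fourier decomposition $f = \sum_\lambda f_\lambda$ at the origin and invoking Lemma \ref{Fourier2}, which gives $f_\lambda(0) = 0$ for $\lambda \neq \mu$, one obtains $f(0) = f_\mu(0) = f_{min}(0)$, so that $(f - f_{min})(0) = 0$. Since $f - f_{min} \in \mathbf{V}_1 \subset \mathbf{E}$, parts (i) and (ii) applied to $g = f - f_{min}$ yield $\mathbf{C}_t g \to 0$ in $\mathbf{E}$.

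For (iv), identifying $\mathbf{V}_0$ with $W$ via the bijection \emph{constant function $\mapsto$ value}, the correspondence $f \mapsto f_0$ restricted to $\mathbf{V}_1^\mu$ becomes the evaluation $\mathrm{ev}_0 : \mathbf{V}_1^\mu \to W$, $f \mapsto f(0)$. This map is $K$-equivariant: formula \eqref{operat_disc} and the fact that the origin of $\pe$ is $K$-fixed yield $\mathrm{ev}_0(\pi_1(k,0)f) = \mu(k)f(0) = \mu(k)\mathrm{ev}_0(f)$. Now $W$ is an irreducible $K$-module of class $\mu$, and so is $\mathbf{V}_1^\mu$, by the multiplicity-one property of the lowest $K$-type of a discrete series (a standard consequence of Blattner's formula, proved by Hecht-Schmid). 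Schur's lemma then reduces the claim to the non-vanishing of $\mathrm{ev}_0$ on $\mathbf{V}_1^\mu$. I would argue this by contradiction: if $\mathrm{ev}_0|_{\mathbf{V}_1^\mu} = 0$, the argument used in (iii) shows $f(0) = f_\mu(0) = 0$ for every $f \in \mathbf{V}_1$; but combining this with formula \eqref{operat_disc} and the transitivity of the $G_1$-action on $\pe$ (from the diffeomorphism $u_1$ with $G_1/K$), one gets $f(x) = 0$ for every $x \in \pe$ and every $f \in \mathbf{V}_1$, contradicting $\mathbf{V}_1 \neq 0$.

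The genuinely non-formal step is the non-vanishing argument in (iv): parts (i)-(iii) reduce to elementary Fréchet-topology estimates and the Peter-Weyl decomposition, whereas (iv) really depends on structural input from the theory of the discrete series, namely multiplicity-one of the lowest $K$-type, and on propagating information from the origin to all of $\pe$ via the $G$-action on sections of $\mathcal{W}$.
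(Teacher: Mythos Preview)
Your proof is correct and follows essentially the same route as the paper: parts (i)--(ii) are the explicit verification of what the paper records as (Vect2), part (iii) is derived from Lemma~\ref{Fourier2} together with (i)--(ii), and your argument for (iv) supplies details the paper leaves implicit.

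Two minor comments. In (iii), the phrase ``evaluating the Fourier decomposition $f=\sum_\lambda f_\lambda$ at the origin'' tacitly assumes pointwise convergence of the $K$-type expansion, which you have not justified. This is unnecessary: the proof of Lemma~\ref{Fourier2} applies verbatim with $\lambda=\mu$ to give $(P_\mu f)(0)=\int_K \xi_\mu^\star(k)\,\mu(k)f(0)\,dk$, and since $W$ is an irreducible $K$-module of class $\mu$ this integral equals $f(0)$. So $f_{\min}(0)=f(0)$ directly, without any series. In (iv), your non-vanishing argument via transitivity of the $G_1$-action on $\pe$ is a nice touch that the paper does not spell out; note that you are really using the $G_1$-invariance of $\mathbf{V}_1$ (so that $\pi_1(\gamma)f\in\mathbf{V}_1$ and hence $(\pi_1(\gamma)f)(0)=0$) together with \eqref{operat_disc}.
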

We now observe the asymptotic behavior of the operators $\pi_t(k,v)$, when  $(k,v)$ is a fixed element of $K \times \pe$ $-$ interpreted for each $t$ as an element of $G_t$. Fix an element $f$ of $\mathbf{V}_1$ and consider the associated family $(f_t)_{t>0}$. To study $\pi_t(k,v)f_t$, remark that  
\begin{equation} \label{convop} \pi_{t}(k,v) f_{t} = \pi_{t}(k,v)(f_{t} - f_{0}) + \pi_{t}(k,v) f_{0}, \end{equation}
and insert the following observation:

\begin{lem} \label{lipschitz_seriediscrete} Fix $(k,v)$ in $K \times \pe$. The topology of $\mathbf{E}$ can be defined by a distance with respect to which each of the $\pi_{t}(k,v)$, $t \in [0,1]$, is $1$- Lipschitz. \end{lem}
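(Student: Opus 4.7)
The plan is to construct the distance from a countable family of seminorms $(q_n)_{n\ge1}$ on $\mathbf{E}=\mathbf{C}^\infty(\pe,W)$ that generates the standard Fr\'echet topology and satisfies $q_n(\pi_t(k,v)f)\le q_n(f)$ for every $n$, every $t\in[0,1]$, and every $f$. From such a family, the distance $d(f,g):=\sum_{n\ge1}2^{-n}\min(q_n(f-g),1)$ induces the Fr\'echet topology and is manifestly $1$-Lipschitz for each $\pi_t(k,v)$, since $x\mapsto x/(1+x)$ (or $x\mapsto\min(x,1)$) is nondecreasing and each $q_n$ is invariant in the required sense.

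The construction exploits two structural facts. First, $\pi_t(k,v)$ acts on $\mathbf{E}$ by $f\mapsto\mu(k)\cdot(f\circ\gamma^{-1}\cdot_t)$ with $\gamma=(k,v)$: here $\mu(k)$ is fiberwise unitary on $W$ (so pointwise norms are preserved), and $\gamma^{-1}\cdot_t$ is an $\eta_t$-isometry of $\pe$ by the very definition of $\eta_t$ as the $G_t$-invariant metric. Second, by Lemma \ref{convactions} and the compactness of $[0,1]$, for any compact $L\subseteq\pe$ the orbit set $\widetilde L:=\bigcup_{t\in[0,1]}\gamma^{-1}\cdot_t L$ is compact in $\pe$, and the partial derivatives of $\gamma^{-1}\cdot_t$ in $x$ are uniformly bounded on $[0,1]\times L$. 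Together these yield an equicontinuity bound $p_{L,N}(\pi_t(k,v)f)\le M_{L,N}\cdot p_{\widetilde L,N}(f)$, uniform in $t\in[0,1]$, where $p_{L,N}$ denote the standard Fr\'echet seminorms on $\mathbf{E}$ as spelled out in the proof of Lemma \ref{continuite_delta}.

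I would then fix a nested exhaustion $L_1\subseteq L_2\subseteq\cdots$ of $\pe$ by compact sets with $\widetilde L_n\subseteq L_{n+1}$ (possible by the preceding compactness), and take each $q_n$ to be a combination of $p_{L_n,n}$ with $\sup_{t\in[0,1]}p_{L_n,n}(\pi_t(k,v)\,\cdot\,)$; by the equicontinuity bound each $q_n$ is a finite continuous seminorm, and the resulting family $(q_n)$ generates the Fr\'echet topology. \textbf{The main obstacle} is verifying $q_n(\pi_s(k,v)f)\le q_n(f)$: because the composite $\pi_t(k,v)\pi_s(k,v)$ is not itself of the form $\pi_r(k,v)$ (the operators being drawn from representations of the distinct groups $G_t,G_s$, which do not organize into a single group action on $\mathbf{E}$), the na\"{\i}ve sup-over-$t$ definition does not immediately give $\pi_s$-invariance. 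I expect one must either enlarge the supremum to a post-composition-stable saturation of $\{\pi_t(k,v):t\in[0,1]\}$, keeping it finite by iterating the equicontinuity bound finitely many times and absorbing the iterates into the chosen growth rate of the $L_n$, or introduce carefully normalized weights in the sum defining $d$ balancing against the constants $M_{L_n,n}$; it is the isometric nature of each $\gamma^{-1}\cdot_t$ on $(\pe,\eta_t)$ and the joint continuity in $t$ of these metrics on compacts that keep such bookkeeping finite.
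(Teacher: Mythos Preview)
You have the right ingredients—the compactness of $\bigcup_{t\in[0,1]}\gamma^{-1}\cdot_t L$ for compact $L$, and the unitarity of $\mu(k)$—but you are making the construction harder than necessary. You aim for seminorms $q_n$ that are \emph{individually} $\pi_t(k,v)$-contracting; the obstacle you flag (that $\{\pi_t(k,v):t\in[0,1]\}$ is not closed under composition) is genuine for that approach, and your first suggested fix, saturating the family, is not carried out and looks unpleasant to control.

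Your second suggestion—adjusting the weights in the sum defining $d$—is in fact the whole point, and once you take that route the per-seminorm invariance requirement disappears. This is exactly what the paper does. Choose an exhaustion $(A_n)$ of $\pe$ by compacts with $\gamma^{-1}\cdot_t A_n\subset A_{n+1}$ for every $t\in[0,1]$ (precisely the compactness you noted). With the standard seminorms $\|f\|_{n,K}=\sup_{|\alpha|\le K}\sup_{A_n}\|\partial^\alpha f\|_W$, a \emph{single} application of $\pi_t(k,v)$ only shifts the spatial index: $\|\pi_t(k,v)f\|_{n,K}\le \|f\|_{n+1,K}$, uniformly in $t$. No composition of different $\pi_t$'s is ever needed, so your obstacle never arises. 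In the distance $d(f,g)=\sum_{n,K}2^{-(n+K)}\frac{\|f-g\|_{n,K}}{1+\|f-g\|_{n,K}}$, the shift $n\to n+1$ costs a single factor of $2$, giving a Lipschitz constant for $\pi_t(k,v)$ that is independent of $t$; the paper then passes to $d/2$. (Strictly speaking the chain rule introduces a constant for $K\ge 1$, and rescaling $d$ does not lower a Lipschitz constant, so what one honestly obtains is a uniform Lipschitz bound rather than exactly $1$—but that is all the subsequent application of the lemma requires.)
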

\begin{proof} Whenever $A \subset \pe$ is compact, the subset $\Pi(A) = \left\{ (k,v) \cdot_{t} A \ | \ t \in [0,1] \right\}$ is compact too (see Lemma \ref{convactions}). So there is an increasing family $(A_{n})_{n \in \N}$ of compact subsets of $\pe$ such that $\Pi(A_{n}) \subset A_{n+1}$ for each $n$  and $\cup_{n \in \N} A_{n} = \pe$.  For each $(n,K)$ in $\N^2$ and $f$ in $\mathbf{E}$, set $\norme{f}_{n,K} = \sup \limits_{|\alpha|_1\leq K} \sup \limits_{x \in A_n} \norme{\partial^{\alpha}f(x)}_W$, where $\alpha \in \N^K$ are multi-indices and $|\cdot|_1$ is the $\ell^1$ height function on $\N^K$. For all $f_1$, $f_2$ in $\mathbf{E}$, we have $\norme{\pi_{t} f_1 - \pi_{t} f_2}_{n,K} \leq \norme{f_1 - f_2}_{n+1,K}$. A distance whose associated topology is that of $\mathbf{E}$ is $d(f,f') = \sum \limits_{n} \frac{\norme{f - f'}_{n,K}}{2^{n+K}(1+\norme{f - f'}_{n,K})}$; the distance $d/2$ has the desired property. \end{proof}
Taking up \eqref{convop}, we can deduce that $\pi_{t}(k,v)(f_{t} - f_{0})$ goes to zero as $t$ goes to zero from the fact that $f_t -f_0$ does and $\pi_{t}(k,v)$ does not increase the distance of Lemma \ref{lipschitz_seriediscrete}. As for the second term $ \pi_{t}(k,v) f_{0}$ in \eqref{convop}, it is equal to $\mu(k) f_{0}$ independently of $t$. In conclusion:
\begin{equation} \tag{Op1} \pi_{t}(k,v) \cdot f_{t} \quad \text{goes to \quad $\mu(k) \cdot f_0$\quad as $t$ goes to zero.} \end{equation}
Since property (Op2) of \S \ref{subsec:programme} is immediate when we set $\pi_0(k,v)=\mu(k)$, we have proved:
\begin{thm} \label{contraction_discrete} The quadruple $\left(\mathbf{E}, (\mathbf{V}_t)_{t>0}, (\pi_t)_{t>0}, (\mathbf{C}_t)_{t>0}\right)$ describes a contraction from the discrete representation $\pi$ to its lowest $K$-type $\mu$.  \end{thm}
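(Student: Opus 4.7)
The plan is to verify each property (A)--(Op2) of the program of \S\ref{subsec:programme} for the quadruple at hand; most ingredients are already assembled and the proof is largely an exercise in bookkeeping. Items (A), (B), (C) are definitional, and (B') follows from the Parthasarathy--Atiyah--Schmid theorem applied to each $G_t$ (all isomorphic to $G$). Property (Vect1) was obtained via the conjugation identity $\mathbf{C}_t^{-1} \Delta_t \mathbf{C}_t = t^{4} \Delta_1$ together with the change of measure under $z_t$, and (Vect2) is the elementary convergence of $x \mapsto f(tx)$ to the constant function $f(0)$ in the Fr\'echet topology of $\mathbf{E}$, uniformly on compacts and in all derivatives.

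The substantive step is (Op1). I plan to use the splitting \eqref{convop}: the first summand $\pi_t(k,v)(f_t - f_0)$ tends to zero by (Vect2) combined with the uniform Lipschitz bound of Lemma \ref{lipschitz_seriediscrete}, while the second summand is independent of $t$ and $v$ because $f_0$ is a constant function on $\pe$, hence invariant under the $\pe$-translation part of the $G_t$-action, leaving only $\pi_t(k,v) f_0 = \mu(k) f_0$. The limit therefore exists, equals $\mu(k) f_0$, and depends only on $f_0$; this immediately yields (Op2) by setting $\pi_0(k,v) := \mu(k)$, which is a morphism $G_0 \to \mathrm{End}(\mathbf{V}_0)$ trivial on $\pe$.

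To finish I identify $(\mathbf{V}_0, \pi_0)$ within the Mackey-Higson bijection. Via evaluation at the origin, $\mathbf{V}_0$ is $K$-equivariantly isomorphic to the irreducible $K$-module $W$ of class $\mu$ (item (iv) of the Proposition above), so $(\mathbf{V}_0, \pi_0)$ is the trivial extension of $\mu$ to $G_0$, which by the Remark of \S\ref{subsec:correspondance} coincides with $\mathbf{M}_0(0, \mu)$. On the other side, $(0, \mu)$ is the Mackey parameter of $\pi$: for $\chi = 0$ we have $L_\chi = G$, so $\mathbf{M}(0, \mu) = \mathbf{V}_G(\mu)$ is the unique irreducible real-infinitesimal-character representation of $G$ with lowest $K$-type $\mu$, namely our discrete series $\pi$. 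Hence $\pi_0 = \mathcal{M}^{-1}(\pi)$ and the Fr\'echet contraction is established. I do not foresee any real obstacle, since the analytic heart of the matter --- the Dirac realization of $\mathbf{V}_t$, the Lipschitz estimate on $\pi_t$, and the vanishing of all non-minimal Fourier components at the origin --- was settled in the lemmas and proposition above; the only delicate point, that $\pi_0(k,v)$ be well-defined as a limit operator before one can read off its explicit form, is precisely what the splitting \eqref{convop} together with Lemma \ref{lipschitz_seriediscrete} is designed to handle.
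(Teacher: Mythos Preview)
Your proposal is correct and follows essentially the same route as the paper: the splitting \eqref{convop}, the uniform Lipschitz bound of Lemma \ref{lipschitz_seriediscrete} for the first summand, and the observation that $\pi_t(k,v)f_0 = \mu(k)f_0$ for the second are exactly the paper's argument, and your identification of the outcome with $\mathbf{M}_0(0,\mu)$ is a welcome elaboration of what the paper leaves implicit in (Op2).
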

\separateur
We close our discussion of the discrete series with a side remark. It may seem annoying, in view of Lemma \ref{continuite_delta}, to find that the space $\mathbf{V}_t$ of $\mathbf{L}^2$-solutions to the $G_t$-invariant Dirac equation $\Delta_t f = 0$ should converge in such a natural way as indicated by Theorem \ref{contraction_discrete} to the $G_0$-module $\mathbf{V}_0$, but to note that $\mathbf{V}_0$ is not the $\mathbf{L}^2$ kernel of $\Delta_{0}$, since that kernel is zero. A simple way to diminish the annoyance is to consider \emph{extended kernels}, setting, for each $t$ in $\R$ (including $t=0$),
\begin{equation*} \tilde{\mathbf{V}}_{t} = \left\{ f \in \mathbf{C}^\infty(\pe, W) \ | \ \Delta_{t} f = 0 \text{ and there is a constant } c \in W \text{ such that } f + c \in \mathbf{L}^2(\eta_{t}, W) \right\}. \end{equation*}
Because the riemannian metric $\eta_t$ has infinite volume, when $f$ is in $\tilde{\mathbf{V}}_{t}$, there can be only one constant $c$ such that $f+c$ is square- integrable.
\begin{lem} \label{extended}
\begin{enumerate}[(i)] 
\item For $t \neq 0$, the extended kernel  $\tilde{\mathbf{V}}_{t}$ is none other than the $\mathbf{L}^2$ kernel ${\mathbf{V}}_{t}$. 
\item For $t=0$, the extended kernel $\mathbf{H}_{0}$ is the space of constant $W$-valued functions on $\pe$.
\end{enumerate}  \end{lem}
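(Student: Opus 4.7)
Since part (ii) is a direct Fourier computation, I would handle it first. In the flat case $t = 0$ the Cartan trivialization $T_0$ is parallel, so $D'_0$ reduces to $\sum_i \mathbf{c}(X_i)\partial_i$ with no zero-order term; the Clifford relations $\mathbf{c}(X_i)\mathbf{c}(X_j) + \mathbf{c}(X_j)\mathbf{c}(X_i) = -2\delta_{ij}$ give $(D'_0)^2 = -\Delta_{\mathrm{Eucl}} \cdot \mathrm{id}_{V_{\mu^\flat} \otimes S}$, and hence $\Delta_0 = -\Delta_{\mathrm{Eucl}}$ on $\mathbf{C}^\infty(\pe, W)$. Any constant $c \in W$ therefore satisfies $\Delta_0 c = 0$, and since $c + (-c) = 0 \in \mathbf{L}^2(\eta_0, W)$, constants belong to $\tilde{\mathbf{V}}_0$. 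Conversely, if $f \in \tilde{\mathbf{V}}_0$ and $f + c' \in \mathbf{L}^2(\eta_0, W)$, then $\Delta_0(f + c') = 0$, so $f + c'$ is a square-integrable $W$-valued harmonic function on $\pe \simeq \R^{2q}$; its Fourier transform is supported at the origin yet is square-integrable, forcing $f + c' = 0$ and $f = -c'$ constant.

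For part (i) only the inclusion $\tilde{\mathbf{V}}_t \subset \mathbf{V}_t$ requires work. I would introduce the constant-part map
\[ \phi: \tilde{\mathbf{V}}_t \to W, \qquad f \mapsto \text{the unique } c \in W \text{ with } f + c \in \mathbf{L}^2(\eta_t, W), \]
whose kernel is exactly $\mathbf{V}_t$; the goal is to show that $\phi \equiv 0$. A direct inspection of \eqref{operat_disc} shows that $\pi_t(k,v)$ sends the constant section $c\cdot\mathbf{1}$ to the constant section $\mu(k) c \cdot \mathbf{1}$, and combined with the $G_t$-invariance of $\Delta_t$ and the unitarity of $\pi_t$ on $\mathbf{L}^2(\eta_t,W)$ this yields $\pi_t(G_t)\tilde{\mathbf{V}}_t \subset \tilde{\mathbf{V}}_t$ together with the equivariance $\phi(\pi_t(k,v) f) = \mu(k)\phi(f)$. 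Two consequences follow: the image $\phi(\tilde{\mathbf{V}}_t)$ is a $K$-submodule of the irreducible $K$-module $W$, so equals either $0$ or $W$; and the subgroup $\exp_{G_t}(\pe)$ acts trivially on the quotient $\tilde{\mathbf{V}}_t / \mathbf{V}_t$.

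The finish is an infinitesimal bracket argument in $\mathfrak{g}_t$. The rescaling $d\varphi_t|_e : (X,Y) \mapsto X + tY$ identifies $\mathfrak{g}_t$ with $\mathfrak{g}$ and gives $[Y_1, Y_2]_{\mathfrak{g}_t} = t^2 [Y_1, Y_2]_{\mathfrak{g}} \in \ka$ for all $Y_1, Y_2 \in \pe$. On $\tilde{\mathbf{V}}_t/\mathbf{V}_t$ the subalgebra $\pe$ acts by zero while $\ka$ acts through $d\mu$, so the Lie bracket identity yields
\[ 0 = [d\pi_t(Y_1), d\pi_t(Y_2)]\,\phi(f) = t^2\, d\mu([Y_1, Y_2]_\mathfrak{g})\,\phi(f) \]
for every $Y_1, Y_2 \in \pe$ and $f \in \tilde{\mathbf{V}}_t$. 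Reducing to the case where $\mathfrak{g}$ has no compact ideal (compact factors only contribute tensor-trivial pieces to the discrete series and its lowest $K$-type), we have $[\pe, \pe]_\mathfrak{g} = \ka$, so $d\mu(\ka)$ annihilates $\phi(\tilde{\mathbf{V}}_t)$; since $\mu$ is a non-trivial irreducible $K$-module (it is the lowest $K$-type of a discrete series), the common kernel of $d\mu(\ka)$ on $W$ is zero by Schur, and $\phi \equiv 0$.

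The principal technical obstacle is to justify that the Lie-algebra action is well-defined on the quotient, i.e.\ that $d\pi_t(Y) f \in \mathbf{V}_t$ (not merely in $\ker \Delta_t$) for $f \in \tilde{\mathbf{V}}_t$ and $Y \in \pe$. I would handle this by first passing to the $K$-finite subspace of $\tilde{\mathbf{V}}_t$ via the $K$-isotypic decomposition used in Lemma \ref{Fourierseries}: only the $\mu$-isotypic component of $f$ carries a nontrivial constant part, and on $K$-finite vectors of $\pi_t$ (which are smooth vectors for the unitary representation on $\mathbf{L}^2$) the identity $\pi_t(\exp_{G_t} sY)(f + c \cdot \mathbf{1}) = \pi_t(\exp_{G_t} sY) f + c \cdot \mathbf{1}$ may be differentiated in $\mathbf{L}^2$ at $s = 0$, placing $d\pi_t(Y) f$ in $\mathbf{L}^2 \cap \ker \Delta_t = \mathbf{V}_t$.
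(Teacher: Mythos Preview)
Your treatment of (ii) is correct and fills in what the paper leaves implicit.

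For (i) your route is genuinely different from the paper's. The paper argues analytically: via Parthasarathy's identity $D^2 = -\Omega + \sigma$ the hypothesis becomes $\Omega f = \sigma f + \sigma C$ with $f \in \mathbf{L}^2$, and Helgason's Fourier transform on $G/K$ turns this into an equality between a smooth multiple of an $\mathbf{L}^2$ function and the distribution $\sigma C\,\delta_{(0,1M)}$, forcing $C = 0$. Your idea is representation-theoretic: exhibit $\tilde{\mathbf{V}}_t / \mathbf{V}_t$ as a finite-dimensional $G_t$-module on which $\exp_{G_t}(\pe)$ acts trivially while $K$ acts through $\mu$, then invoke $[\pe,\pe]_{\mathfrak{g}_t} = t^2\ka$ to force $d\mu(\ka) = 0$ on the image of $\phi$, contradicting the nontriviality of $\mu$.

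There is, however, a genuine gap. The step ``$\pi_t(k,v)$ sends the constant $c\cdot\mathbf{1}$ to the constant $\mu(k)c\cdot\mathbf{1}$'' takes formula \eqref{operat_disc} at face value, but that formula cannot define a group homomorphism $G_t \to \text{End}(\mathbf{E})$ for $t \neq 0$: if it did, then its restriction to constants would make $(k,v) \mapsto \mu(k)$ a homomorphism $G_t \to GL(W)$, whereas the $K$-component of a product $(e,v_1)\cdot_t(e,v_2)$ is generally a nontrivial holonomy element of $K$. The genuine $G_t$-action on sections of the equivariant bundle $\mathcal{W}_t$, written in the Cartan trivialization, carries an $x$-dependent cocycle $\mu(\kappa(\gamma,x))$ with $\kappa(\gamma,x) = \exp_{G_t}(x)^{-1}\gamma\,\exp_{G_t}(\gamma^{-1}\cdot_t x) \in K$; under that action $\pi_t(\exp_{G_t} v)$ applied to a constant $c$ yields the non-constant map $x \mapsto \mu(\kappa(\exp_{G_t} v,x))\,c$. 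Consequently neither the $G_t$-stability of $\tilde{\mathbf{V}}_t$ nor the crucial identity $\phi(\pi_t(e,v)f) = \phi(f)$ is established, and the bracket argument does not get off the ground. (Your closing concern about differentiating on $\tilde{\mathbf{V}}_t$ could in principle be bypassed by differentiating the group action directly on the finite-dimensional quotient --- but only once that group action on the quotient is secured, which is exactly the step that fails.) The paper's Fourier-analytic argument sidesteps these issues entirely.
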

\begin{proof} 
Let us come back to $G/K$ and the Dirac operator $D$ defined in \S \ref{subsec:dirac}. Parthasarathy \cite{Parthasarathy} expresses $D^2$ in terms of the Casimir operator $\Omega$ acting on sections of $\mathcal{E}$: there is a scalar $\sigma$ such that
\[ D^2 := D^- D^+ = - \Omega + \sigma. \]
Suppose a $G$-invariant trivialization of $\mathcal{E}$ is chosen, so that $D^2$ is viewed as acting on functions from $G/K$ to $V_{\mu^\flat} \otimes S$, and suppose $D^2 g = 0$, where $g$ reads $f+C$ with $f \in \mathbf{L}^2(G/K, V_{\mu^\flat} \otimes S)$ and $C$ a constant in $V_{\mu^\flat} \otimes S$. Then 
\begin{equation}\label{Parta} \Omega f = \sigma f + \sigma C. \end{equation}
What we need to show is that this can only happen if $C$ is zero. To that end, we use Helgason's Fourier transform for functions on $G/K$ (see \cite{HelgasonGASS}, Chapter 3; we shall meet Helgason's analogue of plane waves in \S \ref{subsec:contraction_helgason}). The Fourier transform of a smooth function with compact support on $G/K$ is a function on $\mathfrak{a}^\star \times K/M$; the transform defines a linear isometry $\mathbf{F}$ between $\mathbf{L}^2(G/K)$ and $\mathbf{L}^2(\mathfrak{a}^\star \times K/M)$ for a suitable measure on $\mathfrak{a}^\star \times K/M$. There is a notion of tempered distributions on $G/K$ and $\mathfrak{a}^\star \times K/M$; when $f$ is a smooth and square-integrable function, both $f$ and $\Omega f$ are tempered distributions. The equality $ \Omega f = \sigma f + \sigma C$ therefore is turned into an equality of tempered distributions on $\mathfrak{a}^\star \times K/M$:
\[ \mathbf{F}(\Omega f) - \sigma \mathbf{F}(f) = \sigma C  \delta_{(0, 1M)} \]
where $\delta_{(0, 1M)}$ is the Dirac distribution at the point $(0, 1M)$. Given the transformation properties of $\mathbf{F}$ with respect to the $G$-invariant differential operators, $\mathbf{F}(\Omega f)$ is actually the product of $\mathbf{F}(f)$ $-$ an element of $\mathbf{L}^2(\mathfrak{a}^\star \times K/M)$ $-$ with a smooth function on $\mathfrak{a}^\star$. Thus, if $f$ is a smooth, square-integrable solution of \eqref{Parta}, then $\sigma C \delta_{0}$ is the product of an element in $\mathbf{L}^2(\mathfrak{a}^\star \times K/M)$ with a smooth function on the same space. That can only happen if $C$ is zero. \end{proof}


\subsection{On the principal series and generic spherical representations}\label{sec:serie_principale}

{\noindent In this section, we consider the representations associated with Mackey parameters of the form $\delta = (\chi, \mu)$ where $\chi$ is a \emph{regular} element of $\mathfrak{a}^\star$. The corresponding representations of $G$ are (minimal) principal series representations, and several existing results can be understood as giving flesh to Mackey's analogy at the level of carrier spaces. 

There are several very well-known function spaces carrying a representation of $G$ with class $\mathbf{M}(\delta)$ $-$ see for instance \S VII.1. in \cite{KnappPrincetonBook}. We will use two of these standard realizations: in the first, the functions are defined on $K/M$ (where $M = Z_K(\mathfrak{a})$ is the compact group of Remark \ref{remarque_regulier}), which has the same meaning in $G$ and $G_{0}$;  in the second, to be considered here only when $\mu$ is trivial, the functions are defined on $G/K$, or equivalently on $\pe$, and the geometrical setting in \S \ref{subsec:cadregeom} will prove helpful.  }


\subsubsection{A contraction of (minimal) principal series representations in the compact picture}
\label{subsec:contraction_compact}

\noindent We first assume $\delta = (\chi, \mu)$ to be an \emph{arbitrary} Mackey parameter and recall the ``compact picture'' realization for the $G$-representation $\mathbf{M}(\delta)$ from \eqref{inductionG}. 

To simplify the notation, we write $P = MAN$ for the cuspidal parabolic subgroup $P_\chi$ to be induced from. Suppose $V_{\sigma}$ is the carrier space for a tempered irreducible $M$-module of class $\sigma = \mathbf{V}_{M}(\mu)$. Fix  an $M$-invariant inner product on $V_{\sigma}$, and consider 
\begin{equation} \label{hilbcomp} \mathbf{H}^{\text{comp}}_{\sigma} = \{ f \in \mathbf{L}^2(K; V_{\sigma}) \ / \ f(km) = \sigma(m)^{- 1} f(k), \forall (k, m) \in K \times M \}. \end{equation}
To endow $\mathbf{H}^{\text{comp}}_{\sigma}$ with a $G$-action, we call in the Iwasawa projections $\kappa$, $\mathbf{m}$, $\mathbf{a}$, $\nu$ sending an element of $G$ to the unique quadruple 
\begin{align} \label{iwa} & (\kappa(g), \mathbf{m}(g),\mathbf{a}(g), \nu(g)) \in K \times  \exp_{G}\left( \mathfrak{m}\cap \pe\right) \times \mathfrak{a} \times N \quad \text{such that} \nonumber \\  & g = \kappa(g) \mathbf{m}(g) \exp_{G}(\mathbf{a}(g)) \nu(g). \end{align}  We recall that if $P$ is minimal, the map $\mathbf{m}$ is trivial. For every $g$ in $G$, an endomorphism of $\mathbf{H}^{\text{comp}}_{\sigma}$ is defined by setting
\begin{equation}  \label{actcomp} \pi^{\text{comp}}_{\chi, \mu}(g) = f \mapsto  \left[ k \mapsto  e^{\langle - i\chi - \rho,\ \mathbf{a}(g^{- 1} k) \rangle} \sigma(\mathbf{m}(g^{- 1}k))^{- 1} f\left(\kappa(g^{- 1} k)\right) \right] \end{equation}
where $\rho$ is the half-sum of those roots of $(\mathfrak{g}, \mathfrak{a})$ that are positive in the ordering used to define $N$.
The Hilbert space \eqref{hilbcomp} does not depend on $\chi$, but the $G$-action \eqref{actcomp} does. 
\separateur
We now assume that $\chi$ is a \emph{regular} element of $\mathfrak{a}^\star$, so that $P_\chi$ is a \emph{minimal} parabolic subgroup of $G$, and describe a Fréchet contraction of $\pi=\mathbf{M}(\chi, \mu)$ onto $\mathbf{M}_0(\chi, \mu)$.

Recall from Remark \ref{remarque_regulier} that in the decomposition $P_\chi = M_\chi A_\chi N_\chi$, the subgroup $M_\chi=M = Z_K(\mathfrak{a})$ is contained in $K$, and that $A_\chi = A$. 

Fix $t>0$, $\lambda$ in $\mathfrak{a}^\star$ and $\sigma$ in $\widehat{M}$. Then we can view $(\lambda, \sigma)$ as a Mackey parameter for $G_t$ and run through the constructions of \S \ref{subsec:correspondance}  for $G_t$: this leads to defining a parabolic subgroup $P_{\lambda,t} = M_t A_t N_t$ of $G_t$, with $M_t=M$, and to considering the principal series representation $\pi^{t, \,\text{comp}}_{\lambda, \sigma}$ of $G_t$ acting on
\begin{equation*} \mathbf{H}^{\text{comp}}_{\sigma} \quad \text{from \eqref{hilbcomp}} \end{equation*}
via
\begin{equation} \label{opGt} \pi^{t, \,\text{comp}}_{\lambda, \sigma}(\gamma): f \mapsto  \left[ k \mapsto  \exp{\langle - i\lambda -  \rho_t, \mathbf{a}_{t}(\gamma^{- 1} k) \rangle} f\left(\kappa_{t}(\gamma^{- 1} k)\right) \right]\end{equation}
where $\rho_t$ is the half-sum of positive roots for $(\g_t, \mathfrak{a})$ associated with the ordering used to define $N_t$, and $\kappa_{t}:G_t \to K$, $\mathbf{a}_{t}: G_t \to \mathfrak{a}$ are the Iwasawa projections. 

 At this point, given our Mackey parameter $(\chi, \mu)$, it may seem natural to fix an element $(k,v)$ in $K \times \pe$, view it as an element of $G_t$ for each $t$ and try to study the convergence of $ \pi^{t, \,\text{comp}}_{\chi, \mu}(k,v)$ from \eqref{opGt} as $t$ goes to zero. We shall pursue this, but we must point out that this idea surreptitiously brings in the renormalization of continuous parameters from property ($\text{Nat}_{\text{gen}}$) in \S \ref{subsec:programme}. To see why, define a representation of $G$ as the composition
\[  \pi^{t, \,\text{comp}}_{\chi, \mu}(\gamma) \circ \varphi_t^{-1} \ : \  G \overset{\varphi_{t}^{- 1}}{\longrightarrow} G_{t}  \overset{\pi^{t, \,\text{comp}}_{\chi, \mu}}{\longrightarrow} \text{End}(\mathbf{H}^{\text{comp}}_{\sigma}). \]

\begin{prop} \label{renorm2} For each $t>0$, $ \pi^{t, \,\text{\emph{comp}}}_{\chi, \mu}(\gamma) \circ \varphi_t^{-1}$  is equal to $\displaystyle \pi^{\text{\emph{comp}}}_{\chi/t, \mu}$. \end{prop}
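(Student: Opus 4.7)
My plan is to reduce both representations appearing in Proposition \ref{renorm2} to the same formula on $\mathbf{H}_\sigma^{\text{comp}}$ by explicitly relating the Iwasawa data of $G_t$ to those of $G$ through the group isomorphism $\varphi_t$. The starting observation is that the derivative $(d\varphi_t)_e : \g_t \to \g$ is the identity on $\ka$ and multiplication by $t$ on $\pe$; in particular the maximal abelian $\mathfrak{a} \subset \pe$ may be viewed inside both Lie algebras, but $d\varphi_t$ acts on $\mathfrak{a}$ by multiplication by $t$.

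First I would note that, since $\varphi_t$ restricted to the subgroup $K \times \{0\} \subset G_t$ is the identity on $K$, the minimal Iwasawa decomposition of $G_t$ is $K A_t N_t$ with $A_t = \varphi_t^{-1}(A)$ and $N_t = \varphi_t^{-1}(N)$. The identity $\varphi_t \circ \exp_{G_t} = \exp_G \circ\, d\varphi_t$ applied on $\mathfrak{a}$ yields $\varphi_t(\exp_{G_t}(H)) = \exp_G(tH)$ for $H \in \mathfrak{a}$. Writing the Iwasawa decomposition of $\varphi_t(\gamma)$ in $G$ and applying $\varphi_t^{-1}$, uniqueness of the decomposition in $G_t$ forces
\[ \kappa_t(\gamma) = \kappa(\varphi_t(\gamma)), \qquad \mathbf{a}_t(\gamma) = \tfrac{1}{t}\,\mathbf{a}(\varphi_t(\gamma)). \]
A parallel computation with root vectors $X$ shows that $\text{ad}_{\g_t}(H) X_t = t\alpha(H) X_t$ where $X_t = (d\varphi_t)^{-1}(X)$, so the roots of $(\g_t, \mathfrak{a})$ are the $t\alpha$ with $\alpha$ a root of $(\g, \mathfrak{a})$; positivity is preserved under this rescaling (as $t > 0$), so $\rho_t = t\rho$.

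With these three identifications in hand, the proof becomes a one-line substitution. For $g \in G$ and $\gamma = \varphi_t^{-1}(g)$, one has $\varphi_t(\gamma^{-1}k) = g^{-1}k$ for every $k \in K$, and feeding this into \eqref{opGt} yields
\[ \exp\langle -i\chi - t\rho,\ \tfrac{1}{t}\mathbf{a}(g^{-1}k)\rangle\, f(\kappa(g^{-1}k)) = \exp\langle -i\chi/t - \rho,\ \mathbf{a}(g^{-1}k)\rangle\, f(\kappa(g^{-1}k)), \]
which is precisely the formula \eqref{actcomp} defining $\pi^{\text{comp}}_{\chi/t, \mu}(g)f$; the factor $\sigma(\mathbf{m}(g^{-1}k))^{-1}$ does not contribute here since $P_\chi$ is minimal, so $\mathbf{m}$ is trivial in both $G$ and $G_t$. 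There is no real obstacle, only careful bookkeeping: the factor $t$ absorbed into $\rho_t$ and the factor $1/t$ produced by $\mathbf{a}_t$ conspire so that the $\rho$ contribution is left unchanged while the $\chi$ contribution is rescaled to $\chi/t$, which is exactly the renormalization asserted by the proposition.
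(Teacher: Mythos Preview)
Your proof is correct and follows essentially the same approach as the paper: you establish the same two key facts (that $\rho_t = t\rho$ via the root rescaling $\alpha \mapsto t\alpha$, and that $\kappa_t \circ \varphi_t^{-1} = \kappa$, $\mathbf{a}_t \circ \varphi_t^{-1} = \tfrac{1}{t}\mathbf{a}$), then substitute into the defining formula. The paper isolates these two facts as separate lemmas (Lemmas~\ref{racines_gt} and~\ref{iwa_gt}) because they are reused later in \S\ref{sec:contraction_generale}, but the mathematical content of your argument is the same.
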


To prove Proposition \ref{renorm2}, we only have to understand what happens to the half-sum of positive roots when we go from $G$ to $G_{t}$, and to make the relationship between the Iwasawa decompositions in both groups clear. Here are statements that apply to any parabolic subgroup. 

\begin{lem} \label{racines_gt}Suppose $P = MAN$ is an arbitrary parabolic subgroup of $G$ with $A$ contained in $\exp_G(\pe)$. If $\alpha \in \mathfrak{a}^\star$ is a root of $(\g, \mathfrak{a})$, then $t  \alpha$ is a root of $(\g_{t}, \mathfrak{a})$. \end{lem}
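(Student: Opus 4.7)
The plan is to reduce to a direct calculation in the Lie algebra $\g_t$ of $G_t$, using the Lie algebra isomorphism $d\varphi_t : \g_t \to \g$ that is canonically attached to the group isomorphism $\varphi_t : G_t \to G$ from \S\ref{subsec:famille}. First I would identify the underlying vector space of $\g_t$ with $\ka \oplus \pe$ in such a way that $d\varphi_t$ becomes the linear map $X + Y \mapsto X + tY$ (with $X \in \ka$, $Y \in \pe$); a glance at the two one-parameter subgroups $s \mapsto (\exp_K(sX),0)$ and $s \mapsto (1_K,sY)$ in $G_t = K \times \pe$, paired against $\varphi_t(k,v) = \exp_G(tv)k$, yields this formula.

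From $d\varphi_t([\cdot,\cdot]_t) = [d\varphi_t(\cdot),d\varphi_t(\cdot)]$ I would then read off the bracket in $\g_t$ on the decomposition $\ka \oplus \pe$: it agrees with the bracket of $\g$ on $\ka \times \ka$ and on $\ka \times \pe$, but satisfies $[Y_1,Y_2]_t = t^2 [Y_1,Y_2]$ for $Y_1,Y_2 \in \pe$. In particular $\mathfrak{a} \subset \pe$ is still abelian in $\g_t$, so it makes sense to talk of roots of $(\g_t,\mathfrak{a})$.

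Now, given a root $\alpha$ of $(\g,\mathfrak{a})$ and a root vector $Z = X + Y \in \g^\alpha$ with $X \in \ka$, $Y \in \pe$, the relations $[H,X] = \alpha(H) Y$ and $[H,Y] = \alpha(H) X$ (for $H \in \mathfrak{a}$) follow from the $\theta$-stability of $\g^\alpha \oplus \g^{-\alpha}$. I would then introduce the candidate $Z_t := tX + Y \in \g_t$ and compute
\begin{equation*}
[H,Z_t]_t \;=\; t[H,X]_t + [H,Y]_t \;=\; t\,\alpha(H)Y + t^2\alpha(H)X \;=\; t\alpha(H)\,Z_t,
\end{equation*}
which exhibits $t\alpha$ as a root of $(\g_t,\mathfrak{a})$ with root vector $Z_t$. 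Equivalently, $d\varphi_t$ sends $Z_t$ to $tZ \in \g^\alpha$ and sends $H \in \mathfrak{a} \subset \g_t$ to $tH \in \mathfrak{a} \subset \g$, so the eigenvalue $\alpha(tH) = t\alpha(H)$ is exactly what transport of structure predicts.

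There is no real obstacle here; the only point requiring care is notational, namely keeping the two roles of $\mathfrak{a}$ distinct (as a subalgebra of $\g$ and as a subalgebra of $\g_t$) and tracking the factor of $t$ that $d\varphi_t$ introduces on $\pe$ but not on $\ka$. Once the formula for $[\cdot,\cdot]_t$ is written down, the lemma is a one-line computation; the same bookkeeping will also identify the positive system and compute $\rho_t$ in the subsequent argument leading up to Proposition~\ref{renorm2}.
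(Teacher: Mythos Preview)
Your proof is correct and follows essentially the same approach as the paper: both identify the bracket $[\cdot,\cdot]_t$ on $\ka\oplus\pe$, split a root vector along the Cartan decomposition to obtain $[H,X]=\alpha(H)Y$ and $[H,Y]=\alpha(H)X$, and then verify the eigenvalue equation in $\g_t$ directly. The only cosmetic difference is the normalization of the candidate root vector (you take $tX+Y$, the paper takes $X_e+\tfrac{1}{t}X_h=\phi_t^{-1}(Z)$, a scalar multiple of yours), and you invoke $\theta$-stability where the paper simply compares $\ka$- and $\pe$-components.
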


\begin{proof} When $\alpha$ is a root of $(\g, \mathfrak{a})$, there is a nonzero $X \in \g$ such that $[X, H] = \alpha(H) X$ for each $H \in \mathfrak{a}$. Split $X$ as $X_{e} + X_{h}$, with $X_{e} \in \mathfrak{k}$ and $X_{h} \in \pe$, according to the Cartan decomposition. Then
\begin{equation}\label{jordan} [X_{e}, H] - \alpha(H) X_{h} = - \left(  [X_{h}, H] - \alpha(H) X_{e}\right).\end{equation}
The left-hand side of~\eqref{jordan} is in $\pe$ and the right-hand side is in $\ka$, so both are zero. 

Now, the group morphism $\varphi_t: G_t \to G$ from \S \ref{subsec:defo} induces a Lie algebra isomorphism $\phi_t: \g_t \to \g$. The isomorphism $\phi_{t}^{- 1}$ sends $X$ to $X^t =  X_{e} + \frac{1}{t}X_{h} \in \g_{t}$, and for each $H \in \mathfrak{a}$, 
\begin{equation}\label{jordan2}  [X^t, H]_{\g_{t}} = [X_{e}, H]_{\g_{t}} + \frac{1}{t} [X_{h}, H]_{\g_{t}} =   [X_{e}, H]_{\g} + t [X_{h}, H]_{\g} \end{equation}
(for the last equality, recall that if $U,V$ are in $\g$, the bracket $[U,V]_{\g_{t}}$ is defined as $\phi^{- 1}_{t}[\phi_{t} U, \phi_{t} V]_{\g}$, so that $[U,V]_{\g_{t}} = [U,V]_{\g_{}}$ when $U$ lies in $\ka$ and $V$ lies in $\pe$, whereas $[U,V]_{\g_{t}} =t^2 [U,V]_{\g_{}} $ when they both lie in $\pe$).

The right-hand side of \eqref{jordan2} is  $\alpha(H) X_{h} + t \cdot \alpha(H) X_{e} = t \cdot \alpha(H) X^t$, so $X^t$ is in the $(\mathfrak{g}_{t}, \mathfrak{a})$ root space for $t \cdot \alpha$, whence Lemma \ref{racines_gt}.
\end{proof}
\noindent The proof shows that the root space for $t \alpha$ is the image of $\g_{\alpha}$ under $\phi_{t}^{- 1}$; we spell out an immediate consequence. 
\begin{lem} \label{iwa_gt} Suppose $P=MAN$ is a parabolic subgroup of $G$. Then $P_t:= \varphi_t^{-1}(P)$ is a parabolic subgroup of $G_t$, with Iwasawa decomposition $M_tA_tN_t=(\varphi_t^{-1}M) A (\varphi_t^{-1}N)$. The Isawawa projections \label{iwa} for $G$ and $G_t$ are related as follows: for every $g$ in $G$,
\begin{align*}  \kappa_{t}(\varphi_{t}^{- 1} g) &= \kappa(g); \\
 \mathbf{m}_{t}(\varphi_{t}^{- 1} g) &= \mathbf{m}(g);\\
\mathbf{a}_{t}(\varphi_{t}^{- 1} g)& = \frac{\mathbf{a}(g)}{t}. \end{align*} \end{lem}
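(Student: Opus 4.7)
The plan is to exploit the fact that $\varphi_t: G_t \to G$ is a Lie group isomorphism, so it transports the Iwasawa decomposition of $G$ onto one of $G_t$, which we can then match against the intrinsic Iwasawa decomposition by uniqueness. There are two things to verify: first that the Langlands factors correspond as claimed, and second that the three projection formulas follow.

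For the first part, I would establish $\varphi_t^{-1}(MAN) = (\varphi_t^{-1}M)\cdot A \cdot (\varphi_t^{-1}N)$, with the understanding that $\varphi_t^{-1}(A) = A_t$ as subgroups of $G_t = K \times \pe$. Starting from $\varphi_t(k,v) = \exp_G(tv)k$, one computes $\varphi_t^{-1}(\exp_G(Y)) = \exp_{G_t}(Y/t)$ for every $Y \in \pe$; applied to $Y \in \mathfrak{a}$ this gives $\varphi_t^{-1}(A) = \exp_{G_t}(\mathfrak{a}/t) = \exp_{G_t}(\mathfrak{a}) = A_t$. For $N_t$, the proof of Lemma \ref{racines_gt} already identified the $(\g,\mathfrak{a})$-root space for $\alpha$ with the $(\g_t,\mathfrak{a})$-root space for $t\alpha$ as the same vector subspace of $\g$; since $t>0$, the positive system used to define $N$ is carried onto the positive system used to define $N_t$, so $\varphi_t^{-1}(N) = N_t$. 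The statement for $M$ is similar: $Z_K(\mathfrak{a})$ is literally the same subgroup of $K$ in both groups, and the noncompact part $\exp_G(\mathfrak{m} \cap \pe)$ is mapped to $\exp_{G_t}(\mathfrak{m} \cap \pe)$ by the same exponential-rescaling argument.

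For the second part, I would simply apply $\varphi_t^{-1}$, a homomorphism for $\cdot$ and $\cdot_t$, to the identity $g = \kappa(g)\,\mathbf{m}(g)\,\exp_G(\mathbf{a}(g))\,\nu(g)$, obtaining
\[ \varphi_t^{-1}(g) \;=\; \kappa(g) \cdot_t \varphi_t^{-1}(\mathbf{m}(g)) \cdot_t \exp_{G_t}\!\left(\mathbf{a}(g)/t\right) \cdot_t \varphi_t^{-1}(\nu(g)),\]
a product of elements in $K$, $\exp_{G_t}(\mathfrak{m}_t \cap \pe)$, $\exp_{G_t}(\mathfrak{a})$ and $N_t$ respectively, by the first part. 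Uniqueness of the Iwasawa decomposition in $G_t$ then forces these four factors to be the Iwasawa projections of $\varphi_t^{-1}(g)$, which reads off as $\kappa_t(\varphi_t^{-1}g) = \kappa(g)$, $\mathbf{a}_t(\varphi_t^{-1}g) = \mathbf{a}(g)/t$, and $\mathbf{m}_t(\varphi_t^{-1}g) = \mathbf{m}(g)$ (this last equality under the natural identification of $M_t$ with $M$ provided by $\varphi_t$).

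The proof is essentially bookkeeping: the main thing to keep straight is the distinction between the two group laws on $K \times \pe$ and the $t$-rescaling $\exp_G(Y) \leftrightarrow \exp_{G_t}(Y/t)$ on $\mathfrak{a}$-like directions. There is no genuine difficulty; in the applications of this paper the relevant parabolic will most often be minimal, in which case $\mathbf{m}$ is trivial and the $M$-identification is vacuous.
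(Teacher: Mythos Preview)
Your proposal is correct and matches the paper's approach: the paper states this lemma as an ``immediate consequence'' of Lemma~\ref{racines_gt} (the root-space identification $\phi_t^{-1}(\g_\alpha)$ being the $(\g_t,\mathfrak{a})$-root space for $t\alpha$), and what you have written is precisely the bookkeeping that unpacks this. Your remark about the $\mathbf{m}$-equality holding only modulo the identification $M_t \simeq M$ via $\varphi_t$ is in fact a slight sharpening of the paper's statement, which leaves that identification implicit.
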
 
We now return to the proof of Proposition \ref{renorm2}, assuming again that $\chi$ is regular. Inspecting the definitions of \S \ref{subsec:correspondance}, we know from Lemma \ref{iwa_gt} that the parabolic subgroup $P_{\chi,t}$ of $G_t$ used in \eqref{opGt} is $\varphi_t^{-1}(P_\chi) = MA \varphi_t^{-1}(N_\chi)$.  From Lemma \ref{racines_gt} we know that $\rho_t$ in \eqref{opGt} is $t \rho$. We can now conclude that for every $g$ in $G$,
\begin{align*} \pi^{t, \,\text{comp}}_{\chi, \mu}(\varphi_{t}^{- 1}(g)) & = f \mapsto  \left[ k \mapsto  \exp{\langle - i\frac{\chi}{t} - \rho, t \cdot \mathbf{a}_{t}( \varphi_{t}^{- 1}\left[g^{- 1} k\right]) \rangle} f\left(\kappa_{t}(\varphi_{t}^{- 1}\left[g^{- 1} k\right])\right) \right] \\
& = f \mapsto \left[ k \mapsto  \exp{\langle - i\frac{\chi}{t} - \rho, \mathbf{a}( g^{- 1} k) \rangle} f\left(\kappa(g^{- 1} k)\right) \right] \Fin{\text{(using Lemma \ref{iwa_gt})}}\\
& = \pi^{\text{comp}}_{\frac{\chi}{t}, \mu}(g). \hspace{10cm}  \end{align*} 
This concludes the proof of Proposition \ref{renorm2}.\qed

\begin{rem} \label{rem_identification} There is an implicit choice behind the interpretation of $(\chi, \mu)$ as a Mackey datum for $G_t$ which leads to Proposition \ref{renorm2}. In the present context, we have $A_t = A_1$ for all $t>0$; as one referee pointed out, our way of viewing $e^{i\chi}$ as a character of $A_t$ amounts to using the identification between the Pontryagin duals $\widehat{A_t}$ and $\widehat{A}$ induced by the identity map $\mathfrak{a}_t \to \mathfrak{a}$. There are other ways to identify $\widehat{A_t}$ and $\widehat{A}$; but they do not lead to the convergence theorems to be proven below. The renormalization of continuous parameters in property ($\text{Nat}_{\text{gen}}$) of \S \ref{subsec:programme} reflects~the~identification.
\end{rem}

\separateur 
With Proposition \ref{renorm2} and its proof in hand, discussing the contraction from $G$ to $G_{0}$ becomes particularly simple. Set
\begin{equation} \tag{A} \mathbf{E}= \mathbf{H}^{\text{comp}}_{\sigma} \quad \text{from \eqref{hilbcomp}} \end{equation}
and for every $t>0$, 
\begin{align} \tag{B} \mathbf{V}_t &= \mathbf{E} \\
\tag{B'} \pi_t &=  \pi^{t, comp}_{\chi, \mu} \quad \text{from \eqref{opGt}}\\
\tag{C} \mathbf{C}_t &= \text{id}_{\mathbf{E}}.
 \end{align}
Fix $(k,v)$ in $K \times \pe$. For every $t>0$, the endomorphism ${\pi}_{t}(k,v)$ of $\mathbf{E}$ is an operator for a principal series representation of $G$ with continuous parameter  $\frac{\chi}{t}$; but as $t$ goes to zero it gets closer and closer to an operator for the representation of $G_{0}$ with Mackey parameter $(\chi, \mu)$: 

\begin{prop} \label{dooleyrice} Fix $(k,v)$ in $K \times \pe$ and $f$ in $\mathbf{E}$. There is is a limit to ${\pi}_{t}(k,v)f$ as $t$ goes to zero, given by the map $\pi_{0}(k,v)f$ in \eqref{actionG0}. The convergence holds both in $\mathbf{L}^2(K, V_\mu)$ and in the sense of uniform convergence in $\mathbf{C}(K, V_\mu)$. \end{prop}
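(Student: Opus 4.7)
The plan is to compute $\pi_t(k,v) f(u)$ explicitly via Proposition~\ref{renorm2}, then extract the $t \to 0$ limit from a first-order expansion of the Iwasawa projection $\mathbf{a}$. Since $\varphi_t(k,v) = \exp_G(tv)\,k$, Proposition~\ref{renorm2} gives
\[
 \pi_t(k,v) f(u) \;=\; e^{\langle -i\chi/t - \rho,\ \mathbf{a}((\exp_G(tv)k)^{-1} u)\rangle}\; f\!\left(\kappa((\exp_G(tv)k)^{-1} u)\right).
\]
Left multiplication by $k^{-1} \in K$ leaves $\mathbf{a}$ unchanged and turns $\kappa$ into $k^{-1}\kappa(\cdots)$; moreover the identity $\exp_G(-tv)\,u = u\,\exp_G(-t\,\text{Ad}(u^{-1})v)$, combined with $K$-invariance of $\mathbf{a}$ on the left, reduces the study of the $\mathbf{a}$-term to $\mathbf{a}(\exp_G(-tX_u))$ with $X_u := \text{Ad}(u^{-1})v \in \pe$.

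The heart of the argument is the classical first-order expansion
\[
 \mathbf{a}(\exp_G(-tX)) \;=\; -t\,X_\mathfrak{a} + O(t^2) \qquad \text{as } t \to 0,
\]
valid for every $X \in \pe$, where $X_\mathfrak{a}$ is the $\mathfrak{a}$-component in the orthogonal decomposition $\pe = \mathfrak{a} \oplus (\pe \cap \mathfrak{a}^\perp)$. One proves it by differentiating the Iwasawa factorization $\exp_G(-tX) = \kappa(t)\exp(\mathbf{a}(t)) n(t)$ at $t=0$ and projecting $-X = \dot\kappa(0) + \dot{\mathbf{a}}(0) + \dot n(0)$ onto the three summands of $\g = \ka \oplus \mathfrak{a} \oplus \mathfrak{n}$. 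Since $\chi \in \mathfrak{a}^\star$ is extended to $\pe^\star$ by zero on $\pe \cap \mathfrak{a}^\perp$, one has $\langle\chi,(X_u)_\mathfrak{a}\rangle = \langle\chi,X_u\rangle = \langle\text{Ad}^\star(u)\chi,v\rangle$, and therefore
\[
 \langle -i\chi/t - \rho,\ \mathbf{a}(\exp_G(-tX_u))\rangle \;\longrightarrow\; i\langle \text{Ad}^\star(u)\chi,\ v\rangle \quad \text{as } t \to 0,
\]
the $-\rho$ contribution being $O(t)$. Meanwhile $\kappa((\exp_G(tv)k)^{-1} u) = k^{-1}\kappa(\exp_G(-tv)u) \to k^{-1}u$ by continuity of $\kappa$ and the fact that $\kappa(u)=u$ for $u \in K$. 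All of these convergences are uniform in $u \in K$ by compactness, and the pointwise limit matches $\pi_0(k,v)f(u)$ as given by~\eqref{actionG0}. The renormalization $(\text{Nat}_{\text{gen}})$ of \S\ref{subsec:programme} is precisely what is needed: the $1/t$ blow-up of the continuous parameter is absorbed by the $t$-linear vanishing of the Iwasawa projection.

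For continuous $f \in \mathbf{C}(K,V_\sigma)$ the uniform pointwise convergence above yields uniform convergence in $\mathbf{C}(K,V_\sigma)$ at once. For $f \in \mathbf{L}^2(K,V_\sigma)$, I would use that each $\pi_t(k,v)$ is unitary on $\mathbf{L}^2$ (the compact-picture formula is unitary for every $t$ thanks to the $\rho_t$-shift), density of $\mathbf{C}(K,V_\sigma)$ in $\mathbf{L}^2(K,V_\sigma)$, and the standard $\varepsilon/3$ bound
\[
 \|\pi_t(k,v) f - \pi_0(k,v) f\|_2 \;\leq\; 2\|f - \tilde f\|_2 + \|\pi_t(k,v)\tilde f - \pi_0(k,v)\tilde f\|_2
\]
for a continuous $\tilde f$ approximating $f$, to reduce to the continuous case (where the second term tends to $0$ because uniform convergence on the compact space $K$ implies $\mathbf{L}^2$ convergence). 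The main obstacle is the Iwasawa first-order asymptotic itself; once it is in hand, everything else reduces to compactness of $K$, continuity of $\kappa$ and $\mathbf{a}$, and unitarity of principal-series operators.
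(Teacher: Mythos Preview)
Your argument is correct and is essentially the same as the paper's: both reduce to the first-order expansion of the Iwasawa $\mathfrak{a}$-projection at the identity, which the paper packages as Lemma~\ref{conv_iwa} (writing $\mathfrak{I}_t(v)=\frac{1}{t}\mathfrak{I}(tv)\to d\mathfrak{I}(0)[v]$, the orthogonal projection onto $\mathfrak{a}$) and you obtain by invoking Proposition~\ref{renorm2} and expanding $\mathbf{a}(\exp_G(-tX))$ directly. The paper leaves the $\mathbf{L}^2$ statement implicit (``follow immediately''), so your unitarity-plus-density $\varepsilon/3$ argument is a welcome addition rather than a deviation.
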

Recall that when $k$ is in $K$ and $v$ in $\pe$,
\begin{equation}\label{interpGt} \text{ the element $\exp_{G_t}(v)k$ of $G_t$ is just $(k,v)$ in the underlying set $K \times \pe$.}\end{equation}
As a consequence, we can rewrite \eqref{opGt} as
\[ \pi^{t, \,\text{comp}}_{\chi, \mu}(k,v) = f \mapsto  \left[ u \mapsto  \exp{\langle - i\chi - t \rho, \mathbf{a}_{t}\left(  (k^{- 1} \cdot_{t} \exp_{G_{t}}(- v) \cdot_{t} u) \right) \rangle} f\left(\kappa_{t}( k^{- 1}  \exp_{G_{t}}(- v) u)\right) \right]. \]
where the symbols $\cdot_t$ indicate products in $G_{t}$. On the other hand, the operator $\pi_{0}(k,v)$ from \S \ref{subsec:correspondance}  reads
\[ \pi_{0}(k,v) = f \mapsto  \left[ u \mapsto  \exp{\langle i\chi,  \ad(u^{- 1}) v  \rangle} f\left( k^{- 1} u \right) \right]. \]
To make the two look more similar, notice that 
\begin{align*} \left[\pi^{t, \,\text{comp}}_{\chi, \mu}(k,v) f \right](u) & =  e^{\langle - i\chi - t \rho, \ \mathbf{a}_{t}\left(  (k^{- 1} u)\cdot_{t} \exp_{G_{t}}(- \ad(u^{- 1}) v)  \right) \rangle} f\left(\kappa_{t}( k^{- 1} u \exp_{G_{t}}(- \ad(u^{\ 1}) v)\right) \\ & =   e^{\langle - i\chi - t \rho, \ \mathbf{a}_{t}\left( \exp_{G_{t}}(- \ad(u^{- 1}) v)  \right) \rangle} f\left( (k^{- 1} u) \cdot \kappa_{t}(\exp_{G_{t}}(- \ad(u^{- 1}) v)) \right).  \end{align*}
 We just need to see how the Iwasawa projection parts behave as $t$ goes to zero. Let us introduce, for every $t>0$, the Iwasawa maps 
\begin{align} \label{iwa_It}\mathfrak{K}_t: \pe &\to K \quad & \text{and} \quad\quad\quad\quad\quad & \mathfrak{I}_t: \pe  \to\mathfrak{a} \\ v & \mapsto \kappa_t\left( \exp_{G_t}(v)\right) & & v  \mapsto \mathbf{a}_t\left( \exp_{G_t}(v)\right). \nonumber \end{align} 
The Iwasawa projection $\mathfrak{I}_{t}: \pe \to \mathfrak{a}$ is nonlinear, but as $t$ goes to zero it gets closer and closer to a linear projection:
\begin{lem} \label{conv_iwa} As $t$ goes to zero, 
\begin{itemize}
\item[$\bullet$] $\mathfrak{I}_{t}$ admits as a limit (in the sense of uniform convergence on compact subsets of $\pe$ of every derivative) the orthogonal projection from $\pe$ to $\mathfrak{a}$, 
\item[$\bullet$] $\mathfrak{K}_{t}$ tends to the constant function on $\pe$ with value $\mathbf{1}_{K}$. 
\end{itemize}\end{lem}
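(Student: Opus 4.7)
The plan is to reduce everything to a fixed smooth computation on $G$ by using Lemma \ref{iwa_gt}, and then recognize the limiting maps via a first-order Taylor expansion at the identity.

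First, note that the definition $\varphi_t(k,v) = \exp_G(tv)k$ gives $\varphi_t(1,v) = \exp_G(tv)$, so $\exp_{G_t}(v) = \varphi_t^{-1}(\exp_G(tv))$ (matching also the identification \eqref{interpGt}). Feeding this into the definitions \eqref{iwa_It} and applying Lemma \ref{iwa_gt}, we obtain the clean formulas
\[
\mathfrak{I}_t(v) = \frac{1}{t}\,\mathbf{a}\!\left(\exp_G(tv)\right), \qquad \mathfrak{K}_t(v) = \kappa\!\left(\exp_G(tv)\right),
\]
where $\mathbf{a}$ and $\kappa$ are the fixed Iwasawa projections on $G$. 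This reduces the problem to the behavior of the smooth map $(t,v)\mapsto \exp_G(tv)$ near $t=0$.

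For $\mathfrak{K}_t$: since $\kappa$ and $\exp_G$ are smooth and $\exp_G(0)=1_G$ with $\kappa(1_G)=1_K$, the map $(t,v)\mapsto \kappa(\exp_G(tv))$ is a smooth function of $(t,v)\in\R\times\pe$ equal to the constant $1_K$ at $t=0$; uniform convergence on compact subsets of $\pe$ (with derivatives of all orders) is immediate. For $\mathfrak{I}_t$: for each fixed $v$, the map $f_v(t)=\mathbf{a}(\exp_G(tv))$ is smooth and satisfies $f_v(0)=0$, so the identity $f_v(t)/t=\int_0^1 f_v'(st)\,ds$ shows that $(t,v)\mapsto\mathfrak{I}_t(v)$ extends smoothly across $t=0$ with value $f_v'(0)=D\mathbf{a}|_{1_G}(v)$. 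The desired uniform convergence on compacts (with derivatives) follows at once from joint smoothness.

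It remains to identify $D\mathbf{a}|_{1_G}$ on $\pe$. Differentiating the Iwasawa factorization $g=\kappa(g)\exp(\mathbf{a}(g))\nu(g)$ at $g=1_G$ shows that $D\mathbf{a}|_{1_G}:\g\to\mathfrak{a}$ is the projection associated with the Iwasawa splitting $\g=\ka\oplus\mathfrak{a}\oplus\mathfrak{n}$. Now restrict to $\pe$ and use the $B$-orthogonal decomposition $\pe=\mathfrak{a}\oplus\bigoplus_{\alpha>0}\pe_\alpha$ with $\pe_\alpha=\{Y-\theta Y:Y\in\g_\alpha\}$. For $Y\in\g_\alpha$ with $\alpha>0$ we have $Y\in\mathfrak{n}$ and $\theta Y\in\g_{-\alpha}$; writing $-\theta Y=-(Y+\theta Y)+Y$ exhibits
\[
Y-\theta Y = \underbrace{-(Y+\theta Y)}_{\in\,\ka} \;+\; 0 \;+\; \underbrace{2Y}_{\in\,\mathfrak{n}},
\]
so the $\mathfrak{a}$-component of every element of $\pe_\alpha$ vanishes. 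Consequently $D\mathbf{a}|_{1_G}$ restricted to $\pe$ coincides with the $B$-orthogonal projection $\pe\to\mathfrak{a}$, completing the proof. The only place that requires any genuine checking is this root-space computation; the rest is a bookkeeping consequence of Lemma \ref{iwa_gt} and smoothness of Iwasawa.
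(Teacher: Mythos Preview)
Your proof is correct and follows essentially the same approach as the paper's: both reduce to the identities $\mathfrak{I}_t(v)=\tfrac{1}{t}\mathbf{a}(\exp_G(tv))$ and $\mathfrak{K}_t(v)=\kappa(\exp_G(tv))$ (you invoke Lemma~\ref{iwa_gt}, the paper rederives them directly), then recognize the limit of $\mathfrak{I}_t$ as the derivative of the Iwasawa $\mathfrak{a}$-projection at the identity. The only difference is in identifying that derivative on $\pe$ with the orthogonal projection onto $\mathfrak{a}$: you compute explicitly on restricted root spaces, while the paper simply observes that both $\ka$ and $\mathfrak{n}$ are Killing-orthogonal to $\mathfrak{a}$, so the Iwasawa projection onto $\mathfrak{a}$ \emph{is} the orthogonal projection; the two arguments are equivalent and equally standard.
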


\begin{proof} $\bullet$ We check that $\mathfrak{I}_{t}$ is none other than $v \mapsto \frac{1}{t} \mathfrak{I}(tv)$, where $\mathfrak{I}: \pe \to \mathfrak{a}$ is the projection attached to $G$. Since $\varphi_{t}$ is a group morphism from $G_{t}$ to $G$, the definition of group exponentials does imply that $\exp_{G}(t v) = \exp_{G}(d\varphi_{t}(1) v) = \varphi_{t} \left( \exp_{G_{t}} v \right)$. Let us write $\exp_{G_{t}} v = k e^{\mathfrak{I}_{t}(v)} n_{t}$ with $k \in K$ and $n_{t} \in N_{t}$, then  $ \varphi_{t} \left( \exp_{G_{t}} v \right) = k e^{t\mathfrak{I}_{t}(v)} n$, where $n = \varphi_{t}(n_{t})$ lies in $N$. So we know that 
\begin{equation}~\label{Iwa} \exp_{G}(t v) = k e^{t\mathfrak{I}_{t}(v)} n\end{equation}
and thus that $\mathfrak{I}(tv) = t \mathfrak{I}_{t}(v)$, as announced. 

But then as $t$ goes to zero, the limit of $\mathfrak{I}_{t}(v)$ is the value at $v$ of the derivative $d\mathfrak{I}(0)$. This does yield the orthogonal projection of $v$ on $\mathfrak{a}$: although the Iwasawa decomposition of $\g$ is not an orthogonal direct sum because $\ka$ and $\mathfrak{n}$ are not orthogonal to each other, they are both orthogonal to $\mathfrak{a}$ with respect to the Killing form of $\g$, so the direct sum $\ka \oplus \mathfrak{n}$ is the orthogonal of $\mathfrak{a}$.  

$\bullet$ As for $\mathfrak{K}_{t}$, from \eqref{Iwa} we see that $\mathfrak{K}_{t}(v) = \kappa_{t}\left(exp_{G_{t}} v \right) = \kappa \left( \exp_{G}(tv) \right)$, and this does go to the identity uniformly on compact subsets as $t$ goes to zero.
\end{proof}
\noindent Both parts of  Proposition \ref{dooleyrice} follow immediately.
 
 \begin{rem} Proposition \ref{dooleyrice} can be viewed as a reformulation of Theorem 1 in Dooley and Rice \cite{DooleyRice}. Our reason for including it here is that in our treatment of the general case in \S \ref{sec:contraction_generale}, we will take up the strategy of the proof and use the lemmas here proven. The interplay with Helgason's waves (in the next subsection) may also throw some light on the phenomenon, for instance on the necessity of renormalizing continuous parameters.\end{rem}

\subsubsection{A contraction of generic spherical representations using Helgason's waves}
\label{subsec:contraction_helgason}

\paragraph[Helgason's waves and the spherical principal series]{Helgason's waves and picture for the spherical principal series.}

We first recall how the compact picture of the previous subsection is related to the usual "induced picture", for which the Hilbert space is 
\begin{equation}\small  \label{hilbind} \mathbf{H}^{\text{ind}}_{\delta} = \left\{ f: G \rightarrow V_{\sigma} \ \ \big| \quad \ \begin{aligned} f(gme^{H}n)& = e^{\langle - i\chi - \rho, H\rangle} \sigma(m)^{- 1}f(g) \ \text{for all} \ (g, me^{H}n) \in G \times P \\ \text{and} \ & f\big|_{K} \in \mathbf{L}^2(K; V_{\sigma}) \end{aligned} \right\},\normalsize  \end{equation}
the inner product is the $\mathbf{L}^2$ scalar product between restrictions to $K$, and the $G$-action is given by $\pi^{\text{ind}}_{\delta}(g) f = \left[ x \mapsto f(g^{- 1} x) \right]$ for $(g,f)$ in $G \times \mathbf{H}^{\text{ind}}_{\delta}$. Because of the $P$-equivariance condition in \eqref{hilbind}, every element of $\mathbf{H}^{\text{ind}}_{\delta}$ is completely determined by its restriction to $K$, so  restriction to $K$ induces an isometry (say $\mathcal{I}$) between $\mathbf{H}^{\text{ind}}_{\delta}$ and $\mathbf{H}^{\text{comp}}_{\sigma}$. The definition of  $\pi^{\text{comp}}_{\chi, \mu}$ in \eqref{actcomp} is just what is needed to make $\mathcal{I}$ an intertwining operator. \\

\noindent We now consider a Mackey parameter $\delta = (\chi, \mu)$ and assume that 
\begin{itemize}
\item[$\bullet$] $\chi$ is regular and lies in $\mathfrak{a}^\star$, so that $K_\chi$ is equal to $M=Z_K(\mathfrak{a})$ ;

\item[$\bullet$] $\mu$ is the trivial representation of $M$. \\
\end{itemize}
There is then a distinguished element in $\mathbf{H}^{\text{comp}}_{\sigma}$: the constant function on $K$ with value one. Under the isometry $\mathcal{I}$, it corresponds to the function 
\[ \bar{e}_{\chi, 1} = ke^{H}n \mapsto e^{\langle - i\chi - \rho, H\rangle} \]
in $\mathbf{H}^{\text{ind}}_{\delta}$; this in turn defines a function on $G/K$ if we set $\tilde{e}_{\chi, 1}(gK) = \bar{e}_{\chi, 1}(g^{- 1})$, and a function on $\pe$ if we set $e_{\lambda, 1}(v) = \tilde{e}_{\chi, 1}(\exp_{G}(v)K)$. Figure \ref{onde} shows a plot of $e_{\lambda, 1}$ when $G$ is $SL(2,\R)$.

\begin{figure}[h]
\begin{center}
\includegraphics[width=0.29\textwidth]{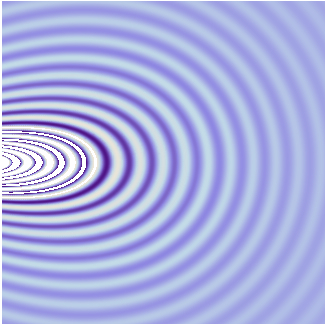} 
\caption{ Plot of the real part of the Helgason wave $e_{30, 1}$. We used the mapping from $\R^2$ to the unit disk provided by the Cartan decomposition, and the explicit formulae availiable on the unit disk: see \cite{HelgasonGGA}, chapter 0. The $x$- and $y$- range is [-1.5, 1.5] (this region is chosen so that the modulus varies clearly but within a displayable range, and the choice of $\lambda$ is to have enough waviness in the region).}
\label{onde}
\end{center}
\end{figure}

 Now set $e_{\chi, b}(v) = e_{\chi, 1}(b^{- 1}v)$ for $b$ in $K/M$ and $v$ in $\pe$ (the notation comes from \cite{HelgasonGASS}, \S III.1: for $\tilde{b}$ in $K$, the map $v \mapsto e_{\chi,1}(\tilde{b}^{-1}v)$ depends only on the class of $\tilde{b}$ in $K/M$). Equip $K/M$ with the measure inherited with the Haar measure of $K$. Then the ``Poisson transform''
\begin{align*} \mathbf{L}^2(K/M) = \mathbf{H}^{\text{comp}}_{\sigma} & \rightarrow \mathbf{C}^{\infty}(\pe) \\
 F & \mapsto \int_{K/M} e_{\chi, b} F(b)db 
\end{align*}
intertwines $\pi^{\text{comp}}_{\chi,1}$ with the quasi-regular action of $G$ on the Fréchet space
\begin{equation} \tag{A} \mathbf{E}= \mathbf{C}^{\infty}(\pe)\end{equation} associated with the action $(g,x) \mapsto g\cdot x$ defined below \eqref{cartanGt}, and happens to be an injection (see \cite{HelgasonGASS}, Chapter 3). We write 
\begin{equation}\label{real_helg} \mathbf{V}_{\text{Helgason}}^{\chi} = \left\{  \int_{K/M} e_{\chi, b} F(b)db, \quad F \in \mathbf{L}^2(K/M) \right\} \end{equation}  
for the image of this map, a geometric realization for the spherical principal series representation of $G$ with continuous parameter $\chi$.

 For each $t>0$, we can repeat this construction to attach to every regular $\lambda$ in $\mathfrak{a}^\star$ a geometric realization for the spherical principal series representation of $G_t$ with continuous parameter $\lambda$. Using the quasi-regular action 
\begin{equation} \label{operat_princ} \pi_t: G_t \to \text{End}(\mathbf{E})\end{equation} attached to the action $\cdot_t$ of \S \ref{subsec:cadregeom}, and comparing  the above constructions of Helgason with \S \ref{subsec:contraction_compact} (especially Lemma \ref{racines_gt}), we now define, for every $\lambda$ in  $\mathfrak{a}^\star$ and $b$ in $K/M$, a function
\begin{align}\varepsilon^{t}_{\lambda, b}: \pe & \mapsto \C \nonumber \\ v & \mapsto e^{ \ \langle \ i \lambda + t \rho \ , \ \mathfrak{I}_{t}(Ad(b) \cdot v) \ \rangle} \nonumber \end{align}
where $\mathfrak{I}_t$ is the Iwasawa projection from \eqref{iwa_It}. We set 
\begin{equation} \tag{B} \tilde{\mathbf{V}}^\lambda_t = \left\{ \int_{K/M} \varepsilon^t_{\lambda,b} F(b) db, \quad F \in \mathbb{L}^2(K/M) \right\};\end{equation}
this is a $\pi_t(G_t)$-stable subspace of $\mathbf{E}$. We note that $\tilde{\mathbf{V}}^\lambda_1$ coincides with ${\mathbf{V}}_{\text{Helgason}}^{\lambda}$ from \eqref{real_helg}.
\paragraph{Convergence of Helgason waves and contraction of spherical representations.} Still assuming that $\chi$ is a regular element of $\mathfrak{a}^\star$, suppose $\pi$ is the spherical representation of $G$ with continuous parameter $\chi$; then the geometric realization for $\pi$ described above is strongly reminiscent of the description we gave in Remark \ref{remarque_fourier} for the representation $\mathbf{M}_0(\chi, 1)$ of $G_0$ that corresponds to $\pi$ in the Mackey-Higson  bijection. There $G_0$ acted on the subspace of $\mathbf{E}$ whose elements are combinations of plane waves whose wavevectors lie on the $\ad^\star(K)$-orbit of $\chi$ in $\pe^\star$. But in view of Lemma \ref{conv_iwa}, the two kinds of waves can be deformed onto one another (see Figure \ref{convergence_ondes}):

\begin{figure}
\begin{center}
\includegraphics[width=0.24\textwidth]{Helgason1.jpg} \includegraphics[width=0.24\textwidth]{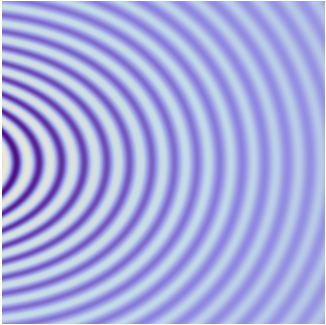} \includegraphics[width=0.24\textwidth]{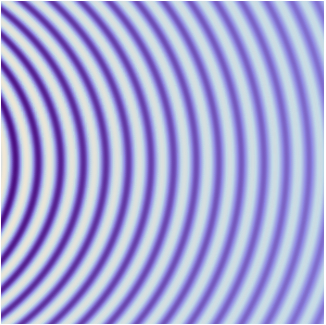} \includegraphics[width=0.24\textwidth]{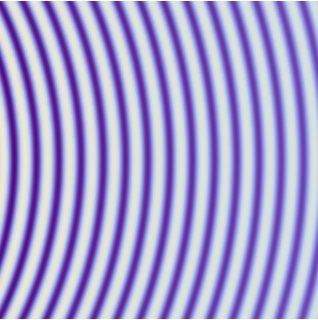} 
\caption{ Illustration of Lemma \ref{conv_ondes} : these are plots of $\varepsilon^{1/2^{k}}_{\chi,1}$, $k = 0, 1, 2, 3$, in the same domain as in Figure \ref{onde}. Each of these waves is a building block for a principal series representation of $G$ whose continuous parameter is $2^k \chi$, with $\chi = 30$ here.}
\label{convergence_ondes}
\end{center}
\end{figure}

\begin{lem} \label{conv_ondes} Fix $\lambda$ in $\mathfrak{a}^\star$ and  $b$ in $K/M$. Then as $t$ goes to zero, the Helgason waves $\varepsilon^{t}_{\lambda, b}$ converge (in the Fréchet space $\mathbf{E}$) to the Euclidean plane wave $v \mapsto e^{\langle i\lambda, Ad(b) \cdot v \rangle }$.
\end{lem}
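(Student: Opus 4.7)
The plan is to reduce everything to Lemma \ref{conv_iwa} by unwinding the definition of the Helgason wave and verifying that the convergence of the Iwasawa projection propagates through the smooth operations that build $\varepsilon^t_{\lambda,b}$.

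First I would write the exponent as $\langle i\lambda + t\rho,\ \mathfrak{I}_t(\mathrm{Ad}(b)\cdot v)\rangle$, and treat the two contributions separately. By Lemma \ref{conv_iwa}, the map $v\mapsto \mathfrak{I}_t(\mathrm{Ad}(b)\cdot v)$ converges, as $t\to 0$, to the $B$-orthogonal projection $\mathrm{proj}_{\mathfrak{a}}\circ \mathrm{Ad}(b)$, uniformly together with every derivative on each compact subset of $\pe$ (the composition with the linear map $\mathrm{Ad}(b)$ is of course harmless). Since $\lambda\in\mathfrak{a}^\star$ is extended to $\pe^\star$ by declaring it zero on the $B$-orthogonal complement $\mathfrak{a}^\perp$ of $\mathfrak{a}$ in $\pe$, one has $\langle\lambda, X\rangle = \langle\lambda, \mathrm{proj}_{\mathfrak{a}}(X)\rangle$ for every $X\in\pe$; consequently the first contribution $\langle i\lambda,\ \mathfrak{I}_t(\mathrm{Ad}(b)\cdot v)\rangle$ converges in $\mathbf{C}^\infty(\pe)$ to $\langle i\lambda,\ \mathrm{Ad}(b)\cdot v\rangle$.

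Next I would dispose of the remainder $\langle t\rho,\ \mathfrak{I}_t(\mathrm{Ad}(b)\cdot v)\rangle$. Since $\mathfrak{I}_t$ and all its derivatives are uniformly bounded on compact subsets as $t\to 0$ (a consequence of the uniform convergence in Lemma \ref{conv_iwa}), multiplication by the scalar $t$ drives this term and each of its derivatives to zero, uniformly on compact subsets of $\pe$. Putting the two pieces together gives the convergence, in $\mathbf{C}^\infty(\pe,\C)$, of the exponent
\[ \Phi_t(v) := \langle i\lambda + t\rho,\ \mathfrak{I}_t(\mathrm{Ad}(b)\cdot v)\rangle \longrightarrow \Phi_0(v) := \langle i\lambda,\ \mathrm{Ad}(b)\cdot v\rangle. \]

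Finally, I would pass from $\Phi_t \to \Phi_0$ in $\mathbf{C}^\infty$ to $e^{\Phi_t}\to e^{\Phi_0}$ in $\mathbf{C}^\infty$. By Faà di Bruno, every derivative $\partial^\alpha e^{\Phi_t}$ is a universal polynomial in the derivatives $\partial^\beta\Phi_t$ with $|\beta|\leq|\alpha|$, multiplied by $e^{\Phi_t}$. The real part of $\Phi_t$ equals $t\langle \rho,\mathfrak{I}_t(\mathrm{Ad}(b)\cdot v)\rangle$, which is uniformly bounded on any fixed compact set as $t\to 0$, so $|e^{\Phi_t(v)}|$ stays uniformly bounded there. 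Combined with the $\mathbf{C}^\infty$-convergence of $\Phi_t$, this yields uniform convergence of every $\partial^\alpha e^{\Phi_t}$ on compact subsets, which is exactly convergence in the Fréchet space $\mathbf{E}=\mathbf{C}^\infty(\pe)$. The limit is $e^{\Phi_0(v)}=e^{\langle i\lambda,\,\mathrm{Ad}(b)\cdot v\rangle}$, the advertised Euclidean plane wave.

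There is no real obstacle: once Lemma \ref{conv_iwa} is invoked, the only thing to watch is that we have $\mathbf{C}^\infty$-convergence and not merely pointwise convergence, which is handled by the elementary observation that composition with the entire function $\exp$ is continuous for the $\mathbf{C}^\infty$ topology as soon as the imaginary-axis/real-part bounds above are uniform. The role of the factor $t\rho$ in the exponent, often a nuisance in principal series computations, is here innocuous precisely because it is a vanishing correction.
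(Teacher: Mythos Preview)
Your proposal is correct and follows exactly the approach the paper intends: the paper simply states that the lemma holds ``in view of Lemma \ref{conv_iwa}'' and gives no further details, so you have carefully and accurately filled in the routine argument (convergence of the exponent in $\mathbf{C}^\infty$ via Lemma \ref{conv_iwa}, harmlessness of the $t\rho$ term, and passage through the exponential).
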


It is then natural to use the spaces in (B) to write down a Fréchet contraction of $\pi$ to the representation of $G_0$ on 
\begin{equation} \mathbf{V}_0 = \left\{ \left[v \mapsto \int_{K/M} e^{\langle i\chi, Ad(b) \cdot v \rangle } F(b) db\right], \quad F \in \mathbf{L}^2(K/M) \right\} \end{equation}
where an element $(k,v)$ of $G_0$ acts on $\mathbf{E}$ through the usual  $\pi_0(k,v): f \mapsto \left[x \mapsto f(k^{-1}x-k^{-1}v)\right]$.

The technical necessities have already been dealt with in \S \ref{subsec:cadregeom} and \S \ref{subsec:serie_discrete} and involve the zooming-in operator 
\begin{align} \tag{C} \mathbf{C}_t: \mathbf{E}& \to \mathbf{E}\\ f & \mapsto \left[ x \mapsto f(tx) \right].\nonumber \end{align}

\begin{lem} \label{zoom_ondes} Suppose $\lambda$ is an element of $\mathfrak{a}^\star$ and $t>0$.
\begin{enumerate}[(i)]
\item For every $b$ in $K/M$, the operator $\mathbf{C}_t$ sends $e_{\lambda,b}$ to $\varepsilon^{t}_{t\lambda, b}$.
\item Fix $f$ in $\mathbf{E}$. Then $\mathbf{C}_t f$ lies in $\tilde{\mathbf{V}}_t^{\lambda}$ if and only $f$ lies in $\tilde{\mathbf{V}}_{1}^{\frac{\lambda}{t}}$.
\item The map $\mathbf{C}_t$ inertwines the actions $\pi$ and $\pi_t \circ \varphi_t^{-1}$ of $G$ on $\tilde{\mathbf{V}}_{1}^{\frac{\lambda}{t}}$ and $\tilde{\mathbf{V}}_t^{\lambda}$, respectively.
\item It is the only map satisfying (iii) which preserves the values of functions at zero.
\end{enumerate} 
\end{lem}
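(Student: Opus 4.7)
My plan is to prove the four assertions in order, the bulk of the work being in (i) and (iii), which are essentially direct computations resting on scaling identities already established. Part (i) will use the identity $\mathbf{a}(\exp_G(tv)) = t \,\mathfrak{I}_t(v)$ isolated in the proof of Lemma \ref{conv_iwa}; part (iii) will use Lemma \ref{conjactions}(i); (ii) will follow formally from (i) and invertibility of $\mathbf{C}_t$; and (iv) will be a Schur-type uniqueness argument. I expect (iv) to be the most delicate point, because Schur's lemma in the Fréchet setting for the smooth globalization of a spherical principal series representation requires one to use irreducibility of the representation (available here since $\lambda$ is regular).

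For (i), I would unwind the definitions: writing $e_{\lambda, b}(v) = \bar{e}_{\lambda,1}(\exp_G(-\ad(b^{-1})v))$ and recalling that $\bar{e}_{\lambda,1}(g) = e^{\langle -i\lambda-\rho,\,\mathbf{a}(g)\rangle}$ expresses $e_{\lambda,b}(v)$ as an exponential of a pairing with $\mathbf{a}(\exp_G(-\ad(b^{-1})v))$. Evaluating $\mathbf{C}_t e_{\lambda,b}$ at $v$ amounts to replacing $v$ by $tv$ in this formula; the scaling identity $\mathbf{a}(\exp_G(tx)) = t\mathfrak{I}_t(x)$ then transforms the exponent into $\langle -i\lambda - \rho,\,t\mathfrak{I}_t(-\ad(b^{-1})v)\rangle = \langle -i(t\lambda)-t\rho,\,\mathfrak{I}_t(-\ad(b^{-1})v)\rangle$, which matches $\varepsilon^t_{t\lambda,b}(v)$ up to the sign/inversion conventions built into the paper's definition of that symbol. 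For (ii), both $\tilde{\mathbf{V}}_1^{\lambda/t}$ and $\tilde{\mathbf{V}}_t^\lambda$ are defined as images of $\mathbf{L}^2(K/M)$ under continuous linear integration-against-a-wave maps; applying (i) with $\lambda$ replaced by $\lambda/t$, and using linearity and continuity of $\mathbf{C}_t$, one gets $\mathbf{C}_t(\tilde{\mathbf{V}}_1^{\lambda/t}) \subset \tilde{\mathbf{V}}_t^\lambda$, while the reverse inclusion follows from the explicit inverse $\mathbf{C}_t^{-1}\colon f\mapsto [x\mapsto f(x/t)]$.

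For (iii), I would first deduce from Lemma \ref{conjactions}(i), by multiplying both sides by $t$ and relabelling $x\mapsto tx$, the identity $t \cdot [\varphi_t^{-1}(g) \cdot_t x] = g \cdot (tx)$ for all $g\in G$ and $x \in \pe$. Since each $\pi_\tau$ is the quasi-regular action $\pi_\tau(\gamma)f = [x \mapsto f(\gamma^{-1}\cdot_\tau x)]$ and $\mathbf{C}_t f = [x \mapsto f(tx)]$, a direct check gives $(\mathbf{C}_t\,\pi_1(\varphi_1^{-1}(g))f)(x) = f(g^{-1}\cdot(tx))$ (using the $t=1$ case of Lemma \ref{conjactions}(i)) and $(\pi_t(\varphi_t^{-1}(g))\,\mathbf{C}_t f)(x) = f(t\cdot[\varphi_t^{-1}(g^{-1})\cdot_t x]) = f(g^{-1}\cdot(tx))$. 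For (iv), (ii) and (iii) together show $\mathbf{C}_t$ is a $G$-equivariant linear isomorphism $\tilde{\mathbf{V}}_1^{\lambda/t} \to \tilde{\mathbf{V}}_t^\lambda$; if $A$ is any other such map preserving values at zero, then $\mathbf{C}_t^{-1}A$ is a $G$-equivariant endomorphism of $\tilde{\mathbf{V}}_1^{\lambda/t}$. The spherical principal series with regular parameter being irreducible, Schur's lemma forces $\mathbf{C}_t^{-1}A = c\cdot \text{id}$ for some scalar $c$; and evaluating at, say, $f=e_{\lambda/t,1}$, which satisfies $f(0)=1$, immediately gives $c=1$, hence $A = \mathbf{C}_t$.
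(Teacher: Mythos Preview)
Your proposal is correct and follows essentially the same approach as the paper's proof. The paper's argument is extremely terse: for (i) it writes the one-line identity $\varepsilon^{t}_{t\lambda,b}(v)=e^{\langle i\lambda+\rho,\,\mathfrak{I}(b\cdot(tv))\rangle}=\varepsilon^{1}_{\lambda,b}(tv)=e_{\lambda,b}(tv)$ using the scaling $\mathfrak{I}(tv)=t\,\mathfrak{I}_t(v)$ (exactly the identity you single out from Lemma~\ref{conv_iwa}); it then says (ii) is immediate, (iii) follows from Lemma~\ref{conjactions}(i), and (iv) is Schur's lemma --- matching your outline point for point, with your version simply being more explicit.
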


\begin{proof} For (i), we use Lemma \ref{iwa_It} and find that for all $v$ in $\pe$,
\[ \varepsilon^{t}_{t\lambda, b}(v) = e^{ \ \langle \ i {\lambda} + \rho \ , \ \mathfrak{I}(b \cdot (tv) ) \ \rangle} = \varepsilon^{1}_{{\lambda}, b}(tv) = \varepsilon^{1}_{{\lambda}, b}(t v)=e_{{\lambda}, b}(tv).  \]
Deducing (ii) is immediate. Part (ii) is from Lemma \ref{conjactions}(i), and part (iv) is Schur's Lemma.
 \end{proof}
 
\begin{rem} \label{spreading} It may be instructive to compare (i) and (ii) of the above result with Proposition \ref{renorm2} and property ($\text{Nat}_{\text{gen}}$) of \S \ref{subsec:programme}: we already saw that the zooming-in operator $\mathbf{C}_t$ intertwines the $G$- and $G_t$-actions on $\pe$, so it is tempting to zoom-in on a Helgason wave for $G$ to get a wave for $G_t$; without any renormalization of the spacing between phase lines, however, they would get spread out and the zooming-in process would yield a trivial outcome. 
\end{rem}
	
To obtain a contraction from $\pi \simeq \mathbf{M}(\chi, \mu_{\text{triv}})$ to $\pi_0 \simeq \mathbf{M}_0(\chi, \mu_{\text{triv}})$, the above makes it natural to use, for the carrier space at time $t$,
\begin{equation} \tag{B, corrected} \mathbf{V}_{t} = \tilde{\mathbf{V}}_{t}^{\frac{\chi}{t}}.\end{equation}

\begin{prop} The quadruple $\left( \mathbf{E}, (\mathbf{V}_t)_{t>0}, (\pi_t)_{t>0}, (\mathbf{C}_t)_{t>0}\right)$ describes a Fréchet  contraction from $\pi$ to $\pi_0$.\end{prop}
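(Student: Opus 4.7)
The plan is to verify the four properties (Vect1), (Vect2), (Op1), (Op2) by combining the intertwining properties of the zoom-in operator (Lemma \ref{zoom_ondes}) with the convergence of Helgason waves to Euclidean plane waves (Lemma \ref{conv_ondes}), in the spirit of the argument already carried out in \S\ref{subsec:contraction_discrete} for the discrete series.

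Property (Vect1) will follow directly from Lemma \ref{zoom_ondes}(ii): its content is exactly that $\mathbf{C}_t$ carries the appropriate $G$-Helgason space into the $G_t$-Helgason space once the renormalization of the continuous parameter is absorbed into the definition of $\mathbf{V}_t$. For (Vect2), I would first handle a general $f \in \mathbf{E}$: writing $\mathbf{C}_t f (x) = f(tx)$, continuity of $f$ gives $\mathbf{C}_t f \to f(0)$ uniformly on compact subsets of $\pe$, while derivatives of positive order pick up positive powers of $t$ and vanish, so $\mathbf{C}_t f$ has a limit in the Fréchet topology of $\mathbf{E} = \mathbf{C}^\infty(\pe)$. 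When $f \in \mathbf{V}_1$ is expressed as a Poisson-type integral $\int_{K/M} e_{\chi, b}\, F(b)\, db$, Lemma \ref{zoom_ondes}(i) rewrites $\mathbf{C}_t f$ as an integral of Helgason waves for $G_t$ against the same density $F$; Lemma \ref{conv_ondes} together with the compactness of $K/M$ then allows passage to the limit under the integral (dominated convergence in the Fréchet topology, using uniform control of all derivatives of $\varepsilon^{t}_{\cdot,b}$ in $(b,x)$ over compacts) and identifies the limit with an element of
\[
\mathbf{V}_0 \;=\; \Bigl\{\, v \mapsto \int_{K/M} e^{\,i\langle \chi,\, \mathrm{Ad}(b)\cdot v\rangle}\, F(b)\, db \;\Big|\; F \in \mathbf{L}^2(K/M)\,\Bigr\}.
\]

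For (Op1) and (Op2), I would follow the template of the proof of Theorem \ref{contraction_discrete}: Lemma \ref{lipschitz_seriediscrete} adapts verbatim to furnish a distance on $\mathbf{E}$ with respect to which every $\pi_t(k,v)$, $t \in [0,1]$, is $1$-Lipschitz; the decomposition
\[
\pi_t(k,v)\,f_t \;=\; \pi_t(k,v)(f_t - F) \;+\; \pi_t(k,v)\,F
\]
then separates the analysis into a term that vanishes by (Vect2) and the Lipschitz bound, and a term $\pi_t(k,v)\,F$ that converges to $\pi_0(k,v)\,F$ by Lemma \ref{convactions} (the $G_t$-action on $\pe$ converges to the $G_0$-action by rigid motions) combined with the Fréchet continuity of the quasi-regular action on $\mathbf{C}^\infty(\pe)$. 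The resulting limit $(\mathbf{V}_0, \pi_0)$ is visibly the Fourier-dual realization of Remark \ref{remarque_fourier}, hence equivalent to $\mathbf{M}_0(\chi, \mathbf{1}) = \mathcal{M}^{-1}(\pi)$.

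The main obstacle will be the careful bookkeeping of the renormalization of the continuous parameter built into the definition of $\mathbf{V}_t$, and the verification that this renormalization is compatible with the zoom-in at every stage: as Remark \ref{spreading} emphasizes, without it the naive contraction would collapse each Helgason wave to a constant function and yield a trivial limit. Once this bookkeeping is correctly in place, Lemma \ref{conv_ondes} drives the convergence and the remaining estimates are straightforward adaptations of the arguments from \S\ref{subsec:contraction_discrete}.
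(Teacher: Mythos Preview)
Your overall strategy coincides with the paper's: for (Op1) the paper performs exactly the splitting
\[
\pi_t(k,v)\,f_t \;=\; \pi_t(k,v)(f_t - f_0) \;+\; \pi_t(k,v)\,f_0,
\]
uses Lemma \ref{lipschitz_seriediscrete} (trivial fibers) to kill the first term and Lemma \ref{convactions} to send the second to $\pi_0(k,v)f_0$. Properties (Vect1), (Vect2) and (Op2) are treated as already established via Lemmas \ref{zoom_ondes} and \ref{conv_ondes}, just as you indicate.

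There is, however, a self-contradiction in your treatment of (Vect2). You first argue that for every $f \in \mathbf{E}$ the naive zoom $\mathbf{C}_t f(x)=f(tx)$ converges to the constant $f(0)$, and then assert that for $f \in \mathbf{V}_1$ the limit is the non-constant plane-wave integral $v \mapsto \int_{K/M} e^{i\langle \chi,\mathrm{Ad}(b)v\rangle}F(b)\,db$. Since $\mathbf{V}_1 \subset \mathbf{E}$, both cannot hold; this is precisely the collapse flagged in Remark \ref{spreading}. The resolution is that the renormalization of the continuous parameter must already be absorbed into the family $(f_t)$: one works with $f_t = \int_{K/M} \varepsilon^t_{\chi,b}\,F(b)\,db$ (fixed $\chi$, in $\tilde{\mathbf{V}}_t^{\chi}$), so that Lemma \ref{conv_ondes} applies with $\lambda=\chi$ and yields the plane-wave limit. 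Your ``general $f$'' paragraph is therefore misleading here and should be dropped; once that is done, your argument is the paper's.
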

 
\noindent The only property that still needs proof is (Op1): we take up the reasoning in Lemma \ref{convop}, fix  $(k,v)$ in $K \times \pe$ and $f =   \int_{K/M} \varepsilon^1_{\lambda,b} F(b) db $ in $\mathbf{V}_1$, set $f_t = \mathbf{C}_t f =  \int_{K/M} \varepsilon^t_{\lambda,b} F(b) db$ for every $t>0$, write $f_0: v\mapsto \int_{K/M} e^{\langle i\lambda, Ad(b) \cdot v \rangle } F(b) db$ for the limit of $f_t$ as $t$ goes to zero, and remark that 
\begin{equation} \label{convop2} \pi_{t}(k,v) f_{t} = \pi_{t}(k,v)(f_{t} - f_{0}) + \pi_{t}(k,v) f_{0}. \end{equation}%
We now insert Lemma \ref{lipschitz_seriediscrete} (in the case of trivial fibers) to find that the first term goes to zero as $t$ goes to zero, and Lemma \ref{convactions} to find that the second term goes to $\pi_0(k,v) f_0$.\qed

\section{Real-infinitesimal-character representations}
\label{sec:real_inf_char}

\noindent We begin our search for a Fréchet contraction in the general case. This section describes a contraction of every irreducible tempered representation of $G$ with real infinitesimal character onto its lowest $K$-type, in case $G$ is linear connected semisimple. In \S \ref{sec:contraction_generale}, we will reduce the general case to that one.

 Fix an irreducible tempered representation $\pi$ with real infinitesimal character; write $\mu$ for its lowest $K$-type. A body of work by Vogan, Zuckerman, Schmid, Wong and others makes it possible to build out of $\mu$ a pair~$(L, \sigma)$,~where

\begin{itemize}
\item[$\bullet$] $L$ is a \emph{quasi-split} Levi subgroup of $G$, the centralizer of an elliptic element in $\g^\star$,
\item[$\bullet$] $\sigma$ is a tempered irreducible representation of $L$ with real infinitesimal character.
\end{itemize}
\noindent One can then obtain from $(L, \sigma)$ a geometric realization for $\pi$, based on the existence of a $G$-invariant complex structure on the elliptic coadjoint orbit $G/L$. Indeed, from  $(L, \sigma)$ we may form a holomorphic equivariant bundle $\mathcal{V}^\sharp$ over $G/L$  with fiber an irreducible $L$-module built from $\sigma$. The bundle usually has infinite rank; one of the Dolbeault cohomology spaces $H^{0,q}(G/L, \mathcal{V}^\sharp)$ yields a geometric realization for $\pi$ (see \S \ref{subsec:dolbeault}).

In \S \ref{sec:reduction}, we will prove that if there exists a Fréchet contraction of $\sigma$ onto its lowest $(K \cap L)$-type (and provided the contraction satisfies ``nice'' auxiliary conditions), then it is possible, using complex-analytic methods together with some of the ideas in \S \ref{subsec:serie_discrete}, to find a contraction of $\pi$ onto its lowest $K$-type. This reduces the problem of contracting $\pi$ to that of finding a (``nice'') contraction of $\sigma$. 

In \S \ref{sec:contraction_QS}, we will solve the latter problem. There $\sigma$ may be realized as an irreducible factor in a principal series representation of $L$; we will use methods of real harmonic analysis in the spirit of Helgason's wave construction and \S \ref{subsec:contraction_helgason} to obtain a (``nice'') contraction of $\sigma$.

\subsection{Reduction to the quasi-split case} \label{sec:reduction}

\noindent Throughout \S \ref{sec:reduction}, we assume that $G$ is a linear connected semisimple Lie group with finite center, we fix a  tempered irreducible representation  $\pi$ of $G$ with real infinitesimal character, and write $\mu$ for its lowest $K$-type. Our discussion is organized as follows: 
\begin{itemize}
\item[$\bullet$] In \S \ref{subsec:dolbeault}, we give details about the realization of $\pi$ by cohomological induction from $\sigma$. 
\item[$\bullet$] In \S \ref{subsec:zoom_RIC}, we construct, given a (``nice'') contraction of $\sigma$, a candidate for a Fréchet contraction of $\pi$ onto~$\mu$, and state our main finding, Theorem \ref{contraction_RIC}.
\item[$\bullet$] In \S \ref{subsec:proof}, we prove the convergence results expressed by Theorem \ref{contraction_RIC}.
\item[$\bullet$] We have gathered in \S \ref{subsec:lemmes} the proofs of several technical statements from \S \ref{subsec:zoom_RIC}.
\end{itemize}

\subsubsection{Cohomological induction and real-infinitesimal-character representations}\label{subsec:dolbeault}

We here recall how a pair $(L, \sigma)$ may be built from the lowest $K$-type $\mu$ of $\pi$, how to form the ``cohomologically induced'' module $H^{0,q}(G/L, \mathcal{V}^\sharp)$ to obtain a geometric realization for $\pi$, and how the lowest $K$-type $\mu$ can in turn be recovered from that geometrical construction. 

Fix a maximal torus $T$ in $K$. Vogan defines from $\mu$ an element $\lambda(\mu)$ in $\mathfrak{t}^\star$ (see \cite{SalamancaVogan}, Proposition 2.3 and Corollary 2.4), as follows. Write $\Delta_c^+$ for a system of positive roots for $\Delta(\ka_\C, \mathfrak{t}_\C)$, $\rho_c$ for the half-sum of positive roots, $\vec{\mu}$ for the corresponding highest weight of $\mu$. In the larger $\Delta=\Delta(\g_\C, \mathfrak{t}_C)$, fix a positive root system $\Delta^+$ making $\vec{\mu}+\rho_c$ dominant; form the corresponding half-sum  $\rho$ of positive roots; finally, define $\lambda(\mu) \in \mathfrak{t}^\star$ to be the projection of $\vec{\mu}+2\rho_c-\rho$ on the convex cone of $\Delta^+$-dominant elements.

Consider the centralizer $L$ of $\lambda(\mu)$ in $G$ for the coadjoint action. This is a \emph{quasi-split} reductive group of $G$, which is usually not compact; it admits $(K \cap L)$ as a maximal compact subgroup. 

The representation $\sigma$ of $L$ to be used in the construction is specified, for instance, in Theorem 2.9 of \cite{SalamancaVogan}: it is the inverse image of $\pi$ under the bijection (2.2) from there (here are some precisions : $\pi$ belongs to the class denoted by $\Pi^{\lambda_a}_a(G)$ in the right-hand side of Eq. (2.2) in \cite{SalamancaVogan}; the inverse image $\sigma$ of $\pi$ is an irreducible admissible representation of $L$ ; from Corollary 4.4 in \cite{SalamancaVogan} we know that $\sigma$ is tempered; from the way the bijection in \cite{SalamancaVogan} affects infinitesimal character, we know that $\sigma$ has real infinitesimal character). The unique lowest $(K \cap L)$-type of $\sigma$ is \emph{fine}, in the sense of \cite{Vogan81}, Definition 4.3.9; as a consequence, $\sigma$ appears as an irreducible constituent of a principal series representation of $L$ $-$ we shall say more on this in \S \ref{sec:contraction_QS}.

In the complexification $G_\C$, there exists a parabolic subgroup $Q$ with Levi factor $L_\C$ (for an especially convenient choice, see \cite{VoganBranching}, \S 13). Fix such a parabolic subgroup and write $\mathfrak{u}$ for the unipotent radical of the Lie algebra $\mathfrak{q}$, so that $\mathfrak{q}=\mathfrak{l}_\C \oplus \mathfrak{u}$; the complex dimension of $\mathfrak{u}$ is  $n=\frac{1}{2}(\dim(G)-\dim(L))$. The action of $L$ on $\mathfrak{q}/\mathfrak{l}_\C$ induces an action on $\mathfrak{u}$; thus, on the complex line $\wedge^{n}(\mathfrak{u})$ (top exterior power), there acts an abelian character of $L$: we will 
write $e^{2\rho(\mathfrak{u})}$ for it. 

From an irreducible $(\mathfrak{l}_\C, (K \cap L))$-module $V$ with class $\sigma$, form the $(\mathfrak{l}_\C, (K \cap L))$-module $V^\sharp = V \otimes \wedge^{n}(\mathfrak{u})$; the given choice of $Q$ equips $G/L$ with a $G$-invariant complex structure inherited from $G_\C / Q$; we can now form the holomorphic vector bundle $\mathcal{V}^\sharp=G \times_L (V^\sharp)$ over $G/L$, then the Dolbeault cohomology groups $H^{(p,q)}(G/L, \mathcal{V}^\sharp)$ (for these, see \cite{VoganVenice}, \S 7, and of course \cite{Wong99}, \S 2). Wong's work on the closed-range property for the Dolbeault operator (\cite{Wong99}, Theorem 2.4.(1)) equips $H^{(p,q)}(G/L, \mathcal{V}^\sharp)$ with a Fréchet topology and a $(\g,K)$-module structure.

The inclusion $K/(K \cap L)  \hookrightarrow G/L$ is holomorphic; we write $X$ for the (maximal) compact complex submanifold $K/(K \cap L)$ of $G/L$, and 
\[ s = \dim_\C \left( K \big/ (K \cap L )\right) \] for the complex dimension of $X$. 

\begin{thm}[\textbf{from work by  Vogan, Zuckerman, Knapp, and Wong}] The $(\g, K)$-module $H^{(0,s)}(G/L, \mathcal{V}^\sharp)$ is irreducible and its equivalence class is that of $\pi$.\end{thm}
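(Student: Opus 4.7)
The plan is to chain two established machines. Step one is to identify the Dolbeault cohomology, at the level of $(\mathfrak{g}, K)$-modules, with the cohomologically induced module $\mathcal{R}^q_{\mathfrak{q}}(V^\sharp)$ obtained from Zuckerman's derived functors acting on $V^\sharp = V \otimes \wedge^{n}(\mathfrak{u})$; step two is to use the Vogan--Zuckerman machinery to show that this cohomologically induced module vanishes outside degree $s$, is irreducible in degree $s$, has $\mu$ as its (unique) lowest $K$-type, and hence coincides with the Harish--Chandra module of $\pi$.

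For step one I would invoke Wong's comparison theorem (\cite{Wong99}, Theorem 2.4): the Dolbeault operator $\bar\partial$ acting on smooth sections of $\mathcal{V}^\sharp$ over the non-compact complex manifold $G/L$ has closed range, the cohomology $H^{(0,q)}(G/L, \mathcal{V}^\sharp)$ carries a natural Fréchet topology and continuous $G$-action, and its $K$-finite vectors form a Harish--Chandra module isomorphic to $\mathcal{R}^q_{\mathfrak{q}}(V^\sharp)$; this is precisely the maximal globalization, in Schmid's sense, of the algebraic derived-functor module. Step one reduces the theorem to a purely algebraic question. For step two, the parabolic $\mathfrak{q}$ selected in \cite{VoganBranching}, \S 13 is chosen so that, together with the real infinitesimal character of $\sigma$, the pair $(\mathfrak{q},\sigma)$ falls in the (weakly) good range. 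The standard Vogan--Zuckerman vanishing and irreducibility theorems then apply: $\mathcal{R}^q_{\mathfrak{q}}(V^\sharp)$ vanishes outside the middle degree $q = s = \dim_{\C}(K/(K\cap L))$, is irreducible and unitary in degree $s$, and its lowest $K$-types are given by Vogan's bottom-layer formula. Because $\sigma$ was constructed so that its fine lowest $(K \cap L)$-type produces precisely $\mu$ after the $\rho(\mathfrak{u})$-shift coming from the $\wedge^{n}(\mathfrak{u})$-twist, the bottom-layer formula delivers $\mu$ as the unique lowest $K$-type of $\mathcal{R}^s_{\mathfrak{q}}(V^\sharp)$. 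The Vogan--Salamanca-Riba bijection recalled in \cite{SalamancaVogan}, Theorem 2.9 (which was used to define $\sigma$ from $\mu$ in the first place) tells us that an irreducible tempered real-infinitesimal-character representation of $G$ is pinned down by its lowest $K$-type; therefore $\mathcal{R}^s_{\mathfrak{q}}(V^\sharp)$ must be the Harish--Chandra module of $\pi$, which is the desired conclusion.

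The hard step is unquestionably Wong's comparison theorem: passing from the analytic Dolbeault realization on the non-compact complex manifold $G/L$, with a holomorphic bundle of potentially infinite rank, to the purely algebraic derived-functor module rests on a closed-range property for $\bar\partial$ that does not follow from standard $L^2$-methods and that is the real analytical content of the realization. Once Wong is granted the rest is essentially bookkeeping inside the Vogan--Zuckerman package: the vanishing theorem controls the degree, the irreducibility theorem identifies the module up to isomorphism, and the bottom-layer $K$-type computation together with the rigidity built into \cite{SalamancaVogan} matches it to $\pi$ automatically.
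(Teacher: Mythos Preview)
Your proposal is correct and matches the intended route. The paper does not actually prove this theorem: it is stated as a known result attributed to Vogan, Zuckerman, Knapp, and Wong, and immediately put to use. Your two-step outline (Wong's closed-range/comparison theorem to pass from Dolbeault cohomology to the algebraic derived-functor module, then the Vogan--Zuckerman vanishing, irreducibility, and bottom-layer results together with the Salamanca-Riba--Vogan bijection to identify the module as $\pi$) is precisely the chain of references the paper is invoking, and your identification of Wong's theorem as the analytically hard step is accurate.
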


In the rest of \S \ref{sec:reduction}, we will use this realization of $\pi$ $-$  we shall thus view $\pi$ as a $(\g, K)$-module and stay away from the delicate questions related to unitary globalizations. \\

\noindent Keeping in mind that our objective is the description of a Fréchet contraction of $\pi$ onto its lowest $K$-type $\mu$, we mention an analogous geometrical realization for $\mu$.

Recall that the representation $(V,\sigma)$ of $L$ has a unique lowest $(K\cap L)$-type: let us write $\mu^\flat$ for it, and $W$ for the $\mu^\flat$-isotypical subspace of $V$. Because $\mu^\flat$ occurs with multiplicity one in $\sigma$, the $(K \cap L)$-module $W$ is irreducible. The actions of $L$ on $V$ and $\wedge^n(\mathfrak{u})$ induce an action of $(K\cap L)$ on $W$ and another action of $(K\cap L)$ on $\wedge^n(\mathfrak{u})$; we write $W^\sharp$ for the $(K \cap L)$-module $W \otimes \wedge^n(\mathfrak{u})$. We should emphasize the importance of using $\wedge^n(\mathfrak{u})$ once more: we do not switch to $\wedge^{\text{top}} (\mathfrak{u} \cap \ka)$, although $\wedge^n(\mathfrak{u})$ does not have any meaning ``internal to $K$''.

We can as before form the holomorphic vector bundle $\mathcal{W}^\sharp=K \times_{K \cap L} (W^\sharp)$ over $X=K/(K \cap L)$ (this one has finite rank), and obtain the Dolbeault cohomology groups $H^{(p,q)}(X, \mathcal{W}^\sharp)$.

\begin{prop} The $K$-module $H^{(0,s)}\left(X,  \mathcal{W}^\sharp \right)$ is irreducible and its equivalence class is $\mu$.\end{prop}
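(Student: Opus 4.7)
The plan is to recognize the statement as an instance of the Bott--Borel--Weil theorem on the compact complex homogeneous space $X = K/(K\cap L)$, which is a generalized flag manifold for~$K$.

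\textbf{Step 1 (irreducibility of the fiber).} First I would check that $W^\sharp = W \otimes \wedge^n(\mathfrak{u})$ is an irreducible $(K\cap L)$-module. Since $\mu^\flat$ is a fine $(K\cap L)$-type (\cite{Vogan81}, Def. 4.3.9) appearing with multiplicity one in $\sigma$, its isotypical component $W$ is already irreducible, of class $\mu^\flat$. The space $\wedge^n(\mathfrak{u})$ is a one-dimensional $L$-module (the character $e^{2\rho(\mathfrak{u})}$), hence in particular a one-dimensional $(K\cap L)$-module. Their tensor product $W^\sharp$ is therefore irreducible, with $\Delta_c^+ \cap \Delta(\mathfrak{l},\mathfrak{t})$-highest weight
\[ \lambda \;=\; \vec{\mu^\flat} + 2\rho(\mathfrak{u})\big|_{\mathfrak{t}}. \]

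\textbf{Step 2 (Bott--Borel--Weil).} Next I would apply Bott's theorem on $X$ to the finite-rank holomorphic homogeneous bundle $\mathcal{W}^\sharp$: the cohomology $H^{(0,q)}(X, \mathcal{W}^\sharp)$ vanishes unless $\lambda + \rho_c$ is regular with respect to $\Delta_c$, in which case there is a unique coset representative $w \in W(K,T)/W(K\cap L, T)$ for which $w(\lambda + \rho_c)$ is strictly $\Delta_c^+$-dominant; then the cohomology is concentrated in degree $q = \ell(w)$, and $H^{(0,\ell(w))}(X, \mathcal{W}^\sharp)$ is the irreducible $K$-module of highest weight $w(\lambda+\rho_c) - \rho_c$. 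The proposition is thereby reduced to a weight-and-length computation.

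\textbf{Step 3 (weight computation and identification of the degree).} Finally I would carry out that computation. The ingredients are Vogan's bottom-layer formula $\vec{\mu} = \vec{\mu^\flat} + 2\rho(\mathfrak{u}\cap\pe)$, valid for the preferred choice of $Q$ made in \cite{VoganBranching}, \S 13, and \cite{SalamancaVogan}, Thm. 2.9; the splitting $2\rho(\mathfrak{u}) = 2\rho(\mathfrak{u}\cap\ka) + 2\rho(\mathfrak{u}\cap\pe)$; and the identity $\rho_c = \rho_c^L + \rho(\mathfrak{u}\cap\ka)$. Substituting, one checks that $\lambda + \rho_c$ is regular (thanks to the real-infinitesimal-character hypothesis on $\pi$ and the compatibility of $Q$ with the dominance of $\vec{\mu} + 2\rho_c - \rho$), that the selected Weyl element is the longest coset representative in $W(K,T)/W(K\cap L,T)$, whose length equals the number of $\Delta_c^+$-positive roots in $\mathfrak{u}$, i.e.\ $s = \dim_\C X$, and that the resulting highest weight collapses to $\vec{\mu}$. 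This yields the irreducibility of $H^{(0,s)}(X, \mathcal{W}^\sharp)$ and the identification of its class with $\mu$.

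\textbf{Main obstacle.} The principal difficulty is the bookkeeping in Step 3: tracking simultaneously the compact/noncompact decomposition of $2\rho(\mathfrak{u})$, the contribution of $\rho_c - \rho_c^L$, and the action of the selected Weyl element on all of these. The ingredients are standard but dispersed across \cite{Vogan81}, \cite{VoganBranching} and \cite{SalamancaVogan}; the work of a careful write-up is to assemble them into a coherent chain of identities and to check that the choice of $Q$ guarantees the needed regularity of $\lambda + \rho_c$.
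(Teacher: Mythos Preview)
Your approach via Bott--Borel--Weil is a legitimate alternative, but it differs from the paper's proof and leaves the hardest step as an exercise. The paper does not compute weights at all: it invokes Wong's theorem (\cite{Wong99}, Thm.~2.4) to identify the Dolbeault space $H^{(0,s)}(X,\mathcal{W}^\sharp)$ with the Zuckerman-functor module $\mathcal{L}^K_s(W)$ of \cite{KnappVoganLivre}, (5.70), reads off irreducibility from the remark below Corollary~5.72 there, and then identifies the class with $\mu$ by the bottom-layer map (Thm.~2.9 in \cite{SalamancaVogan}), which carries $\mathcal{L}^K_s(W)$ isomorphically onto the $\mu$-isotypical subspace of $\mathcal{L}_s(V)\simeq\pi$. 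In short, the paper outsources both the degree-$s$ concentration and the highest-weight identification to the existing cohomological-induction literature.

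Your route is more elementary in spirit, but your Step~3 is where the actual content lives, and ``one checks that'' hides two nontrivial verifications. First, the claim that the selected Weyl element is the \emph{longest} coset representative (hence $\ell(w)=s$) is not automatic: it depends on where $\lambda+\rho_c$ sits relative to the chambers, which in turn depends on the sign convention for the complex structure on $X$ (here the antiholomorphic tangent is $\mathfrak{u}\cap\ka_\C$, as the paper notes) and on the precise choice of $\mathfrak{u}$ in \cite{VoganBranching}, \S13. Second, unwinding $w(\lambda+\rho_c)-\rho_c=\vec{\mu}$ from $\lambda=\vec{\mu^\flat}+2\rho(\mathfrak{u})$ requires tracking how the long element acts on $\rho_c^L$, $\rho(\mathfrak{u}\cap\ka)$ and $\rho(\mathfrak{u}\cap\pe)$ simultaneously; the bottom-layer identity $\vec{\mu}=\vec{\mu^\flat}+2\rho(\mathfrak{u}\cap\pe)$ you cite is exactly the output of this calculation, so you are in effect re-deriving a piece of \cite{KnappVoganLivre}, \S V.6. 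None of this is wrong, but a complete write-up would need to fix the conventions explicitly and carry out the chamber check rather than assert it.
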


\begin{proof} From Wong's work (\cite{Wong99}, Theorem 2.4) and the isomorphism theorems in \cite{KnappVoganLivre}, Chapter VIII, we know that the $(\g, K)$-module $H^{(0,s)}\left( X, \mathcal{W}^\sharp\right)$ defined above, the $(\g, K)$-module $\mathcal{R}^s(V)$ defined in \cite{KnappVoganLivre}, Eq. (5.3b), and the $(\g, K)$-module $\mathcal{L}_s(V)$  defined in \cite{KnappVoganLivre}, Eq. (5.3a), are all equivalent. To describe their common lowest $K$-type, we can proceed as follows:  Wong's work shows that the $K$-module $H^{(0,s)}\left(X,  \mathcal{W}^\sharp \right)$ is isomorphic with that written $\mathcal{L}^K_s(W)$ in \cite{KnappVoganLivre}, (5.70);  because of the remark below Corollary 5.72 in \cite{KnappVoganLivre}, the latter $K$-module is irreducible. Finally, the bottom-layer map (\cite{KnappVoganLivre}, \S V.6) induces, as recalled for instance in Theorem 2.9 in \cite{SalamancaVogan}, a $K$-equivariant isomorphism between $\mathcal{L}^K_s(W)$ (viewed as a $K$-submodule of $\mathcal{L}^K_s(V)$) and the $\mu$-isotypical subspace~in~$\mathcal{L}_s(V)~\simeq~\pi$. \end{proof}

%
%
%
%
%
\subsubsection{Zooming-in operators and contraction in Dolbeault cohomology}\label{subsec:zoom_RIC}
\noindent In this section, we show that given a sufficiently pleasant Fréchet contraction of the representation $\sigma$ of $L$ onto its lowest $(K \cap L)$-type  $\mu^\flat$, it is possible to build a Fréchet contraction of $\pi$ onto its lowest $K$-type.

The construction is quite technical: bringing in the deformation $(G_t)_{t>0}$, and running through the constructions of \S \ref{subsec:dolbeault} for $G_t$, we shall need to find a Fréchet space independent of $t$ in which one can embed all Dolbeault cohomology spaces. Since the isotropy groups $L_t$, infinite-dimensional modules ${V}_t$, and Dolbeault operators that appear in the definition of  $H^{(0,s)}(G_t/L_t, \mathcal{V}_t^\sharp)$ all will depend on $t$ in a more delicate way than the ingredients of \S \ref{sec:discreteprincipale}, and because of the quotient in the definition of cohomology, we will need some preparation before we can state our main result. We have relegated the proof of four technical statements to \S \ref{subsec:lemmes}, after the proof of Theorem~\ref{contraction_RIC}.

\paragraph{A. On a Mostow decomposition and the structure of elliptic coadjoint orbits.} We first give a meaning to the notion of ``zooming-in on a neighborhood of $K/(K\cap L)$ in $G/L$''. A few remarks on the structure of the coadjoint orbit $G/L$ will help.

Write $\mathfrak{s}$ for the orthogonal complement of $(\mathfrak{l} \cap \pe)$ in $\pe$ with respect to the Killing form; the dimension of $\mathfrak{s}$ is an even integer. Of course the map
\begin{align*} \Psi: K \times \mathfrak{s} & \rightarrow G/L\\ \left(k,v \right) & \mapsto k \cdot \exp_G(v) \cdot L \end{align*}
cannot be a global diffeomorphism, since the dimensions of the source and target spaces are different (the difference is $\dim(K\cap L)$). Still, this map will be useful to us:

\begin{lem} \label{mostow} \begin{enumerate}[(a)]
\item The map $\Psi$ is surjective.
\item Given $(k,v)$ and $(k', v')$ in $K \times \mathfrak{s}$, the condition $\Psi(k,v)=\Psi(k',v')$ is equivalent with the existence of some $u$ in $K \cap L$ such that: $v=Ad(u) \cdot v'$ and $k=k' \cdot u$.
\end{enumerate}
\end{lem}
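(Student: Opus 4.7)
The strategy is to reduce both claims to the classical Mostow decomposition of $G$ relative to the $\theta$-stable reductive subgroup $L$.

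First I would record that $L$ is indeed $\theta$-stable: since $\lambda(\mu) \in \mathfrak{t}^\star \subset \ka^\star$ and the Cartan involution fixes $\ka$ pointwise, its centralizer $L$ is preserved by $\theta$. Hence $\mathfrak{l} = (\mathfrak{l} \cap \ka) \oplus (\mathfrak{l} \cap \pe)$ and, since the Killing form restricts nondegenerately to $\mathfrak{l} \cap \pe$, one has the orthogonal decomposition $\pe = (\mathfrak{l} \cap \pe) \oplus \mathfrak{s}$. The maximal compact subgroup $K \cap L$ of $L$ preserves $\pe$, $\mathfrak{l}$, and the Killing form, so its adjoint action stabilizes $\mathfrak{s}$.

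For part (b) the ``if'' direction is a one-line computation: for $u \in K \cap L$, the identity $\exp(\ad(u) v') = u \exp(v') u^{-1}$ together with $u^{\pm 1} \in L$ (which can be absorbed into the right $L$-coset) yields $k \exp(v) L = (ku)\exp(v') L$, so that the prescribed relation between $(k,v)$ and $(k',v')$ gives $\Psi(k,v) = \Psi(k',v')$. The ``only if'' direction, combined with part (a), is the statement that $\Psi$ descends to a bijection
\[ (K \times \mathfrak{s})/(K \cap L) \;\overset{\sim}{\longrightarrow}\; G/L \]
for the free $(K \cap L)$-action $u \cdot (k, v) = (k u^{-1}, \ad(u) v)$ on $K \times \mathfrak{s}$.

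Both the surjectivity in (a) and the uniqueness half of (b) are consequences of Mostow's decomposition theorem for the $\theta$-stable reductive subgroup $L$ of $G$: the multiplication map
\[ K \times \mathfrak{s} \times L \;\to\; G, \qquad (k, v, \ell) \;\mapsto\; k \exp(v)\, \ell, \]
is surjective and its fibers are precisely the orbits of the free $(K \cap L)$-action $u \cdot (k, v, \ell) = (k u^{-1}, \ad(u) v, u \ell)$. Quotienting by right multiplication by $L$ delivers at once the surjectivity of $\Psi$ and the fact that two preimages of the same point of $G/L$ differ only by a $(K \cap L)$-translation, which is what (a) and (b) assert.

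The main obstacle is Mostow's theorem itself. The standard proof fixes a faithful finite-dimensional representation $\tau: G \to GL(V)$ and an inner product on $V$ for which $\tau(K)$ acts by isometries and $\tau(\exp(\pe))$ by positive self-adjoint operators; one then minimizes the smooth proper function $\ell \mapsto \|\tau(g\ell^{-1})\|_{\mathrm{HS}}^2$ over $\ell \in L$. Strict log-convexity of Hilbert--Schmidt norms along geodesics of the symmetric space $L/(K \cap L)$ guarantees that the minimum is attained on exactly one $(K \cap L)$-orbit, while the critical-point equation there forces the $\pe$-component (in the Cartan decomposition) of $g\ell^{-1}$ to be orthogonal to $\mathfrak{l} \cap \pe$, hence to lie in $\mathfrak{s}$. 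Once these convexity and properness estimates are in hand, the rest of the proof is bookkeeping with conjugation identities in $K \cap L$.
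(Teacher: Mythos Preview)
Your argument is correct and follows the same route as the paper: both reduce the lemma to Mostow's decomposition of $G$ relative to the $\theta$-stable subgroup $L$, then read off surjectivity and the description of the fibers. The only cosmetic difference is that the paper quotes Mostow's 1955 result in the form of a diffeomorphism $K \times \mathfrak{s} \times (\mathfrak{l}\cap\pe) \to G$, $(k,v,\beta)\mapsto k\,e^{v}e^{\beta}$, rather than your equivalent formulation via $K \times \mathfrak{s} \times L$ with $(K\cap L)$-orbit fibers, and it simply cites Mostow instead of sketching the variational proof you outline.
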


\begin{proof} Part (a) is essentially due to Mostow, who proved in 1955 (\cite{Mostow}, Theorem 5) that the map
\begin{align}\label{dec_mostow} K \times \mathfrak{s} \times (\mathfrak{l} \cap \pe) & \rightarrow G \nonumber \\ \left(k, v, \beta \right) & \mapsto k \cdot \exp_G(v) \cdot \exp_G(\beta) \end{align}
is a diffeomorphism. For (b), fix two pairs  $(k,v)$ and $(k', v')$ in $K \times \mathfrak{s}$ and assume that $\Psi(k,v)=\Psi(k',v')$, in other words that $k'e^{v'} \in k e^{v} L$; then there is an element $\beta$ in $(\mathfrak{l} \cap \pe)$ and an element $u$ in $K \cap L$ such that:  $k'e^{v'} = k e^{v} u e^{\beta} = (k'u)e^{\text{Ad}(u) v'} e^{\beta}$. Fom this and Mostow's decomposition theorem we get (b). We mention that the above is very close to the proof of Lemma 4.4 in \cite{BarchiniZierau}.\end{proof} 

Now, equip  $K \times \mathfrak{s}$ with the $K \cap L$-action in which an element $u$ of $K \cap L$ acts through $(k,v) \mapsto (uk, uv)$; write $Y$ for the quotient manifold $\left( K \times \mathfrak{s}\right)/(K \cap L)$ and $\text{proj}_Y: K \times \mathfrak{s} \to Y$ for the quotient map. Given Lemma \ref{mostow}, the unique map $\psi: Y \to G/L$ such that $\psi \circ \text{proj}_Y = \Psi$ is a global diffeomorphism. The inverse image of $X=K/(K \cap L)$ under the diffeomorphism $\psi$ is a compact submanifold of $Y$ with codimension $\dim(\mathfrak{s})$, which we will still denote by $X$.\label{varieteY} For every $t>0$, the map $(k,v) \to (k,\frac{v}{t})$ (from $K \times \mathfrak{s}$ to itself) induces  a map from $Y$ to itself; this ``zooming-in map'' preserves the compact submanifold $X$. 

\paragraph{B. Spaces of ``differential forms'' and models for the Dolbeault cohomologies.}  We now bring in the deformation $(G_t)_{t \in \R}$. Taking up notations from \S \ref{subsec:programme} and  \S \ref{subsec:dolbeault}, we assume given a Fréchet contraction 
\begin{equation} \label{frechet_rec} \left(E, (V_t)_{t>0}, (\sigma_t)_{t >0}, (\mathbf{c}_t)_{t>0} \right)\end{equation} of $\sigma$ onto its lowest $(K \cap L)$-type. For every $t>0$, we can consider the subgroup $L_t = \varphi_t^{-1}(L)$ of $G_t$ (where $\varphi_t: G_t \to G$ is the isomorphism from \S \ref{subsec:defo}); the maximal compact subgroup $K \cap L_t$ does not depend on $t$. Extending $\varphi_t$ (resp. the derivative $\phi_t$) to an isomorphism between the complexifications of $G_t$ and $G$ (resp. of $\mathfrak{g}_t$ and $\mathfrak{g}$), we set $Q_t = \varphi_t^{-1}(Q)$  to obtain a parabolic subgroup in the complexification of $G_t$; let us write $\mathfrak{u}_t= \phi_t^\star \mathfrak{u}$ for the unipotent radical of its Lie algebra. From the $(\mathfrak{l}_{t,\C}, K \cap L)$-module $V_t$ and the abelian character $e^{2\rho(\mathfrak{u}_t)}$ of $L_t$ acting on $\wedge^n(\mathfrak{u}_t)$, we can form the $(\mathfrak{l}_{t,\C}, K \cap L)$-module $V^\sharp_t = V_t \otimes \wedge^n(\mathfrak{u}_t)$ and the Dolbeault cohomologies $H^{p,q}(G_t/L_t, \mathcal{V}^\sharp_t)$ from~\S~\ref{subsec:dolbeault}.

The results recalled in \S \ref{subsec:dolbeault} show that $H^{0,s}(G_t/L_t, \mathcal{V}^\sharp_t)$ carries an irreducible tempered representation of $G_t$ with lowest $K$-type $\mu$. \\

\noindent The program described in \S \ref{subsec:programme} can only be realized if we embed all of the Fréchet spaces $H^{0,s}(G_t/L_t, \mathcal{V}^\sharp_t)$ into a common Fréchet space. To do so, we need a convenient definition for the notion of ``differential form of type $(0,s)$ on $G_t/L_t$ with values in $\mathcal{V}_t^\sharp$'', and for the notions of ``closed'' and ``exact'' forms. 

 It will be helpful to assume the given contraction \eqref{frechet_rec} to have additional~``nice''~properties: 
\begin{hyp} \label{bonnedefo}
\begin{enumerate}[i.]\leavevmode
\item The Fréchet space $E$ is nuclear.
\item  For every $t>0$, the irreducible representation $\sigma_t: L_t \to \text{End}(V_t)$ is the restriction to $V_t$ of a (very reducible) smooth representation $\sigma_t: L_t \to \text{End}(E)$ ; furthermore, for every $u$ in $K \cap L_t = K \cap L$, the operator $\sigma_t(u)$ does not depend on $t$.
\item Suppose $\mathfrak{B}$ is a compact subset of $ \pe$. Then the Fréchet topology of $E$ may be defined by a distance with the property that there exists some $\kappa>0$ such that
\begin{enumerate}[(a)]
\item each the of $\sigma_t(k,v)$, $k \in K\cap L$, $v \in \mathfrak{B}\cap \mathfrak{l}_t \cap$, $t \in ]0,1]$,
\item each of the $\sigma_t(X)$, $X \in U(\mathfrak{l}_{t, \C})$ (enveloping algebra),
\item and each of the $\mathbf{c}_t, t \in ]0,1]$, 
\end{enumerate}
is $\kappa$-Lipschitz as an endomorphism of ${V}_t$.
\end{enumerate}
\end{hyp}

Property $iii.$ will be used at the end of this subsection, when we will need an analogue of Lemma \ref{lipschitz_seriediscrete} (see page \pageref{oplip}). Property $ii$, of the kind already encountered in \eqref{operat_disc} and \eqref{operat_princ}, makes it possible for us to define for every $t>0$ a holomorphic vector bundle $\mathcal{E}_t^\sharp$ over $G_t/L_t$.  In order to circumvent, in our discussion of the space of differential forms with values in $\mathcal{E}_t^\sharp$, some of the analytical difficulties related with the fact that $\mathcal{E}_t^\sharp$ is an infinite-rank bundle, we will follow Wong \cite{Wong99} and define the Dolbeault complex directly: for each nonnegative~integer~$q$,~set
\begin{equation} \label{omega_t} \Omega_t^q =\left\{ \varphi \in \mathcal{C}^\infty\left(G_t, \ E^\sharp \otimes \wedge^q(\mathfrak{u}_t^\star)\right) \quad | \quad  \forall \ell \in L_t, \forall \gamma \in G_t, \quad \varphi(\gamma\ell)= \varsigma_t(\ell)^{-1} f(\gamma) \right\} \end{equation}
where $\varsigma_t$ is the $L_t$-representation obtained from the action $\sigma_t$ on $E$, from the action of $L_t$ on $\wedge^n(\mathfrak{u}_t)$ through an abelian character denoted $e^{2\rho(\mathfrak{u}_t)}$, and from the adjoint-action-induced representation $\xi_t$ of $L_t$ on $\wedge^q(\mathfrak{u}_t^\star)$, by setting $\varsigma_t = \sigma_t \otimes e^{2\rho_{\mathfrak{u}_t}} \otimes \xi_t$. The notion of smooth function on $G_t$ with values in $E_t^\sharp \otimes \wedge^q(\mathfrak{u}_t^\star)$ is that from \cite{Treves}, Chapter 40 (see also \cite{Wong99}, \S 2). In the present context, the Dolbeault operator $\bar{\partial}_{\Omega^q_t}: \Omega^q_t \to \Omega^{q+1}_t$ is defined in Wong \cite{Wong99}, \S 2; we omit the details. 

Using the map 
\begin{align*} \Psi_t: K \times \mathfrak{s} & \rightarrow G_t/L_t\\ \left(k, v \right) & \mapsto k \cdot \exp_{G_t}(v) \cdot L_t \end{align*}
from Lemma \ref{mostow}, we can now exhibit a Fréchet space isomorphic with $\Omega^t_q$, but contained in a Fréchet space independent of $t$. Note first that the derivative of $\Psi_t$ at $(1_K, 0)$ induces a $K\cap L$-equivariant isomorphism between
 \[ \mathfrak{r}=\left(\ka/(\ka \cap \mathfrak{l})\right) \oplus \mathfrak{s}\]
 and $\g_t/\mathfrak{l}_t$, and between the complexifications $\mathfrak{r}_\C$ and $\g_{t,\C} / \mathfrak{l}_{t,\C}$. Write $\iota_t: \mathfrak{r}_\C  \to \g_{t,\C} / \mathfrak{l}_{t,\C}$ for that map, and
\begin{equation} \label{eta_t} \eta_t \subset \mathfrak{r}_\C\end{equation} for the inverse image of $\mathfrak{q}_t/\mathfrak{l}_{t, \C}$ under $\iota_t$. Identifying  $\mathfrak{q}_t/\mathfrak{l}_{t, \C}$ with $\mathfrak{u}_t$, we obtain a $(K \cap L)$-equivariant isomorphism
\begin{equation}\label{iota} \iota_t: \eta_t \to \mathfrak{u}_t; \end{equation}
that one will make it possible to view $\eta_t$ as the ``antiholomorphic tangent space to the manifold $Y$ from page \pageref{varieteY} at $\text{proj}_Y[(1_K, 0)]$, when $Y$ is equipped with the complex structure from $G_t/L_t$''.

We now define our spaces of ``differential forms of degree $q$ on $Y$ with values in $\mathcal{E}^\sharp$'' and ``differential forms of type $(0,q)$ for the $G_t$-invariant complex structure on $Y$'', setting
\begin{align} \small  A^q & :=\left\{ f \in \mathcal{C}^\infty\left(K \times \mathfrak{s}, E^\sharp \otimes \wedge^q(\mathfrak{r_\C}^\star)\right) \quad \big| \quad  \begin{aligned} & \forall u \in K \cap L, \forall (k,v) \in K \times \mathfrak{s}, \\ & f(ku, \text{Ad}(u)v)= \lambda(u)^{-1} f(k,v) \end{aligned}\quad \right\} \\
\text{and}& \nonumber \\
 A^{(0,q)_t} & :=\left\{ f \in \mathcal{C}^\infty\left(K \times \mathfrak{s}, E^\sharp \otimes \wedge^q(\mathfrak{\eta}_t^\star)\right) \quad \big| \quad  \begin{aligned} & \forall u \in K \cap L, \forall (k,v) \in K \times \mathfrak{s}, \\ & f(ku, \text{Ad}(u)v)= \lambda(u)^{-1} f(k,v) \end{aligned}\quad  \right\}\\ & \nonumber \\ &= \left\{ f \in A^q \quad / \quad \text{$f$ is $E^\sharp \otimes \wedge^q(\mathfrak{\eta}_t^\star)$-valued}\quad \right\},\nonumber\end{align}
 \normalsize
where  $\lambda: K \cap L \to \text{End}\left(E^\sharp \otimes \wedge^q(\mathfrak{r}_\C^\star)\right)$ denotes the action of $K \cap L$ on $E^\sharp \otimes \wedge^q(\mathfrak{r}_\C^\star)$ induced by $\sigma_t$, by $e^{2\rho_{\mathfrak{u}_t}}$ and by the natural representation of $K\cap L$ on $\mathfrak{r}$. It should be noted that $\lambda$ does not depend on $t$ and preserves $E^\sharp \otimes \wedge^q({\eta}_t^\star)$: this is due to Assumption \ref{bonnedefo}(i) and to the easily verified fact that $e^{2\rho(\mathfrak{u}_t)}|_{(K \cap L_t)}$ does not depend on $t$.

We equip $A^q$ with the Fréchet topology from \cite{Treves}, Chapter 40 (see \cite{Wong99}, page 5); then $A^{(0,q)_t} $ is closed~in~$A^q$.

Using Mostow's decomposition \eqref{dec_mostow}, we can extend every element in $A^{(0,q)_t} $ to an element of $\Omega_t^q$, obtaining an isomorphism of Fréchet spaces
\begin{align}\label{identif} \mathcal{I}_t: A^{(0,q)_t}  & \to \Omega^q_t \nonumber \\ f&  \mapsto \Phi^t_f :\end{align}
to be precise, when $f$ is an element of $A^{(0,q)_t} $ and $\gamma$ is in $G_t$, define $\Phi^t_f(\gamma)$ by writing $\gamma=k \exp_{G_t}^{v} \exp_{G_t}^{\beta}$   for the Mostow decomposition of $\gamma$ (here $k\in K$, $v \in \mathfrak{s}$, $\beta \in \mathfrak{l}_t \cap \pe$), then setting $\Phi^t_f(\gamma) =\varsigma_t( \exp_{G_t}({- \beta})) f(k,v)$ (where we identified  $\wedge^q(\mathfrak{\eta_t}^\star)$ and $\wedge^q(\mathfrak{u}_t^\star)$ through the isomorphism $\iota_t$ from \eqref{iota}). The map $\Phi^t_f$ is an element of $\mathcal{C}^\infty\left(G_t, E^\sharp \otimes \wedge^q(\mathfrak{u}_t^\star)\right)$, and it is easily verified that it satisfies the $L_t$-equivariance condition from \eqref{omega_t}, so that $\Phi^t_f \in \Omega_t^q$. To check that $\mathcal{I}_t$ is an isomorphism, we only have to add that every $\Phi$ in $\Omega^q_t$ is the image under $\mathcal{I}_t$ of the map $(k,v) \mapsto \Phi(k\exp_{G_t}(v))$ (here we keep identifying $\wedge^q(\mathfrak{\eta_t}^\star)$ and $\wedge^q(\mathfrak{u}_t^\star)$ through $\iota_t$).

By using $\mathcal{I}_t$ to transfer the group action and Dolbeault operator on $\Omega^q_t$ to $A^{(0,q)_t}$, we obtain, for every $t>0$:
\begin{itemize}
\item[$\bullet$] An action of $G_t$ on $A^{(0,q)_t}$; when $f$ is an element $A^{(0,q)_t}$, we will write 
\begin{equation} \label{ac_t} \textbf{ac}_t(k,v)[f]\end{equation} for the image of $f$ under the action of an element $(k,v)$ in $G_t$. 
\item[$\bullet$] A linear $\textbf{ac}_t(G_t)$-invariant differential operator
\[ {\bar{\partial}}_t: A^{(0,q)_t} \to A^{(0,q+1)_t},\] whose range is closed thanks to Wong's work \cite{Wong99}.
\end{itemize}

\noindent We now define the ``spaces of $\bar{\partial}_t$-closed and $\bar{\partial}_t$-exact forms''
\begin{equation*} F^q_t = \left\{ f \in A^q \quad / \quad f \text{ is } V_t^\sharp \otimes \wedge^q(\mathfrak{\eta_t}^\star)\text{-valued}  \quad \text{ and } \quad \bar{\partial}_t f = 0\right\}, \quad \text{and} \end{equation*}
\begin{equation*} X^q_t = \left\{ f \in A^q \quad / \quad f \text{ is } V_t^\sharp \otimes \wedge^q(\mathfrak{\eta_t}^\star)    \text{-valued, and there exists  $\omega \in A^{q-1}$ so that }  \bar{\partial}_t \omega = f\right\}. \end{equation*}
For every $t>0$,  $F^q_t$ and $X^q_t$ are  $\textbf{ac}_t(G_t)$-invariant closed subspaces of $A^{(0,q)_t} $, hence of $A^q$. The induced action and topology on $F^s_t/X^s_t$ equip that quotient with the structure of an admissible representation of $G$; given the results recalled in \S \ref{subsec:dolbeault}, this representation is irreducible, tempered, and has real infinitesimal character and lowest $K$-type $\mu$.\\

There is of course an analogous description of the (less subtle) notion of differential form on $K/(K \cap L)$ with values in the (finite-rank) vector bundle $\mathcal{W}^\sharp$ whose fiber is
\begin{itemize}
\item[$\bullet$] the subspace $W=\left\{ \lim_{t \to 0} \mathbf{c}_t \phi, \ \phi \in V_1 \right\}$ of $E$ obtained by going through the Fréchet contraction of $\sigma$ onto its lowest $(K \cap L)$-type $\mu^\flat$,
\item[$\bullet$] equipped with the action  $\mu^\flat \otimes e^{2(\rho_{\mathfrak{u}})_{|(K \cap L)}}$ of $K \cap L$.
\end{itemize} 
The intersection $\mathfrak{u}_t \cap \ka_\C$ does not depend on $t$ and can be identified with the antiholomorphic tangent space to $K/(K \cap L)$ at $1_K(K \cap L)$. The inverse image $\iota_t^{-1}(\mathfrak{u}_t \cap \ka_\C)$ is a vector subspace of $\left(\ka/(\ka\cap \mathfrak{l})\right)$ that does not depend on $t$ and is contained in each of the $\eta_t$, $t>0$: we will write $\eta_0$ for it. The space of differential forms of type $(0,q)$ on $K/(K \cap L)$ with values in $\mathcal{W}^\sharp$ can thus be identified with 
\begin{equation*} B^q = \left\{ f \in A^q \quad / \quad f \text{ is } W^\sharp \otimes \wedge^q( \eta_0 )^\star\text{-valued}  \quad \text{ and } \quad \forall (k,v) \in K \times \mathfrak{s}, \ f(k,v)=f(k,0) \right\}. \end{equation*}
For every $t>0$, we remark that $B^q$ is contained in $A^{(0,q)_t}$ and that the action $\textbf{ac}_t$ induces an action of $K$ on $B^q$, which does not depend on $t$; we will write $\textbf{ac}: K \to \text{End}(B^q)$ for it. For every $q$ is defined, as before, an $\textbf{ac}(K)$-invariant Dolbeault operator $\bar{\partial}_0: B^q \to B^{q+1}$ with closed range (the closed-range property is from \cite{Wong95}); we set 
\[ F^q_0 = \textbf{Ker}\left(\bar{\partial}_0: B^q \to B^{q+1}\right) \quad \text{and} \quad X^q_0 = \textbf{Im}\left(\bar{\partial}_0: B^{q-1} \to B^{q}\right), \]
so that $F^q_0$ and $X^q_0$ are $K$-invariant subspaces of $A^q$; the quotient $F^q_0/X^q_0$ carries an irreducible $K$-module of class $\mu$.

\paragraph{C. Contraction operators.}  We started this subsection by giving a meaning to the notion of ``zooming-in on a neighborhood of $K \cap (K \cap L)$ in $G/L$''. Furthermore, we assumed given a family of contraction operators $\mathbf{c}_t: E \to E$ that act on the fibers of the holomorphic vector bundles considered in this section. For every $t>0$, the operator $\mathbf{c}_t$  induces an endomorphism $\tilde{\mathbf{c}}_t$ of $E^\sharp \otimes \wedge^q \mathfrak{r}_\C^\star$: using the decomposition $\mathfrak{r} = \ka/(\ka\cap \mathfrak{l})\oplus \mathfrak{s}$ and writing $\tilde{\phi_t}:  \mathfrak{r} \to \mathfrak{r}$ for the map $k+v \to k+tv$ induced by the isomorphism $\varphi_t: G_t \to G$, and $\wedge^p\tilde{\phi}_t^\star$ for the endomorphism of $\wedge^q \mathfrak{r*}_\C^\star$ induced by $\tilde{\phi_t}$, we set $\tilde{\mathbf{c}}_t=\mathbf{c}_t \otimes \wedge^p\tilde{\phi}_t^\star$.

Combining these two operations on the base space and on the fibers, we obtain the linear automorphisms of $A^q$ to be used for our contraction purposes,
\begin{align} \label{contract_dolb} \mathbf{C}_t: A^q & \to A^q \\ f & \mapsto \mathbf{C}_t f = \text{ the map } (k,v) \mapsto \tilde{\mathbf{c}}_t  \cdot f(k, tv) \quad (k \in K, v \in \mathfrak{s}) .\nonumber\end{align}

We now point out that $\mathbf{C}_t$ intertwines the action of $G_1$ on $A^{(0,q)_1}$ and that of $G_t$ on $A^{(0,q)_t}$, as well as the Dolbeault operators $\bar{\partial}:= \bar{\partial}_1$ and $\bar{\partial}_t$:

\begin{lem} \label{intertw_dolb} Fix $t>0$,  $f$ in $A^{(0,q)_1}$ and $g$ in $G_1$. Then $\mathbf{C}_t\left[ \textbf{ac}(g) \cdot f\right] = \textbf{ac}_t\left(\varphi_t^{-1}(g)\right) \cdot \left[ \mathbf{C}_t f\right]$.\end{lem}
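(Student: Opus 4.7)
The plan is to unravel both sides through the isomorphism $\mathcal{I}_t$ of \eqref{identif} and the Mostow decomposition \eqref{dec_mostow}, then to exploit the compatibility of the $G$- and $G_t$-Mostow decompositions under $\varphi_t$. Fix $(k,v)\in K\times\mathfrak{s}$ and let $g^{-1}\,k\exp_G(tv) = k_\sharp\exp_G(v_\sharp)\exp_G(\beta_\sharp)$ be its $G$-Mostow decomposition, with $k_\sharp\in K$, $v_\sharp\in\mathfrak{s}$, $\beta_\sharp\in\mathfrak{l}\cap\pe$. Transferring from $\Omega^q_1$ via $\mathcal{I}_1$ and then applying $\mathbf{C}_t$, the left-hand side evaluates to
\[ \bigl(\mathbf{C}_t[\textbf{ac}(g)\cdot f]\bigr)(k,v) \;=\; \tilde{\mathbf{c}}_t\cdot[\textbf{ac}(g)\cdot f](k,tv) \;=\; \tilde{\mathbf{c}}_t\circ\varsigma_1(\exp_G(-\beta_\sharp))\cdot f(k_\sharp,v_\sharp). \]

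The key intermediate claim is that $\varphi_t^{-1}(g)^{-1}\cdot_t k\cdot_t\exp_{G_t}(v) = k_\sharp\cdot_t\exp_{G_t}(v_\sharp/t)\cdot_t\exp_{G_t}(\beta_\sharp/t)$ is the $G_t$-Mostow decomposition. To check it, apply the group isomorphism $\varphi_t$: using the identity $\varphi_t(\exp_{G_t}(x))=\exp_G(tx)$ (as in the proof of Lemma \ref{iwa_gt}), both sides become $g^{-1}k\exp_G(tv)$ in $G$, so they agree in $G_t$ by injectivity. The right-hand factors are Mostow-admissible because $\mathfrak{s}$ and $\mathfrak{l}\cap\pe=\mathfrak{l}_t\cap\pe$ coincide as vector subspaces of $\pe$ for every $t$ (the Killing-form orthogonalities in $\g$ and $\g_t$ agree on $\pe\times\pe$ up to the scalar $t^2$). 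It follows that
\[ \bigl(\textbf{ac}_t(\varphi_t^{-1}(g))\cdot[\mathbf{C}_t f]\bigr)(k,v) \;=\; \varsigma_t(\exp_{G_t}(-\beta_\sharp/t))\cdot[\mathbf{C}_t f](k_\sharp,v_\sharp/t) \;=\; \varsigma_t(\exp_{G_t}(-\beta_\sharp/t))\circ\tilde{\mathbf{c}}_t\cdot f(k_\sharp,v_\sharp). \]

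The lemma thus reduces to the fiberwise identity $\tilde{\mathbf{c}}_t\circ\varsigma_1(\exp_G(-\beta_\sharp)) = \varsigma_t(\exp_{G_t}(-\beta_\sharp/t))\circ\tilde{\mathbf{c}}_t$ on $E^\sharp\otimes\wedge^q\mathfrak{r}_\C^\star$. Decomposing $\varsigma_t = \sigma_t\otimes e^{2\rho(\mathfrak{u}_t)}\otimes\xi_t$ and $\tilde{\mathbf{c}}_t = \mathbf{c}_t\otimes\wedge^q\tilde{\phi}_t^\star$, this splits into three tensor-factor checks: on the $\sigma$-factor it is exactly property $(\text{Nat}_{\text{RIC}})$ of the given contraction $\mathbf{c}_t$, applied to the element $\ell=\exp_G(-\beta_\sharp)\in L$; on the abelian-character factor it is the scalar equality $e^{2\rho(\mathfrak{u}_t)}(\varphi_t^{-1}\ell) = e^{2\rho(\mathfrak{u})}(\ell)$, which follows from $\det\mathrm{Ad}_t(\varphi_t^{-1}\ell)|_{\mathfrak{u}_t} = \det\mathrm{Ad}(\ell)|_{\mathfrak{u}}$ via the isomorphism $\phi_t:\mathfrak{u}_t\to\mathfrak{u}$; on the $\wedge^q$-factor it amounts to the intertwining of the $L$- and $L_t$-coadjoint actions on $\mathfrak{u}^\star$ and $\mathfrak{u}_t^\star$, once the latter are pulled back to $\mathfrak{r}_\C^\star$ via $\iota_t^\star$ and $\tilde{\phi}_t^\star$.

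The main obstacle is the third check: the finite-dimensional but intricate bookkeeping on the anti-holomorphic fiber factor, where one must keep straight the identifications $\iota_t:\eta_t\to\mathfrak{u}_t$ of \eqref{iota}, the rescaling $\tilde{\phi}_t$ on $\mathfrak{r}$, and their duals on exterior powers, and verify that they were set up precisely so as to match the $L$- with the $L_t$-adjoint actions. Once this purely algebraic compatibility is established, combining the three factor-wise intertwinings concludes the proof.
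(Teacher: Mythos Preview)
Your proof is correct and follows essentially the same approach as the paper: compute both sides via the Mostow decomposition, use the compatibility $\kappa_t\circ\varphi_t^{-1}=\kappa$, $X_t\circ\varphi_t^{-1}=X/t$, $\beta_t\circ\varphi_t^{-1}=\beta/t$, and reduce to the fiberwise intertwining of $\tilde{\mathbf{c}}_t$ with $\varsigma$ and $\varsigma_t\circ\varphi_t^{-1}$. Your decomposition of that fiberwise identity into three tensor-factor checks is in fact more explicit than the paper, which invokes $(\text{Nat}_{\text{RIC}})$ for $\mathbf{c}_t$ and leaves the character and exterior-power factors implicit; the bookkeeping you flag on the $\wedge^q$-factor is exactly the naturality of the adjoint action under the isomorphism $\phi_t$, and needs no further idea.
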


\begin{lem} \label{int_dolbeault} For every $t>0$, the operator $\mathbf{C}_t^{-1} \circ \bar{\partial}_t \circ \mathbf{C}_t$ is none other than $\bar{\partial}$. \end{lem}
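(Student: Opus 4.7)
The plan is to transfer the equality from $A^{(0,q)_t}$ to the standard Dolbeault complex on $G_t$ via the isomorphism $\mathcal{I}_t$ of \eqref{identif}, and there to identify the conjugated contraction $\mathcal{I}_t \circ \mathbf{C}_t \circ \mathcal{I}_1^{-1}$ with the pullback by a biholomorphism (up to a fiberwise identification). Since pullbacks along biholomorphisms commute with Dolbeault operators on appropriately identified equivariant bundles, the identity will then follow.

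The first step is a direct computation, using the Mostow formula built into the definition of $\mathcal{I}_t$, the definition \eqref{contract_dolb} of $\mathbf{C}_t$, and the relation $\varphi_t\bigl(k \cdot_t \exp_{G_t}(v) \cdot_t \exp_{G_t}(\beta)\bigr) = k\, \exp_G(tv)\, \exp_G(t\beta)$ for $k \in K$, $v \in \mathfrak{s}$, $\beta \in \mathfrak{l} \cap \pe$: it shows that for every $\Phi \in \Omega^q_1$,
\[
\bigl(\mathcal{I}_t \circ \mathbf{C}_t \circ \mathcal{I}_1^{-1}\bigr)(\Phi)(g) \;=\; \tilde{\mathbf{c}}_t \cdot \Phi\bigl(\varphi_t(g)\bigr) \qquad \text{for all } g \in G_t.
\]
The second step is the observation that $\varphi_t: G_t \to G$ is a Lie group isomorphism sending $L_t$ to $L$ and the parabolic subgroup $Q_t = \varphi_t^{-1}(Q)$ to $Q$, hence induces a biholomorphism between $G_t/L_t$ and $G/L$ for the complex structures used in \S \ref{subsec:dolbeault}. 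Combined with the first step, this reduces the lemma to the statement that $\tilde{\mathbf{c}}_t$ identifies $\varphi_t^\star \mathcal{V}^\sharp$ with $\mathcal{V}_t^\sharp$ as a holomorphic $L_t$-equivariant bundle, which amounts to checking that $\tilde{\mathbf{c}}_t$ is $L_t$-equivariant when $L_t$ acts on the source via $\varsigma \circ \varphi_t$ and on the target via $\varsigma_t$.

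That $L_t$-equivariance will be the main obstacle, and it decomposes into three pieces matching the three factors of $\tilde{\mathbf{c}}_t = \mathbf{c}_t \otimes \wedge^q \tilde\phi_t^\star$. First, the identity $\mathbf{c}_t\, \sigma\bigl(\varphi_t(\ell)\bigr) = \sigma_t(\ell)\, \mathbf{c}_t$ for $\ell \in L_t$ is exactly the property $(\text{Nat}_{\text{RIC}})$ of the given Fréchet contraction of $\sigma$, valid since $\sigma$ has real infinitesimal character. Second, the abelian characters satisfy $e^{2\rho(\mathfrak{u}_t)}(\ell) = e^{2\rho(\mathfrak{u})}\bigl(\varphi_t(\ell)\bigr)$ for $\ell \in L_t$, because the Lie algebra isomorphism $\phi_t$ sends $\mathfrak{u}_t$ to $\mathfrak{u}$ and intertwines $\ad_{G_t}$ with $\ad_G \circ \varphi_t$. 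Third, the pullback $\wedge^q \tilde\phi_t^\star$ intertwines the $\ad_{L}$-action on $\wedge^q \mathfrak{u}^\star$ (transported via $\iota_1$) with the $\ad_{L_t}$-action on $\wedge^q \mathfrak{u}_t^\star$ (transported via $\iota_t$) for the same Lie-algebraic reason. These three compatibilities are precisely what the definition \eqref{contract_dolb} of the contraction operator is designed to encode.
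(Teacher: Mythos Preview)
Your proof is correct and follows essentially the same route as the paper: transfer the identity to the $\Omega^q$-complexes via $\mathcal{I}_t$, compute that the conjugated contraction equals pullback by the group isomorphism $\varphi_t$ (together with the fiberwise map), and conclude by naturality of the Dolbeault operator under holomorphic bundle isomorphisms. The paper states the holomorphic bundle isomorphism more summarily, whereas you unpack the $L_t$-equivariance of $\tilde{\mathbf{c}}_t$ into its three constituents; this is a welcome elaboration but not a different argument.
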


For the proofs of Lemmas \ref{int_dolbeault} and \ref{intertw_dolb}, see \S \ref{subsec:lemmes}. They have an immediate consequence:

\begin{cor} \label{vect1_dolb} If $f$ lies in $F^q$, then $\mathbf{C}_t f$ lies in $F^q_t$; if $f$ lies in $X^q$, then $\mathbf{C}_t f$ lies in $X^q_t$.\end{cor}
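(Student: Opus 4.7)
The plan is to derive the corollary as a formal consequence of Lemma \ref{int_dolbeault}, once one checks that $\mathbf{C}_t$ respects the valuedness requirement built into the definitions of $F^q_t$ and $X^q_t$.

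For the two $\bar{\partial}$-conditions, Lemma \ref{int_dolbeault} essentially suffices. If $f \in F^q$, then $\bar{\partial}_t(\mathbf{C}_t f) = \mathbf{C}_t(\bar{\partial} f) = 0$; and if $f = \bar{\partial}\omega \in X^q$ with $\omega \in A^{q-1}$, the same lemma gives $\mathbf{C}_t f = \bar{\partial}_t(\mathbf{C}_t \omega)$, and $\mathbf{C}_t \omega$ manifestly lies in $A^{q-1}$, since the construction \eqref{contract_dolb} preserves both the form degree and the $K \cap L$-equivariance.

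The content lies in the valuedness check: if $f$ takes values in $V_1^\sharp \otimes \wedge^q(\eta_1^\star)$, then $\mathbf{C}_t f$ must take values in $V_t^\sharp \otimes \wedge^q(\eta_t^\star)$. The two tensor factors can be treated independently. On the $V^\sharp$-factor, $\mathbf{c}_t$ carries $V_1$ into $V_t$ by property (Vect1) of the given Fréchet contraction of $\sigma$; Assumption \ref{bonnedefo}(ii) ensures that the auxiliary $\wedge^n(\mathfrak{u})$-line presents no obstruction, since the restriction of the character $e^{2\rho(\mathfrak{u}_t)}$ to $K \cap L$ does not depend on $t$. On the form factor, I would unwind the definitions of $\iota_t$ and $\eta_t$ to obtain the identity $\iota_t = \phi_t^{-1} \circ \iota_1 \circ \tilde{\phi}_t$ on $\mathfrak{r}_\C$ (a one-line differentiation of the obvious commutative diagram relating $\Psi_t$ to $\varphi_t \circ \Psi_1$), which yields $\eta_t = \tilde{\phi}_t^{-1}(\eta_1)$; the pullback $\wedge^q \tilde{\phi}_t^\star$ then sends $\wedge^q(\eta_1^\star)$ onto $\wedge^q(\eta_t^\star)$, as required.

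I expect this last compatibility between $\tilde{\phi}_t$ and the subspaces $\eta_t$ to be the one piece requiring genuine verification; once it is in place, both halves of the corollary follow as one-line consequences of Lemma \ref{int_dolbeault}.
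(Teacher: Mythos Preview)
Your proposal is correct and aligns with the paper's treatment: the paper states the corollary as an ``immediate consequence'' of Lemmas \ref{intertw_dolb} and \ref{int_dolbeault} and gives no further proof, so you have simply made explicit the two ingredients it leaves implicit---Lemma \ref{int_dolbeault} for the $\bar{\partial}$-conditions, and the valuedness check $\tilde{\mathbf{c}}_t\bigl(V_1^\sharp \otimes \wedge^q(\eta_1^\star)\bigr) \subset V_t^\sharp \otimes \wedge^q(\eta_t^\star)$. Your derivation of $\eta_t = \tilde{\phi}_t^{-1}(\eta_1)$ via $\iota_t = \phi_t^{-1}\circ\iota_1\circ\tilde{\phi}_t$ (obtained by differentiating $\varphi_t\circ\Psi_t(k,v)=\Psi_1(k,tv)$ at the basepoint) is exactly the computation one needs, and it is indeed implicit in the paper's setup of Lemmas \ref{intertw_dolb}--\ref{int_dolbeault}, which already presuppose that $\mathbf{C}_t$ carries $A^{(0,q)_1}$ into $A^{(0,q)_t}$.

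One small remark: your appeal to Assumption \ref{bonnedefo}(ii) for the $\wedge^n(\mathfrak{u})$-line is slightly off target---that assumption concerns the $t$-independence of $\sigma_t|_{K\cap L}$, which is used in the paper to make the equivariance condition defining $A^q$ independent of $t$, not to handle the twist. The $\wedge^n(\mathfrak{u}_t)$-factor is simply a one-dimensional complex line, and the paper tacitly identifies all of them (via $\phi_t$) with a fixed copy inside $E^\sharp$; no further input is needed there.
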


\paragraph{D. Representatives whose contraction is harmonic.} Our aim of contracting $\pi$ onto its lowest $K$-type $\mu$ now seems within reach: at this stage, we can associate, to every vector in the carrier space  $H^{(0,s)}(G/L, \mathcal{V}^\sharp)$ for $\pi$, an element in the carrier space $H^{(0,s)}(K/(K \cap L), \mathcal{W}^\sharp)$ for $\mu$. Indeed, if $f \in F^s:=F^s_1$ is a representative of a cohomology class in $H^{(0,s)}(G/L, \mathcal{V}^\sharp)$, then as $t$ goes to zero, $f_t:=\mathbf{C}_t f$ goes (for  the given topology on $A^q$) to an element $f_0$ in $A^q$ that is constant in the $\mathfrak{s}$-directions and is $W^\sharp \otimes \left( \ka / (\ka \cap \mathfrak{l})\right)$-valued. Given the definitions of the Dolbeault operators, when $f$ lies in $F^s$, the contraction $f_0$ lies in $F^s_0$ ; furthermore, if $f$ and $g$ are two elements of $F^s$ with the same cohomology class (meaning that $f-g$ lies in $X^s:=X^s_1$), then $f_0$ and $g_0$ are two elements of $F^s_0$ with the same cohomology class (meaning that $f_0-g_0 \in X^s_0$): so the class of $f_0$ in $F^s_0/X^s_0$ depends only of that of $f$ in $F^s/X^s$, and defines an element in $H^{(0,s)}(K/(K \cap L), \mathcal{W}^\sharp)$. 

However, in order to achieve a full implementation of the program in \S \ref{subsec:programme}, we need to embed all of the $F^s_t/X^s_t$, $t>0$, inside a common Fréchet space. The need to mod out a closed subspace $X^s_t$ that depends on $t$ will make things slightly acrobatic, and we will have to choose a representative in $F^s_t$ for each cohomology class in $F^s_t/X^s_t$; in other words, we will choose for every $t>0$ a ``sufficiently pleasant'' algebraic complement to $X^s_t$ within $F^s_t$. 

Now, the complex manifold $K/(K \cap L)$ is compact:  in the space $B^q$ of differential forms with type $(0,q)$ with values in the (finite-rank) bundle $\mathcal{W}^\sharp$, we can call in the usual notion of \emph{harmonic form} (see for instance \cite{VoisinHodge}, \S II.6): a $\bar{\partial}$-closed form of type $(0,q)$ on $K/(K \cap L)$ with values in $\mathcal{W}^\sharp$ is harmonic when it is orthogonal, for the inner product on forms associated with the $K$-invariant Kähler metric on $K/(K \cap L)$ and a $K$-invariant inner product on $W$, to every $\bar{\partial}$-exact form. We thus have a notion of harmonic form in $B^s$, and a harmonic form can be $\bar{\partial}_0$-exact only if it is zero; in addition, if $f \in B^q$ is harmonic, then $\textbf{ac}(k) \cdot f$ is harmonic too. For background on harmonic forms, see \cite{VoisinHodge}, Chapter III, especially around Theorems 5.22-5.24.

Write $H^s_0$ for the subspace of $B^s$ consisting of harmonic forms: this is an $\mathbf{ac}(K)$-invariant subspace of $F^s_0$, and we have $F^s_0 = H^s_0 \oplus X^s_0$. Now, define
\begin{equation*} H^s:= \left\{  f \in F^s \enskip | \enskip f_0 \in H^s_0 \right\},\end{equation*}
a subset of $A^{(0,s)}$.

\begin{lem} \label{dec_fermees} The space $F^s$ of closed forms decomposes as $F^s=H^s+X^s$.\end{lem}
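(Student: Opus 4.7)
The plan is to reduce the decomposition at "time $t=1$" to the classical Hodge decomposition on the compact Kähler manifold $X=K/(K\cap L)$ with coefficients in the finite-rank holomorphic vector bundle $\mathcal{W}^\sharp$ (that is, the $t=0$ picture), and then to use the contraction operators $\mathbf{C}_t$ and the intertwining relation of Lemma~\ref{int_dolbeault} to lift the primitive of the exact part from $t=0$ up to $t=1$.

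Fix $f\in F^s$ and let $f_0=\lim_{t\to 0}\mathbf{C}_t f\in F^s_0$ (the limit exists by the constructions of this subsection and lies in $F^s_0$ because $\mathbf{C}_t$ intertwines $\bar{\partial}$ and $\bar{\partial}_t$). Since $X$ is compact Kähler and $\mathcal{W}^\sharp$ has finite rank, standard Hodge theory supplies an $\mathbf{ac}(K)$-equivariant orthogonal decomposition
\[ F^s_0 \;=\; H^s_0 \,\oplus\, X^s_0, \]
so we may write $f_0=h_0+\bar{\partial}_0\omega_0$ with $h_0\in H^s_0$ harmonic and $\omega_0\in B^{s-1}$. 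Because $B^{s-1}\subset A^{(0,s-1)_t}$ for every $t>0$, we may also view $\omega_0$ as an element of $A^{(0,s-1)_1}$, and set
\[ x:=\bar{\partial}\omega_0 \in X^s, \qquad h:=f-x\in F^s. \]

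It then suffices to verify $h\in H^s$, i.e.\ that $\lim_{t\to 0}\mathbf{C}_t h=h_0$; equivalently, that $\lim_{t\to 0}\mathbf{C}_t x=\bar{\partial}_0\omega_0$. By Lemma~\ref{int_dolbeault},
\[ \mathbf{C}_t x \;=\; \mathbf{C}_t\bar{\partial}\omega_0 \;=\; \bar{\partial}_t\,\mathbf{C}_t\omega_0, \]
so we are reduced to the two convergence statements
\[ \mathbf{C}_t\omega_0\xrightarrow[t\to 0]{}\omega_0 \quad\text{in } A^{s-1}, \qquad \bar{\partial}_t\,\mathbf{C}_t\omega_0\xrightarrow[t\to 0]{}\bar{\partial}_0\omega_0 \quad\text{in } A^s. \]
The first is reasonable because $\omega_0$ is constant in the $\mathfrak{s}$-directions and takes values in $W^\sharp\otimes\wedge^{s-1}\eta_0^\star$: the wedge factor $\wedge^{s-1}\tilde{\phi}_t^\star$ only dilates the $\mathfrak{s}^\star$-coordinates and so acts trivially on $\wedge^{s-1}\eta_0^\star$, giving $(\mathbf{C}_t\omega_0)(k,v)=(\mathbf{c}_t\otimes\mathrm{id})\omega_0(k,0)$, which converges to $\omega_0(k,0)=\omega_0(k,v)$ in the Fréchet topology by the behavior of the underlying contraction of $\sigma$ on $W$. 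The second follows from a weak continuity of the family $(\bar{\partial}_t)_{t\in [0,1]}$ of first-order differential operators on the relevant section spaces, analogous to Lemma~\ref{continuite_delta} in the discrete series setting.

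The hard part will be making those two convergences fully rigorous: establishing the weak continuity of $(\bar{\partial}_t)_{t\geq 0}$ requires unwinding Wong's coefficient formulas in the moving trivialization $\iota_t$, and propagating the pointwise convergence $\mathbf{c}_t\omega_0(k,0)\to\omega_0(k,0)$ to a convergence of $\bar{\partial}_t$-derivatives in the Fréchet topology of $A^s$ relies on the $t$-uniform Lipschitz control provided by Hypothesis~\ref{bonnedefo}(iii). Both are estimates of the type already collected in \S\ref{subsec:lemmes}; once they are in hand the decomposition $F^s=H^s+X^s$ follows immediately.
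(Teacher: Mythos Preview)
Your outline is the same as the paper's: Hodge-decompose $f_0$ on the compact $X$ as $h_0+\bar\partial_0\beta$, regard $\beta\in B^{s-1}$ as an element of $A^{(0,s-1)_1}$ (what the paper calls the ``trivial extension'' $\gamma$), set $x=\bar\partial\,\beta\in X^s$, and check that $f-x\in H^s$. The only difference is how the key assertion $x_0=\bar\partial_0\beta$ (equivalently $(f-x)_0=h_0$) is handled. The paper dispatches it in a single parenthetical, ``because $f_0-\omega_0=a$ is harmonic,'' relying on the compatibility of the contraction map $f\mapsto f_0$ with the two Dolbeault complexes announced in paragraph~D just before the lemma. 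You instead propose to prove it via a weak-continuity statement for the family $(\bar\partial_t)_{t\in[0,1]}$, together with $\mathbf c_t\!\upharpoonright_W\to\mathrm{id}_W$; that route would certainly work, but it is heavier machinery than the paper deploys, and as you candidly note it is not carried to completion. Your identification of the step that needs justification is accurate; the paper is simply terser about it and does not spell out the analogue of Lemma~\ref{continuite_delta} in this setting.
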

 For the proof, see \S \ref{subsec:lemmes}. By calling in the ``nuclear space'' hypothesis in Assumption \ref{bonnedefo}, we can go a step futher: not only does every class in $F^s/X^s$ admit exactly one representative in $H^s$, but we can obtain a \emph{closed} subspace of $H^s$ comprising exactly one representative for each cohomology class. 
\begin{lem} \label{nucleaire}  There exists a closed subspace $\Sigma$ of $H^s$ such that  that $F^s = \Sigma \oplus X^s$.\end{lem}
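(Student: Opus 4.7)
I reduce the statement to a splitting problem. Observe that any closed subspace $\Sigma \subset H^s$ that is an algebraic complement of $H^s \cap X^s$ inside $H^s$ automatically satisfies $\Sigma + X^s = H^s + X^s = F^s$ (the last equality by Lemma \ref{dec_fermees}) and $\Sigma \cap X^s = \Sigma \cap H^s \cap X^s = 0$; the topological character of the direct sum $F^s = \Sigma \oplus X^s$ then follows from the open mapping theorem for Fréchet spaces. So it suffices to split the continuous surjection $\tau : H^s \twoheadrightarrow F^s/X^s$ whose surjectivity is Lemma \ref{dec_fermees}.

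The first move is to establish nuclearity of the ambient Fréchet spaces. Assumption \ref{bonnedefo}(i) makes $E$ nuclear; Grothendieck's theorem on smooth functions with values in a nuclear Fréchet space then makes $A^s \subset \mathcal{C}^\infty(K \times \mathfrak{s},\, E^\sharp \otimes \wedge^s \mathfrak{r}_\C^\star)$ nuclear, and hence so are its closed subspaces $F^s$, $H^s$, $X^s$, $H^s \cap X^s$ and the quotient $F^s/X^s$. The latter is a smooth admissible irreducible Fréchet $G$-representation whose Harish-Chandra module is $\pi$; its $K$-isotypic subspaces $(F^s/X^s)_\lambda$ are therefore finite-dimensional.

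Next I construct a $K$-equivariant candidate for $\Sigma$ one $K$-type at a time. For each $K$-type $\lambda$ occurring in $\pi$, the $K$-equivariance of $\tau$ and Lemma \ref{dec_fermees} show that $\tau$ surjects from the $\lambda$-isotypic part of the $K$-finite vectors in $H^s$ onto the finite-dimensional $(F^s/X^s)_\lambda$; by elementary finite-dimensional linear algebra (complementing the kernel $(H^s \cap X^s)_\lambda$) I choose a finite-dimensional $K$-invariant subspace $\Sigma_\lambda \subset H^s$ mapping isomorphically onto $(F^s/X^s)_\lambda$. Define $\Sigma$ as the Fréchet closure in $H^s$ of the algebraic direct sum $\bigoplus_\lambda \Sigma_\lambda$. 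To verify that $\tau|_\Sigma$ is a topological isomorphism, I invoke the smooth Peter--Weyl theory for nuclear Fréchet representations of the compact group $K$: it provides continuous $K$-isotypic projections whose partial sums converge pointwise to the identity on $F^s/X^s$ and on $H^s$. Injectivity of $\tau|_\Sigma$ then follows because any $f \in \Sigma$ with $\tau(f) = 0$ has each $K$-isotypic component $P_\lambda^{H^s} f$ trapped inside $\Sigma_\lambda$ (by continuity of $P_\lambda^{H^s}$ applied to the closure) and inside $\ker \tau$, hence zero, so $f = \sum_\lambda P_\lambda^{H^s} f = 0$. Surjectivity is obtained by lifting the convergent $K$-isotypic expansion of any $\bar g \in F^s/X^s$ to a convergent series in $\bigoplus_\lambda \Sigma_\lambda \subset \Sigma$.

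\textbf{Main obstacle.} The delicate step is verifying convergence of the lifts in the surjectivity argument: a priori, the finite-dimensional inverses $(\tau|_{\Sigma_\lambda})^{-1}$ might amplify the Fréchet seminorms enough to destroy convergence of the lifted series. The nuclearity hypothesis of Assumption \ref{bonnedefo}(i) is what rescues the construction---it provides smooth Peter--Weyl decay estimates on both sides of $\tau$, and the $K$-equivariance of each isomorphism $\tau|_{\Sigma_\lambda}$, combined with Schur-type bounds on $K$-intertwiners between isomorphic finite-dimensional $K$-modules, yields the uniform control in $\lambda$ needed to lift Fréchet-convergent series to Fréchet-convergent series in $H^s$.
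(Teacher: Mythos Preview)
Your opening reduction is exactly right and agrees with the paper: it suffices to produce a closed complement $\Sigma$ to $H^s \cap X^s$ inside $H^s$, after which Lemma~\ref{dec_fermees} and the open mapping theorem do the rest. Your nuclearity paragraph is likewise correct and is precisely what the paper invokes.

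Where your argument departs from the paper is in \emph{how} you produce $\Sigma$, and the surjectivity step of your $K$-type construction has a genuine gap. To show that $\sum_\lambda (\tau|_{\Sigma_\lambda})^{-1}(\bar g_\lambda)$ converges in $H^s$, you need the operator norms of the inverses $(\tau|_{\Sigma_\lambda})^{-1}$ (measured from a seminorm on $F^s/X^s$ to one on $H^s$) to grow at most polynomially in $\lambda$. The ``Schur-type bounds'' you invoke do not deliver this: Schur's lemma tells you that a $K$-intertwiner between two copies of $m_\lambda \cdot V_\lambda$ is encoded by a matrix in $\text{GL}(m_\lambda,\C)$, but it provides no control whatsoever on the size of that matrix or of its inverse. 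Since the finite-dimensional complements $\Sigma_\lambda$ were chosen arbitrarily (any algebraic complement of the kernel will do), there is no mechanism forcing these inverses to be uniformly, or even polynomially, bounded as $\lambda$ varies. Rapid Peter--Weyl decay on the target $F^s/X^s$ therefore need not survive the lift to $H^s$, and your series may diverge.

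The paper's proof bypasses this difficulty entirely. Having observed that $H^s$ and its closed subspace $H^s\cap X^s$ are nuclear Fr\'echet spaces, it notes that the identity map of $H^s\cap X^s$ is a nuclear linear map; Grothendieck's extension theorem for nuclear maps then furnishes a continuous (in fact nuclear) extension $\mathcal{F}\colon H^s \to H^s\cap X^s$. Since $\mathcal{F}$ restricts to the identity on $H^s\cap X^s$, it is automatically a continuous projection, and $\Sigma := \ker\mathcal{F}$ is the required closed complement. No $K$-type bookkeeping and no growth estimates are needed; nuclearity (Assumption~\ref{bonnedefo}(i)) does all the work by itself.
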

\noindent For the proof, see \S \ref{subsec:lemmes}.

Now fix such a $\Sigma$. For every $t>0$, the space 
\begin{equation} \label{sigma_t} \Sigma_t = \left\{ \mathbf{C}_t[f] \ | \ f \in \Sigma \right\}\end{equation}
is a closed complement to $X^s_t$ in $F^s_t$; since $X^s_t$ and $\Sigma_t$ are closed in $F^s_t$, the projection
\begin{equation} \label{repclasses} q_t: \Sigma_t \to F^s_t/X^s_t \end{equation}
is a Fréchet isomorphism. Thus, we have defined a subspace of $A^s$ such that every class in $H^{(0,s)}(G_t/L_t, \mathcal{V}_t^\sharp)$ has a unique representative in $\Sigma_t$.

In Lemma \ref{nucleaire}, we cannot expect $\Sigma_t$ to be stable under the action $\mathbf{ac}_t(G_t)$ of \eqref{ac_t}. We can however use the isomorphism  \eqref{repclasses} to transfer to $\Sigma_t$ the action $\textbf{ac}^{\text{quo}}_t$ of $G_t$ on $F^s_t/X^s_t$ inherited from $\mathbf{ac}_t$: recall that the projection 
\begin{equation} \label{rep_grosses_classes} p_t: F^s_t \to F^s_t/X^s_t \end{equation}
makes it possible to define for every $\gamma$ in $G_t$ a map $\textbf{ac}^{\text{quo}}_t(\gamma)$ from $F^s_t/X^s_t$ to itself, where
\begin{equation} \label{rep_quotient} \text{if $\omega = p_t(F)$ for some $F$ in $F^s_t$, then } \quad \textbf{ac}^{\text{quo}}_t(\gamma) \cdot \omega= p_t(\textbf{ac}_t(\gamma) F). \end{equation}
We then set
\begin{align} \label{op_Gt_dolb} \pi_t(k,v) \cdot F :&= q_t^{-1} \left[ \textbf{ac}^{\text{quo}}_t(k,v) \cdot q_t(F) \right]\\ &=  q_t^{-1} \left[p_t\left( \textbf{ac}_t(k,v) \cdot F\right) \right]\end{align}
for every $(k,v)$ in $K\times \pe$ and every $F$ in $\Sigma_t$: the representation $\pi_t: G_t \to \textbf{End}(\Sigma_t)$ is irreducible tempered and its lowest $K$-type is $\mu$.

\paragraph{E. Conclusion.} We now have all the pieces for a Fréchet contraction of $\pi$ onto $\mu$. Consider 
\begin{align} \tag{A} \mathbf{E}& =A^s ;\\
\tag{B} \mathbf{V}_t &= \Sigma_t  \text{ from \eqref{sigma_t}}; \\
\tag{B'} \pi_t  &\text{ from \eqref{op_Gt_dolb}}; \\
\tag{C} \mathbf{C}_t  &\text{ from \eqref{contract_dolb}}.
\end{align}
The following result is the announced reduction theorem.

\begin{thm} \label{contraction_RIC} The quadruple $\left(\mathbf{E}, (\mathbf{V}_t )_{t>0}, (\pi_t)_{t>0}, (\mathbf{C}_t)_{t>0}\right)$ describes a Fréchet contraction of the real-infinitesimal character representation $\pi$ onto its lowest $K$-type $\mu$. \end{thm}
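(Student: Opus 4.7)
Properties (A), (B), (B') and (C) are built into the construction of the preceding pages: $\mathbf{E}=A^s$ is a Fréchet space, each $\mathbf{V}_t=\Sigma_t$ is a closed linear subspace, each $\pi_t$ of \eqref{op_Gt_dolb} transports the irreducible $G_t$-module $H^{(0,s)}(G_t/L_t,\mathcal{V}_t^\sharp)$ to $\Sigma_t$ via the isomorphism $q_t$, and $\mathbf{C}_t$ is a linear endomorphism of $A^s$. Property (Vect1) is immediate from Corollary \ref{vect1_dolb} together with the definition $\Sigma_t = \mathbf{C}_t(\Sigma)$ in \eqref{sigma_t}. For (Vect2) I would unpack $(\mathbf{C}_t f)(k,v) = \tilde{\mathbf{c}}_t \cdot f(k,tv)$: the evaluation $f(k,tv)$ converges to $f(k,0)$ in $E^\sharp \otimes \wedge^s\mathfrak{r}_\C^\star$, uniformly with all derivatives on compact subsets of $K\times\mathfrak{s}$; the factor $\mathbf{c}_t$ converges on $E$ by the (Vect2) of the assumed Fréchet contraction of $\sigma$; and $\wedge^s\tilde{\phi}_t^\star$ extends continuously at $t=0$ to the operator that kills every covector dual to $\mathfrak{s}$. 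The product of these three limits defines the element $f_0 \in A^s$, which automatically lies in $B^s$ (i.e., is $W^\sharp\otimes\wedge^s\eta_0^\star$-valued and independent of~$v$).

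Next I would identify $\mathbf{V}_0$. The map $f\mapsto f_0$ from $\Sigma$ into $B^s$ is $K$-equivariant because $\mathbf{C}_t$ commutes with the $K$-action (the restriction $\tilde\phi_t|_{\mathfrak{k}/(\mathfrak{k}\cap\mathfrak{l})}$ is the identity, and by Assumption \ref{bonnedefo}(ii) $\mathbf{c}_t|_K$ does not depend on $t$). If $f\in\Sigma\subset H^s$ then by the very definition of $H^s$ one has $f_0 \in H^s_0$, so $\mathbf{V}_0 \subset H^s_0$. Conversely, $H^s_0$ is irreducible of class $\mu$ as a $K$-module (it maps isomorphically to $F^s_0/X^s_0\simeq\mu$), and the composition $\Sigma \simeq F^s/X^s \xrightarrow{\mathrm{proj}_\mu} \mu \simeq H^s_0$ is nonzero; thus $\mathbf{V}_0 = H^s_0$, a finite-dimensional $K$-invariant subspace of $A^s$ on which $K$ acts through~$\mu$.

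The main obstacle is (Op1). Fix $(k,v)\in K\times\pe$ and $f \in \Sigma$, and write $f_t=\mathbf{C}_t f$. From \eqref{op_Gt_dolb}, $\pi_t(k,v) f_t = q_t^{-1}\bigl[p_t(\mathbf{ac}_t(k,v) f_t)\bigr]$, and I would decompose
\[ \mathbf{ac}_t(k,v) f_t = \mathbf{ac}_t(k,v)(f_t - f_0) + \mathbf{ac}_t(k,v) f_0. \]
Mimicking the argument for Theorem \ref{contraction_discrete}, the explicit formula for $\mathbf{ac}_t$ (via the Mostow decomposition, which varies smoothly in $t$ thanks to Lemma \ref{rouviere}) together with Assumption \ref{bonnedefo}(iii) produces, as in Lemma \ref{lipschitz_seriediscrete}, a single distance on $A^s$ for which the operators $\mathbf{ac}_t(k,v)$ are Lipschitz with constant uniform in $t\in[0,1]$; so the first summand tends to zero. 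For the second summand, since $f_0$ is constant in the $\mathfrak{s}$-directions and $W^\sharp\otimes\wedge^s\eta_0^\star$-valued, an inspection of the Mostow decomposition of $\gamma^{-1}k\exp_{G_t}(v')$ using Lemma \ref{rouviere} and the Campbell–Hausdorff expansion shows that $\mathbf{ac}_t(k,v) f_0$ converges in $A^s$ to $\mathbf{ac}(k)f_0 \in H^s_0 = \mathbf{V}_0$. Finally, since $\mathbf{ac}(k)f_0$ already lies in $\Sigma_0 = H^s_0$, the projection $q_t^{-1} \circ p_t$ fixes the limit (harmonic representatives are canonical complements, and $\Sigma_t \to H^s_0$ in a controlled way because $\Sigma_t = \mathbf{C}_t\Sigma$), so $\pi_t(k,v) f_t \to \mathbf{ac}(k) f_0$, depending only on $F=f_0$.

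Property (Op2) then follows: setting $\pi_0(k,v)F := \mathbf{ac}(k) F$ on $\mathbf{V}_0 = H^s_0$ defines a linear representation of $G_0 = K\ltimes\pe$ that is trivial on $\pe$ and realizes the irreducible $K$-representation $\mu$. By the first Remark of \S \ref{subsec:correspondance}, this is exactly $\mathbf{M}_0(0,\mu)$, and Theorem \ref{correspondance} identifies its Mackey–Higson image as the unique real-infinitesimal-character tempered representation of $G$ with lowest $K$-type $\mu$ — namely $\pi$.
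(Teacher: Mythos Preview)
Your treatment of (Vect1), (Vect2) and the identification $\mathbf{V}_0=H^s_0$ is essentially fine, though for the latter the paper's argument (its Lemma \ref{pas_de_blague}) via the bottom-layer map is cleaner than your sketch involving $\mathrm{proj}_\mu$.

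The genuine gap is in your (Op1) argument. You decompose $\mathbf{ac}_t(k,v)f_t = \mathbf{ac}_t(k,v)(f_t-f_0) + \mathbf{ac}_t(k,v)f_0$ and argue that this converges to $\mathbf{ac}(k)f_0$ in $A^s$; granting that, you must then apply $q_t^{-1}\circ p_t$ and conclude that $\pi_t(k,v)f_t = q_t^{-1}p_t\bigl(\mathbf{ac}_t(k,v)f_t\bigr)$ also converges to $\mathbf{ac}(k)f_0$. But $q_t^{-1}\circ p_t$ is the projection of $F^s_t$ onto $\Sigma_t$ along $X^s_t$, and all three spaces $F^s_t$, $X^s_t$, $\Sigma_t$ move with $t$. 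Your parenthetical ``$\Sigma_t\to H^s_0$ in a controlled way'' is not a proof: you would need a uniform Lipschitz (or at least equicontinuity) estimate on the family $(q_t^{-1}\circ p_t)_{t\in(0,1]}$ of projections, together with an argument that their limit at $t=0$ is the harmonic projection onto $H^s_0$, and you provide neither. There is also a domain issue hidden here: $f_0\in B^s$ is $\bar\partial_0$-closed, but nothing you have said forces $\bar\partial_t f_0=0$, so $\mathbf{ac}_t(k,v)f_0$ need not lie in $F^s_t$, and $p_t$ is only defined on $F^s_t$.

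The paper sidesteps this completely by using the intertwining relation (Lemma \ref{intertw_dolb}) \emph{before} taking limits. Since $\mathbf{C}_t$ sends $F^s$ to $F^s_t$, $X^s$ to $X^s_t$ and $\Sigma$ to $\Sigma_t$, one has $q_t^{-1}\circ p_t\circ\mathbf{C}_t = \mathbf{C}_t\circ q_1^{-1}\circ p_1$ on $F^s$, whence
\[
\pi_t(k,v)f_t
= q_t^{-1}p_t\,\mathbf{C}_t\bigl[\mathbf{ac}(k\exp_G(tv))f\bigr]
= \mathbf{C}_t\Bigl[q_1^{-1}p_1\bigl(\mathbf{ac}(k\exp_G(tv))f\bigr)\Bigr].
\]
Now the only $t$-dependent operator outside the brackets is $\mathbf{C}_t$, and inside the brackets the argument $\mathbf{ac}(k\exp_G(tv))f$ converges to $\mathbf{ac}(k)f$ by continuity of the fixed $G$-action. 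One then needs Lipschitz control only on the \emph{fixed} operators $q_1^{-1}$, $p_1$ and on the family $(\mathbf{C}_t)$; that is precisely Lemma \ref{oplip}. Splitting off the term $\mathbf{ac}(k)f\in\Sigma$ (for which $q_1^{-1}p_1$ is the identity) yields \eqref{decomp_op}, and the second term goes to zero. This pull-back to $t=1$ is the missing idea in your argument.
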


\subsubsection{Proof of Theorem \ref{contraction_RIC}, and a remark} \label{subsec:proof}
\noindent We have already checked properties {(Vect1)} and {(Vect2)} of \S \ref{subsec:programme}: see Corollary \ref{vect1_dolb} and \S \ref{subsec:zoom_RIC}.\,D. Property ($\text{Nat}_{\text{RIC}})$ follows from Lemma \ref{intertw_dolb} togeher with the fact that $X^s_t$ is $\mathbf{ac}_t(G_t)$-stable. We now need to prove properties (Op1) and (Op2). 

Let us consider the outcome
\[ \mathbf{V}_0 = \left\{ f_0 \ / \ f \in \Sigma\right\}\]
of the contraction; this is a subspace of $F^s_0$. The contraction of every element in $\Sigma$ is by definition a harmonic form, so that $\mathbf{V}_0 \subset H^s_0$. The reverse inclusion is true, and guarantees that the $K$-module $\mathbf{V}_0$ is irreducible of class $\mu$: 

\begin{lem} \label{pas_de_blague} Every harmonic form in $B^s$ is the contraction of a form in $\Sigma$: we have $\mathbf{V}_0 = H^s_0$.\end{lem}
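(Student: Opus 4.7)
The plan is to exploit the $K$-equivariance of the contraction together with the fact that the target $H^s_0$ is an irreducible $K$-module of class $\mu$, and that $\mu$ occurs with multiplicity one in $\pi \simeq F^s/X^s$; the lemma will then reduce to a nonvanishing statement which I would settle by identifying the induced map on cohomology with the bottom-layer projection of cohomological induction.

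First I would show that the map $\tau: F^s \to F^s_0$, $f \mapsto f_0 := \lim_{t \to 0} \mathbf{C}_t f$, is $K$-equivariant for the action $\mathbf{ac}$. A direct computation shows that $\mathbf{C}_t$ commutes with $\mathbf{ac}(k)$ for every $k \in K$ and $t>0$: the $K$-action translates the first factor of $K \times \mathfrak{s}$ (and, thanks to Assumption \ref{bonnedefo}(ii), acts on fibers by a $t$-independent operator), while $\mathbf{C}_t$ only rescales the $\mathfrak{s}$-factor and applies $\tilde{\mathbf{c}}_t$ on fibers, so the two operations act on disjoint factors. Passing to the limit yields $K$-equivariance of $\tau$. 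Next, $\tau$ sends $X^s$ into $X^s_0$: if $f = \bar{\partial}\omega$ with $\omega \in A^{s-1}$, then Lemma \ref{int_dolbeault} gives $\mathbf{C}_t f = \bar{\partial}_t(\mathbf{C}_t \omega)$ for every $t>0$, and combining weak continuity of $\bar{\partial}_t$ in $t$ with the closed-range property of $\bar{\partial}_0$ from \cite{Wong95, Wong99} yields $f_0 \in X^s_0$. Hence $\tau$ descends to a $K$-equivariant map $\bar{\tau}: F^s/X^s \to F^s_0/X^s_0$.

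Because $F^s/X^s \simeq \pi$ contains $\mu$ with multiplicity one and $F^s_0/X^s_0 \simeq \mu$ is irreducible, Schur's lemma implies that $\bar{\tau}$ vanishes on every non-$\mu$ isotypical component and is, restricted to the $\mu$-component, either zero or an isomorphism. If $\bar{\tau} \equiv 0$, then for every $f \in \Sigma \subset H^s$ the element $f_0$ would lie in $H^s_0 \cap X^s_0$, which is $\{0\}$ by orthogonality of harmonic and $\bar{\partial}_0$-exact forms on the compact Kähler manifold $K/(K \cap L)$; this would force $\mathbf{V}_0 = \{0\}$. To rule out this alternative, I would identify $\bar{\tau}$ with (a nonzero multiple of) the bottom-layer projection of cohomological induction (\cite{KnappVoganLivre}, \S V.6): concretely, for a harmonic representative $\omega \in H^s_0$, one can produce via the realization used in \S \ref{subsec:dolbeault} (following \cite{Wong99}) an explicit $\bar{\partial}$-closed lift $B(\omega) \in F^s$ whose cohomology class spans the $\mu$-isotypical component of $\pi$; the key computation is that $\tau(B(\omega))$ equals a nonzero scalar multiple of $\omega$, using that $\mathbf{c}_0$ restricted to $W$ acts trivially (by construction of the contraction of $\sigma$ onto its lowest $(K \cap L)$-type).

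The main obstacle is this last compatibility between the Dolbeault realization of the bottom-layer map and the contraction operators $\mathbf{C}_t$; once it is established, $\bar{\tau}$ is surjective, and translating through the Fréchet isomorphisms $\Sigma \simeq F^s/X^s$ (Lemma \ref{nucleaire}) and $H^s_0 \simeq F^s_0/X^s_0$ gives the surjectivity of $\Sigma \to H^s_0$, $f \mapsto f_0$, i.e., $\mathbf{V}_0 = H^s_0$.
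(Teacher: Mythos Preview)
Your proposal is correct and follows essentially the same route as the paper. Both arguments reduce the lemma to a nonvanishing statement for a $K$-equivariant map into the irreducible $K$-module $F^s_0/X^s_0$, and both settle that nonvanishing by identifying the contraction-induced map on cohomology with (the inverse of) the bottom-layer map of \cite{KnappVoganLivre}, \S V.6. The only cosmetic difference is that the paper looks at the injection $\mathbf{V}_0 \hookrightarrow F^s_0/X^s_0$ and argues it must be surjective unless $\mathbf{V}_0=\{0\}$, whereas you look directly at the descended map $\bar{\tau}: F^s/X^s \to F^s_0/X^s_0$; these are two presentations of the same map. In both cases the decisive step---that the bottom-layer lift is compatible with the contraction $f\mapsto f_0$---is stated as an inspection of definitions rather than fully written out; your remark that this uses $\mathbf{c}_0|_W=\mathrm{id}$ is exactly the relevant ingredient.
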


\begin{proof} Since the subspace $\Sigma_0$ of $F^s_0$ is $\mathbf{ac}(K)$-invariant, the projection $\Sigma_0 \to F^s_0/ X^s_0$ is a morphism of $K$-modules; since every element in $\mathbf{V}_0$ is harmonic and no nonzero harmonic form can be exact, this morphism is injective.  Now, the $K$-module $F^s_0/X^s_0$ is irreducible, so that morphism must also be surjective, unless $\mathbf{V}_0$ be zero. But if that were the case, for every $f$ in $F^s = \Sigma \oplus X^s$, the contraction $f_0$ would lie in $X^s_0$ and be exact. 

To prove that this is impossible, we call in the bottom-layer map of \cite{KnappVoganLivre}, \S V.6 (see also \cite{SalamancaVogan}, \S 2, and especially \cite{HermitianFormsSMF}, \S 9). When applied to the $(K \cap L)$-module $W^\sharp$, that map induces an injective morphism of $K$-modules from $F^s_0/X^s_0$ to $F^s/X^s$, whose image is the subspace $\widetilde{W}$ of $F^s/X^s$ carrying the lowest $K$-type of that $G_1$-representation. Inspecting definitions reveals that the map sending an element of $\widetilde{W}$ to its inverse image by the bottom-layer map coincides with the map induced by the contraction  $f \mapsto f_0$. If $f_0$ were exact for every $f$ in $\mathbf{V}_1$, we could deduce $F^s_0=X^s_0$, and that is not true.\end{proof}

Finally, fix an element $F$ in $\mathbf{V}_0$ and an element $f$ in $\Sigma_1$ satisfying $F=f_0$, and set $f_t = \mathbf{C}_t f$ for every $t>0$. To complete the proof of Theorem \ref{contraction_RIC}, we need only check that $\pi_t(k,v) f_t$ goes to $\textbf{ac}(k) \cdot f_0$ as $t$ goes to zero. 

Fix $(k,v)$ in $K \times \pe$. Given Lemma \ref{intertw_dolb} and the definition of our subspaces $\Sigma_t$ in \eqref{sigma_t}, we may write
\begin{align} 
\pi_t(k,v) f_t &= q_t^{-1}\left[ p_t\left( \mathbf{ac}_t(k,v) \cdot \mathbf{C}_t f \right) \right] \nonumber \\
&=q_t^{-1} \circ p_t  \mathbf{C}_t\left[  \left( \textbf{ac}(k\exp_G(tv)) \cdot f \right) \right] \nonumber \\ 
&= \mathbf{C}_t\left[ q_1^{-1} \circ p_1 \left( \textbf{ac}(k\exp_G(tv)) \cdot f \right) \right]  \nonumber \\ 
&= \mathbf{C}_t\left[ q_1^{-1} \circ p_1 \left( \textbf{ac}(k) \cdot f \right) \right]  +  (\mathbf{C}_t\circ q_1^{-1} \circ p_1) \left[ \textbf{ac}(k\exp_G(tv)) \cdot f  - \mathbf{ac}(k) \cdot f \right].\nonumber 
\end{align}
Recall that $f$ lies in $\Sigma_1$, and note that $\Sigma_1$ is $\mathbf{ac}(K)$-invariant, so that $p_1 \left( \textbf{ac}(k) \cdot f \right) =  q_1 \left( \textbf{ac}(k) \cdot f \right)$; applying Lemma \ref{intertw_dolb} again and inserting the fact that $\mathbf{ac}_t(k)= \mathbf{ac}(k)$ for all $t$, we see that
\begin{equation}  \label{decomp_op}\pi_t(k,v) f_t = \quad  \textbf{ac}(k) \cdot f_t   \quad +  (\mathbf{C}_t\circ q_1^{-1} \circ p_1) \left[ \textbf{ac}(k\exp_G(tv)) \cdot f  - \mathbf{ac}(k) \cdot f \right]. \end{equation}
%

When $t$ goes to zero, the first term in \eqref{decomp_op} goes to $\textbf{ac}(k) \cdot f_0$; what we need to check is that the second term goes to zero. We now insert the following two remarks (compare Lemma \ref{lipschitz_seriediscrete}):

\begin{lem}  \label{oplip} \begin{enumerate}[(i)]
\item  The topology of $\mathbf{E}$ may be defined by a distance with respect to which each of the $\mathbf{C}_t$, $t\in ]0,1]$, is $1$-Lipschitz.
\item When $\mathbf{E}$ is equipped with that distance, the Fréchet topology on $F^s/X^s$ may be defined with a distance such that the projection $p_1: F^s \to F^x/X^s$ is $1$-Lipschitz and the inverse projection $q_1^{-1}: F^s/X^s \to \Sigma$ is $2$-Lipschitz.
\end{enumerate}
\end{lem}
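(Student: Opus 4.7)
For (i), I would imitate Lemma~\ref{lipschitz_seriediscrete}. Choose an increasing exhaustion $(A_n)_{n \in \N}$ of $K \times \mathfrak{s}$ by compact subsets stable under each $(k,v) \mapsto (k,tv)$ for $t \in (0,1]$ (for instance $A_n = K \times \{v \in \mathfrak{s} : \|v\| \leq n\}$), a translation-invariant distance on $E$ defining its Fréchet topology and making each $\mathbf{c}_t$ a $\kappa$-Lipschitz map (available by Assumption~\ref{bonnedefo}(iii)), and a Hermitian structure on the finite-dimensional factor $\wedge^n(\mathfrak{u}) \otimes \wedge^q(\mathfrak{r}_\C^\star)$. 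These data yield seminorms $\|f\|_{n,N} = \sup_{(k,v) \in A_n} \sup_{|\alpha| \leq N} \|\partial^\alpha f(k,v)\|$ defining the Fréchet topology on $A^s$. The chain rule gives $\partial^\alpha (\mathbf{C}_t f)(k,v) = t^{|\alpha_v|}\, \tilde{\mathbf{c}}_t \cdot (\partial^\alpha f)(k, tv)$; combining the invariance of $A_n$, the bound $t^{|\alpha_v|} \leq 1$ and the observation that $\wedge^q(\tilde{\phi}_t^\star)$ has operator norm $\leq 1$ (it is the identity on $(\ka/(\ka \cap \mathfrak{l}))^\star$ and scalar multiplication by $t \leq 1$ on $\mathfrak{s}^\star$), one gets $\|\mathbf{C}_t f - \mathbf{C}_t g\|_{n,N} \leq \kappa\, \|f-g\|_{n,N}$. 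The distance $d = \kappa^{-1} \sum_{n,N} 2^{-(n+N)}\, \|\cdot\|_{n,N}/(1 + \|\cdot\|_{n,N})$ then satisfies the required property, by monotonicity of $x \mapsto x/(1+x)$.

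For (ii), I would equip $F^s/X^s$ with the quotient distance $\tilde d([f],[g]) = \inf_{h \in X^s} d(f - g, h)$; since $X^s$ is closed, this is a genuine distance defining the Fréchet quotient topology, and the projection $p_1$ is $1$-Lipschitz by construction. Lemma~\ref{nucleaire} provides a closed direct summand $\Sigma$, and the open mapping theorem applied to the continuous linear bijection $\Sigma \times X^s \to F^s$ makes the projection $\sigma : F^s \to \Sigma$ along $X^s$ continuous. Since $q_1^{-1}([f]) = \sigma(f)$ and $\sigma(f - h) = \sigma(f)$ for every $h \in X^s$, a $2$-Lipschitz bound $d(\sigma u, 0) \leq 2\, d(u, 0)$ for $u \in F^s$ would, after applying it to $u = f - g - h$ and infimizing over $h$, deliver the desired $d(\sigma f, \sigma g) \leq 2\, \tilde d([f], [g])$.

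The main technical obstacle will be arranging that $\sigma$ be $2$-Lipschitz for the distance produced in (i). My plan to overcome it is to refine the seminorm family there before extracting the distance: by continuity of $\sigma$, for each $(n, N)$ there exist $(m, M)$ and $C_{n,N}$ with $\|\sigma f\|_{n,N} \leq C_{n,N} \|f\|_{m,M}$ on $F^s$. By suitably reindexing and rescaling the family $(\|\cdot\|_{n,N})$ and choosing weights $w_{n,N}$ in a distance of the form $\sum w_{n,N}\, \|\cdot\|_{n,N}/(1 + \|\cdot\|_{n,N})$, one can absorb the $C_{n,N}$ into the family so as to preserve simultaneously the $\mathbf{C}_t$-contraction estimates of (i) (these survive any rescaling and reindexing of the same type of seminorms, since the stability of $A_n$ and the chain-rule estimates are unaffected) and an inequality $\|\sigma f\|_{n,N}' \leq \|f\|_{n,N}'$ on $F^s$ for every $(n, N)$. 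The resulting distance makes both $\mathbf{C}_t$ and $\sigma|_{F^s}$ into $1$-Lipschitz maps, whence, a fortiori, the $2$-Lipschitz control required by (ii).
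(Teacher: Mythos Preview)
Your treatment of (i) is essentially the paper's: same exhaustion $K \times B(0,n)$, same appeal to Assumption~\ref{bonnedefo}(iii)(c), same passage to a weighted-sum metric. The extra detail you give on the chain rule and the operator norm of $\wedge^q(\tilde\phi_t^\star)$ is fine.

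For (ii) the paper takes a shorter route than yours. It equips $F^s/X^s$ with the quotient distance (so $p_1$ is $1$-Lipschitz tautologically), observes that $q_1$ is the composition of the inclusion $\Sigma\hookrightarrow F^s$ with $p_1$, hence $1$-Lipschitz, and then invokes a quantitative form of the open mapping theorem (Dieudonn\'e, \S12.16.8--9): a $1$-Lipschitz continuous linear bijection between Fr\'echet spaces equipped with compatible translation-invariant metrics has $2$-Lipschitz inverse. No tinkering with the seminorm family from (i) is needed.

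Your plan instead tries to rebuild the seminorms so that the projection $\sigma:F^s\to\Sigma$ becomes $1$-Lipschitz at the level of each seminorm, while keeping the $\mathbf{C}_t$ contractive. This last step has a gap. ``Rescaling and reindexing'' the family $(\|\cdot\|_{n,N})$ --- i.e.\ replacing each seminorm by a finite max of positive multiples of the original ones --- does preserve the $\mathbf{C}_t$ estimate, since $\mathbf{C}_t$ contracts every original seminorm individually. But such operations only yield $\|\sigma f\|'_k \le \|f\|'_{k+1}$ (a shift by one index), not $\|\sigma f\|'_k \le \|f\|'_k$: to force the latter you would need to fold seminorms of the form $\|\sigma\cdot\|_{n,N}$ into the new family, and those are \emph{not} rescaled originals; their behaviour under $\mathbf{C}_t$ is uncontrolled because $\sigma$ and $\mathbf{C}_t$ do not commute (indeed $\mathbf{C}_t$ does not even preserve $F^s$). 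The shift-by-one version would still give $\sigma$ $2$-Lipschitz for the resulting metric, which is all you need --- but that is not what you wrote, and at that point you have essentially reproduced the quantitative open-mapping argument by hand. The paper's citation of Dieudonn\'e is the cleaner way to close the argument.
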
  
\begin{proof} 
 For (i), build a countable family of semi-norms and a metric whose topology is that of $\mathbf{E}$, following the recipe in \cite{Treves}, page 412: using the notations of the first sentence after Definition 40.2 in that textbook, we use the subsets $\Omega_j = K \times B(0,j)$, $j \in \Z^+$, of  $K \times \mathfrak{s}$ (where $B(0,j)$ is the open ball $\mathfrak{s}$   with center $0_\mathfrak{s}$ and radius $j$), and use Assumption \ref{bonnedefo}iii.(c) to obtain a countable family of semi-norms on the Fréchet space $E^\sharp \otimes \wedge^s\mathfrak{r}^\star$ with respect to which the operators $\mathbf{c}_t$, $t \in ]0,1]$, are all $1$-Lipschitz.\normalsize 

 For (ii), we note that given a Fréchet distance on $F^s$, the usual way to define a Fréchet topology on $F^s/X^s$ is through a distance for which $p_1: F^s \to F^s/X^s$ is $1$-Lipschitz (see \S 12.16.9 in \cite{Dieudonne}); what needs proof is the statement on the inverse $q_1^{-1}$. Now, the inclusion $\Sigma_t \hookrightarrow F^s_t$ and the projection $F^s_t \to F^s_t/X^s_t$ are $1$-Lipschitz, so  the composition $q_1$ is $1$-Lipschitz; the open mapping theorem for Fréchet spaces then shows that $q_1^{-1}$ is actually $2$-Lipschitz with respect to the mentioned distances on $\Sigma$ and on $F^s/X^s$ (see \cite{Dieudonne}, \S 12.16.9 for the open mapping theorem and \S 12.16.8.2 for the Lipschitz statement). \end{proof}

Returning to the proof of Property (Op1) in Theorem \ref{contraction_RIC}, we take up  \eqref{decomp_op} and let $t$ go to zero. Then $\textbf{ac}(k\exp_G(tv)) \cdot f - \textbf{ac}(k) f$ goes to zero; from Lemma \ref{oplip} we deduce that $\pi_t(k,v) f_t$ goes to $\textbf{ac}(k) \cdot f_0$. Lemma \ref{pas_de_blague} yields (Op2);  the proof of Theorem \ref{contraction_RIC} is now complete. \qed\\

\noindent We close this section by mentioning, for future use in \S \ref{sec:contraction_generale}, that our Fréchet contraction satisfies a weakend analogue of Assumption \ref{bonnedefo}:

\begin{lem} \label{bonnedefo_dolb}
Suppose $U$ is a compact subset of $\pe$. Then there is a positive number $\tilde{C}$ with the property that
\begin{enumerate}[(i)]
\item the Fréchet topology of $\mathbf{E}$ may be defined by a distance with respect to which each of the $\pi_t(k,v)$, $k \in K$, $v \in U$, $t \in ]0,1]$, and each of the $\mathbf{C}_t, t \in ]0,1]$, is $\tilde{C}$-Lipschitz as an endomorphism of $\mathbf{V}_t$.
\item  for every $t>0$ and every $(k,v)$ in $K \times U$, the operator $\pi_t(k,v)$ on $\mathbf{V}_t$ may be extended to a $\tilde{C}$-Lipschitz operator acting on all of $\mathbf{E}$. This may be done so that the resulting map $\pi_t: G_t \to \text{\emph{End}}(\mathbf{E})$ is continuous.
\end{enumerate}
\end{lem}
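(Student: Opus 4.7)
My plan builds on two ingredients already assembled: the intertwining identity
\[
\pi_t(k,v)\circ \mathbf{C}_t \;=\; \mathbf{C}_t\circ q_1^{-1}\circ p_1\circ \mathbf{ac}\bigl(k\exp_G(tv)\bigr) \quad\text{on }\Sigma,
\]
read off from the calculation preceding Equation \eqref{decomp_op}, and the distance of Lemma~\ref{oplip}, for which $\mathbf{C}_t$ is $1$-Lipschitz, $p_1$ is $1$-Lipschitz, and $q_1^{-1}$ is $2$-Lipschitz. The first piece of real work is a uniform Lipschitz bound for the family $\{\mathbf{ac}(g)\mid g\in\Omega\}$, where $\Omega\subset G$ is a compact set containing every $k\exp_G(tv)$ with $(k,v,t)\in K\times U\times[0,1]$. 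Mirroring the construction of the distance in Lemma~\ref{lipschitz_seriediscrete}, I would choose an exhausting family $A_n\subset K\times\mathfrak{s}$ with $g\cdot A_n\subset A_{n+1}$ for every $g\in\Omega$ (the image being computed via the action of $G$ on $Y$ and the Mostow decomposition of \S~\ref{subsec:zoom_RIC}), then invoke items (iii.a) and (iii.b) of Assumption~\ref{bonnedefo} (controlling respectively the $K\cap L$-action on $E$ and the infinitesimal $\mathfrak{l}_\C$-action, so as to absorb the Jacobian-type $\varsigma_t$-twist from \eqref{omega_t}) to manufacture a countable family of seminorms on $\mathbf{E}$ for which every $\mathbf{ac}(g)$, $g\in\Omega$, is $C_\Omega$-Lipschitz.

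For (ii), I would extend $q_1^{-1}\circ p_1\colon F^s\to\Sigma$ to a continuous linear retract $R\colon A^s\to\Sigma$. This comes down to extending the closed embedding $F^s\hookrightarrow A^s$ by a continuous linear projection, which is possible because $A^s$ is nuclear (Assumption~\ref{bonnedefo}(i)) and $F^s$ is the kernel of the continuous linear operator $\bar\partial$ with closed range (from \cite{Wong99}), in the same spirit as the proof of Lemma~\ref{nucleaire}. The extension is then
\[
\tilde\pi_t(k,v)\,F \;=\; \mathbf{C}_t\bigl[R\bigl(\mathbf{ac}\bigl(k\exp_G(tv)\bigr)\,F\bigr)\bigr],
\]
which indeed equals $\pi_t(k,v)F$ for $F\in\mathbf{V}_t$ thanks to the intertwining identity. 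Continuity of the map $(k,v)\mapsto\tilde\pi_t(k,v)$ is inherited from the smoothness of $\mathbf{ac}\colon G\to \text{End}(\mathbf{E})$, and the uniform Lipschitz bound is obtained by composing the Lipschitz bounds on $\mathbf{C}_t$, on $R$, and on $\mathbf{ac}(g)$ for $g\in\Omega$.

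The main obstacle is part (i): to see $\pi_t(k,v)$ itself (rather than $\pi_t(k,v)\circ\mathbf{C}_t$) as an endomorphism of $\mathbf{V}_t$ with a Lipschitz constant uniform in~$t$. With the bare distance of Lemma~\ref{oplip} this fails, because $\mathbf{C}_t^{-1}$ restricted to $\mathbf{V}_t$ expands $\mathfrak{s}$-derivatives by a factor of~$t^{-1}$. My plan is to strengthen the distance: setting $\Pi_s=\mathbf{C}_s\circ R$, a continuous projection onto $\mathbf{V}_s$, I would replace each seminorm $p_n$ by
\[
\tilde p_n(F)\;:=\;p_n(F)\;+\;\sup_{s\in\,]0,1]}\,p_n\bigl(\mathbf{C}_s^{-1}\,\Pi_s\,F\bigr).
\]
On $\mathbf{V}_t$ this pulls back the original distance on $\Sigma$ through $\mathbf{C}_t^{-1}$, so the intertwining identity gives at once the desired uniform bound for $\pi_t(k,v)$. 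Checking that the $\tilde p_n$ still generate the Fr\'echet topology of $\mathbf{E}$---in particular the reverse domination $\tilde p_n \lesssim p_m$, uniform in $s$---is where the nuclearity hypothesis and Assumption~\ref{bonnedefo}(iii.c) must be put to serious use, and is the step where I expect the technical difficulty to concentrate.
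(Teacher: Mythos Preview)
Your approach for (i) takes an unnecessary detour that creates the very obstacle you then struggle to remove. By routing everything through the intertwining identity at level $t=1$, you write $\pi_t(k,v)$ on $\Sigma_t$ as $\mathbf{C}_t\circ q_1^{-1}\circ p_1\circ \mathbf{ac}(k\exp_G(tv))\circ \mathbf{C}_t^{-1}$, and then the factor $\mathbf{C}_t^{-1}$ forces you into the speculative strengthening of the seminorms via the $\tilde p_n$'s. The paper avoids this entirely by working \emph{directly at level $t$}: it unfolds the explicit Mostow-based formula for $\mathbf{ac}_t(\gamma)$ on $A^{(0,s)_t}$, observes that the relevant Mostow maps $(k,v)\mapsto(\kappa_t(\gamma^{-1}k\exp_{G_t}^v),X_t(\gamma^{-1}k\exp_{G_t}^v))$ send $\Omega_n$ into $\Omega_{n+1}$ uniformly in $t\in{]0,1]}$ and $\gamma\in K\exp_{G_t}(U)$, and invokes Assumption~\ref{bonnedefo}.iii.(a)--(b) to bound the $\varsigma_t$-twist. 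This gives a metric for which every $\mathbf{ac}_t(\gamma)$ is uniformly Lipschitz. Since $\pi_t(k,v)=q_t^{-1}\circ p_t\circ \mathbf{ac}_t(k,v)$ on $\Sigma_t$, and the open-mapping argument of Lemma~\ref{oplip}(ii) applied \emph{verbatim at each $t$} gives $p_t$ $1$-Lipschitz and $q_t^{-1}$ $2$-Lipschitz (no uniformity in $t$ is needed for this: the constant $2$ is structural), the bound for $\pi_t$ follows at once. No $\mathbf{C}_t^{-1}$ ever enters.

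Your proposed extension in (ii) is incorrect: for $F\in\mathbf{V}_t=\Sigma_t$, write $F=\mathbf{C}_t f$ with $f\in\Sigma$. Then $\pi_t(k,v)F=\mathbf{C}_t\,q_1^{-1}p_1\,\mathbf{ac}(k\exp_G(tv))\,f$, whereas your formula gives $\mathbf{C}_t\,R\,\mathbf{ac}(k\exp_G(tv))\,\mathbf{C}_t f$; these differ because $\mathbf{ac}(g)$ does not commute with $\mathbf{C}_t$ (the intertwining relation is $\mathbf{C}_t\,\mathbf{ac}(g)=\mathbf{ac}_t(\varphi_t^{-1}g)\,\mathbf{C}_t$, not commutation). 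The paper instead takes the crudest possible extension: fix a closed complement $Z$ to $F^s$ in $A^s$, set $Z_t=\mathbf{C}_t Z$, decompose $\mathbf{E}=\Sigma_t\oplus X_t^s\oplus Z_t$, and let $\tilde\pi_t(k,v)$ act by $\pi_t(k,v)$ on $\Sigma_t$ and by the identity on the other two summands. The three projections are each $2$-Lipschitz by the open-mapping argument, so $\tilde\pi_t(k,v)$ is uniformly Lipschitz; continuity in $(k,v)$ is immediate from that of $\pi_t$. This extension is admittedly not a group action on $\mathbf{E}$, but the lemma does not require that, and this is enough for the application in \S\ref{sec:contraction_generale}.
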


Lemma \ref{bonnedefo_dolb} is a rather poor substitute for Assumption \ref{bonnedefo}, because the extension mentioned in part (ii) is simple-minded:  we do \emph{not} try to define it so that $\pi_t: G_t \to \text{GL}(\mathbf{E})$ is a group morphism, and our Lipschitz estimates are crude. What will matter in \S \ref{sec:contraction_generale} is the existence of an extension to all of $\mathbf{E}$ and of a Lipschitz estimate uniform in $k$, $v$ and $t$ in the chosen domain.

\begin{proof} 
For (i), we first unfold the definition of $\mathbf{ac}_t(\gamma)$ in \eqref{ac_t} and find that for every $F$ in $A^{(0,s)_t}$ and $\gamma$ in $G_t$, \begin{align*} \mathbf{ac}_t(\gamma) \cdot F & = \mathcal{I}_t^{-1} \left[ x \mapsto (\mathcal{I}_t F)(\gamma^{-1} x)\right] \\ &= (k,v) \mapsto \varsigma_t\left( \exp_{G_t}^{-\beta_t(\gamma^{-1} k \exp_{G_t}^{v})}\right) F\left[ \kappa_t(\gamma^{-1} k \exp_{G_t}^v), \ X_t(\gamma^{-1} k \exp_{G_t}^v)\right]
\end{align*}
where $\kappa_t: G_t \to K$, $X_t: G_t \to \mathfrak{s}$ and $\beta_t: G_t \to \mathfrak{l}_t \cap \pe$ are the Mostow maps of Lemma \ref{intertw_dolb}.

We now observe, as in the proof of Lemma \ref{lipschitz_seriediscrete}, that there exists an increasing family $(\Omega_n)_{n \in \N}$ of relatively compact open subsets of $K \times \mathfrak{s}$ such that for every $\gamma$ in $K\exp_{G_t}(U)$ and every $t$ in $]0,1]$, the image of $\Omega_n$ under the map $(k,v) \mapsto \left(\kappa_t(\gamma^{-1} k \exp_{G_t}^v), \ X_t(\gamma^{-1} k \exp_{G_t}^v)\right)$ is contained in $\Omega_{n+1}$. We can in fact choose $\Omega_n$ of the form  $K \times B(0,R_n)$, where $(R_n)_{n \in \N}$ is an increasing sequence of positive radii and $B(0,R_n)$ is the ball with radius $R_n$ in $\mathfrak{s}$. We then use those and Assumption \ref{bonnedefo}.iii.(a)-(b) to follow the proof of Lemma \ref{oplip}.(i) and build the desired metric on $\mathbf{E}$.

For (ii), fix a closed complement $Z$ to $F^s$ in $\mathbf{E}=A^s$, and for all $t>0$, set $Z_t = \mathbf{C}_t Z$, so that 
\[ \mathbf{E} = \Sigma_t \oplus X^s_t \oplus Z_t.\]
Write $\text{proj}_{ \Sigma_t}$, $\text{proj}_{ X^s_t}$ and $\text{proj}_{ Z_t}$ for the corresponding linear projections, and for every $(k,v)$ in $G_t$, extend the operator $\pi_t$ on $\Sigma_t$ by having the extension act as the identity on $X^s_t$ and $Z_t$, setting 
\begin{equation} \label{extension} \tilde{\pi}_t(k,v) F = \pi_t(k,v)\left[\text{proj}_{ \Sigma_t} F\right] + \text{proj}_{ X^s_t}(F)+\text{proj}_{ Z_t}(F)\end{equation}
for all $F$ in $\mathbf{E}$. Of course this extension is so simple-minded that it cannot define a representation of $G_t$ on $\mathbf{E}$, but we note, taking up some arguments from the proof of Lemma \ref{oplip}(ii), that the projections $\text{proj}_{ \Sigma_t}$, $\text{proj}_{ X^s_t}$ and $\text{proj}_{ Z_t}$ are all 2-Lipschitz. Indeed, if $A$ is a Fréchet space equipped with a compatible distance and $B,C$ are closed subspaces such that $A = B \oplus C$, then the projection $A \to C$ is $2$-Lipschitz: it can be obtained by first applying the projection $A \to A/B$, which  is $1$-Lipschitz if the usual distance on $A/B$ is chosen, then applying the isomorphism $A/B \to C$, which is the inverse of the $1$-Lipschitz isomorphism $C \to A/B$ and is as a consequence $2$-Lipschitz. 

Returning to \eqref{extension}, we know from Lemma \ref{oplip} that $\pi_t(k,v)$ is $4$-Lipschitz, and use the preceding remarks to deduce that $\tilde{\pi}_t(k,v)$ is $10$-Lipschitz. The continuity of $(k,v) \mapsto \tilde{\pi}_t(k,v)$ is straightforward from that of $\pi_t$.
\end{proof}

\subsubsection{Proof of Lemmas 4.5, 4.6, 4.8 and 4.9} \label{subsec:lemmes}

Our proofs of the Lemmas \ref{intertw_dolb} and \ref{int_dolbeault} use the notations of \S \ref{subsec:zoom_RIC}\, B-C.

\begin{proof}[Proof of Lemma \ref{intertw_dolb}]

 In order to make the notations in the statement and proof lighter, we write $A^{(0,q)}$ for $A^{(0,q)_1}$ and $\textbf{ac}: G \to \text{End}(A^{(0,q)})$ for the $G$-action obtained by composing  $\textbf{ac}_1$ and the isomorphism $\varphi_1: G_1 \to G$.
 
 When $g$ is an element of $G$, we write its Mostow decomposition as $g = \kappa(g) \exp_G^{X(g)} \exp_G^{\beta(g)}$, getting Mostow maps $\kappa: G \to K$, $X: G \to \mathfrak{s}$ and $\beta: G \to \mathfrak{l} \cap \pe$. We write $\kappa_t: G_t \to K$, $X_t: G_t \to \mathfrak{s}$ and $\beta_t: G_t \to \mathfrak{l}_t \cap \pe$ for the analogous Mostow maps of $G_t$. We now fix $g$ in $G$ and $t>0$.

\begin{itemize}
\item[$\bullet$] By definition, $\textbf{ac}(g)\cdot f$ is a map from $K \times \mathfrak{s}$ to $E^\sharp \otimes \wedge^q \mathfrak{u}^\star$, \emph{viz.}
\begin{equation*}\textbf{ac}(g)\cdot f: (k,v) \mapsto \varsigma\left(e^{-\beta\left[g^{-1}k\exp_G(v)\right]}\right) f\left(\kappa\left[g^{-1}k\exp_G(v)\right], X\left[g^{-1}k\exp_G(v)\right]\right).\end{equation*}

Thus $\mathbf{C}_t\left[ \textbf{ac}(g) \cdot f\right] $ is the map
\begin{align*} (k, v) & \mapsto \tilde{\mathbf{c}}_t \cdot \left[\textbf{ac}(g)\cdot f\right](k,tv) \\ &=  \tilde{\mathbf{c}}_t \cdot \varsigma\left(\exp_G^{-\beta\left[g^{-1}k\exp_G(tv)\right]}\right) f\left(\kappa\left[g^{-1}k\exp_G^{tv}\right], \quad X\left[g^{-1}k\exp_G^{tv}\right]\right) \\ &=  \varsigma_t \left(\varphi_t^{-1}(\exp_G^{-\beta\left[g^{-1}k\exp_G^{tv}\right])}\right) \tilde{\mathbf{c}}_t \cdot f\left(\kappa\left[g^{-1}k\exp_G^{tv}\right], \quad X\left[g^{-1}k\exp_G^{tv}\right]\right) \\ &=  \varsigma_t \left(\exp_{G_t}^{-\frac{1}{t}\beta\left[g^{-1}k\exp_G^{tv}\right])}\right) \tilde{\mathbf{c}}_t \cdot f\left(\kappa\left[g^{-1}k\exp_G^{tv}\right], \quad X\left[g^{-1}k\exp_G^{tv}\right]\right)
\end{align*}
(in the transition between the second and third line, we use property ($\text{Nat}_{\text{RIC}}$) for the contraction operator $\mathbf{c}_t$: it intertwines the representations $\sigma$ and $\sigma_t \circ \varphi_t^{-1}$ of $L$).

\item[$\bullet$] We need to compare this with $ \textbf{ac}_t\left(\varphi_t^{-1}(g)\right) \cdot \left[ \mathbf{C}_t f\right]$, which is the map 
\begin{align*} 
(k,v) &\mapsto \varsigma_t\left( \exp_{G_t}^{ \beta_t\left(\varphi_t^{-1}(g)^{-1} k \exp_{G_t}(v)\right)}\right) \cdot \left[ \mathbf{C}_t f \right]\left( \kappa_t\left(\varphi_t^{-1}(g)^{-1} k \exp_{G_t}^v\right),\quad  X_t\left(\varphi_t^{-1}(g)^{-1} k \exp_{G_t}^v\right)\right) \\ 
&= \varsigma_t\left( \exp_{G_t}^{\beta_t\left(\varphi_t^{-1}(g)^{-1} k \exp_{G_t}(v)\right)}\right) \cdot \tilde{\mathbf{c}}_t \cdot f\left( \kappa_t\left(\varphi_t^{-1}(g)^{-1} k \exp_{G_t}^v\right), \quad t \cdot X_t\left(\varphi_t^{-1}(g)^{-1} k \exp_{G_t}^v\right)\right).
\end{align*}
If $\gamma = k e^{v} e^{\beta}$ is an element of $G$, then $\varphi_t^{-1}(\gamma) = k \exp_{G_t}^{X/t} \exp_{G_t}^{\beta/t}$, so that $\kappa(\gamma)=\kappa_t(\varphi_t^{-1}(\gamma)$, $X(\gamma) = t \cdot X_t(\varphi_t^{-1}(\gamma))$ and $\beta(\gamma) = t \cdot \beta_t(\varphi_t^{-1}(\gamma))$. Using this to compare the last lines of our descriptions of $\mathbf{C}_t\left[ \textbf{ac}(g) \cdot f\right]$ and $\textbf{ac}_t\left(\varphi_t^{-1}(g)\right) \cdot \left[ \mathbf{C}_t f\right]$, we see that they are equal. \end{itemize}\end{proof}

\begin{proof}[Proof of Lemma \ref{int_dolbeault}] Recall that our Dolbeault operator $ \bar{\partial} $ on $A^{(0,q)}$ is defined by transferring to $A^{(0,q)}$ the Dolbeault operator acting on $\Omega^q_1$ (defined in \cite{Wong99} \S 2), through the Fréchet isomorphism $\mathcal{I}: A^{(0,q)} \to \Omega^q_1$ from \eqref{identif}. For every $t>0$, the operator $ \bar{\partial}_t$ acting on  $A^{(0,q)_t}$ similarly comes from Wong's Dolbeault operator on $\Omega^q_t$, transferred to  $A^{(0,q)_t}$ via the Fréchet isomorphism $\mathcal{I}_t: A^{(0,q)_t} \to \Omega^q_t$. 

We now remark that the linear automorphism $\mathbf{C}_t$ of $A^q$ induces a linear isomorphism between $\Omega^q_1$ and $\Omega^q_t$,  the map $\mathcal{I}_t \circ \mathbf{C}_t \circ \mathcal{I}_1^{-1}$. Inspecting definitions, we notice that for every $F: G_1 \to E^\sharp \otimes \wedge^q \mathfrak{u}^\star$ in $\Omega^q_1$, the map $\mathcal{I}_t \circ \mathbf{C}_t \circ \mathcal{I}_1^{-1}(F)$ is none other than
\begin{align*} \mathcal{C}_t(F): G_t & \to E^\sharp \otimes \wedge^q \mathfrak{u}^\star \\ (k,v) &\mapsto (\mathbf{c}_t \otimes \phi_t^\star) \cdot F(\varphi_t(k,v))\end{align*}
where $\phi_t^\star: \wedge^q\mathfrak{u}^\star \to  \wedge^q\mathfrak{u}_t^\star$ is the linear isomorphism induced by the derivative of $\varphi_t^{-1}$ at the identity.

This means that $\mathcal{C}_t(F)$ is the element $\Omega^q_t$  induced by the group isomorphism $\varphi_1 \circ \varphi_t^{-1}$ between $G_1$ and $G_t$. That isomorphism induces an isomorphism of holomorphic vector bundles between the bundle $\mathcal{V}_1^\sharp$ over $G_1/L_1$ and the bundle $\mathcal{V}^\sharp_t$ over $G_t/L_t$; by naturality of the Dolbeault operator, that bundle map must intertwine the Dolbeault operators acting on  $\Omega^q_1$ and $\Omega^q_t$. So
\[ \mathcal{C}_t^{-1} \bar{\partial}_{\Omega^q_t} \mathcal{C}_t =\bar{\partial}_{\Omega^q_1},\]
which proves Lemma \ref{int_dolbeault}.
  \end{proof} 
\separateur 

Our proofs of Lemmas \ref{dec_fermees} and \ref{nucleaire} use the notations of \S  \ref{subsec:zoom_RIC}\,D.
\begin{proof}[Proof of Lemma \ref{dec_fermees}] Fix $f$ in $F^s$; the contraction $f_0$ of $f$ is $\bar{\partial}_0$-closed, so we can write $f_0$ as $ a+b$, where $a\in H_0^s$ is a harmonic form and $b\in X_0^s$ is a $\bar{\partial}_0$-exact form. Write $b=\bar{\partial}_0 \beta$, where $\beta$ lies in $B^{s-1}$; we can  extend $\beta$ trivially to $K \times \mathfrak{s}$, obtaining an element $\gamma$ of $A^{s-1}$ that is constant in the $\mathfrak{s}$-directions. Set $\omega = \bar{\partial} \gamma$; then the form $\omega$ lies in $X^s$, and $f-\omega$ lies in $H^s$ (because $f_0-\omega_0=a$ is harmonic). We have proved that $f$ can be written as a sum of a form in $H^s$ and another in $X^s$; that is the lemma.  \end{proof}

\begin{proof}[Proof of Lemma \ref{nucleaire}] We need only find an algebraic complement $\Sigma$ for $H^s \cap X^s$ in $H^s$ that is closed for the topology of $H^s$. The fact that a \emph{closed} $\Sigma$ may be chosen follows from the theory of nuclear spaces (see Tr\`eves \cite{Treves}, chapters 50 and 51, and Grothendieck \cite{Grothendieck}): because of Assumption \ref{bonnedefo}, the Fréchet space $A^s$ is nuclear (see \cite{Treves}, Proposition 50.1 and \S 51.8), so the closed subspaces $H^s$, $X^s$ and $H^s \cap X^s$ are both nuclear; because $X^s\cap H^s$ is closed in $H^s$, the identity $H^s \cap X^s\to H^s \cap X^s$ is a nuclear linear map (\cite{Grothendieck}, corollaire 2, p. 101); it admits (\cite{Grothendieck}, théorème 3, p. 103) a nuclear extension $\mathcal{F}: H^s  \to H^s \cap X^s$. We define $\Sigma$ as the kernel of $\mathcal{F}$: it is a closed complement to $H^s \cap X^s$ in $H^s$, and the lemma follows.\end{proof}


\subsection{The case of quasi-split groups and fine $K$-types}\label{sec:contraction_QS}

\noindent In this section, we consider a pair $(G, \pi)$ where
\begin{itemize}
\item[$\bullet$] $G$ is a \emph{quasi-split} linear group with abelian Cartan subgroups,
\item[$\bullet$] $\pi$ is a tempered irreducible representation of $G$ with real infinitesimal character and a \emph{fine} lowest $K$-type~$\mu$,
\end{itemize}
and we set out to build a Fréchet contraction of $\pi$ onto $\mu$.

In the present context, the representation $\pi$ can be realized as an irreducible factor in a (usually nonspherical) \emph{principal series} representation of $G$, as in Chapter 4 in \cite{Vogan81}. Suppose $P=MAN$ is a \emph{minimal} parabolic subgroup in $G$, then in our quasi-split case:
\begin{itemize}
\item[$\bullet$] $M$ is abelian (and compact, but not connected), 
\item[$\bullet$] and there exists a character $\sigma$ of $M$ such that $\mu$ is one of the lowest $K$-types in 
\[ \Pi = \ind_{MAN}^G\left(\sigma \otimes \mathbf{1}\right).\] 
\end{itemize}

The representation $\Pi$ is a finite direct sum of irreducible tempered subrepresentations, each of which has real infinitesimal character and a unique lowest $K$-type; the irreducible factor that contains $\mu$ is equivalent with $\pi$ as a $G$-representation. 

Recall that in the compact picture from \S \ref{subsec:contraction_compact}, the representation $\Pi$ acts on
\begin{equation*} \mathbf{L}^2_\sigma(K):= \left\{ \Phi: K \rightarrow \C \ / \ \forall (u,m) \in K \times M, \ \Phi(um)=\sigma(m)^{-1} \Phi(u) \right\} \end{equation*} through the action obtained by choosing $\lambda=0$ in \eqref{opGt}. The subspace
\begin{equation} \label{sousmodule} \mathbf{H} = \text{irreducible subspace of $\mathbf{L}^2_\sigma(K)$ containing the $K$-type $\mu$} \end{equation}   thus carries a (not very explicit) realization of $\pi$ on a space of complex-valued functions on $K$. 

Because the induced representation $\Pi$ is reducible and has continuous parameter zero, the results of \S \ref{subsec:contraction_compact} cannot be applied here; yet some of the interplay between \S \ref{subsec:contraction_compact} and \S \ref{subsec:contraction_helgason} will be helpful.


\subsubsection{A Helgason-like realization}\label{subsec:quasisplit}

\noindent We now describe another realization, in which $G$ acts on a space of sections of an equivariant vector bundle over $G/K$; it mimics the realization of spherical principal series representations in \S \ref{subsec:contraction_helgason} using Helgason waves.

For the rest of \S \ref{subsec:quasisplit}, we fix an irreducible $K$-module $V^\mu$ of class $\mu$, write $\mathcal{E}$ for the equivariant bundle $G/K \times_K V^\mu$ over $G/K$ and $\Gamma(\mathcal{E})$ for the space of smooth sections of $\mathcal{E}$.\\

\noindent We first describe the special sections of $\mathcal{E}$ which will play in our construction the part Helgason waves~did~in~\S\ref{subsec:contraction_helgason}.

From Frobenius reciprocity and the fact that the $K$-type $\mu$ occurs with multiplicity one in $\mathbf{H}$, we know that the restriction $\mu_{|M}$ contains $\sigma$ with multiplicity one. We fix a vector $v \in V^\mu$ belonging to the one-dimensional $\sigma$-isotypical subspace and introduce
 \begin{equation*}
\begin{array}{ccccc}
 \bar{\gamma} & : & G & \to & V^\mu  \\
 & &  ke^{H}n & \mapsto & e^{-\langle \rho,\ H\rangle} \mu(k)^{-1} v. \\
\end{array}
\end{equation*}

\begin{lem} \label{transfogamma} For all $(g,m, H, n)$ in $G\times M \times \mathfrak{a}\times N$, we have $\bar{\gamma}(gme^{H}n) = e^{-\langle \rho,\ H\rangle}\sigma(m)^{-1} \gamma(g)$.\end{lem}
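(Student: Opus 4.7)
The plan is to write $gme^{H}n$ in Iwasawa form (using the decomposition of $g$) and then apply the definition of $\bar{\gamma}$; the $\sigma(m)^{-1}$ factor will come out because $v$ is $M$-isotypical with character $\sigma$.

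First I would fix the Iwasawa decomposition $g = k_{g}\, e^{H_{g}}\, n_{g}$ of $g$, with $k_{g}\in K$, $H_{g}\in\mathfrak{a}$, $n_{g}\in N$. (In our quasi-split setting the parabolic $P=MAN$ is minimal, so the $\mathbf{m}$-component in \eqref{iwa} is trivial and such a decomposition is unique.) Then
\[
gme^{H}n = k_{g}\, e^{H_{g}}\, n_{g}\, m\, e^{H}\, n.
\]
Next, using that $M$ normalises $N$ we write $n_{g}m = m\,(m^{-1}n_{g}m) = m\,\tilde{n}$ with $\tilde{n}\in N$; using that $M$ centralises $A$ we can then slide $m$ past $e^{H_{g}}$; finally, since $A$ normalises $N$, we can slide $\tilde{n}$ past $e^{H}$, producing some $\tilde{n}'\in N$. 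After all these moves we obtain
\[
gme^{H}n \;=\; (k_{g}m)\, e^{H_{g}+H}\, n''\qquad\text{with }n''=\tilde{n}'n\in N,
\]
which is the Iwasawa decomposition of $gme^{H}n$.

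Applying the definition of $\bar{\gamma}$ therefore gives
\[
\bar{\gamma}(gme^{H}n) \;=\; e^{-\langle\rho,\,H_{g}+H\rangle}\,\mu(k_{g}m)^{-1}v \;=\; e^{-\langle\rho,H\rangle}\,e^{-\langle\rho,H_{g}\rangle}\,\mu(m)^{-1}\mu(k_{g})^{-1}v.
\]
To finish, I invoke the defining property of $v$: it lies in the one-dimensional $\sigma$-isotypical subspace of $V^{\mu}$ under $M$, so $\mu(m)^{-1}v = \sigma(m)^{-1}v$. Since $\sigma(m)^{-1}$ is a scalar it commutes with $\mu(k_{g})^{-1}$, and we conclude
\[
\bar{\gamma}(gme^{H}n) \;=\; e^{-\langle\rho,H\rangle}\,\sigma(m)^{-1}\bigl(e^{-\langle\rho,H_{g}\rangle}\mu(k_{g})^{-1}v\bigr) \;=\; e^{-\langle\rho,H\rangle}\,\sigma(m)^{-1}\,\bar{\gamma}(g).
\]

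There is really no obstacle: the whole content is the bookkeeping that $M$ normalises $N$, centralises $A$, and acts on $v$ by $\sigma$. The only point requiring a second of thought is that one must perform all the commutations \emph{before} applying $\bar{\gamma}$, because $\bar{\gamma}$ is only defined by formula on factored elements.
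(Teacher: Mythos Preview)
Your proof is correct and follows essentially the same approach as the paper: write $g$ in Iwasawa form, commute $m$ to the left using that $M$ normalises $N$ and centralises $A$ (and $A$ normalises $N$), apply the definition of $\bar{\gamma}$, then use that $v$ is $\sigma$-isotypical and $\sigma(m)$ is scalar. Your bookkeeping of the commutations is in fact slightly more explicit than the paper's.
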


\emph{Proof.} We use the Iwasawa decomposition \eqref{iwa} to find\vspace{-0.2cm} \begin{align*} \bar{\gamma}(gman)& = \bar{\gamma}\left(\kappa(g) e^{H(g)} \nu(g) man \right) \\ &= \bar{\gamma}\left( \kappa(g) m e^{H+H'} \tilde{n} \right) \text{ for some $\tilde{n} \in N$ \Fin{(recall that $N$ normalizes $MA$ and $M$ centralizes $A$)}} \\ &= e^{-\langle \rho,\ H+H'\rangle} \mu(\kappa(g)m)^{-1} v \\ &= \sigma(m)^{-1} e^{-\langle \rho,\ H\rangle}  \left[e^{-\langle \rho,\ H(g)\rangle} \mu(\kappa(g))^{-1} v\right]\  \Fin{\text{(since $\mu(m)^{-1} v = \sigma(m)^{-1}v$ and $\sigma(m)$ is scalar)}}\\ &= e^{-\langle \rho,\ H\rangle}\sigma(m)^{-1} \gamma(g). \hspace{10cm} \qed\end{align*} 

Now set $\tilde{\gamma}: g \mapsto \bar{\gamma}(g^{-1})$. A simple calculation 
shows that  $\tilde{\gamma}(gk)=\mu(k^{-1}) \gamma(g)$ for every $k$ in $K$ and $g$ in $G$. As a consequence, $\tilde{\gamma}$ induces a section $\gamma$ of the vector bundle $\mathcal{E}$ over $G/K$. We will take up $\gamma$ as an analogue in $\Gamma(\mathcal{E})$ of the Helgason ``wave'' $e_{\lambda, 1_K}$ with (not very wavy) frequency $\lambda=0$.

 For every $b$ in $K$, the map $\tilde{\gamma}_b: g \mapsto \tilde{\gamma}(b^{-1}g)$ induces another element $\gamma_b$ in $\Gamma(\mathcal{E})$, which we will take up as an analogue of the Helgason ``wave'' $e_{0,\,b}$. From Lemma \ref{transfogamma}, we know that $\gamma_{bm} = \sigma(m)^{-1} \gamma_b$ for every $(b,m) \in K \times M$.\\
 
\noindent Using these special sections, we build an invariant  subspace of $\Gamma(\mathcal{E})$ (for the usual action of $G$ on $\Gamma(\mathcal{E})$). We associate a section of  $\mathcal{E}$ with each of the functions on $K$ belonging to the Hilbert space $\mathbf{H}$ from \eqref{sousmodule}: set \begin{align*} \mathcal{T}: \mathbf{H} & \rightarrow \Gamma(\mathcal{E}) \\ \Phi & \mapsto \int_K \gamma_b {\Phi}(b) db;\end{align*}
note from the previous $M$-equivariance properties of $\Phi$ and $b \mapsto \gamma_b$ that the integrand is invariant under the change of variables  $b \leftarrow b \cdot m$. 

\begin{lem}  The map $\mathcal{T}$ is $G$-equivariant and injective. \end{lem}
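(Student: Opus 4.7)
\emph{Proof plan.} The strategy is twofold: verify $G$-equivariance by a direct Iwasawa calculation, then deduce injectivity from $G$-irreducibility of $\mathbf{H}$ together with a Schur-orthogonality non-vanishing check.

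For $G$-equivariance I fix $g,h\in G$ and $\Phi\in\mathbf{H}$ and start from
\[
\widetilde{\mathcal{T}(g\cdot\Phi)}(h) \;=\; \int_K \tilde\gamma(b^{-1}h)\,(g\cdot\Phi)(b)\,db.
\]
Inserting the compact-picture action $(g\cdot\Phi)(b)=e^{-\langle\rho,\mathbf{a}(g^{-1}b)\rangle}\Phi(\kappa(g^{-1}b))$ (no $\sigma(\mathbf{m})$ factor since $P$ is minimal), I change variables on $K$ via $b=\kappa(gb')$. This has the standard Jacobian $db=e^{-2\langle\rho,\mathbf{a}(gb')\rangle}db'$, and from $gb'=\kappa(gb')e^{\mathbf{a}(gb')}\nu(gb')$ one reads off $\kappa(g^{-1}b)=b'$, $\mathbf{a}(g^{-1}b)=-\mathbf{a}(gb')$, and $\kappa(gb')^{-1}=\nu(gb')e^{\mathbf{a}(gb')}b'^{-1}g^{-1}$. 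Rewriting $\tilde\gamma(\kappa(gb')^{-1}h)=\bar\gamma\bigl(h^{-1}gb'\,e^{-\mathbf{a}(gb')}\nu(gb')^{-1}\bigr)$ and applying Lemma \ref{transfogamma} with $m=1$, $H=-\mathbf{a}(gb')$, $n=\nu(gb')^{-1}$ produces a compensating factor $e^{\langle\rho,\mathbf{a}(gb')\rangle}$; once all exponentials are collected, the integrand reduces to $\bar\gamma(h^{-1}gb')\Phi(b')$, and the remaining integral is precisely $\int_K\tilde\gamma(b'^{-1}g^{-1}h)\Phi(b')\,db'=\widetilde{\mathcal{T}(\Phi)}(g^{-1}h)$, i.e.\ the representative of $g\cdot\mathcal{T}(\Phi)$ at $h$.

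For injectivity, $G$-equivariance and continuity of $\mathcal{T}$ force $\ker\mathcal{T}$ to be a closed $G$-stable subspace of the irreducible $G$-module $\mathbf{H}$, hence $\{0\}$ or $\mathbf{H}$. To rule out the latter, I consider the evaluation map $T:\mathbf{H}\to V^\mu$, $T(\Phi)=\widetilde{\mathcal{T}(\Phi)}(1_G)=\int_K\bar\gamma(b)\Phi(b)\,db$. Combining $G$-equivariance of $\mathcal{T}$ with the $K$-equivariance $\tilde s(gk)=\mu(k)^{-1}\tilde s(g)$ of representatives yields $T(u\cdot\Phi)=\mu(u)\,T(\Phi)$, so $T$ is a $K$-intertwiner. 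By Schur's lemma $T$ vanishes on every $K$-isotypical component of $\mathbf{H}$ other than the $\mu$-one, and restricts to a scalar $\lambda\cdot\mathrm{Id}_{V^\mu}$ on the (multiplicity-one) $\mu$-isotypical subspace. Parametrizing that subspace by matrix coefficients $\Phi_w(k)=\langle w,\mu(k)v_0\rangle$ (so $u\cdot\Phi_w=\Phi_{\mu(u)w}$) and applying Schur orthogonality for $K$ to $T(\Phi_w)=\int_K\bar\gamma(b)\langle w,\mu(b)v_0\rangle\,db$ identifies $\lambda$ as a positive multiple of $\|v_0\|^2/\dim V^\mu$; hence $\lambda\ne 0$, so $T\ne 0$, so $\mathcal{T}\ne 0$, and $\ker\mathcal{T}=\{0\}$.

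The most delicate step is the $G$-equivariance bookkeeping: the $\rho$-shift built into $\bar\gamma$ must precisely match the $\rho$-shift from the compact-picture action and the Iwasawa Jacobian $e^{-2\langle\rho,\mathbf{a}(\cdot)\rangle}$, while the $\sigma$-factors produced by Lemma \ref{transfogamma} must cancel against the $\sigma$-equivariance built into $\mathbf{L}^2_\sigma(K)$. Once that bookkeeping is in place, the injectivity half is essentially formal, and the non-vanishing of $\lambda$ is a one-line Schur-orthogonality computation.
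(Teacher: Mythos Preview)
Your argument is essentially the paper's: the $G$-equivariance is the same Iwasawa/cocycle computation (the paper delegates it to Camporesi, you carry it out explicitly via the substitution $b=\kappa(gb')$), and the injectivity is the same irreducibility-plus-Schur-orthogonality check on the $\mu$-isotypical piece. One small slip to fix: your matrix coefficients $\Phi_w(k)=\langle w,\mu(k)v_0\rangle$ satisfy $\Phi_w(km)=\sigma(m)\Phi_w(k)$ rather than $\sigma(m)^{-1}\Phi_w(k)$, so they do not lie in $\mathbf{L}^2_\sigma(K)$; replace $\mu(k)$ by $\mu(k)^{-1}$ (as the paper does with $\xi_v(k)=\langle v,\mu(k^{-1})v\rangle$) and the rest of your computation goes through unchanged.
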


\begin{proof} Fix $x$ in $G$ and write $x = \kappa(x)e^{\mathbf{a}(x)}\mathbf{n}(x)$ for its Iwasawa decomposition. To prove that $\mathcal{T}$ is $G$-equivariant, we must prove that for every  $\Phi$ in  $\mathbf{H}$, the sections
\begin{align} \label{transfoH}\mathcal{T}(\pi(x) \phi) &= \mathcal{T}\left[b \mapsto e^{-\langle \rho,\ \mathbf{a}(x^{-1} b) \rangle}\cdot\Phi\left(\kappa(x^{-1} b)\right) \right]  \nonumber \\ 
&=  \left(gK \mapsto \int_K   e^{-\langle \rho,\ \mathbf{a}(x^{-1} b) \rangle} \gamma_b(gK)\cdot \Phi\left(\kappa(x^{-1} b)\right) db \right) \nonumber \\
 &=  \left(gK \mapsto \int_K   e^{-\langle \rho,\ \mathbf{a}(x^{-1} b) \rangle} e^{-\langle \rho,\ \mathbf{a}(g^{-1}b) \rangle} \cdot\Phi\left(\kappa(x^{-1} b)\right) \left(\mu\left(\kappa(g^{-1}b)\right) \cdot v \right)db \right) \end{align} 
and \begin{align} \label{transfoE} \lambda(x) \mathcal{T}(\phi) &= \left( g \mapsto \int_K \gamma_b(x^{-1} gK)\cdot {\Phi}(b) db \right) \nonumber \\ 
&=  \left( g \mapsto \int_K e^{\langle -\rho,\ \mathbf{a}(g^{-1} x b)\rangle} \cdot{\Phi}(b) \left(\mu\left(\kappa(g^{-1} x b)\right)\cdot v\right)  db \right)\end{align} 
are equal. Going from \eqref{transfoH} to \eqref{transfoE} by substituting $b \leftarrow \kappa(x^{-1}b)$ in \eqref{transfoE} is made possible by the fact that the Iwasawa maps $\kappa$, $\mathbf{a}$, $\mathbf{n}$ satisfy cocycle relations. The necessary steps are identical with those taken by Camporesi for his proof of Proposition 3.3 in \cite{Camporesi}, so we omit the details.  

The injectivity of  $\mathcal{T}$ comes from the irreducibility of $\pi$: from the equivariance we know that $\mathcal{T}^{-1}(\{0\})$ is a closed $G$-invariant subspace of $\mathbf{H}$. It is either $\{0\}$ or $\mathbf{H}$. If it were equal to $\mathbf{H}$, the map $\mathcal{T}$ would be identically zero. Fixing a $K$-invariant inner product on $V_\sigma$, we notice that the matrix element $\xi_v: k \mapsto \langle{v, \mu(k^{-1}) v}\rangle_{V_\sigma}$ (attached to the special $v$ above) lies in $\mathbf{L}^2_\sigma(K)$;  by the Peter-Weyl theorem  it transforms under $\Pi_{|K}$ according to $\mu$, so $\xi_v$ lies in $\mathbf{H}$. The value of $\mathcal{T}(\xi_v)$ at the origin of $\pe$ is the vector $\int_K (\mu(k^{-1}) v)\cdot {\xi_v}(k) dk$ in $V^\mu$, whose scalar product with $v$ is nonzero because of Schur's relations. This proves that $\mathcal{T}(\xi_v)$ is not zero, so $\mathcal{T}$ must be injective. \end{proof}

\noindent We have obtained a geometric realization for $\pi$ in which $G$ acts on the subspace
\[ \left\{\int_K \gamma_b \cdot \Phi(b)\ db, \ \Phi \in \mathbf{H} \right\}\]
of $\Gamma(G/K, V^\mu)$, through the usual action of $G$ on $\Gamma(G/K, V^\mu)$.


\subsubsection{Contraction to the lowest $K$-type}

\noindent We can now describe a Fréchet contraction of $\pi$ onto $\mu$ using the same zooming-in operators as we did for the discrete series in \S \ref{subsec:serie_discrete} and for the Helgason-wave picture for spherical principal series in \S \ref{subsec:contraction_helgason}. Set
\begin{equation} \tag{A} \mathbf{E} = \mathcal{C}^\infty(\pe, V^\mu). \end{equation}
For every $t>0$, we transfer to functions on $\pe$ the sections of $\Gamma(G_t/K, V^\mu)$ which the construction in \S \ref{subsec:quasisplit} yields for $G_t$. Calling in the Iwasawa maps $\mathfrak{I}_t: \pe \to \mathfrak{a}$ and $\kappa_t: \pe \to K$ from \eqref{iwa_It}, we define
 \begin{equation*}
\begin{array}{ccccc}
 \Gamma^{\ t}_b & : & \pe & \to & V^\mu  \\
 & &  u & \mapsto & e^{\langle t\rho,\ \mathfrak{I}_t(\text{Ad}(b) \cdot v) \rangle} \mu^{-1}(\kappa_t(u)) v. \\
\end{array}
\end{equation*}
In view of the previous subsection, we define
\begin{equation} \tag{B} \mathbf{V}_t =  \left\{\int_K \Gamma^{\ t}_b \cdot \Phi(b) \ db, \ \Phi \in \mathbf{H}_\pi\right\}. \end{equation}
The action of $G_t$ on $\Gamma(G_t/K, V^\mu)$ yields a $G_t$-action on $\mathbf{E}$, \emph{viz.}
\begin{align}
G_t \times \mathbf{E} &\rightarrow \mathbf{E} \nonumber \\
  \left((k,v), F\right)&\mapsto \pi_t(k,v) F = \left( x \mapsto  \mu(k) \cdot F\left[(k,v)\cdot_t x\right] \right)  \tag{B'}
  \end{align}
where $\cdot_t$ is the $G_t$-action on $\pe$ defined in \S \ref{subsec:cadregeom}. The subspace $\mathbf{V}_t$ of $\mathbf{E}$ is then $\pi_t$-stable.

Bringing in the zooming-in map $z_t: u \mapsto \frac{u}{t}$ from $\pe$ to itself, we recall from Lemma \ref{naturalite_SD} that  
\begin{equation} \tag{C} \mathbf{C}_t : f \mapsto f \circ z_t^{-1} \end{equation}
intertwines the $G_1$- and $G_t$-actions on $\mathbf{E}$; from Lemma \ref{zoom_ondes} we deduce that $\mathbf{C}_t$ sends $\Gamma^{1}_b$ to $\Gamma^t_b$ (the main difference with the situation in \S \ref{subsec:contraction_helgason} is that we need no renormalization of spatial frequencies here, because we are dealing with representations with continuous parameter zero). Thus,
\begin{equation} \tag{Vect1} \text{$\mathbf{C}_t$ sends $\mathbf{V}_1$ to $\mathbf{V}_t$.}
\end{equation}
Now if $f =  \left(\int_K \Gamma^1_b \Phi(b) db\right)$ is any element in $\mathbf{V}_1$, then as $t$ goes to zero, $\mathbf{C}_t f$ converges in $\mathbf{E}$ to
\begin{equation} \label{Vect2} \tag{Vect2} f_0 = \text{the constant function with value} \int_{K} (\mu(b) \cdot v) \Phi(b) db.\end{equation}
If $(k,v)$ is a fixed element in $K \times \pe$ and $f$ is an element in $\mathbf{V}_1$, we proved in \S \ref{subsec:serie_discrete} (see \eqref{convop} and Lemma \ref{lipschitz_seriediscrete}) that 
\begin{equation} \tag{Op1} \text{As $t$ goes to zero, } \quad \pi_t(k,v)f_t \quad \text{ goes to } \quad \mu(k) \cdot \left(\text{ the value of }f_0\right).\end{equation}
From the Schur orthogonality relations and formula {(Vect2)}, we know that
\begin{itemize}
\item[$\bullet$] if $\Phi$ lies in a $K$-isotypical subspace of $\mathbf{H}$ for a $K$-type other than $\mu$, then $f_0 = 0$; 
\item[$\bullet$] but the map $f \mapsto f_0$ induces a $K$-equivariant isomorphism between the $\mu$-isotypical subspace of $\mathbf{H}$ (which is spanned, if we fix an orthogonal basis $(v_j)$ for $V^\mu$, by the matrix elements $k \mapsto \langle{v_j, \mu(k)^{-1} v}\rangle$) and the fiber $V^\mu$. \end{itemize}
We obtain the following conclusion: 
\begin{align} \tag{Op2} \text{The outcome space } \mathbf{V}_0=\left\{ f_0, \ f \in \mathbf{H}_1 \right\} \text{ identifies, as a $K$-module, with $V^\mu$ ;} \\ \text{ for every $(k,v)$ in $K \times \pe$, the limit of $\pi_t(k,v)f_t$ then identifies with $\mu(k) \cdot f_0$.}\nonumber \end{align}
This concludes our search for a contraction of $\pi$ in the present case:
\begin{thm} The quadruple $(\mathbf{E}, (\mathbf{V}_t)_{t>0}, (\pi_t)_{t>0}, (\mathbf{C}_t)_{t>0})$ describes a Fréchet contraction of $\pi$ onto $\mu$. Assumption  \ref{bonnedefo} is satisfied by this Fréchet contraction.  \end{thm}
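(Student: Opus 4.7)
The plan is to verify the four properties (Vect1), (Vect2), (Op1), (Op2) of \S\ref{subsec:programme}, most of which have been anticipated in the paragraphs preceding the statement, and then to establish Assumption~\ref{bonnedefo} separately. The backbone is an explicit control of the Helgason-like sections under the zooming-in operator: using the identities $\mathfrak{I}(tu) = t\,\mathfrak{I}_t(u)$ and $\kappa_t(u) = \kappa(tu)$ established in the proof of Lemma~\ref{conv_iwa}, the equality $\mathbf{C}_t \Gamma^{1}_b = \Gamma^{t}_b$ is a direct computation, and since integration over $K$ against a continuous density commutes with pullback by $z_t$, this yields (Vect1). For (Vect2), the convergences in Lemma~\ref{conv_iwa} hold uniformly on compact subsets of $\pe$ and in every derivative; the exponential prefactor $e^{\langle t\rho,\, \mathfrak{I}_t(\cdot)\rangle}$ tends to $1$ and $\mu\circ\kappa_t \to \mathrm{id}_{V^\mu}$, all uniformly in $b \in K$, so interchanging integral and limit identifies~$f_0$.

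Property (Op1) then follows from the splitting
\[ \pi_t(k,v) f_t = \pi_t(k,v)(f_t - f_0) + \pi_t(k,v) f_0 \]
already used in \eqref{convop}. The second term equals $\mu(k) f_0$ because $f_0$ is a constant section on $\pe$ and $\pi_t$ acts on constant functions by $\mu(k)$, independently of $t$. For the first term, one exhibits an exhaustion $(A_n)$ of $\pe$ stable under the family $\{(k,v) \cdot_t \cdot \ : \ t \in (0,1]\}$ of actions (available from Lemma~\ref{convactions}), builds seminorms measuring finitely many derivatives on each $A_n$, and obtains exactly as in Lemma~\ref{lipschitz_seriediscrete} a Fréchet metric on $\mathbf{E}$ in which each $\pi_t(k,v)$ is $1$-Lipschitz uniformly in $t$; combined with $f_t \to f_0$ this forces the first term to zero. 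Property (Op2) is then a Schur-orthogonality computation applied to the integral formula for $f_0$: components of $\Phi$ transverse to the $\mu$-isotypical subspace of $\mathbf{H}$ contribute zero, while the $\mu$-isotypical part is mapped $K$-equivariantly and isomorphically onto~$V^\mu$.

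The main obstacle is Assumption~\ref{bonnedefo}. Nuclearity of $\mathbf{E} = \mathcal{C}^\infty(\pe, V^\mu)$ is classical. Part (ii) is essentially free in the present ``base'' case where $L = G$: the operator $\pi_t(k,v) F \colon x \mapsto \mu(k)\, F((k,v) \cdot_t x)$ is already defined on all of $\mathbf{E}$, and for $u \in K \cap L_t = K$ it reduces to $F \mapsto \mu(u) \cdot (F \circ \mathrm{Ad}(u^{-1}))$, manifestly independent of $t$. The delicate point is part (iii), and in particular the uniform Lipschitz estimate for the enveloping-algebra action $\pi_t(X)$, $X \in U(\g_{t,\C})$. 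Differentiating $\pi_t$ along $\g_t$ produces vector fields on $\pe$ whose coefficients are polynomial in $t$ and in matrix coefficients of $\mathrm{Ad}$; their iterated brackets therefore remain uniformly bounded, together with all their derivatives, on any compact subset of $\pe$ as $t$ ranges over $(0,1]$. Choosing weights that decrease sufficiently rapidly in $n$ and in the differentiation order when building the metric as in Lemma~\ref{lipschitz_seriediscrete} then produces a common Lipschitz constant $\kappa$ that works simultaneously for (a), (b) and (c), completing the verification.
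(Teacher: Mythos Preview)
Your argument for (Vect1), (Vect2), (Op1), (Op2) is correct and is exactly the paper's: the splitting \eqref{convop}, the uniform-Lipschitz metric of Lemma~\ref{lipschitz_seriediscrete}, and the Schur-orthogonality identification of $\mathbf{V}_0$ with $V^\mu$ are all what the paper uses.

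For Assumption~\ref{bonnedefo} your treatment of parts~(i) and~(ii) is fine. In part~(iii), however, the assertion that the vector fields generating the $\g_t$-action on $\pe$ have coefficients \emph{polynomial in $t$} is not justified and is in fact not literally true: from Lemma~\ref{conjactions}(i) one sees that for $X=X_\ka+X_\pe\in\g$ the $G_t$-action vector field at $x\in\pe$ equals $\tfrac{1}{t}\bigl[V_{X_\ka}(tx)+t\,V_{X_\pe}(tx)\bigr]$, where $V_Y$ is the $G$-action vector field for $Y$; these involve the (non-polynomial) Cartan map. The conclusion you need---uniform boundedness of all derivatives on compacts for $t\in(0,1]$---does hold, but it follows from smoothness in $(t,x)$ and the fact that $tx$ stays in a fixed compact, not from polynomiality. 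The paper sidesteps this entirely: its one-line hint is to rerun Lemma~\ref{lipschitz_seriediscrete} with \emph{star-shaped} compacts $A_n$, which gives $tA_n\subset A_n$ for $t\in(0,1]$ and hence makes each $\mathbf{C}_t$ already $1$-Lipschitz for every seminorm, without any weight-tuning.
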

\noindent (Concerning the statement about Assumption \ref{bonnedefo}, parts $i.$ and $ii.$ of  Assumption \ref{bonnedefo} are clearly satisfied here; for part $iii.$, take up the proof of Lemma \ref{lipschitz_seriediscrete} using star-shaped subsets of $\pe$ in the construction of seminorms.)

\section{Contraction of an arbitrary tempered representation}
\label{sec:contraction_generale}

\subsection{Real-infinitesimal-character representations for disconnected groups} \label{subsec:disco}

\noindent Our analysis of real-infinitesimal-character representations in \S \ref{sec:real_inf_char} crucially used Wong's work \cite{Wong99} on the closed-range property for the Dolbeault operator. The setting in \cite{Wong99} is that of a linear connected semisimple group: that was our reason for assuming $G$ to be linear connected semisimple in \S \ref{sec:real_inf_char}.

Now suppose $G$ is a linear connected reductive group. When $(\chi, \mu)$ is an arbitrary Mackey parameter, the representation of $G$ built from $(\chi, \mu)$ in \S \ref{subsec:correspondance} is induced from a parabolic subgroup $P_\chi= M_\chi A_\chi N_\chi$, using the real-infinitesimal-character representation $\sigma$ of $M_\chi$ with lowest $K_\chi$-type $\mu$. 

Of course $M_\chi$ is usually disconnected, although the disconnectedness is limited in that $M_\chi$ satisfies the axioms in \S 1 of \cite{KnappCompositio}. It is possible that the theorem of Wong mentioned in \ref{subsec:dolbeault} is true for linear (disconnected) reductive groups such as $M_\chi$, but I have not been able to find a reference; if we are to use the results of \S \ref{sec:real_inf_char} in a future discussion of parabolic induction from $P_\chi$, we must check that they extend~to~reasonably~disconnected~groups.\\

\noindent For the rest of \S \ref{subsec:disco}, we thus consider a reductive group $G$ satisfying the axioms of section 1 in \cite{KnappCompositio}: $G$ is a matrix group with a finite number of connected components, the identity component $G_e$ of $G$ has compact center and a reductive Lie algebra, and $G \subset G^{\C} \cdot \text{Cent}_{\text{GL}}(G)$, where $\text{Cent}_{\text{GL}}(G)$ is the centralizer of $G$ in the total general linear group of matrices and $G^{\C}$ is the analytic linear group with Lie algebra $\mathfrak{g}_\C$. \\

\noindent  Consider the identity component $G_{\text{e}}$. It is a connected Lie group and can be decomposed as $G_{e} = G_{\text{ss}} \left(Z_{G}\right)_{\text{e}}$, with $G_{ss}$ a connected semisimple Lie group with finite center. The abelian group $\left(Z_{G}\right)_{\text{e}}$ is compact and central in $G_{e}$ (\cite{KnappPrincetonBook}, \S V.5). 

Suppose we start with an irreducible tempered representation of $G_{\text{e}}$ with real infinitesimal character. Then the elements in $\left(Z_{G}\right)_{\text{e}}$ will act as scalars, defining an abelian character of $\left(Z_{G}\right)_{\text{e}}$. The restriction to $G_{\text{ss}}$ of our representation will be irreducible and have real infinitesimal character. 

A representation $\pi_{\text{e}}$ of $G_{\text{e}}$ in the class $\widetilde{(G_\text{e})}_{\text{RIC}}$  is thus uniquely specified by a $\pi_{\text{ss}}$ in the class $\widetilde{(G_{ss})}_{\text{RIC}}$ and an abelian character $\xi$ of $\left(Z_{G}\right)_{\text{e}}$ whose restriction to $G_{ss} \cap \left(Z_{G}\right)_{\text{e}}$ coincides with $\left(\pi_{ss}\right)\big|_{G_{\text{ss}} \cap \left(Z_{G}\right)_{\text{e}}}$. Given $\pi_{\text{ss}}$ and $\xi$, any carrier space for $\pi_{\text{ss}}$ furnishes a carrier space for $\pi_\text{e}$, with $g = g_{\text{ss}} g_{\left(Z_{G}\right)_{\text{e}}}$ acting through   $\xi(g_{\left(Z_{G}\right)_{\text{e}}}) \pi_{\text{ss}}(g_{\text{ss}})$.

Now that we know how to describe the class $\widetilde{(G_{\text{e}})}_{\text{RIC}}$, let us write $G^\sharp$ for the subgroup $G_\text{e}Z_G$ of $G$; because of \cite{KnappPrincetonBook}, Lemma 12.30, $G_\text{e}$ has finite index in $G^\sharp$; in fact there is a finite, abelian subgroup $F$ of $K$ (it is the subgroup called $F(B^-)$ in \cite{KnappPrincetonBook}), lying in the center of $G$ (hence of $G^\sharp$),  such that
\[ G^\sharp = G_{\text{e}} F. \]
The arguments we used for $G_{\text{e}}$ go through here: a representation $\pi^\sharp$ in the class $\widetilde{(G^{\sharp})}_\text{RIC}$ is uniquely specified by a representation $\pi_{\text{e}}$ in $\widetilde{(G_{\text{e}})}_{\text{RIC}}$ and an abelian character $\chi$ of $F$ whose restriction to $G_{\text{e}} \cap F$ coincides with $\left(\pi_{\text{e}}\right)\big|_{G_{\text{e}} \cap F}$. Given $\pi_\text{e}$ and $\chi$, any carrier space for $\pi_\text{e}$ furnishes a carrier space for $\pi^{\sharp}$, with $g = g_{\text{e}} f$ acting through   $\chi(f) \pi_{\text{e}}(g_{\text{e}})$.

To obtain a tempered representation of $G$, we can start from a representation $\pi^\sharp$ in the class $\widetilde{(G^\sharp)}_\text{RIC}$ and set
\[ \pi = \text{Ind}_{G^\sharp}^G \left( \pi^\sharp \right). \]
It turns out that $\pi$ is irreducible, has real infinitesimal character, and that $\pi^\sharp \mapsto \pi$ maps the class  $\widetilde{(G^\sharp)}_{\text{RIC}}$ \emph{bijectively onto}  $\widetilde{G}_{\text{RIC}}$: this comes from Knapp and Zuckerman's work on the discrete series (see e.g. \cite{KnappPrincetonBook}, Proposition 12.32), and from the fact that every representation in $\widetilde{G}_{\text{RIC}}$ occurs in one induced from one in the discrete series of a Levi subgroup). In addition, the restriction of $\pi$ to $G^\sharp$ decomposes as 
\[ \pi\big|_{G^\sharp} = \sum \limits_{\omega \in G/G^\sharp} \omega\pi^\sharp \]
where $\omega\pi^{\sharp}$ is $g\mapsto \pi^\sharp(\omega^{- 1} g \omega)$. Since $G^\sharp$ has finite index in $G$ (see (12.74) in \cite{KnappPrincetonBook}), the sum is finite here.\\

\noindent With the above description in hand, it is a simple matter to reduce the contraction problem for representations in the class $\widetilde{G}_{\text{RIC}}$ to the already solved contraction problem for representations in the class $\widetilde{(G_{\text{ss}})}_{\text{RIC}}$.

Fix a representation $\pi$ in $\widetilde{G}_{\text{RIC}}$; the above discussion says how $\pi$ may be constructed from a triple $(\chi, \xi, \sigma)$, where $\sigma$ is a representation of $G_{\text{ss}}$ in the class $\widetilde{(G_{\text{ss}})}_{\text{RIC}}$ and $\chi$, $\xi$ are characters of  $\left(Z_{G}\right)_{\text{e}}$ and $F$. 

Write $\mu^\flat$ for the lowest $K_{\text{ss}}$-type of $\sigma$ and assume given a Fréchet contraction 
\begin{equation} \label{contr_reduction}\left(E, (V_t)_{t>0},(\sigma_t)_{t>0},(\mathbf{c}_t)_{t>0}  \right) \end{equation}
of $\sigma$ onto $\mu^\flat$. Remark that $\left(Z_{G}\right)_{\text{e}}$ and $F$ are contained in $K$: there is in fact a direct product decomposition 
\begin{equation}K = \left(Z_{G}\right)_{\text{e}} \cdot F \cdot  K_{\text{ss}},\end{equation} 
thus for each $t>0$ the group $G_t^\sharp = \varphi_t^{-1}(G^\sharp)$ decomposes as
\[G_t^\sharp  =\left(Z_{G}\right)_{\text{e}} \cdot F \cdot \varphi_t^{-1}(G_{\text{ss}}).\]

For every $t>0$, a representation of $G_t$ may be obtained by following the above procedure: write $\sigma_t^\sharp$ for the irreducible tempered representation of $G_t^\sharp$ obtained from the representation $\sigma_t$ of $\varphi_t^{-1}(G_{\text{ss}})$, so that every element $ \gamma_{{\left(Z_{G}\right)_{\text{e}}}} \gamma_{F}\gamma_{ss}$ of $G_t^\sharp$ acts on $V_t$ through the operator $\xi(\gamma_{{\left(Z_{G}\right)_{\text{e}}}}) \chi(\gamma_f) \sigma_t(\gamma_{\text{ss}})$. For each $t>0$, the representation 
\begin{equation} \label{pi_t_nonconnexe} \pi_t:= \ind_{G_t^\sharp}^{G_t} \left( \sigma_t^\sharp \right) \end{equation}
is irreducible tempered, has real infinitesimal character; its lowest $K$-type is the lowest $K$-type $\mu$ of $\pi$. 

Turning to $K$-types, there is (applying the above remarks to the reductive group $K$) an analogous description of $\mu$ as induced from a representation of $K^\sharp$: from the representation $\mu^\flat$ of $K_{\text{ss}}$ and the given characters $\xi$, $\chi$, form the representation $\mu^\natural := (\mu^\flat)^\sharp$ of $K^\sharp$ where any $k^\sharp =  k_{{\left(Z_{G}\right)_{\text{e}}}} k_{F}k_{\text{ss}}$ acts through $\xi(k_{{\left(Z_{G}\right)_{\text{e}}}}) \chi(k_F) \sigma_t(k_{\text{ss}})$, then induce to $K$: the result is an irreducible $K$-module, and we do have
\begin{equation} \label{outcome} \mu \simeq \ind_{K^\sharp}^{K} \left( \mu^\natural \right).\end{equation}
We now construct a Fréchet contraction of $\pi$ onto $\mu$. The representation space for $\pi_t$ is a finite direct sum of copies of $V_t$, indexed by $G_t/G_t^\sharp$. Remarking that the inclusion of $K$ in $G_t$ induces a bijection 
\begin{equation} \label{isomclasses}  K / K^\sharp \overset{\sim}{\longrightarrow} G_t / G_t^{\sharp}, \end{equation}
we will thus write the direct sum of copies of $V_t$ as
\begin{equation} \tag{B} \mathbf{V}_{t} = \sum \limits_{\omega \in K/K^\sharp} V_{t,\omega}. \end{equation}
The action of $G_t$ on $ \mathbf{V}_{t}$ may be described by choosing a section of the projection $K \to K/K^\sharp$, that is, a finite collection $(\kappa_\omega)_{\omega \in K/K^\sharp}$ of elements of $K$: for every $\gamma$ in $G_t$, there is a~collection~$(\gamma^\sharp_{a})_{a \in K/K^\sharp}$~of~elements~of $G_t^\sharp$~satisfying
\begin{equation} \label{coc} \forall \omega \in K/K^\sharp, \enskip \gamma \kappa_{\omega} = \kappa_{[\gamma \omega]_t} \gamma^{\sharp}_{[\gamma \omega]_t},\end{equation}
 where the action $\omega \to [\gamma \omega]_t$ of $\gamma$ on $K/K^\sharp$ is that induced from the action of $\gamma$ on $G_t/G_t^\sharp$ through the bijection \eqref{isomclasses}. One can then set 
\begin{equation} \label{beprime} \tag{B'}\pi_t(\gamma) = \text{ \enskip the operator \enskip} \sum \limits_{\omega} x_{\omega} \mapsto \sum \limits_{\omega} \sigma_t^\sharp(\gamma^{\sharp}_{[\gamma\omega]}) x_{[\gamma{\omega}]} \end{equation}
to obtain a realization for the representation \eqref{pi_t_nonconnexe} of $G_t$.

Now, each of the $\mathbf{V}_t$, $t>0$, is contained in a finite direct sum of copies of the Fréchet space $E$ from \eqref{contr_reduction}; we will denote that direct sum by
\begin{equation} \tag{A} \mathbf{E} =  \sum \limits_{\omega \in K/K^\sharp} E_{\omega}, \end{equation}
and equip it with the direct product topology. For each $t>0$, the contraction operator $\mathbf{c}_t$ acts on every summand; if we write $\mathbf{c}^\omega_t$ for the action on the summand with index $\omega$, we obtain  an operator 
\begin{align} \tag{C} \mathbf{C}_t: \mathbf{V}_{1} & \to \mathbf{V}_t \\  
 \left( \sum \limits_{\omega \in K/K^\sharp} f_{\omega} \right) & \mapsto\left( \sum \limits_{\omega \in K/K^\sharp} \mathbf{c}_{t}^{\omega} f_{\omega}\right).\nonumber
 \end{align}
 
 \begin{prop} \label{contraction-nonconnexe} The quadruple $\left(\mathbf{E}, (\mathbf{V}_t)_{t>0},(\pi_t)_{t>0},(\mathbf{C}_t)_{t>0}  \right) $ describes a Fréchet contraction of $\pi$ onto $\mu$. \end{prop}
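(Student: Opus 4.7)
The strategy is to verify, one by one, the properties from \S\ref{subsec:programme}, reducing each componentwise to the corresponding property of the given Fréchet contraction of $\sigma$ via the index-$[K:K^\sharp]$ direct-sum structure of $\mathbf{E}$, $\mathbf{V}_t$, and $\mathbf{C}_t$. First I would confirm that $(\mathbf{V}_t, \pi_t)$ defined by (B)--(B') genuinely realizes $\ind_{G_t^\sharp}^{G_t}(\sigma_t^\sharp)$: formula \eqref{beprime} is the standard transcription of the induced-representation action on $\sum_\omega V_{t,\omega}$ through the section $(\kappa_\omega)$ of $K \to K/K^\sharp$, the latter identified with $G_t/G_t^\sharp$ via \eqref{isomclasses}, and the discussion of \S\ref{subsec:disco} then ensures that $\pi_t$ is irreducible, tempered, has real infinitesimal character, and has $\mu$ as its lowest $K$-type. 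Conditions (Vect1), (Vect2) and ($\text{Nat}_{\text{RIC}}$) are immediate by linearity: $\mathbf{C}_t$ is a direct sum of $[K:K^\sharp]$ copies of $\mathbf{c}_t$, and the corresponding assertions for $(\mathbf{c}_t)$ hold summand by summand. In particular, the outcome space is $\mathbf{V}_0 = \sum_\omega W_\omega$, where $W$ is the carrier space of $\mu^\flat$.

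The real content lies in (Op1)--(Op2). Fix $\gamma = (k,v) \in K \times \pe$; the key is to make the cocycle $\gamma^\sharp_{[\gamma\omega]_t}$ of \eqref{coc} completely explicit. A short computation using the definition of $\cdot_t$ in \S\ref{subsec:famille} shows that
\[
\gamma \cdot_t \kappa_\omega \;=\; \kappa_{[k\omega]} \cdot_t \bigl(m_\omega,\ \ad(\kappa_{[k\omega]}^{-1}) v\bigr), \qquad m_\omega := \kappa_{[k\omega]}^{-1} k \kappa_\omega \in K^\sharp,
\]
where $[k\omega] \in K/K^\sharp$ is the class of $k\kappa_\omega$. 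The crucial observation is that $\gamma^\sharp_{[k\omega]_t} = (m_\omega,\ \ad(\kappa_{[k\omega]}^{-1}) v)$ is \emph{independent of $t$} as a point of the common underlying set $K \times \pe$, and in particular the permutation $\omega \mapsto [k\omega]$ on the finite set $K/K^\sharp$ depends only on $k$. Formula \eqref{beprime} then yields, in the $[k\omega]$-summand,
\[
\bigl(\pi_t(k,v)\,\mathbf{C}_t f\bigr)_{[k\omega]} \;=\; \sigma_t^\sharp\bigl(m_\omega,\ \ad(\kappa_{[k\omega]}^{-1}) v\bigr)\,\mathbf{c}_t f_\omega,
\]
and since $\sigma_t^\sharp$ is built from the $t$-independent characters $\xi$, $\chi$ and from $\sigma_t$, property (Op1) applied componentwise to the given contraction of $\sigma$ ensures the existence of the limit as $t \to 0$.

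It remains to identify the resulting linear $G_0$-action with the Mackey--Higson correspondent of $\pi$. Because $\sigma$ has real infinitesimal character, property (Op2) for $(\mathbf{c}_t)$ forces $\sigma_0$ to act trivially along $\pe$ and to restrict to $\mu^\flat$ on $K_{\text{ss}}$; the formula above thus collapses to
\[
\pi_0(k,v)\Bigl(\sum_\omega (f_\omega)_0\Bigr) \;=\; \sum_\omega \mu^\natural(m_\omega)\,(f_\omega)_0 \text{ placed in summand } [k\omega],
\]
which is precisely the formula for $\mu \simeq \ind_{K^\sharp}^{K}(\mu^\natural)$ acting on $\mathbf{V}_0 = \sum_\omega W_\omega$, with $\pe$ acting trivially; by the $\chi = 0$ case of the Mackey--Higson bijection recalled just after \eqref{inductionG}, this is indeed the correspondent of $\pi$. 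The principal obstacle is the cocycle bookkeeping and the final identification, both rendered transparent by the $t$-independence of $\gamma^\sharp_{[k\omega]_t}$ as a point of $K \times \pe$, so that no further appeal to Lemma \ref{rouviere} is needed here.
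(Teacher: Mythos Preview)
Your proof is correct and follows essentially the same route as the paper: reduce (Vect1), (Vect2) componentwise, compute the cocycle $\gamma^\sharp_{[\gamma\omega]_t}$ explicitly for $\gamma=(k,v)$, observe that as a point of $K\times\pe$ it is independent of $t$, apply (Op1) for the contraction of $\sigma$ summand by summand, and identify the outcome with $\ind_{K^\sharp}^K(\mu^\natural)\simeq\mu$. Your computation of the cocycle even includes the $\ad(\kappa_{[k\omega]}^{-1})$ twist on $v$, which the paper's displayed formula omits (harmlessly, since the $\pe$-direction acts trivially in the limit).

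One small point: the claim that ($\text{Nat}_{\text{RIC}}$) is ``immediate by linearity\ldots summand by summand'' is a little too quick. The intertwining $\mathbf{C}_t\,\pi_1(\varphi_1^{-1}g)=\pi_t(\varphi_t^{-1}g)\,\mathbf{C}_t$ does not reduce to a diagonal statement, because the cocycle in \eqref{beprime} permutes the summands and one must check that the permutation and the $G^\sharp$-part match up under $\varphi_t\circ\varphi_1^{-1}$. The paper isolates this as a separate lemma, showing $[\gamma_1\omega]_1=[\gamma_t\omega]_t$ and $\varphi_t\varphi_1^{-1}(\gamma^\sharp_{[\gamma_1\omega]_1})=\gamma^\sharp_{[\gamma_t\omega]_t}$. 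Your later cocycle computation contains exactly the observation needed (the $t$-independence of $\gamma^\sharp$ as a point of $K\times\pe$), so the gap is only organizational: just invoke that observation earlier, or note that ($\text{Nat}_{\text{RIC}}$) follows from it together with ($\text{Nat}_{\text{RIC}}$) for $\mathbf{c}_t$.
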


 We start with property ($\text{Nat}_{\text{RIC}}$):

\begin{lem} The map $\mathbf{C}_{t}$ intertwines $\pi_t \circ \varphi_t^{-1}$ and $\pi_1 \circ \varphi_1^{-1}$ . \end{lem}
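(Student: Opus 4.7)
The plan is to reduce the intertwining identity to the analogous property ($\text{Nat}_{\text{RIC}}$) already satisfied by the given Fréchet contraction $(\mathbf{c}_t)_{t>0}$ of $\sigma$. The reduction rests on the observation that for every $t>0$, the isomorphism $\varphi_t: G_t \to G$ is compatible with all the $\sharp$-data: since $\varphi_t(k,v) = \exp_G(tv)k$ is the identity on the copy of $K$ sitting in $G_t$, in particular on $(Z_G)_{\text{e}}$ and on $F$, and since $\varphi_t$ maps $\varphi_t^{-1}(G_{\text{ss}})$ onto $G_{\text{ss}}$ by construction, one has $\varphi_t(G_t^\sharp) = G^\sharp$. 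In particular, any fixed system of coset representatives $(\kappa_\omega)_{\omega \in K/K^\sharp}$ chosen inside $K$ serves simultaneously as representatives of $G_t^\sharp$ in $G_t$ for every $t>0$ and of $G^\sharp$ in $G$.

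First I would fix such a family $(\kappa_\omega)$ and write, for each $g \in G$, the cocycle relation $g \kappa_\omega = \kappa_{[g\omega]}\, g^\sharp_{[g\omega]}$ in $G$ with $g^\sharp_{[g\omega]} \in G^\sharp$. Applying $\varphi_t^{-1}$ yields
\[ \varphi_t^{-1}(g)\, \kappa_\omega = \kappa_{[g\omega]}\, \varphi_t^{-1}\bigl(g^\sharp_{[g\omega]}\bigr) \]
in $G_t$, so the permutation of $K/K^\sharp$ induced by $\varphi_t^{-1}(g)$ through \eqref{coc} coincides with that induced by $g$, and the corresponding cocycle values in $G_t^\sharp$ are the images under $\varphi_t^{-1}$ of those in $G^\sharp$. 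Plugging this into the induced-representation formula \eqref{beprime} and using that $\mathbf{C}_t$ acts slot-by-slot as $\mathbf{c}_t$, the $\omega$-slots of $\pi_t(\varphi_t^{-1}(g))\,\mathbf{C}_t\bigl(\sum_{\omega'} f_{\omega'}\bigr)$ and of $\mathbf{C}_t\, \pi_1(\varphi_1^{-1}(g))\bigl(\sum_{\omega'} f_{\omega'}\bigr)$ read respectively
\[ \sigma_t^\sharp\bigl(\varphi_t^{-1}(g^\sharp_{[g\omega]})\bigr)\,\mathbf{c}_t\, f_{[g\omega]} \quad \text{and} \quad \mathbf{c}_t\, \sigma_1^\sharp\bigl(g^\sharp_{[g\omega]}\bigr)\, f_{[g\omega]}. \]

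The equality of the two expressions then follows from the pointwise identity $\mathbf{c}_t\, \sigma_1^\sharp(g^\sharp) = \sigma_t^\sharp\bigl(\varphi_t^{-1}(g^\sharp)\bigr)\, \mathbf{c}_t$ for every $g^\sharp \in G^\sharp$. To see this last identity, decompose $g^\sharp = z \cdot f \cdot g_{\text{ss}}$ according to $G^\sharp = (Z_G)_{\text{e}} \cdot F \cdot G_{\text{ss}}$; since $\varphi_t$ restricts to the identity on $(Z_G)_{\text{e}}$ and $F$, one has $\varphi_t^{-1}(g^\sharp) = z \cdot f \cdot \varphi_t^{-1}(g_{\text{ss}})$ in $G_t^\sharp$. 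By the very definition of $\sigma_1^\sharp$ and $\sigma_t^\sharp$ recalled above, both sides factor out the same scalar $\xi(z)\chi(f)$, so the identity reduces to $\mathbf{c}_t\, \sigma(g_{\text{ss}}) = \sigma_t\bigl(\varphi_t^{-1}(g_{\text{ss}})\bigr)\, \mathbf{c}_t$, which is property ($\text{Nat}_{\text{RIC}}$) applied to the given Fréchet contraction of $\sigma$ onto $\mu^\flat$.

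No substantial obstacle is expected: the argument is entirely a matter of unwinding definitions. The only point to watch carefully is the bookkeeping, in particular the consistency of the two uses of ``$\varphi_t$'' (one for $G$ and one for its subgroup $G_{\text{ss}}$), which agree on $\varphi_t^{-1}(G_{\text{ss}})$ because the Cartan decomposition of $G_{\text{ss}}$ is induced from that of $G$; and the coherent use of the $K$-representatives $(\kappa_\omega)$ simultaneously for $G/G^\sharp$ and for each $G_t/G_t^\sharp$.
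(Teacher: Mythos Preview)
Your proposal is correct and follows essentially the same route as the paper: both unwind the induced-representation formula \eqref{beprime}, show that transporting the cocycle data from $G$ (or $G_1$) to $G_t$ via $\varphi_t^{-1}$ matches the cocycle data computed directly in $G_t$, and then reduce to $(\text{Nat}_{\text{RIC}})$ for $\mathbf{c}_t$ after factoring out the central characters $\xi$ and $\chi$. One minor notational slip: when you write $\sigma_1^\sharp(g^\sharp_{[g\omega]})$ with $g^\sharp_{[g\omega]}\in G^\sharp$, you are implicitly identifying $G_1^\sharp$ with $G^\sharp$ via $\varphi_1$ (the paper keeps this explicit by working with $\varphi_t\varphi_1^{-1}$ throughout), but this does not affect the substance of the argument.
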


\begin{proof}

Fix $g$ in $G$, write  $\gamma_1$ for the element $\varphi_1^{-1}(g)$ of $G_1$ and $\gamma_t$ for the element $\varphi_t^{-1}(g)$  of $G_t$. Suppose $f = \sum\limits_{\omega} f_{\omega}$ is an element of $\mathbf{V}_1$. Taking up (B'), we use the intertwining relation for $\mathbf{c}_t$ to find
\begin{equation}\label{lemme_ct} \mathbf{C}_{t} \left( \pi_1(\gamma_1) f \right)  = \mathbf{C}_t\left( \sum \limits_{\omega} \sigma_t^\sharp(\gamma^{\sharp}_{[\gamma_1\omega]}) f_{[\gamma_1{\omega}]} \right)=  \sum \limits_{\omega} \mathbf{c}^{\omega}_t \left[ \sigma_1^\sharp(\gamma^{\sharp}_{[\gamma_1\omega]}) f_{[\gamma_1{\omega}]_1}\right] =
   \sum \limits_{\omega}  \sigma_1^\sharp(\varphi_t^{-1} \varphi_1(\gamma^{\sharp}_{[\gamma_1\omega]_1}))\cdot \left[ \mathbf{c}^{\omega}_t f_{[\gamma_1{\omega}]_1}\right]. \end{equation}
 For every $\omega$ in $K/K^\sharp$, the isomorphism $\varphi_t \circ \varphi_1^{-1}$ sends the class of $\gamma_1 \kappa_{\omega}$ in $G_1/G_1^\sharp$ to the class of $\gamma_t \kappa_{\omega}$ in $G_t/G_t^\sharp$. As a consequence, $[\gamma_1 \omega]_1$ and $ [\gamma_t \omega]_t$ coincide. Together with  \eqref{coc}, this means that
\[  \varphi_t \varphi_1^{-1}\left(\gamma^\sharp_{[\gamma_1 \omega]_1}\right) = \varphi_t \varphi_1^{-1} \left( \gamma_1 \kappa_{\omega}\kappa_{[\gamma_1 \omega]}^{-1}\right) = \varphi_t \varphi_1^{-1} \gamma_1 \left( \kappa_{\omega}\kappa_{[\gamma_1 \omega]_1}^{-1}\right) = \gamma_t \kappa_{\omega}\kappa_{[\gamma_t \omega]_t}^{-1} = \gamma^\sharp_{[\gamma_t \omega]_t}.\]
Inserting this into \eqref{lemme_ct}, we find $\mathbf{C}_{t} \left( \pi_1(\gamma_1) f \right)  =  \sum \limits_{\omega}  \sigma_t^\sharp(\gamma^{\sharp}_{[\gamma_t\omega]_t})\cdot \left[ \mathbf{c}^\omega_t f_{[\gamma_t{\omega}]} \right] =  \pi_t(\gamma_t) \cdot \left[\mathbf{C}_t f \right]$, as desired.
\end{proof}

For each vector $f$ in $\mathbf{V}_1$, the contraction $f_t:=\mathbf{C}_{t} f$ does of course admit a limit as $t$ goes to zero. The set of possible limits is
\begin{align*} \mathbf{V}_{0} :=  \left\{ f_{0} \ | \ f \in \mathbf{E} \right\}  = \sum \limits_{\omega \in K/K^\sharp} V_{0, \omega},\end{align*}
where $V_{0, \omega} := \left\{ \lim(\mathbf{c}_t f) \ ; \ f \in V_{1, \omega}\right\}$ carries an irreducible representation of $K^\sharp$ with class $\mu^\natural$. \\

Turning to the convergence of operators, fix $F =  \sum\limits_{\omega} f_{0,\omega}$ in $\mathbf{V}_0$ and $f =  \sum\limits_{\omega} f_{\omega}$ in $\mathbf{V}_1$ such that $F=f_0$. Fix $(k,v)$ in $K \times \pe$. To observe $\pi_t(k,v) f_t$, we need to insert $\gamma=\exp_{G_t}(v) k$ in \eqref{beprime}; for every  $\omega$ in $K/K^\sharp$, we remark that $\gamma^\sharp_{[\gamma \omega]_t}$ from \eqref{coc} is equal to $\exp_{G_t}(v) k^\sharp_{k\omega}$, where $k^\sharp_{k\omega}$ is the element of $K^\sharp$ satisfying $k \kappa_{\omega} = \kappa_{k \omega} k^{\sharp}_{k\omega}$ (compare \eqref{coc}; here $k\omega$ is just the class of $k \kappa_{\omega}$ in $K/K^\sharp$). This indicates that 
\begin{align*} \pi_t(k,v) f_t = \sum \limits_{ \omega \in K/K^\sharp} \pi_t^\sharp(k^\sharp_{k\omega}, v) f_{t, \omega}.\end{align*}
As $t$ goes to zero, the fact that we started with a contraction from $\sigma$ onto $\mu^\flat$ means that for every $\omega$, $\pi_t^\sharp(k^\sharp_{k\omega}, v) f_{t, \omega}$ goes to $\mu^\natural(k^\sharp_{k\omega}) f_{0, \omega}$. Thus, 
\begin{align*} \pi_t(k,v) f_t \text{\quad goes to  \quad } \pi_0(k,v) F := \sum \limits_{\omega \in K/K^\chi} \mu^\natural(k^\sharp_{k\omega}) f_{0, \omega}.\end{align*}
In the last formula, we recognize the $K$-action in $\ind_{K^\sharp}^K(\mu^\natural)$; from \eqref{outcome} we deduce that the $G_0$-representation $(\mathbf{V}_0, \pi_0)$ is the extension to $G_0$ of an irreducible representation of $K$ equivalent with $\mu$,~proving~Proposition~\ref{contraction-nonconnexe}.\qed\\

\noindent We note that if the contraction \eqref{contr_reduction}  from $\sigma$ onto $\mu^\flat$ satisfies Lemma \ref{bonnedefo_dolb}, then the contraction from $\pi$ onto $\mu$ we built in Proposotion \ref{contraction-nonconnexe} obviously satisfies Lemma \ref{bonnedefo_dolb}, too.
 
\subsection{Parabolic induction and reduction to real infinitesimal character}

\noindent We return to our linear connected reductive group $G$. Consider an arbitrary Mackey parameter $(\chi, \mu)$ and call in the parabolic subgroup $P_\chi = M_\chi A_\chi N_\chi$ from \eqref{inductionG}, together with the irreducible tempered representation $\sigma=V_{M_\chi}(\mu)$ with real infinitesimal character and lowest $K_\chi$-type $\mu$ (recall that $M_\chi \cap K = K_\chi$). The representation of $G$ that we attached to $(\chi, \mu)$ in  \S \ref{subsec:correspondance} is 
\begin{equation} \pi = \ind_{M_\chi A_\chi N_\chi} \left( \sigma \otimes e^{i\chi}\right), \end{equation}
and the representation of $G_0$ paired with $\pi$ in the Mackey-Higson bijection is 
\begin{equation} \pi_0 = \ind_{K_\chi \ltimes \pe } \left( \mu \otimes e^{i\chi}\right). \end{equation}
We will start from a Fréchet contraction 
\begin{equation} \label{contraction_initiale} \left(E, (V_t)_{t>0}, (\sigma_t)_{t>0}, (\mathbf{c}_t)_{t>0}\right)\end{equation}
of $\sigma$ onto $\mu$, assume that it has the properties in Lemma \ref{bonnedefo_dolb}, and build a Fréchet contraction of $\pi$ onto $\pi_0$.

 For every $t>0$, the space 
\begin{equation}\label{vt} \mathbf{V}_t = \left\{ f : K \overset{\text{continuous}}{\longrightarrow} V_t \quad | \quad \forall u \in K_\chi,  \ \forall k \in K, \ f(ku) = \sigma_t(u^{-1}) f(k) \right\} \end{equation}  
can be equipped with a family of irreducible representations of $G_t$:  recall from Lemma \ref{iwa_gt} that the inverse image of $M_\chi A_\chi N_\chi$ under the isomorphism $\varphi_t: G_t \to G$ is a parabolic subgroup $M_{t, \chi} A_\chi N_{t, \chi}$ of $G_t$; for every $\lambda$ in $\mathfrak{a}_\chi^\star$, we use  the Iwasawa maps 
\begin{equation*} \kappa_t: G_t  \to K, \quad\quad\quad \mathbf{m}_t: G_t  \to M_{t,\chi} \cap \exp_{G_t}(\pe), \quad\quad\quad \mathbf{a}_t: G_t  \to \mathfrak{a}_\chi, \end{equation*}
and the half-sum $\rho_t$ of positive roots in the ordering used to define $N_{t, \chi}$, to define an endomorphism of $\mathbf{V}_t$: we set 
\begin{equation} \label{pi_t_comp} \pi^{t, \text{comp}}_{\lambda, \mu}(\gamma) :=  f \mapsto  \left[ k \mapsto  \exp{\langle - i\lambda - \rho_t, \mathbf{a}_t(\gamma^{- 1} k) \rangle} \sigma(\mathbf{m}_t(\gamma^{- 1}k)) f\left(\kappa_t(\gamma^{- 1} k)\right) \right] \end{equation}
for every $f$ in $\mathbf{V}_t$ and every $\gamma$ in $G_t$. See the discussion of the compact picture in \S \ref{subsec:contraction_compact}.

Embedding these spaces into a fixed Fréchet space is easy: they are all vector subspaces of 
\begin{equation} \tag{A} \mathbf{E} = \left\{ \text{ continuous functions from $K$ to $E$ } \right\}, \end{equation}
a Fréchet space when equipped with the topology of uniform convergence. Now for $t>0$, consider
\begin{align}\tag{B} \mathbf{V}_t & \ \text{from \eqref{vt}} \\ \tag{B'} \pi_t &= \pi^{t, \text{comp}}_{\chi, \mu} \quad \text{from \eqref{pi_t_comp} }\\ \tag{C} \mathbf{C}_t &=  \text{ pointwise composition with the map $\mathbf{c}_t$ from \eqref{contraction_initiale} }. \end{align}
We are ready to complete the task we set ourselves.
\begin{thm} \label{contraction_finale} Suppose $G$ is a linear connected reductive group, $\pi$ is an  irreducible tempered representation of $G$, and $\pi_0$ is the representation of $G_0$ that corresponds to $\pi$ in the Mackey-Higson bijection of Theorem \ref{correspondance}. The quadruple $\left( \mathbf{E}, (\mathbf{V}_t)_{t>0},  (\pi_t)_{t>0},  (\mathbf{C}_t)_{t>0}  \right)$ above describes a Fréchet contraction of $\pi$ onto $\pi_0$. \end{thm}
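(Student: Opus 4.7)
The plan is to combine, in the compact picture, the convergence of the Iwasawa-projection-based principal series operators from Proposition \ref{dooleyrice} with the given Fréchet contraction of the Levi-factor representation $\sigma$ onto its lowest $K_\chi$-type $\mu$. Indeed, the situation is exactly parallel to that of \S \ref{subsec:contraction_compact}, except that the constant fiber $V_\sigma$ there is now replaced by the moving fiber $V_t \subset E$, whose contraction to $W \subset E$ is handled by $\mathbf{c}_t$. The crucial input is Lemma \ref{bonnedefo_dolb}, which supplies a metric on $E$ and a uniform Lipschitz constant $\tilde{C}$ controlling simultaneously the operators $\sigma_t(u)$ (for $u$ in a fixed compact subset of $L_\chi$), the contraction operators $\mathbf{c}_t$, and extensions of the $\sigma_t(\gamma)$ to all of $E$.

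First I would check properties (Vect1), (Vect2), and ($\text{Nat}_{\text{gen}}$). Property (Vect1) follows from the pointwise definition of $\mathbf{C}_t$, together with the fact that $\mathbf{c}_t$ intertwines the $K_\chi$-actions of $\sigma_1$ and $\sigma_t$ (which is immediate from ($\text{Nat}_{\text{RIC}}$) for the initial contraction, since $\varphi_t^{-1}$ is the identity on $K$). For (Vect2), uniform convergence of $\mathbf{C}_t f = \mathbf{c}_t \circ f$ on the compact set $K$ follows from the Lipschitz estimate of Lemma \ref{bonnedefo_dolb}(i) together with the pointwise convergence of $\mathbf{c}_t f(k)$. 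Property ($\text{Nat}_{\text{gen}}$) is verified by repeating the calculation that proved Proposition \ref{renorm2}, inserting the fiber-level intertwining property of $\mathbf{c}_t$: the fiber factor $\sigma(\mathbf{m}_t(\cdot))$ in \eqref{pi_t_comp} combines with Lemmas \ref{racines_gt} and \ref{iwa_gt} to give exactly the compact-picture operator for the representation $\mathrm{Ind}_{M_\chi A_\chi N_\chi}^{G}(\sigma \otimes e^{i\chi/t}) = \mathcal{R}^{1/t}_G(\pi)$.

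The main work lies in (Op1)-(Op2). Fix $(k,v) \in K \times \pe$ and $f \in \mathbf{V}_1$, set $f_t = \mathbf{C}_t f$ and $F = \lim_{t \to 0} f_t$. Writing $\gamma_t = (k,v) \in G_t$, the formula \eqref{pi_t_comp} gives, after the same manipulation as in the proof of Proposition \ref{dooleyrice},
\[ [\pi_t(k,v) f_t](u) = e^{\langle -i\chi - t\rho, \mathfrak{I}_t(-\mathrm{Ad}(u^{-1})v)\rangle} \, \sigma_t\bigl(\mathbf{m}_t(\exp_{G_t}(-\mathrm{Ad}(u^{-1})v))\bigr) f_t\bigl( (k^{-1}u) \cdot \mathfrak{K}_t(-\mathrm{Ad}(u^{-1})v) \bigr), \]
so I need convergence of three pieces as $t \to 0$: the exponential weight, the $\sigma_t(\mathbf{m}_t(\cdot))$ factor, and the composition $f_t \circ \mathfrak{K}_t(\cdot)$. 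For the first, Lemma \ref{conv_iwa} gives the required limit $e^{\langle -i\chi, \mathfrak{I}_0(-\mathrm{Ad}(u^{-1})v)\rangle} = e^{i\langle \mathrm{Ad}^\star(u)\chi, v\rangle}$, matching \eqref{actionG0}. For the $\mathbf{m}_t$ factor, a straightforward adaptation of the proof of Lemma \ref{conv_iwa} (using the refined Iwasawa decomposition with the additional $M_\chi \cap \exp_{G_t}(\pe)$ component and the rescaling $\varphi_t$) shows that $\mathbf{m}_t(\exp_{G_t} w)$ converges to the identity uniformly in $w$ on compact subsets, whence $\sigma_t(\mathbf{m}_t(\cdot))$ applied to a fixed vector tends to the identity, by Lemma \ref{bonnedefo_dolb}(ii) which gives a uniform Lipschitz control on $\sigma_t$. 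For the third factor, I would decompose
\[ f_t\bigl( (k^{-1}u)\mathfrak{K}_t(\cdot)\bigr) = f_t(k^{-1}u) + \bigl[ f_t\bigl( (k^{-1}u)\mathfrak{K}_t(\cdot)\bigr) - f_t(k^{-1}u)\bigr], \]
noting that the first summand converges uniformly to $F(k^{-1}u)$, while the second vanishes uniformly by Lemma \ref{conv_iwa} together with the equicontinuity of the family $\{f_t\}_{t \in (0,1]}$, which is in turn controlled by the Lipschitz estimates of Lemma \ref{bonnedefo_dolb}(i) applied to the $K$-action on $E$.

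The hard part will be justifying the uniform control required by the third piece above: the functions $f_t$ depend on $t$ in a nontrivial way (they are pointwise images under $\mathbf{c}_t$, not a fixed function), and one needs the family $\{f_t\}$ to be equicontinuous on $K$ with respect to a suitable distance on $E$ for the limit to pass through. This is where the uniformity in $t$ of the Lipschitz constants in Lemma \ref{bonnedefo_dolb} is indispensable, and is the whole reason for having extracted that auxiliary property from the construction in \S \ref{sec:real_inf_char}. Once these three convergences are combined, we obtain
\[ \lim_{t \to 0} [\pi_t(k,v) f_t](u) = e^{i\langle \mathrm{Ad}^\star(u)\chi, v\rangle} F(k^{-1}u), \]
which is exactly the formula \eqref{actionG0} for the Mackey representation $\pi_0 = \mathbf{M}_0(\chi,\mu)$ acting on the space $\mathbf{V}_0 = \{u \mapsto F(u) \mid F \in \mathbf{V}_1, F = \lim_t \mathbf{c}_t \circ f\}$, a space on which $K_\chi$ acts (via its action on $W$) through $\mu$. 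Comparing with the construction \eqref{espaceG0}-\eqref{actionG0} of $\mathbf{M}_0(\chi,\mu)$ and invoking Theorem \ref{correspondance}, we conclude that the representation of $G_0$ so produced is unitary irreducible and corresponds to $\pi$ in the Mackey-Higson bijection, completing the verification of (Op2).
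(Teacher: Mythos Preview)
Your plan follows the paper's proof closely: same compact-picture formula for $\pi_t(k,v)f_t$, same three ingredients (exponential weight, $\sigma_t(\mathbf{m}_t)$ factor, composition of $f_t$ with $\mathfrak{K}_t$), same appeals to Lemmas \ref{conv_iwa} and \ref{bonnedefo_dolb}. There is, however, one genuine gap in the middle factor.

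You assert that since $\mathbf{m}_t(\exp_{G_t} w) \to 1$ uniformly and Lemma \ref{bonnedefo_dolb} gives uniform Lipschitz control on $\sigma_t$, it follows that $\sigma_t(\mathbf{m}_t(\cdot))\xi \to \xi$ for fixed $\xi$. But Lemma \ref{bonnedefo_dolb} only says that each operator $\sigma_t(k,v)$ is $\tilde{C}$-Lipschitz and that $g \mapsto \sigma_t(g)$ is continuous \emph{for each fixed $t$}; this is uniform boundedness, not uniform continuity of $g \mapsto \sigma_t(g)\xi$ at the identity across $t$. From $g_t \to 1$ and a uniform bound on $\sigma_t(g_t)$ you cannot conclude $\sigma_t(g_t)\xi \to \xi$, because the representation itself moves with $t$. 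The paper closes this gap via a four-term decomposition of $\Sigma_t(u,v)\,f_t\bigl((k^{-1}u)\mathfrak{K}_t[\beta]\bigr)$ (see the proof of Proposition \ref{conv_op_finale}): three terms are handled by the uniform Lipschitz bound just as you propose, but the fourth, $[\Sigma_t(u,v)-\mathrm{Id}_E]\,f_0(k^{-1}u)$, requires two ingredients you do not invoke. First, the limit $f_0(k^{-1}u)$ lies in the outcome space $V_0$ of the initial contraction and is $\mathbf{c}_t$-\emph{invariant} (a feature of the explicit contractions built in \S \ref{sec:real_inf_char}--\ref{subsec:disco}). Second, the intertwining relation $(\mathrm{Nat}_{\mathrm{RIC}})$ for $\mathbf{c}_t$ converts $\sigma_t(\mathbf{m}_t[\exp_{G_t}^\beta])$ acting on such a vector into $\sigma_1(\mathbf{m}_1[\exp_{G_1}^{t\beta}])$ acting on it; since $\sigma_1$ is a \emph{fixed} representation and $\mathbf{m}_1[\exp_{G_1}^{t\beta}]\to 1$, ordinary continuity of $\sigma_1$ finishes the job. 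Your argument is repaired by this same manoeuvre.
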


\noindent It is perhaps simplest to start by establishing the naturality condition ($\text{Nat}_{\text{gen}}$) of \S \ref{subsec:programme}: we need only mimic the case of principal series representations and use the facts encountered in the proof of Proposition \ref{renorm2} concerning the normalization of continuous parameters.
 
For every $\lambda$ in $\mathfrak{a}_\chi^\star$, use  $\varphi_1: G_1 \to G$ to define a representation $\tilde{\pi}^{}_{\lambda}$ of $G$ acting on $\mathbf{V}_1$, the composition
\[ \tilde{\pi}_{\lambda} \ : \  G \overset{\varphi_{1}^{- 1}}{\longrightarrow} G_{1}  \overset{\pi^{1, \text{comp}}_{\lambda, \mu}}{\longrightarrow} \text{End}(\mathbf{V}_1). \]
Consider the representation of $G$ transferred from $\pi_t$ through the isomorphism $\varphi_t: G_t \to G$, 
\[ \pi_t \circ \varphi_t^{-1} \ : \  G \overset{\varphi_{t}^{- 1}}{\longrightarrow} G_{t}  \overset{\pi^{t, \text{comp}}_{\chi, \mu}}{\longrightarrow} \text{End}(\mathbf{V}_t). \]
\begin{lem}  The map $\mathbf{C}_t$ intertwines $\pi_t \circ \varphi_t^{-1}$ and $\tilde{\pi}_{\chi/t}$.  \end{lem}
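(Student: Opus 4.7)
The plan is to reduce the lemma to a direct computation that combines three ingredients already in hand: Proposition \ref{renorm2} (which identifies $\tilde\pi_{\chi/t}$ with the standard compact-picture representation $\pi^{\text{comp}}_{\chi/t,\mu}$ of $G$), Lemma \ref{iwa_gt} on the relationship between Iwasawa projections for $G$ and $G_t$, and the real-infinitesimal-character naturality property $(\text{Nat}_{\text{RIC}})$ satisfied by the fiber-level contraction $\mathbf{c}_t$ of $\sigma$ onto $\mu$. The two extra observations needed are that $\mathbf{C}_t$ indeed maps $\mathbf{V}_1$ into $\mathbf{V}_t$, and that $t \rho = \rho_t$ (Lemma \ref{racines_gt}).

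First I would verify that $\mathbf{C}_t$ preserves the $K_\chi$-equivariance defining $\mathbf{V}_t$. If $u \in K_\chi$, then $\varphi_t^{-1}(u) = u$, so $(\text{Nat}_{\text{RIC}})$ for $\mathbf{c}_t$ gives $\mathbf{c}_t \circ \sigma(u) = \sigma_t(u) \circ \mathbf{c}_t$; consequently, for any $f \in \mathbf{V}_1$ and any $k \in K$,
\[
(\mathbf{C}_t f)(ku) = \mathbf{c}_t\bigl(\sigma(u^{-1})f(k)\bigr) = \sigma_t(u^{-1})\,(\mathbf{C}_t f)(k),
\]
so $\mathbf{C}_t f \in \mathbf{V}_t$. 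The same computation will handle the $\mathbf{m}_t$-factor appearing in \eqref{pi_t_comp}.

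Next, fix $g \in G$ and $f \in \mathbf{V}_1$, set $\gamma_t = \varphi_t^{-1}(g) \in G_t$, and compute both sides at a point $k \in K$. By Proposition \ref{renorm2} the left-hand side is
\[
\mathbf{C}_t\bigl[\tilde\pi_{\chi/t}(g)f\bigr](k) = \mathbf{c}_t\Bigl( e^{\langle -i\chi/t -\rho,\, \mathbf{a}(g^{-1}k)\rangle}\, \sigma(\mathbf{m}(g^{-1}k))^{-1}\, f(\kappa(g^{-1}k)) \Bigr).
\]
Apply $(\text{Nat}_{\text{RIC}})$ to pull $\mathbf{c}_t$ past $\sigma(\mathbf{m}(g^{-1}k))^{-1}$: this turns the factor into $\sigma_t(\varphi_t^{-1}\mathbf{m}(g^{-1}k))^{-1} \mathbf{c}_t(f(\kappa(g^{-1}k)))$. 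Then invoke Lemma \ref{iwa_gt} to rewrite
\[
\varphi_t^{-1}\mathbf{m}(g^{-1}k) = \mathbf{m}_t(\gamma_t^{-1}k), \qquad \kappa(g^{-1}k) = \kappa_t(\gamma_t^{-1}k), \qquad \mathbf{a}(g^{-1}k) = t\,\mathbf{a}_t(\gamma_t^{-1}k),
\]
and use Lemma \ref{racines_gt} to rewrite $t\rho = \rho_t$. The scalar factor becomes $e^{\langle -i\chi -\rho_t,\, \mathbf{a}_t(\gamma_t^{-1}k)\rangle}$, and the whole expression reorganizes into
\[
e^{\langle -i\chi -\rho_t,\, \mathbf{a}_t(\gamma_t^{-1}k)\rangle}\, \sigma_t(\mathbf{m}_t(\gamma_t^{-1}k))^{-1}\, (\mathbf{C}_t f)(\kappa_t(\gamma_t^{-1}k)) = \bigl[\pi^{t,\text{comp}}_{\chi,\mu}(\gamma_t) (\mathbf{C}_t f)\bigr](k),
\]
which is exactly $(\pi_t \circ \varphi_t^{-1})(g)(\mathbf{C}_t f)(k)$.

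There is really no genuine obstacle here: the computation is a strict generalization of the proof of Proposition \ref{renorm2}, with the trivial scalar fiber replaced by the Fréchet space $E$, and the only new input is the intertwining relation $(\text{Nat}_{\text{RIC}})$ used to move the fiber-level operator $\mathbf{c}_t$ across $\sigma(\mathbf{m})$. The one place where one must be a bit careful is the rescaling $\mathbf{a}(g^{-1}k) = t\,\mathbf{a}_t(\gamma_t^{-1}k)$, which, combined with the shift $\chi \leadsto \chi/t$ built into $\tilde\pi_{\chi/t}$, produces exactly the factor $e^{-i\chi}$ demanded by $\pi_t$, while the shift $\rho \leadsto t\rho = \rho_t$ produces the correct modular factor for $G_t$. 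This accounts transparently for the appearance of the renormalization $\mathcal{R}^{1/t}_G$ in property $(\text{Nat}_{\text{gen}})$ of \S\ref{subsec:programme}.
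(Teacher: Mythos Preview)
Your proof is correct and follows essentially the same route as the paper's: both unfold the compact-picture formula, invoke Lemma \ref{racines_gt} for $\rho_t = t\rho$, Lemma \ref{iwa_gt} for the Iwasawa projections, and the intertwining property $(\text{Nat}_{\text{RIC}})$ of $\mathbf{c}_t$ to pass $\sigma$ to $\sigma_t$. Your extra verification that $\mathbf{C}_t$ carries $\mathbf{V}_1$ into $\mathbf{V}_t$ is a welcome addition; the only minor imprecision is your appeal to Proposition \ref{renorm2}, which was stated for minimal parabolics---what you actually use is just the definition of $\tilde\pi_{\chi/t}$ via \eqref{pi_t_comp} at $t=1$, as you yourself note when calling the computation ``a strict generalization'' of that proposition.
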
 

\emph{Proof.} Fix $f$ in $\mathbf{V}_1$ and $g$ in $G$. We need to compare  $(\pi_t \circ \varphi_{t}^{- 1})(g) \cdot [\mathbf{C}_t f]$ with $\mathbf{C}_t\left[\tilde{\pi}_{\chi/t} (g)f\right]$.
Lemma \ref{racines_gt} proves that $\rho_t$ is equal with $t \rho_1$, so given the definition of $\mathbf{C}_t$,  
\[ (\pi_t \circ \varphi_{t}^{- 1})(g) \cdot [\mathbf{C}_t f] \enskip\text{is the map }\enskip  k \mapsto  e^{\langle - i\chi - t\rho, \mathbf{a}_{t}( \left[\varphi_{t}^{- 1}g\right]^{- 1} k) \rangle} \sigma_{t}(\mathbf{m}_{t}(\left[\varphi_{t}^{- 1}g\right]^{- 1} k) )\left[\mathbf{c}_t f\right]\left(\kappa_{t}(\left[\varphi_{t}^{- 1}g\right]^{- 1} k)\right) \]
for every $g$ in $G$. Lemma \ref{iwa_gt} identifies the maps $\kappa_t \circ \varphi_t^{-1}$, $\mathbf{m}_t \circ \varphi_t^{-1}$ and $\mathbf{a}_t \circ \varphi_t^{-1}$; inserting, from property ($\text{Nat}_{\text{RIC}}$) of the Fréchet contraction from $\sigma$ onto $\mu$, the fact that $\mathbf{c}_{t}$ intertwines $\sigma_1$ and $\sigma_{t} \circ \varphi_t^{-1}$, some rearranging leads to
\begin{align*} \pi^{t, comp}_{\lambda, \sigma}(\varphi_{t}^{- 1}(g)) \left[ \mathbf{C}_{t} f \right] & =  \left[ k \mapsto  e^{\langle - i\frac{\lambda}{t} - \rho, \ t \cdot \mathbf{a}_{t}( \varphi_{t}^{- 1}\left[g^{- 1} k\right]) \rangle}  \sigma_{t}(\mathbf{m}_{t}(\left[\varphi_{t}^{- 1}g\right]^{- 1} k) ) \left(\mathbf{c}_{t} f\right) \left(\kappa_{t}(\varphi_{t}^{- 1}\left[g^{- 1} k\right])\right) \right] \\
& = \left[ k \mapsto  e^{\langle - i\frac{\lambda}{t} - \rho, \ \mathbf{a}( g^{- 1} k) \rangle}\left\{ \mathbf{c}_{t}  \sigma(g^{- 1}k)\right\} f\left(\kappa(g^{- 1} k)\right) \right] \\
& = \mathbf{C}_t\left[\tilde{\pi}_{\chi/t}(g) f\right], \text{ as desired.} \hspace{7cm} \qed\end{align*}
Among the properties of \S \ref{subsec:programme}, (Vect1) and (Vect2) are once again obvious from the fact that the quadruple $\left(E, (V_t)_{t>0}, (\sigma_t)_{t>0}, (\mathbf{c}_t)_{t>0}\right)$ contracts $\sigma$ onto $\mu$: if $f$ lies in $\mathbf{V}_1$ and we set $f_t = \mathbf{C}_t f$ for every $t>0$, then $f_t$ converges to an element of 
\begin{equation} \mathbf{V}_0 =  \left\{ f : K \overset{\text{continuous}}{\longrightarrow} V_0 \quad | \quad \forall u \in K_\chi,  \ \forall k \in K, \ f(ku) = \sigma_0(u^{-1}) f(k) \right\} \end{equation} 
where $V_0$ is the outcome of the Fréchet  contraction $\left(E, (V_t)_{t>0}, (\sigma_t)_{t>0}, (\mathbf{c}_t)_{t>0}\right)$, an irreducible $K_\chi$-module carrying an irreducible representation $k \mapsto \sigma_0(k)$ of class $\mu$. Every $F$ in $\mathbf{V}_0$ does arises as the limit of $\mathbf{C}_t f$ for some $f$ in $\mathbf{V}_1$.

The space $\mathbf{V}_0$ comes already equipped with an irreducible $G_0$-representation equivalent with $\pi_0$, that described in \S \ref{subsec:correspondance} and Eq. \eqref{actionG0}: for every $F$ in $\mathbf{V}_0$ and $(k,v)$ in $G_0$, we set 
\begin{equation} \label{actionG0_finale} \pi_0(k,v)\cdot F = \left[ u \mapsto e^{i \langle Ad^\star(u)\chi, v\rangle} F(k^{- 1} u)\right].\end{equation}
The only thing that still needs proof is the convergence of operators.
\begin{prop} \label{conv_op_finale} Fix $F$ in $\mathbf{V}_0$, fix $f$ in $\mathbf{V}_1$ such that $F = \lim \limits_{t \to 0} f_t$,  and fix $(k,v)$ in $K \times \pe$. Then  $\pi_t(k,v) \cdot  f_t$ goes to $\pi_0(k,v)\cdot F$ as $t$ goes to zero.\end{prop}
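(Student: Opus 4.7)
The strategy mirrors the convergence argument of Proposition \ref{dooleyrice} for the minimal spherical case, extended to accommodate a general Mackey parameter by combining it with the splitting trick of the discrete-series case (see \eqref{convop} and Lemma \ref{lipschitz_seriediscrete}) and with the Lipschitz hypotheses of Lemma \ref{bonnedefo_dolb} on the incoming contraction of $\sigma$. First I unfold $[\pi_t(k,v) f_t](u)$ using \eqref{pi_t_comp} together with the rewriting $\gamma^{-1}u = (k^{-1}u) \cdot_t \exp_{G_t}(-\ad(u^{-1})v)$ that was central to \S \ref{subsec:contraction_compact}: since $k^{-1}u$ lies in $K$, the Iwasawa data of $\gamma^{-1}u$ reduces to that of $\exp_{G_t}(w_u)$, where $w_u := -\ad(u^{-1})v$. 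A straightforward extension of Lemma \ref{conv_iwa} to the (possibly non-minimal) parabolic $P_\chi$ follows from the same dilation argument ($\tilde{\mathbf{a}}_t(w) = \mathbf{a}(tw)/t$, and likewise for $\tilde{\mathbf{m}}_t$, where $\tilde{\bullet}_t(w)$ abbreviates $\bullet_t(\exp_{G_t}(w))$): as $t \to 0$, uniformly on compact subsets of $\pe$, one obtains $\tilde{\mathbf{a}}_t(w) \to \operatorname{proj}_{\mathfrak{a}_\chi}(w)$, $\tilde{\kappa}_t(w) \to 1_K$, and $\tilde{\mathbf{m}}_t(w) \to (1_K,\operatorname{proj}_{\mathfrak{m}_\chi \cap \pe}(w))$ in the motion-group $(M_\chi)_0 = K_\chi \ltimes (\mathfrak{m}_\chi \cap \pe)$, the projections being with respect to the Killing-form orthogonal decomposition of $\pe$ induced by the Iwasawa decomposition of~$\g$.

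Second, I use Lemma \ref{bonnedefo_dolb} for the incoming contraction of $\sigma$ to extend each $\sigma_t$ to a Lipschitz operator on $E$, with constant uniform in $t \in (0,1]$ and in group arguments from any compact subset of the form $K_\chi \cdot \exp_{G_t}(U)$ with $U \subset \mathfrak{m}_\chi \cap \pe$ compact. Equipping $\mathbf{E}$ with the uniform-convergence metric built from the chosen $E$-distance, this upgrades to a uniform Lipschitz bound on $\pi_t(k,v)$ for $(k,v)$ in any fixed compact subset of $K \times \pe$. Following the pattern of \eqref{convop}, I split
\[
\pi_t(k,v) f_t \;=\; \pi_t(k,v)(f_t - F) \;+\; \pi_t(k,v) F.
\]
Property (Vect2) for the present contraction gives $f_t \to F$ in $\mathbf{E}$, and the uniform Lipschitz bound then forces the first summand to zero in~$\mathbf{E}$.

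Third, for the second summand I evaluate pointwise at each $u \in K$:
\[
[\pi_t(k,v) F](u) \;=\; e^{\langle -i\chi - t\rho,\ \tilde{\mathbf{a}}_t(w_u)\rangle}\, \sigma_t(\tilde{\mathbf{m}}_t(w_u))\, F(k^{-1}u \cdot \tilde{\kappa}_t(w_u)).
\]
The scalar factor converges to $e^{i\langle \ad^*(u)\chi,\,v\rangle}$, using that $\chi$, extended to $\pe^*$ by zero on the Killing-orthogonal of $\mathfrak{a}_\chi$, satisfies $\langle \chi, w\rangle = \langle \chi, \operatorname{proj}_{\mathfrak{a}_\chi}(w)\rangle$. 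For the $\sigma_t$ factor, the uniform Lipschitz control of $\sigma_t$ in its group argument lets me replace $\tilde{\mathbf{m}}_t(w_u)$ by its $G_0$-limit $(1_K, \operatorname{proj}_{\mathfrak{m}_\chi\cap\pe}(w_u))$ with error $o(1)$ uniformly in $u$; property (Op1) for the $\sigma$-contraction, applied at the fixed group element $(1_K, \operatorname{proj}_{\mathfrak{m}_\chi\cap\pe}(w_u)) \in (M_\chi)_0$, then gives $\sigma_t(\tilde{\mathbf{m}}_t(w_u)) F(u') \to F(u')$ (the contracted representation $\sigma_0$ is $\mu \otimes \mathbf{1}$, so $\mathfrak{m}_\chi \cap \pe$ acts trivially in the limit). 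Finally $F(k^{-1}u \cdot \tilde{\kappa}_t(w_u)) \to F(k^{-1}u)$ by uniform continuity of $F$ and $\tilde{\kappa}_t \to 1_K$. All three convergences are uniform in $u \in K$ because $\{w_u : u \in K\}$ is compact in $\pe$; assembling them gives $[\pi_t(k,v)f_t](u) \to e^{i\langle \ad^*(u)\chi,v\rangle} F(k^{-1}u) = [\pi_0(k,v)F](u)$ uniformly in $u$, that is, in~$\mathbf{E}$.

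The main obstacle is threading the Lipschitz estimates through the cascade of Iwasawa projections, $\sigma_t$-actions, and composition on $K$, while preserving uniformity in $u$ and in $t$; this is precisely what Lemma \ref{bonnedefo_dolb} is engineered to accommodate, and it is the reason the notion of ``nice'' contraction required there was formulated in exactly that form.
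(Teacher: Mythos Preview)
Your overall architecture matches the paper's: the Iwasawa rewriting $\gamma^{-1}u = (k^{-1}u)\exp_{G_t}(w_u)$, the convergence of the scalar factor and of $\tilde{\kappa}_t$, and the use of Lemma~\ref{bonnedefo_dolb} to obtain uniform Lipschitz bounds on each $\sigma_t(m)$ \emph{as an operator} are all correct and are just what the paper uses. The first half of your split, $\pi_t(k,v)(f_t-F)\to 0$, is fine.

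The gap is in the second half, where you must show that $\sigma_t(\tilde{\mathbf{m}}_t(w_u))$ applied to $F(k^{-1}u)\in V_0$ converges to $F(k^{-1}u)$. You invoke two things the hypotheses do not supply. First, ``uniform Lipschitz control of $\sigma_t$ in its group argument'': Lemma~\ref{bonnedefo_dolb} gives, for each fixed $t$, continuity of $m\mapsto\sigma_t(m)$, together with a uniform-in-$(t,m)$ bound on the operator $\sigma_t(m)$; it gives no uniform-in-$t$ modulus of continuity in the group variable, and the crude extension \eqref{extension} certainly does not have one. So you cannot replace $\tilde{\mathbf{m}}_t(w_u)$ by its limit inside $\sigma_t(\cdot)$ at vanishing cost. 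Second, property (Op1) for the $\sigma$-contraction is stated for $\sigma_t(\gamma)\phi_t$ with $\phi_t=\mathbf{c}_t\phi$, $\phi\in V_1$, at a \emph{fixed} group element $\gamma$; it does not directly control $\sigma_t$ acting on a vector of $V_0$ at a $t$-dependent argument.

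The paper circumvents both issues by a different device. Using that $f_0(k^{-1}u)\in V_0$ is $\mathbf{c}_t$-invariant and applying the intertwining relation $(\text{Nat}_{\text{RIC}})$ for $\mathbf{c}_t$, one rewrites $\sigma_t(\tilde{\mathbf{m}}_t(w_u))\,f_0(k^{-1}u)$ as $\mathbf{c}_t\cdot\sigma_1\big(\mathbf{m}_1[\exp_{G_1}(t\,w_u)]\big)\,f_0(k^{-1}u)$: the problematic $t$-dependent representation has been traded for the \emph{fixed} representation $\sigma_1$ evaluated at an argument tending to the identity. Now ordinary continuity of $\sigma_1$ (Lemma~\ref{bonnedefo_dolb}(ii) at $t=1$) and the uniform Lipschitz bound on $\mathbf{c}_t$ finish the job, with no need for any uniformity-in-$t$ of the group-variable dependence. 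That use of $(\text{Nat}_{\text{RIC}})$ together with $\mathbf{c}_t$-invariance of $V_0$ is the missing idea.
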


\begin{proof} We first rearrange $ \pi_t(k,v)f_t := \pi_t(\exp_{G_{t}}(v)k) f_t $ as 
\[  u  \mapsto  e^{\langle -  i\lambda - t \rho, \mathbf{a}_{t}\left[k^{- 1} u \exp_{G_{t}}^{- Ad(u^{- 1}) v}\right] \rangle} \sigma_{t}\left(\mathbf{m}_{t}\left[ k^{- 1} u \exp_{G_{t}}^{- Ad(u^{- 1}) v}  \right]\right)^{- 1}f_t\left(\kappa_{t}(k^{- 1} u \exp_{G_{t}}^{- Ad(u^{- 1}) v) }\right).  \]
Calling in the Iwasawa maps $\mathfrak{I}_{t} : \pe \to \mathfrak{a}_{\chi}$ and $\mathfrak{K}_t: \pe \to K$ from \eqref{iwa_It},  notice that  
\[ \kappa_{t}\left(k^{- 1} u \exp_{G_{t}}^{- Ad(u^{- 1}) v} \right) =k^{- 1} u \ \mathfrak{K}_{t}\left[- Ad(u^{- 1})v \right] ,\] which makes it possible to rewrite $\pi_t(k,v) f_t$ as 
 \[ u \mapsto  e^{\langle - i\lambda - t \rho, \mathfrak{I}_{t}(- Ad(u^{- 1}) v) \rangle} \sigma_{t}\left(\mathbf{m}_{t}\left[k^{- 1} u \exp_{G_{t}}^{- Ad(u^{- 1}) v} \right] \right) f_t\left(k^{- 1} u \ \mathfrak{K}_{t}\left[- Ad(u^{- 1})v \right]\right). \]
Now recall that for all $u$ in $K$,
\begin{equation} \label{interm1} \mathbf{m}_{t}\left[k^{- 1} u \exp_{G_{t}}^{- Ad(u^{- 1} v)} \right] = \mathbf{m}_{t}\left[\exp_{G_{t}}^{- Ad(u^{- 1}) v}.\right] 
 \end{equation}
%
%
%
Set $\beta(u,v) = - Ad(u^{- 1}) v$, an element of the compact orbit $\ad(K_\chi) v$ in $\pe$. Then we just saw that
\begin{align} \label{Final1} \hspace{- 0cm} \pi^{t}(k,v) f_t = u \mapsto &  \quad { \exp{\langle - i\chi - t \rho, \mathfrak{I}_{t}(\beta(u,v)) \rangle}} \\ \label{Final2} & \quad  \times {\sigma_{t} \left(\mathbf{m}_{t}\left[\exp_{G_{t}}^{\beta(u,v)}\right]   \right) f_t\left(k^{- 1} u \ \mathfrak{K}_{t}\left[ \beta(u,v) \right]\right)}. \end{align}
When $t$ goes to zero, we already know from Lemma \ref{conv_ondes} that  \eqref{Final1} goes to $e^{i \langle Ad^\star(u)\chi, v\rangle}$, uniformly in $u$ because of Lemma \ref{conv_iwa}. To complete the proof of Proposition \ref{conv_op_finale}, it is enough to establish that \eqref{Final2} goes to $F(k^{-1} u)$ as $t$ goes to zero. For this, we call in Lemma \ref{bonnedefo_dolb} and use the extension of $\sigma_t$ to all of ${E}$ to rearrange  \eqref{Final2}. To simplify the notations, set $\Sigma_t(u,v)=\sigma_{t} \left(\mathbf{m}_{t}\left[\exp_{G_{t}}^{\beta(u,v)}\right]   \right);$ then
\begin{align} 
 \tag{Term 1}  \hspace{-1cm}{\sigma_{t} \left(\mathbf{m}_{t}\left[\exp_{G_{t}}^{\beta(u,v)}\right]   \right) f_t\left(k^{- 1} u \ \mathfrak{K}_{t}\left[ \beta(u,v) \right]\right)}=  &  \ \  f_{t}(k^{- 1} u) \\
   \tag{Term 2} & + \left[\Sigma_{t}(u,v)  - Id_{E} \right]\left[ f_{t}(k^{- 1}u) - f_{0}(k^{- 1}u) \right]  \\ 
 \tag{Term 3}   & + \Sigma_{t}(u,v) \left[  f_{t}\left(k^{- 1} u \ \mathfrak{K}_{t}\left[ \beta(u,v) \right]\right) - f_{t}(k^{- 1} u) \right] \\ 
  \tag{Term 4}  & +  \left[ \Sigma_{t}(u,v)  - Id_{E} \right]\left[ f_{0}(k^{- 1}u) \right] . 
\end{align}
\noindent As $t$ goes to zero,  \begin{itemize}
\item[$\bullet$] \emph{Term 1} goes to $f_0(k^{-1} u) = F(k^{-1} u)$.
\item[$\bullet$] \emph{Term 2} goes to zero because of the Lipschitz estimate in Lemma \ref{bonnedefo_dolb}: since $\beta(u,v)$ lies on a compact orbit in $\pe$, there is a number $C$, independent of $t$ and $u$, such that  $\Sigma_{t}(u,v) $ is $C$-Lipschitz for all $t,u$ (recall that $v$ is fixed here). Since $f_t-f_0$ goes to zero as $t$ goes to zero, so does Term 2.
\item[$\bullet$]  \emph{Term 3} also goes to zero: it can be rewritten as $ \left(\Sigma_{t}(u,v) \circ \mathbf{c}_t \right)\left[  f\left(k^{- 1} u \ \mathfrak{K}_{t}\left[ \beta \right]\right) - (k^{- 1} u) \right]$; from Lemma \ref{conv_iwa} and the compactness of $\left\{ \beta(u,v), u \in K\right\}$, we know that $u \mapsto f\left(k^{- 1} u \ \mathfrak{K}_{t}\left[ \beta(u,v) \right]\right) - (k^{- 1} u)$ goes to zero as $t$ goes to zero, and we use the uniform Lipschitz estimate of Lemma \ref{bonnedefo_dolb} to conclude as we did for Term 2.
\item[$\bullet$]  To prove that \emph{Term 4} goes to zero, we need to prove that  $\Sigma_{t}(u,v)f_0(k^{-1}u)$ goes to $f_0(k^{-1} u)$. Now, $f_0$ is by definition a $\mathbf{c}_t$-invariant element of ${E}$, so for all $u$ in $K$,
\begin{align*}
 \Sigma_{t}(u,v)f_0(k^{-1}u) &= \Sigma_{t}(u,v)\cdot \mathbf{c}_t\cdot  f_0(k^{-1}u) \\ 
 &= \sigma_{t} \left(\mathbf{m}_{t}\left[\exp_{G_{t}}^{\beta(u,v)}\right]   \right)\cdot \mathbf{c}_t\cdot f_0(k^{-1}u) \\
  &= \sigma_1 \left(\varphi_1^{-1} \circ \varphi_t \left\{\mathbf{m}_{t}\left[\exp_{G_{t}}^{\beta(u,v)}\right]\right\}  \right) f_0(k^{-1}u) \\ 
  &=  \sigma_1 \left(\mathbf{m}_{1}\left[\exp_{G_1}^{t \cdot \beta(u,v)}\right] \right) f_0(k^{-1}u).
  \end{align*}
The transition between the second line and the third uses the intertwining relation ($\text{Nat}_{\text{RIC}}$) for $\mathbf{c}_t$, and that between the third line and the fourth uses the fact that $\beta(u,v)$ lies in $\pe$.

As $t$ goes to zero, the element $\mathbf{m}_{1}\left[\exp_{G_1}^{t \cdot \beta(u,v)}\right]$ of $G_1$ goes to the identity of $G_1$; from the continuity of $\sigma_1$ (see Lemma   \ref{bonnedefo_dolb}) we deduce that $\Sigma_{t}(u,v)f_0(k^{-1}u) $ goes to $f_0(k^{-1}u) $, as desired. 
\end{itemize} 
All convergences are uniform in $u$; Proposition \ref{conv_op_finale} and Theorem \ref{contraction_finale} follow. \end{proof}
\newpage

\bibliography{mackey_part_2}

\providecommand{\bysame}{\leavevmode\hbox to3em{\hrulefill}\thinspace}
\providecommand{\noopsort}[1]{}
\providecommand{\mr}[1]{\href{http://www.ams.org/mathscinet-getitem?mr=#1}{MR~#1}}
\providecommand{\zbl}[1]{\href{http://www.zentralblatt-math.org/zmath/en/search/?q=an:#1}{Zbl~#1}}
\providecommand{\jfm}[1]{\href{http://www.emis.de/cgi-bin/JFM-item?#1}{JFM~#1}}
\providecommand{\arxiv}[1]{\href{http://www.arxiv.org/abs/#1}{arXiv~#1}}
\providecommand{\doi}[1]{\url{https://doi.org/#1}}
\providecommand{\MR}{\relax\ifhmode\unskip\space\fi MR }
\providecommand{\MRhref}[2]{%
  \href{http://www.ams.org/mathscinet-getitem?mr=#1}{#2}
}
\providecommand{\href}[2]{#2}
\begin{thebibliography}{10}

\bibitem{HermitianFormsSMF}
\bgroup\scshape{}J.~Adams\egroup{}, \bgroup\scshape{}M.~van Leeuwen\egroup{},
  \bgroup\scshape{}P.~Trapa\egroup{}, and \bgroup\scshape{}D.~A. Vogan,
  Jr.\egroup{}, Unitary representations of real reductive groups,
  arXiv:1212.2192v5 (version 5), 2017.

\bibitem{AAthese}
\bgroup\scshape{}A.~Afgoustidis\egroup{}, \emph{Repr\'esentations de groupes de
  Lie et fonctionnement g\'eom\'etrique du cerveau}, Ph.D. thesis, Universit\'e
  Paris-7, July 2016.

\bibitem{AAMackey}
\bgroup\scshape{}A.~{Afgoustidis}\egroup{}, \emph{On the analogy between
  reductive {L}ie groups and {C}artan motion groups: the {M}ackey-{H}igson
  bijection}, Preprint, arXiv:1510.02650v3 (version 3), 2018.

\bibitem{AtiyahSchmid}
\bgroup\scshape{}M.~Atiyah\egroup{} and \bgroup\scshape{}W.~Schmid\egroup{}, A
  geometric construction of the discrete series for semisimple {L}ie groups,
  \emph{Invent. Math.} \textbf{42} (1977), 1--62. \mr{0463358}.  Link:
  \url{https://doi.org/10.1007/BF01389783}.

\bibitem{BBS}
\bgroup\scshape{}D.~Barbasch\egroup{}, \bgroup\scshape{}N.~Higson\egroup{}, and
  \bgroup\scshape{}E.~M. Subag\egroup{}, Algebraic families of groups and
  commuting involutions,  \emph{Internat. J. Math.} \textbf{29} no.~4 (2018),
  1850030, 18. \mr{3797197}.  \doi{10.1142/S0129167X18500301}.

\bibitem{BarchiniZierau}
\bgroup\scshape{}L.~Barchini\egroup{} and \bgroup\scshape{}R.~Zierau\egroup{},
  Square integrable harmonic forms and representation theory,  \emph{Duke Math.
  J.} \textbf{92} no.~3 (1998), 645--664. \mr{1620530}.  Link:
  \url{https://doi.org/10.1215/S0012-7094-98-09220-1}.

\bibitem{BaumConnesHigson}
\bgroup\scshape{}P.~Baum\egroup{}, \bgroup\scshape{}A.~Connes\egroup{}, and
  \bgroup\scshape{}N.~Higson\egroup{}, Classifying space for proper actions and
  {$K$}-theory of group {$C^\ast$}-algebras,  in \emph{{$C^\ast$}-algebras:
  1943--1993 ({S}an {A}ntonio, {TX}, 1993)}, \emph{Contemp. Math.}
  \textbf{167}, Amer. Math. Soc., Providence, RI, 1994, pp.~240--291.
  \mr{1292018}.  Link: \url{https://doi.org/10.1090/conm/167/1292018}.

\bibitem{BHS2Pub}
\bgroup\scshape{}J.~Bernstein\egroup{}, \bgroup\scshape{}N.~Higson\egroup{},
  and \bgroup\scshape{}E.~M. Subag\egroup{}, Algebraic families of
  harish-chandra pairs,  \emph{International Mathematics Research Notices}
  (2018), rny147. \doi{10.1093/imrn/rny147}.

\bibitem{BHSPub}
\bgroup\scshape{}J.~Bernstein\egroup{}, \bgroup\scshape{}N.~Higson\egroup{},
  and \bgroup\scshape{}E.~M. Subag\egroup{}, Contractions of representations
  and algebraic families of harish-chandra modules,  \emph{International
  Mathematics Research Notices} (2018), rny146. \doi{10.1093/imrn/rny146}.

\bibitem{Camporesi}
\bgroup\scshape{}R.~Camporesi\egroup{}, The {H}elgason {F}ourier transform for
  homogeneous vector bundles over {R}iemannian symmetric spaces,  \emph{Pacific
  J. Math.} \textbf{179} no.~2 (1997), 263--300. \mr{1452535}.  Link:
  \url{https://doi.org/10.2140/pjm.1997.179.263}.

\bibitem{ConnesLivreENG}
\bgroup\scshape{}A.~Connes\egroup{}, \emph{Noncommutative geometry}, Academic
  Press, Inc., San Diego, CA, 1994. \mr{1303779}.

\bibitem{Dieudonne}
\bgroup\scshape{}J.~Dieudonn\'e\egroup{}, \emph{\'El\'ements d'analyse. {T}ome
  {II}: {C}hapitres {XII} \`a {XV}}, \emph{Cahiers Scientifiques, Fasc. XXXI},
  Gauthier-Villars, \'Editeur, Paris, 1968. \mr{0235946}.

\bibitem{DooleyRice}
\bgroup\scshape{}A.~H. Dooley\egroup{} and \bgroup\scshape{}J.~W.
  Rice\egroup{}, On contractions of semisimple {L}ie groups,  \emph{Trans.
  Amer. Math. Soc.} \textbf{289} no.~1 (1985), 185--202. \mr{779059}.  Link:
  \url{https://doi.org/10.2307/1999695}.

\bibitem{Grothendieck}
\bgroup\scshape{}A.~Grothendieck\egroup{}, R\'esum\'e des r\'esultats
  essentiels dans la th\'eorie des produits tensoriels topologiques et des
  espaces nucl\'eaires,  \emph{Ann. Inst. Fourier Grenoble} \textbf{4} (1952),
  73--112 (1954). \mr{0061754}.  Link:
  \url{http://www.numdam.org/item?id=AIF_1952__4__73_0}.

\bibitem{HelgasonTangentSpace}
\bgroup\scshape{}S.~Helgason\egroup{}, A duality for symmetric spaces with
  applications to group representations. {III}. {T}angent space analysis,
  \emph{Adv. in Math.} \textbf{36} no.~3 (1980), 297--323. \mr{577307}.
  \doi{10.1016/0001-8708(80)90019-5}.

\bibitem{HelgasonGGA}
\bgroup\scshape{}S.~Helgason\egroup{}, \emph{Groups and geometric analysis},
  \emph{Mathematical Surveys and Monographs} \textbf{83}, American Mathematical
  Society, Providence, RI, 2000, Integral geometry, invariant differential
  operators, and spherical functions, Corrected reprint of the 1984 original.
  \mr{1790156}.  Link: \url{https://doi.org/10.1090/surv/083}.

\bibitem{HelgasonGASS}
\bgroup\scshape{}S.~Helgason\egroup{}, \emph{Geometric analysis on symmetric
  spaces}, second ed., \emph{Mathematical Surveys and Monographs} \textbf{39},
  American Mathematical Society, Providence, RI, 2008. \mr{2463854}.  Link:
  \url{https://doi.org/10.1090/surv/039}.

\bibitem{Hermann}
\bgroup\scshape{}R.~Hermann\egroup{}, The {G}ell-{M}ann formula for
  representations of semisimple groups,  \emph{Comm. Math. Phys.} \textbf{2}
  (1966), 155--164. \mr{0206151}.  Link:
  \url{https://projecteuclid.org/euclid.cmp/1103815046}.

\bibitem{Higson2008}
\bgroup\scshape{}N.~Higson\egroup{}, The {M}ackey analogy and {$K$}-theory,  in
  \emph{Group representations, ergodic theory, and mathematical physics: a
  tribute to {G}eorge {W}. {M}ackey}, \emph{Contemp. Math.} \textbf{449}, Amer.
  Math. Soc., Providence, RI, 2008, pp.~149--172. \mr{2391803}.  Link:
  \url{https://doi.org/10.1090/conm/449/08711}.

\bibitem{InonuWignerGalilee}
\bgroup\scshape{}E.~\.{I}n\"{o}n\"{u}\egroup{} and \bgroup\scshape{}E.~P.
  Wigner\egroup{}, Representations of the {G}alilei group,  \emph{Nuovo Cimento
  (9)} \textbf{9} (1952), 705--718. \mr{0050594}.

\bibitem{InonuWignerContractions}
\bgroup\scshape{}E.~\.{I}n\"{o}n\"{u}\egroup{} and \bgroup\scshape{}E.~P.
  Wigner\egroup{}, On the contraction of groups and their representations,
  \emph{Proc. Nat. Acad. Sci. U. S. A.} \textbf{39} (1953), 510--524.
  \mr{0055352}.

\bibitem{KnappCompositio}
\bgroup\scshape{}A.~W. Knapp\egroup{}, Commutativity of intertwining operators
  for semisimple groups,  \emph{Compositio Math.} \textbf{46} no.~1 (1982),
  33--84. \mr{660154}.  Link:
  \url{http://www.numdam.org/item?id=CM_1982__46_1_33_0}.

\bibitem{KnappZuckerman}
\bgroup\scshape{}A.~W. Knapp\egroup{} and \bgroup\scshape{}G.~J.
  Zuckerman\egroup{}, Classification of irreducible tempered representations of
  semisimple groups,  \emph{Ann. of Math. (2)} \textbf{116} no.~2 (1982),
  389--455. \mr{672840}.  Link: \url{https://doi.org/10.2307/2007066}.

\bibitem{KnappZuckerman2}
\bgroup\scshape{}A.~W. Knapp\egroup{} and \bgroup\scshape{}G.~J.
  Zuckerman\egroup{}, Classification of irreducible tempered representations of
  semisimple groups. {II},  \emph{Ann. of Math. (2)} \textbf{116} no.~3 (1982),
  457--501. \mr{678478}.  Link: \url{https://doi.org/10.2307/2007019}.

\bibitem{KnappPrincetonBook}
\bgroup\scshape{}A.~W. Knapp\egroup{}, \emph{Representation theory of
  semisimple groups}, \emph{Princeton Landmarks in Mathematics}, Princeton
  University Press, Princeton, NJ, 2001, Reprint of the 1986 original.
  \mr{1880691}.

\bibitem{KnappVoganLivre}
\bgroup\scshape{}A.~W. Knapp\egroup{} and \bgroup\scshape{}D.~A. Vogan,
  Jr.\egroup{}, \emph{Cohomological induction and unitary representations},
  \emph{Princeton Mathematical Series} \textbf{45}, Princeton University Press,
  Princeton, NJ, 1995. \mr{1330919}.  Link:
  \url{https://doi.org/10.1515/9781400883936}.

\bibitem{MackeyPNAS49}
\bgroup\scshape{}G.~W. Mackey\egroup{}, Imprimitivity for representations of
  locally compact groups. {I},  \emph{Proc. Nat. Acad. Sci. U. S. A.}
  \textbf{35} (1949), 537--545. \mr{0031489}.

\bibitem{MickelssonNiederle}
\bgroup\scshape{}J.~Mickelsson\egroup{} and
  \bgroup\scshape{}J.~Niederle\egroup{}, Contractions of representations of de
  {S}itter groups,  \emph{Comm. Math. Phys.} \textbf{27} (1972), 167--180.
  \mr{0309502}.  Link:
  \url{https://projecteuclid.org.proxy/euclid.cmp/1103858248}.

\bibitem{Mostow}
\bgroup\scshape{}G.~D. Mostow\egroup{}, Some new decomposition theorems for
  semi-simple groups,  \emph{Mem. Amer. Math. Soc.} \textbf{No. 14} (1955),
  31--54. \mr{0069829}.

\bibitem{Mukunda}
\bgroup\scshape{}N.~Mukunda\egroup{}, Expansions of {L}ie groups and
  representations of {${\rm SL}(3,\,C)$},  \emph{J. Mathematical Phys.}
  \textbf{10} (1969), 897--911. \mr{0246583}.  \doi{10.1063/1.1664919}.

\bibitem{Parthasarathy}
\bgroup\scshape{}R.~Parthasarathy\egroup{}, Dirac operator and the discrete
  series,  \emph{Ann. of Math. (2)} \textbf{96} (1972), 1--30. \mr{0318398}.
  Link: \url{https://doi.org/10.2307/1970892}.

\bibitem{SalamancaVogan}
\bgroup\scshape{}S.~A. Salamanca-Riba\egroup{} and \bgroup\scshape{}D.~A.
  Vogan, Jr.\egroup{}, On the classification of unitary representations of
  reductive {L}ie groups,  \emph{Ann. of Math. (2)} \textbf{148} no.~3 (1998),
  1067--1133. \mr{1670073}.  Link: \url{https://doi.org/10.2307/121036}.

\bibitem{Segal}
\bgroup\scshape{}I.~E. Segal\egroup{}, A class of operator algebras which are
  determined by groups,  \emph{Duke Math. J.} \textbf{18} (1951), 221--265.
  \mr{0045133}.  Link: \url{http://projecteuclid.org/euclid.dmj/1077476400}.

\bibitem{SBBM}
\bgroup\scshape{}E.~M. Subag\egroup{}, \bgroup\scshape{}E.~M. Baruch\egroup{},
  \bgroup\scshape{}J.~L. Birman\egroup{}, and
  \bgroup\scshape{}A.~Mann\egroup{}, Strong contraction of the representations
  of the three-dimensional {L}ie algebras,  \emph{J. Phys. A} \textbf{45}
  no.~26 (2012), 265206, 25. \mr{2942592}.
  \doi{10.1088/1751-8113/45/26/265206}.

\bibitem{SubagH}
\bgroup\scshape{}E.~M. Subag\egroup{}, Symmetries of the hydrogen atom and
  algebraic families,  \emph{J. Math. Phys.} \textbf{59} no.~7 (2018), 071702,
  20. \mr{3827131}.  \doi{10.1063/1.5018061}.

\bibitem{Subag}
\bgroup\scshape{}E.~M. Subag\egroup{}, The algebraic {M}ackey-{H}igson
  bijections,  \emph{J. Lie Theory} \textbf{29} no.~2 (2019), 473--492.
  \mr{3942563}.

\bibitem{ShilinSL2}
\bgroup\scshape{}Q.~Tan\egroup{}, \bgroup\scshape{}Y.-J. Yao\egroup{}, and
  \bgroup\scshape{}S.~Yu\egroup{}, Mackey analogy via {$\mathcal{D}$}-modules
  for {${\rm SL}(2,\Bbb{R})$},  \emph{Internat. J. Math.} \textbf{28} no.~7
  (2017), 1750055, 20. \mr{3667899}.  \doi{10.1142/S0129167X17500550}.

\bibitem{Treves}
\bgroup\scshape{}F.~Tr\`eves\egroup{}, \emph{Topological vector spaces,
  distributions and kernels}, Academic Press, New York-London, 1967.
  \mr{0225131}.

\bibitem{Vogan81}
\bgroup\scshape{}D.~A. Vogan, Jr.\egroup{}, \emph{Representations of real
  reductive {L}ie groups}, \emph{Progress in Mathematics} \textbf{15},
  Birkh\"auser, Boston, Mass., 1981. \mr{632407}.

\bibitem{VoganBranching}
\bgroup\scshape{}D.~A. Vogan, Jr.\egroup{}, Branching to a maximal compact
  subgroup,  in \emph{Harmonic analysis, group representations, automorphic
  forms and invariant theory}, \emph{Lect. Notes Ser. Inst. Math. Sci. Natl.
  Univ. Singap.} \textbf{12}, World Sci. Publ., Hackensack, NJ, 2007,
  pp.~321--401. \mr{2401817}.  Link:
  \url{https://doi.org/10.1142/9789812770790_0010}.

\bibitem{VoganVenice}
\bgroup\scshape{}D.~A. Vogan, Jr.\egroup{}, Unitary representations and complex
  analysis,  in \emph{Representation theory and complex analysis},
  \emph{Lecture Notes in Math.} \textbf{1931}, Springer, Berlin, 2008,
  pp.~259--344. \mr{2409701}.  Link:
  \url{https://doi.org/10.1007/978-3-540-76892-0_5}.

\bibitem{VoisinHodge}
\bgroup\scshape{}C.~Voisin\egroup{}, \emph{Hodge theory and complex algebraic
  geometry. {II}}, \emph{Cambridge Studies in Advanced Mathematics}
  \textbf{77}, Cambridge University Press, Cambridge, 2003, Translated from the
  French by Leila Schneps. \mr{1997577}.  Link:
  \url{https://doi.org/10.1017/CBO9780511615177}.

\bibitem{Wong95}
\bgroup\scshape{}H.-W. Wong\egroup{}, Dolbeault cohomological realization of
  {Z}uckerman modules associated with finite rank representations,  \emph{J.
  Funct. Anal.} \textbf{129} no.~2 (1995), 428--454. \mr{1327186}.  Link:
  \url{https://doi.org/10.1006/jfan.1995.1058}.

\bibitem{Wong99}
\bgroup\scshape{}H.-W. Wong\egroup{}, Cohomological induction in various
  categories and the maximal globalization conjecture,  \emph{Duke Math. J.}
  \textbf{96} no.~1 (1999), 1--27. \mr{1663911}.  Link:
  \url{https://doi.org/10.1215/S0012-7094-99-09601-1}.

\bibitem{Shilin}
\bgroup\scshape{}S.~Yu\egroup{}, \emph{Mackey analogy as deformation of
  $\mathcal{D}$-modules}, Preprint, arXiv:1707.00240, 2017.

\end{thebibliography}
\bibliographystyle{afgou_bib}

\end{document}